\newtheorem{theorem}{Theorem}[section]
\newtheorem{corollary}[theorem]{Corollary}
\newtheorem{lemma}[theorem]{Lemma}
\newtheorem{proposition}[theorem]{Proposition}
\newtheorem{remark}[theorem]{Remark}
\numberwithin{equation}{section}
\providecommand{\abs}[1]{\left| #1 \right| }
\begin{document}

\title[A class of Kolmogorov systems]
{Phase portraits of a family of Kolmogorov systems depending on six parameters}

\author[E. Diz-Pita, J. Llibre and M.V. Otero-Espinar]
{\'Erika Diz-Pita$^1$, Jaume Llibre$^2$ and M. Victoria Otero-Espinar$^1$}

\address{$^1$ Departamento de Estat\'istica, An\'alise Matem\'atica e Optimizaci\'on, Universidade de Santiago de Compostela, 15782 Santiago de Compostela, Spain }
\email{erikadiz.pita@usc.es, mvictoria.otero@usc.es}

\address{$^2$ Departament de Matem\`atiques, Universitat Aut\`onoma de Barcelona, 08193 Bellaterra, Barcelona, Spain}
\email{jllibre@mat.uab.cat}

\subjclass[2010]{Primary 34C05}

\keywords{Kolmogorov system, Lotka-Volterra system, Phase portrait, Poincar\'e disc}
\date{}
\dedicatory{}

\maketitle  

\begin{abstract}
Consider a general $3$-dimensional Lotka-Volterra system with a rational first integral of degree two of the form $H=x^i y^j z^k$. The restriction of this Lotka-Volterra system to each surface $H(x,y,z)=h$ varying $h\in \mathbb{R}$ provide Kolmogorov systems. With the additional assumption that they have a Darboux invariant of the form $x^\ell y^m e^{st}$ they reduce to the Kolmogorov systems
\begin{equation*}
\begin{split}
\dot{x}&=x \left( a_0- \mu (c_1 x + c_2 z^2 + c_3 z)\right),\\
\dot{z}&=z\left( c_0+ c_1 x + c_2 z^2 + c_3 z\right).
\end{split}
\end{equation*}
In this paper we classify the phase portraits in the Poincar\'e disc of all these Kolmogorov systems which depend on six parameters.
\end{abstract}

\section{Introduction}

The Lotka-Volterra systems have been used for modelling many natural phenomena, such as the time evolution of conflicting species in biology  \cite{compesp}, chemical reactions, plasma physics \cite{plasma} or hydrodynamics \cite{hidrodinamica}, just as other problems from social science and
economics.  

These systems, which are polynomial differential equations of degree two, were initially proposed, independently, by Alfred J. Lotka in 1925 and Vito Volterra in 1926, both in the context of competing species. Later on Lotka-Volterra systems were generalized and considered in arbitrary dimension, i.e.
$$
\dot{x_i}=x_i  \left( a_{i0} + \sum_{j=1}^{n} a_{ij}x_j \right)  , \;\;\; i=1,...,n.
$$
Consequently the applications of these systems started to multiply.
Moreover Kolmogorov in \cite{Kolmogorov} extended the Lotka-Volterra systems as follows
$$
\dot{x_i}=x_i  P_i(x_1,\ldots,x_n)  , \;\;\; i=1,...,n,
$$
where $P_i$ are polynomials of degre at most $m$. These kind of systems are now known as Kolmogorov systems. They have in particular all the applications of the Lotka-Volterra systems as for instance in the study of the black holes in cosmology, see \cite{cosmologia}.

The global qualitative dynamics of the Lotka-Volterra systems in dimension two has been completely studied in \cite {LVdim2}, where all possible phase portraits on the Poincar\'e disc have been classified.

There are few results about the global dynamics of the Lotka-Volterra systems in dimension three. Our objective is to study the phase portraits of the $3$-dimensional Lotka-Volterra systems
\begin{equation}\label{systemLV3}
\begin{split}
& \dot{x} = x ( a_0 + a_1 x + a_2 y + a_3 z ),\\
& \dot{y} = y ( b_0 + b_1 x + b_2 y + b_3 z ), \\
& \dot{z} = z  (c_0 + c_1 x + c_2 y + c_3 z ),
\end{split}
\end{equation}
which have a rational first integral of degree two of the form $x^i y^j z^k$. We have used the Darboux theory of integrability to obtain a characterization of these systems. As a result, we have reduced the initial problem to a problem in dimension two, the study of the global dynamics of two families of Kolmogorov systems. In this paper we focus on the first family, which is
\begin{equation}\label{E1}
\begin{split}
\dot{x}&=x ( a_0+a_1x+a_2z^2+a_3z ) ,\\
\dot{z}&=z ( c_0+c_1x+c_2z^2+c_3z ) ,
\end{split}
\end{equation}	

Kolmogorov systems \eqref{E1} depend on eight parameters, this is a big number in order to classify all their distinct topological phase portraits. Then we require that Kolmogorov systems \eqref{E1} have a Darboux invariant of the form $x^\ell y^m e^{st}$, then these systems are reduced to study the Kolmogorov systems
\begin{equation}\label{system}
\begin{split}
\dot{x}&=x\left(a_0- \mu (c_1 x + c_2 z^2 + c_3 z)\right),\\
\dot{z}&=z\left(c_0+ c_1 x + c_2 z^2 + c_3 z\right),
\end{split}
\end{equation}
which now depend on six parameters. For these Kolmogorov systems we give the topological classification of all their phase portraits in the Poincar\'e disc. Roughly speaking the Poincar\'e disc is the closed unit disc centered at the origin of $\mathbb{R}^2$. Its interior is identified with $\mathbb{R}^2$ and the circle of its boundary is identified with the infinity of $\mathbb{R}^2$. In the plane $\mathbb{R}^2$ we can go or come from the infinity in as many directions as points have the circle. The polynomial differential systems can be extended to the closed Poincar\'e disc, i.e. they can be extended to infinity and in this way we can study their dynamics in a neighborhood of infinity. This extension is called the Poincar\'e compactification, for more details see subsection 2.1.
Thus our main result is the following.

\begin{theorem}\label{th_global}
Kolmogorov systems \eqref{system} have $102$ topologically distinct phase portraits in the Poincar\'e disc under condition $(H_2)$ given in Figure \ref{fig:global_sis2.1}.
\end{theorem}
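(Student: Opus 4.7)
The plan is to follow the classical program for classifying planar polynomial phase portraits on the Poincar\'e disc. First I would exploit the Kolmogorov structure: both coordinate axes $\{x=0\}$ and $\{z=0\}$ are invariant, so the finite singular points split into those lying on these axes (easily parametrized by zeros of linear/quadratic factors in one variable) and the interior equilibria. The interior equilibria arise from solving the pair $a_0 = \mu(c_1 x + c_2 z^2 + c_3 z)$ and $c_0 + c_1 x + c_2 z^2 + c_3 z = 0$; subtracting the two equations gives a linear relation that, after elimination, reduces to a single quadratic in $z$. This pins down the possible number of interior equilibria (zero, one, or two) and the parameter loci where they coalesce.

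Next I would classify each finite singular point by computing the Jacobian. Most cases are hyperbolic (nodes, saddles, foci), treated by eigenvalue signs; the semi-hyperbolic cases on the axes (one zero eigenvalue produced by the invariant factor) are handled by Theorem 2.19 of Dumortier-Llibre-Art\'es, while any nilpotent or linearly zero degeneracies require the standard normal-form or blow-up arguments. I would then perform the Poincar\'e compactification in the charts $U_1, U_2, V_1, V_2$ to find and classify the infinite singularities; since the vector field is quadratic with a dominant $z^2$ term in $\dot z$ and linear dominant terms in $\dot x$, the infinite analysis is explicit and yields a short list of possibilities.

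Having the local portraits at all finite and infinite singular points, I would use the Darboux invariant $x^{\ell} z^{m} e^{st}$ (inherited from the reduction described in the introduction) to restrict the $\alpha$- and $\omega$-limit sets: such an invariant forces every orbit that is not on the level set $\{x=0\}\cup\{z=0\}\cup\{\infty\}$ to have its $\alpha$- and $\omega$-limits on that set, which eliminates limit cycles and polycycles in the interior and essentially forces the global connection pattern once the local pictures are fixed. The Markus--Neumann--Peixoto theorem then says that the topological type of the phase portrait is determined by the separatrix skeleton, so two parameter choices give the same portrait exactly when their skeletons coincide.

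The final step, and the main obstacle, is the bookkeeping of the case analysis over the six-parameter family under condition $(H_2)$. I would partition the parameter space according to (i) the number and relative position of finite equilibria on each axis and in the interior, (ii) the hyperbolicity type of each equilibrium, (iii) the configuration at infinity, and (iv) the sign pattern of $\mu, c_1, c_2, c_3$ which governs the monotonicity of the two nullclines. Within each cell of this partition the separatrix skeleton is constant and can be drawn unambiguously; across boundaries a bifurcation occurs (saddle-node of singularities, collision with infinity, or change in separatrix connection). The delicate part is ensuring exhaustiveness, using the obvious discrete symmetries (such as $z \mapsto -z$ combined with sign changes of $c_2, c_3$) to identify equivalent cases, and checking that the remaining list of portraits in Figure \ref{fig:global_sis2.1} contains no two topologically equivalent entries, so that the total is exactly $102$.
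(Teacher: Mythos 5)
Your overall program (local classification of finite and infinite singularities, Poincar\'e compactification, Markus--Neumann--Peixoto) is indeed the skeleton of the paper's proof, but two of your key claims would fail as stated. First, you assert that the analysis at infinity ``is explicit and yields a short list of possibilities.'' The system is cubic (the term $z\cdot c_2z^2$ gives degree $3$), and the origin of the chart $U_1$ is a linearly zero singular point whose desingularization requires a chain of blow-ups with both nondicritical and dicritical branches; the paper's Lemma \ref{lemma_O1} shows that this single infinite point admits $47$ topologically distinct local phase portraits under $(H_2)$. This is the bulk of the technical work, not a short explicit list. A minor related point: under $(H_2)$ the condition $a_0+c_0\mu\neq0$ already forces the quadratic you propose to solve for interior equilibria to be inconsistent, so all finite singularities lie on the invariant axes -- your ``zero, one, or two interior equilibria'' trichotomy collapses to zero.

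Second, and more seriously, your claim that the Darboux invariant ``essentially forces the global connection pattern once the local pictures are fixed'' is not true. The invariant does exclude limit cycles and interior polycycles, but in many parameter cells the fixed local data at the finite and infinite singular points remain compatible with two or three distinct separatrix skeletons (see, e.g., the paper's treatment of cases 1.6, 2.2, 2.6, 3.2 and 4.2, each of which produces three candidate global portraits). The paper discards the spurious candidates with tools you do not mention: (i) the contact-point result (Theorem \ref{th_contactpoints}), namely that for $c_1\neq0$ each line $z=\mathrm{const}\neq0$ carries exactly one contact point, while for $c_1=0$ there are prescribed invariant horizontal lines that must appear as separatrices; and (ii) the Poincar\'e--Hopf index formula on $\mathbb{S}^2$, which is needed even to decide whether certain sectors of the blown-up infinite singularity are elliptic or hyperbolic. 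Without these arguments the case-by-case selection cannot be completed and the count of $102$ cannot be certified.
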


The condition $(H_2)$ is
\begin{align*}
\left\lbrace c_2\neq 0, a_0\geq 0, c_1\geq 0, c_3\geq 0, a_0+c_0\mu\neq 0, a_0c_1\mu\neq0, \mu\neq-1 \right\rbrace.
\end{align*}
We will see that we can assume condition $(H_2)$ because Kolmogorov systems \eqref{system} can be reduced to satisfy such condition either using symmetries, or eliminating known phase portraits, or eliminating phase portraits with infinitely many finite or infinite singular points.

Other papers where some topological phase portraits have been classified in the Poincar\'e disc are, for instance, \cite{Ben-LLi,Jim-Lli-Med,Lli-Per-Pess}.  

In Section \ref{sec:reduction} using the Darboux theory of integrability we explain the reduction from the Lotka-Volterra system \eqref{systemLV3} to the Kolmogorov systems \eqref{system}. In Section \ref{sec:properties} we give some properties of the system obtained. In Section \ref{sec:finite} we study the local phase portrait of the finite singular points, and in Section \ref{sec:infinite} we do the same with the infinite singular points, applying the blow-up technique. Finally in Section \ref{sec:global} we prove Theorem \ref{th_global}.

\section{Preliminaries}\label{sec:preliminaries}

\subsection{Poincar\'e compactification} \label{subsec:Poincare}

In order to study the behavior of the trajectories of our polynomial differential systems near the infinity we will use the Poincar\'e compactification. We provide a short summary about this method, more details can be found in Chapter 5 of \cite{Libro}.

Let $X=(P(x,y),Q(x,y))$ be a polynomial vector field of degree $d$ defined in $\mathbb{R}^2$. Consider the \textit{Poincar\'e sphere} $\mathbb{S}^2=\left\lbrace  y \in \mathbb{R}^3 : y_1^2+y_2^2+y_3^2=1 \right\rbrace$ and its tangent plane at the point $(0,0,1)$ which is identified with $\mathbb{R}^2$.

We consider the central projections $f^+\colon\mathbb{R}^2\to \mathbb{S}^2$ and $f^-\colon\mathbb{R}^2\to \mathbb{S}^2$. By definition, $f^+(x)$ is the intersection of the straight line passing through the point $x$ and the origin with the northern hemisphere of $\mathbb{S}^2$, and respectively for $f^-(x)$ with the southern hemisphere. The differential $Df^+$ and respectively $Df^-$ send the vector field $X$ into a vector field $\overline{X}$ on  $\mathbb{S}^2\textbackslash \mathbb{S}^1$.  Note that the points at infinity of $\mathbb{R}^2$ are in bijective correspondence with the points of the equator $\mathbb{S}^1$ of $\mathbb{S}^2$.

The vector field $\overline{X}$ can be extended analytically to a vector field on $\mathbb{S}^2$ multiplying $\overline{X}$ by $y_3^d$. We denothe this vector field by $\rho(X)$, and it is called the \textit{Poincar\'e compactification} of the vector field $X$ on $\mathbb{R}^2$.

For studying the dynamics of $X$ in the neighborhood of the infinity, we must study the dynamics of $\rho(X)$ near $\mathbb{S}^1$. The sphere $\mathbb{S}^2$ is a 2-dimensional manifold so we need to know the expressions of the vector field $\rho(X)$ in the local charts $(U_i,\phi_i)$ and $(V_i,\psi_i)$, where $U_i= \left\lbrace y \in \mathbb{S}^2 :y_i > 0  \right\rbrace$, $V_i = \left\lbrace y \in \mathbb{S}^2 : y_i < 0  \right\rbrace$, $\phi_i: U_i \longrightarrow \mathbb{R}^2$ and $\psi_i: V_i \longrightarrow \mathbb{R}^2$ for $i=1,2,3$ with $\phi_i(y) = - \psi_i(y) = \left( y_m/y_i , y_n/y_i \right)$ for $m < n$ and $m,n \neq i$.

In the local chart $(U_1,\phi_1)$ the expression of $\rho(X)$ is
\begin{equation}\label{Poincare_comp_U1}
\dot{u}= v^d \left[ -u \: P\left( \frac{1}{v}, \frac{u}{v}\right)  + Q\left( \frac{1}{v}, \frac{u}{v}\right) \right], \;
\dot{v}= - v^{d+1} \: P\left( \frac{1}{v}, \frac{u}{v}\right).
\end{equation}
In the local chart $(U_2,\phi_2)$ the expression of $\rho(X)$ is
\begin{equation}\label{Poincare_comp_U2}
\dot{u}= v^d \left[ P\left( \frac{1}{v}, \frac{u}{v}\right) - u Q\left( \frac{1}{v}, \frac{u}{v}\right) \right], \;
\dot{v}= - v^{d+1} \: P\left( \frac{1}{v}, \frac{u}{v}\right),
\end{equation}
and in the local chart $(U_3,\phi_3)$ the expression of $\rho(X)$ is
\begin{equation}\label{Poincare_comp_U3}
\dot{u}= P(u,v), \;
\dot{v}= Q(u,v).
\end{equation}

In the charts $(V_i,\psi_i)$, with $i=1,2,3$, the expression for $\rho(X)$ is the same as in the charts $(U_i,\phi_i)$ multiplied by $(-1)^{d-1}$.

The equator $\mathbb{S}^1$ is invariant by the vector field $\rho(X)$ and all the singular points of $\rho(X)$  which lie in this equator are called the \textit{infinite singular points} of $X$. If $y\in \mathbb{S}^1$ is an infinite singular point, then $-y$ is also an infinite singular point and they have the same (respectively opposite) stability if the degree of vector field is odd (respectively even).

The image of the closed northern hemisphere of $\mathbb{S}^2$ onto the plane $y_3=0$ under the orthogonal projection $\pi$ is called the \textit{Poincar\'e disc} $\mathbb{D}^2$.
Since the orbits of $\rho(X)$  on $\mathbb{S}^2$ are symmetric with respect to the origin of $\mathbb{R}^3$, we only need to consider the flow of $\rho(X)$ in the closed northern hemisphere, and we can project the phase portrait of $\rho(X)$ on the northern hemisphere onto the Poincar\'e disc.  We shall present the phase portraits of the polynomial differential systems \eqref{system} in the Poincar\'e disc.

\subsection{Topological equivalence between two polynomial vector fields}\label{subsec:topologicaleq}

Two polynomial vector fields $X_1$ and $X_2$ on $\mathbb{R}^2$ are \textit{topologically equivalent} if there exists a homeomorphism on the Poincar\'e disc which preserves the infinity $\mathbb{S}^1$ and sends the trajectories of the flow of $\pi(\rho(X_1))$ to the trajectories of the flow of $\pi(\rho(X_2))$, preserving or reversing the orientation of all the orbits.

A \textit{separatrix} of the Poincar\'e compactification $\pi(\rho(X))$ is an orbit at the infinity $\mathbb{S}^1$, or a finite singular point, or a limit cycle, or an orbit on the boundary of a hyperbolic sector at a finite or an infinite singular point. The set of all separatrices of $\pi(\rho(X))$ is closed and we denote it by $\Sigma_X$.

An open connected component of $\mathbb{D}^2 \textbackslash \Sigma_X$ is a \textit{canonical region} of $\pi(\rho(X))$. The \textit{separatrix configuration} of  $\pi(\rho(X))$ is the union of an orbit of each canonical region with the set  $\Sigma_X$, and it is denoted by $\Sigma^{'}_X$. We denote by S (respectively R) the number of separatrices (respectively canonical regions) of a vector field  $\pi(\rho(X))$.

We say that two separatrix configurations $\Sigma^{'}_{X_1}$ and $\Sigma^{'}_{X_2}$ are \textit{topologically equi\-va\-lent} if there is a homeomorphism $h: \mathbb{D}^2 \to \mathbb{D}^2$ such that $h(\Sigma^{'}_{X_1})=\Sigma^{'}_{X_2}$.

The following theorem of Markus \cite{Markus}, Neumann \cite{Neumann} and Peixoto \cite{Peixoto} allows to investigate only the separatrix configuration of a polynomial differential system in order to determine its phase portrait in the Poincar\'e disc.

\begin{theorem}
The phase portraits in the Poincaré disc of two compactified polynomial vector fields $\pi(\rho(X_1))$ and $\pi(\rho(X_2))$ with finitely many separatrices are topologically equivalent if and only if their separatrix configurations $\Sigma^{'}_{X_1}$ and $\Sigma^{'}_{X_2}$ are topologically equivalent.
\end{theorem}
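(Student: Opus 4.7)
The plan is to follow the classical Markus--Neumann--Peixoto approach, treating the two implications separately and reducing the substantive (reverse) direction to an extension lemma for canonical regions.

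The forward direction is essentially automatic. If $h:\mathbb{D}^2\to \mathbb{D}^2$ is a homeomorphism realising the topological equivalence of the phase portraits of $\pi(\rho(X_1))$ and $\pi(\rho(X_2))$, then $h$ preserves the infinity $\mathbb{S}^1$ and sends orbits to orbits. The notions of singular point, limit cycle, and orbit on the boundary of a hyperbolic sector are topologically invariant, so $h(\Sigma_{X_1})=\Sigma_{X_2}$ and $h$ permutes the canonical regions. By composing, on each canonical region of $X_2$, with an orbit-preserving homeomorphism that carries $h(\text{representative})$ to the chosen representative, one obtains an equivalence with $h(\Sigma'_{X_1})=\Sigma'_{X_2}$.

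The reverse direction is where the real work lies. Given $h_0:\mathbb{D}^2\to \mathbb{D}^2$ with $h_0(\Sigma'_{X_1})=\Sigma'_{X_2}$, I would construct an orbit-preserving homeomorphism $h$ in three steps. \emph{Step 1 (classification of canonical regions).} Since $\Sigma_{X_i}$ is finite, each canonical region is an open, connected, flow-invariant set whose topological boundary lies in $\Sigma_{X_i}\cup \mathbb{S}^1$; by Neumann \cite{Neumann}, such a region admits an orbit-preserving homeomorphism onto one of a short list of standard models (a parallel strip carrying the flow $\partial/\partial x$, or a spiral/annular region carrying a rotational or radial flow). \emph{Step 2 (extension lemma).} Any orientation-preserving homeomorphism of the boundary of a standard canonical region that respects the orbit endpoint data extends to an orbit-preserving homeomorphism of its closure; the extension is obtained by transporting the boundary data via the model parametrisation, and the chosen representative orbit pins down the extension uniquely up to an orbit-preserving isotopy. \emph{Step 3 (gluing).} The extensions on adjacent regions agree on their common separatrix boundaries because $h_0$ is a single homeomorphism globally defined on $\Sigma'_{X_1}$; the resulting global map $h$ is continuous across separatrices and sends orbits of $X_1$ to orbits of $X_2$, which is the required equivalence.

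The main obstacle is Step 2 for canonical regions whose closures reach $\mathbb{S}^1$ or touch singular points with complicated local sector structure (mixtures of hyperbolic, parabolic and elliptic sectors). Showing that every canonical region admitted by a polynomial vector field on $\mathbb{D}^2$ falls into one of the standard models, and that the boundary data imposed by $h_0$ is always compatible with the intrinsic orbit endpoint data of the model, is precisely the content of Neumann's classification; once it is granted, the remainder is a careful but routine verification. This is why the paper quotes the theorem in the form due to Markus \cite{Markus}, Neumann \cite{Neumann} and Peixoto \cite{Peixoto} rather than reproducing their proofs.
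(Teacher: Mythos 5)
The paper does not prove this theorem: it is stated as a quoted result of Markus \cite{Markus}, Neumann \cite{Neumann} and Peixoto \cite{Peixoto}, so there is no internal proof to compare against. Your sketch correctly reproduces the standard argument from those sources --- the forward direction by topological invariance of the separatrix skeleton, the reverse direction via Neumann's classification of canonical regions into parallel (strip, annular, radial) models, boundary extension, and gluing --- and you rightly identify that the substantive content lives in Neumann's classification, which is precisely why the paper cites the theorem rather than reproving it.
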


\subsection{Blow-up technique}\label{subsec:blowup}

There exist classification theorems for hyperbolic and semi-hyperbolic singular points, and also for nilpotent singular points which can be found in Chapter 2 and 3 of \cite{Libro}. The centers are more difficult to study, see for instance Chapter 4 of \cite{Libro}. Whereas to study a singular point for which the Jacobian matrix is identically zero, the only possibility is studying each singular point case by case. The main technique to perform the desingularization of a linearly zero singular point is the blow-up technique. We give a short summary about this method, more details can be found in \cite{blowup}.

Roughly speaking the idea behind the blow up technique is to explode, through a change of variables that is not a diffeomorphism, the singularity to a line. Then, for studying the original singular point, one studies the new singular points that appear on this line, and this is simpler. If some of these new singular points are linearly zero, the process is repeated. Dumortier proved that this iterative process of desingularization is finite, see \cite{blowupDumortier}.

Consider a real planar polynomial differential system of the form
\begin{equation}\label{sis_teo_blowup}
\begin{split}
\dot{x}=P(x,y)=P_m(x,y)+ \dots, \\
\dot{y}=Q(x,y)=Q_m(x,y)+ \dots ,
\end{split}
\end{equation}
where $P$ and $Q$ are coprime polynomials, $P_m$ and $Q_m$ are homogeneous polynomials of degree $m\in \mathbb{N}$ and the dots mean higher order terms in $x$ and $y$. Note that we are assuming that the origin is a singular point because $m>0$. We define the \textit{characteristic polynomial} of \eqref{sis_teo_blowup} as
\begin{equation}\label{characteristic_polynomial}
\mathcal{F}(x,y):=xQ_m(x,y)-yP_m(x,y),
\end{equation}
and we say that the origin is a \textit{nondicritical} singular point if $\mathcal{F}\not\equiv0$
and a \textit{dicritical} singular point if $\mathcal{F}\equiv0$. In this last case $P_m=xW_{m-1}$ and $Q_m=yW_{m-1}$, where $W_{m-1}\not\equiv 0$ is a homogeneous polynomial of degree $m-1$. If $y-vx$ is a factor of $W_{m-1}$ and $v=\tan \theta^{*}$, $\theta^{*}\in \left[ 0,2\pi \right)$, then $\theta^{*}$ is a \textit{singular direction}.

The \textit{homogeneous directional blow up in the vertical direction} is the mapping $(x,y) \to (x,z) = (x,y/x)$, where $z$ is a new variable. This map transforms the origin of \eqref{sis_teo_blowup} into the line $x=0$, which is called the \textit{exceptional divisor}. The expression of system \eqref{sis_teo_blowup} after the blow up in the vertical direction is
\begin{equation}\label{sis_teo_blowup2}
\dot{x}=P(x,xz), \;
\dot{z}=\frac{Q(x,xz)-zP(x,xz)}{x},
\end{equation}
that is always well-defined since we are assuming that the origin is a singularity. After the blow up, we cancel an appearing common factor $x^{m-1}$ ($x^m$ if $\mathcal{F}\equiv0$). Moreover, the mapping swaps the second and the third quadrants in the vertical directional blow up.
Propositions 2.1 and 2.2 of \cite{blowup} provide the relationship between the original singular point of system \eqref{sis_teo_blowup} and the new singularities of system  \eqref{sis_teo_blowup2}. For additional details see \cite{blowupAndronov}.

Finally, to study the behavior of the solutions around the origin of system \eqref{sis_teo_blowup}, it is necessary to study the singular points of system \eqref{sis_teo_blowup2} on the exceptional divisor. They correspond to either characteristic directions in the nondicritical case, or singular directions in the dicritical case. It may happen that some of these singular points are linearly zero, in which case we have to repeat the process. As we said before, it is proved in \cite{blowupDumortier} that this chain of blow ups is finite.

\subsection{Indices of planar singular points} \label{subsec:indices}

Given an isolated singularity $q$ of a vector field $X$, defined on an open subset of $\mathbb{R}^2$ or $\mathbb{S}^2$, we define the index of $q$ by means of the Poincaré Index Formula. We assume that $q$ has the finite sectorial decomposition property. Let $e$, $h$  and $p$ denote the number of elliptic, hyperbolic and parabolic sectors of $q$, respectively, and suppose that $e+h+p>0$. Then the \textit{index of $q$} is $i_q=1+(e-h)/2$, and it is always an integer.

We recall that the Poincaré compactification of a vector field in $\mathbb{R}^2$ introduced in Subsection \ref{subsec:Poincare} is a tangent vector field on the sphere $\mathbb{S}^2$, so the next result will be very useful in our study.

\begin{theorem}[Poincaré-Hopf Theorem]\label{th_PoincareHopf}
For every tangent vector field on $\mathbb{S}^2$ with a finite number of singular points, the sum of their indices is $2$.
\end{theorem}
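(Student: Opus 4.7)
The plan is to prove this in two conceptually distinct steps: first establish that the quantity $\sum_p i_p$ is a topological invariant of the sphere (independent of the particular vector field chosen, as long as it has only isolated zeros), and then compute this invariant explicitly for one convenient vector field where the sectorial decomposition is transparent.

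For the invariance step, I would work with the degree-theoretic reformulation of the index: for any isolated zero $q$ of a vector field $X$, choose a small disc $B_\varepsilon(q)$ containing no other zeros, and set $i_q = \deg\bigl(X/\|X\|\colon \partial B_\varepsilon(q)\to \mathbb{S}^1\bigr)$. Standard arguments show this agrees with the sectorial formula $1+(e-h)/2$ whenever the sectorial decomposition property holds. Given two tangent fields $X_0,X_1$ on $\mathbb{S}^2$ with finitely many zeros, I would consider a linear homotopy $X_t=(1-t)X_0+tX_1$ (projected to the tangent bundle) and, after a small generic perturbation, view the zero set $\mathcal{Z}=\{(p,t)\in\mathbb{S}^2\times[0,1]:X_t(p)=0\}$ as a compact 1-manifold with boundary contained in $\mathbb{S}^2\times\{0,1\}$. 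Each component of $\mathcal{Z}$ is either an arc joining two boundary zeros or an interior loop; the index is locally constant along a regular arc and, at any birth/death event where a pair of zeros collides, the two colliding indices cancel (since the total local degree is zero on a small sphere bounding no other zeros). Summing contributions yields $\sum_{X_0(p)=0}i_p=\sum_{X_1(p)=0}i_p$.

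For the computation step, I would take the height function $h(y_1,y_2,y_3)=y_3$ on $\mathbb{S}^2\subset\mathbb{R}^3$ and its gradient $X=\nabla h$ with respect to the round metric. The only singular points are the north pole $N$ and the south pole $S$: $N$ is a hyperbolic sink and $S$ is a hyperbolic source, both with purely parabolic sectors and no elliptic or hyperbolic sectors. By the formula $i_q=1+(e-h)/2$ with $e=h=0$, each contributes $i_N=i_S=1$, so
\begin{equation*}
\sum_{X(p)=0} i_p \;=\; i_N+i_S \;=\; 2.
\end{equation*}
Combined with invariance, this proves the theorem for every tangent vector field on $\mathbb{S}^2$ with finitely many singular points.

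The main obstacle will be making the invariance argument precise without dragging in a disproportionate amount of algebraic topology. Showing that the generic perturbation of $X_t$ can be arranged so that $\mathcal{Z}$ is a 1-manifold, and that indices really do cancel at birth/death bifurcations, requires some care with transversality. A cleaner alternative I would fall back on, if the direct bifurcation argument becomes cumbersome in this setting, is the triangulation route: fix a triangulation of $\mathbb{S}^2$ with $V$ vertices, $E$ edges and $F$ faces, build the standard piecewise-linear vector field whose zeros are the barycenters of the simplices with $i_\sigma=(-1)^{\dim\sigma}$, and read off $\sum i_\sigma=V-E+F=\chi(\mathbb{S}^2)=2$ from Euler's polyhedron formula; this bypasses the height-function computation and lets the invariance step carry the rest.
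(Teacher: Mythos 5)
The paper itself contains no proof of this statement: the Poincar\'e--Hopf theorem is quoted as a classical result (cf.\ \cite{Libro}) and is only \emph{used} in Section 6, via the identity $2\,ind_F+ind_I=2$, to decide between elliptic and hyperbolic sectors. So there is no ``paper proof'' to match your argument against; what you have written must be judged as a self-contained proof of the classical theorem, and in outline it is correct and entirely standard: homotopy invariance of the total index plus an explicit computation on one model field ($\nabla h$ for the height function, a source and a sink, each of index $1$), with Hopf's triangulation argument ($\sum_\sigma(-1)^{\dim\sigma}=V-E+F=\chi(\mathbb{S}^2)=2$) as a legitimate fallback.

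Two points in your invariance step deserve more care than your sketch gives them. First, the identification of the degree-theoretic index with the paper's sectorial formula $i_q=1+(e-h)/2$ is Bendixson's index formula, a genuine (if standard) theorem rather than a ``standard argument'' one can wave at; it is essential here because the paper defines the index sectorially, and because the singular points arising in this paper (e.g.\ the linearly zero point $O_1$) are exactly the degenerate ones for which the two definitions are least obviously equal. Second, your linear homotopy $X_t=(1-t)X_0+tX_1$ (which, incidentally, is automatically tangent --- no projection is needed) cannot in general be made transverse to the zero section \emph{rel endpoints} when $X_0$ or $X_1$ has degenerate zeros: transversality of the zero set $\mathcal{Z}$ at $t=0,1$ forces the endpoint zeros to be nondegenerate. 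The standard repair is to first perturb each $X_i$ inside small discs around its zeros, splitting each isolated (possibly degenerate) zero into finitely many nondegenerate ones whose indices sum to the boundary degree --- the same local-degree cancellation lemma you invoke at birth/death events --- and only then run the cobordism argument on the perturbed fields. With those two lemmas made explicit, your proof closes; alternatively the triangulation route you name bypasses both the height-function computation and the delicate endpoint transversality, at the cost of constructing the piecewise-linear field and still needing Bendixson's formula to connect back to the paper's sectorial definition of the index.
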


\subsection{Invariants and Application of the Darboux Theory}\label{subsec:Darboux}

The Darboux Theo\-ry of Integrability provides a link between the integrability of polynomial vector fields and the number of invariant algebraic curves that they have. The basic results on dimension two can be found in Chapter 8 of \cite{Libro}, and these results have been extended to $\mathbb{R}^n$ and $\mathbb{C}^n$ in \cite{art_Darboux_1, art_Darboux_2, art_Darboux_3}.

We consider a real polynomial differential system in dimension three, that is a system of the form
\vspace{-0.3cm}
\begin{equation}\label{sis_pol_dim3}
\begin{split}
dx/dt=\dot{x}=P(x,y,z),\\
dy/dt=\dot{y}=Q(x,y,z),\\
dz/dt=\dot{z}=R(x,y,z),\\
\end{split}
\end{equation}
where $P,Q$ and $R$ are polynomials in the variables $x,y$ and $z$. We denote by $m=\max \{\deg P, \deg Q, \deg R\}$ the degree of the polynomial system, and we always assume that the polynomials $P,Q$ and $R$ are relatively prime in the ring of the real polynomials in the variables $x,y$ and $z$.

\begin{theorem}[Darboux Integrability Theorem]\label{th_Darboux}
Suppose that a polynomial system \eqref{sis_pol_dim3} of degree $m$ admits $p$ irreducible invariant algebraic surfaces $f_i=0$ with cofactors $K_i$ for $i=1,\dots,p$. Then the next statements hold.

 \vspace{0.1cm}
 
\begin{enumerate}[(a)]
\item There exist $\lambda_i\in \mathbb{C}$ not all zero such that $\sum_{i=1}^{p}\lambda_iK_i=0$ if and only if the function $f_1^{\lambda_1}\dots f_p^{\lambda_p}$ is a first integral of system \eqref{sis_pol_dim3}.

 \vspace{0.2cm}
		
\item There exist $\lambda_i\in \mathbb{C}$ not all zero such that $\sum_{i=1}^{p}\lambda_iK_i=-s$ for some $s\in \mathbb{R} \textbackslash\left\lbrace 0 \right\rbrace $ if and only if the function $f_1^{\lambda_1}\dots f_p^{\lambda_p} exp(st)$ is a Darboux  invariant of system \eqref{sis_pol_dim3}.
\end{enumerate}
\end{theorem}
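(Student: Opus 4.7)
The plan is to proceed directly from the defining identity of an invariant algebraic surface, namely that $f_i=0$ is invariant for the vector field $X=(P,Q,R)$ iff there is a polynomial cofactor $K_i$ (of degree at most $m-1$) such that
\begin{equation*}
X f_i \;=\; P\,\tfrac{\partial f_i}{\partial x} + Q\,\tfrac{\partial f_i}{\partial y} + R\,\tfrac{\partial f_i}{\partial z} \;=\; K_i\, f_i,
\end{equation*}
and then to carry out a logarithmic differentiation of the candidate functions. This reduces both (a) and (b) to one short algebraic manipulation; no deeper structural facts from the Darboux theory will be needed at this level.

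For part (a), I would set $F = f_1^{\lambda_1}\cdots f_p^{\lambda_p}$, work on the open set where all $f_i$ are nonzero, and compute
\begin{equation*}
\frac{\dot F}{F} \;=\; \sum_{i=1}^p \lambda_i\,\frac{\dot f_i}{f_i} \;=\; \sum_{i=1}^p \lambda_i\, K_i,
\end{equation*}
so that $\dot F = F\bigl(\sum \lambda_i K_i\bigr)$. The implication $(\Rightarrow)$ is immediate: if the linear combination of cofactors vanishes, then $\dot F \equiv 0$ and $F$ is a first integral on its domain of definition. For $(\Leftarrow)$, observe that $F$ is not identically zero on any open subset of its domain (it is a product of nonvanishing factors raised to powers), so $\dot F = 0$ forces $\sum\lambda_i K_i = 0$ as a polynomial identity, hence everywhere.

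For part (b), I would apply the same trick to $I = F\,e^{st}$, obtaining
\begin{equation*}
\dot I \;=\; \dot F\, e^{st} + s\,F\, e^{st} \;=\; F\,e^{st}\!\left(\sum_{i=1}^p \lambda_i K_i + s\right).
\end{equation*}
By the convention that a Darboux invariant is a time-dependent function constant along trajectories, $I$ is a Darboux invariant iff $\dot I\equiv 0$, and since $F\,e^{st}\not\equiv 0$ this is equivalent to $\sum\lambda_i K_i = -s$, giving both directions simultaneously.

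The only genuine subtleties, rather than obstacles, are bookkeeping ones. First, the $\lambda_i$ are allowed to be complex, so $F$ is a priori multi-valued; this is handled by interpreting the identities on a simply connected open set where none of the $f_i$ vanish, and then extending by analyticity, which is the standard convention in the Darboux theory. Second, one must note that in (b) the parameter $s$ is required to be real and nonzero so that $e^{st}$ is well defined and genuinely introduces time dependence — otherwise (b) degenerates to (a). Beyond these points the proof is a one-line computation in each direction, so I do not expect any serious technical barrier.
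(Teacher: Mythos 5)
The paper itself contains no proof of this theorem: it is stated as a known result, with the planar case attributed to Chapter~8 of \cite{Libro} and the extensions to $\mathbb{R}^n$ and $\mathbb{C}^n$ to \cite{art_Darboux_1, art_Darboux_2, art_Darboux_3}, so there is no in-paper argument to compare yours against. Your proof is the standard one found in those references, and it is correct: from $Xf_i=K_if_i$, logarithmic differentiation gives $XF=F\bigl(\sum_{i=1}^p\lambda_iK_i\bigr)$ for $F=f_1^{\lambda_1}\cdots f_p^{\lambda_p}$, and for $I=Fe^{st}$ the total derivative along the flow is $\dot I=Fe^{st}\bigl(\sum_{i=1}^p\lambda_iK_i+s\bigr)$, from which both equivalences follow; your observation that $\dot F\equiv 0$ on an open set where $F$ does not vanish forces $\sum\lambda_iK_i=0$ as a \emph{polynomial} identity is exactly the right way to close the converse directions, and your handling of the multivaluedness of $F$ for complex $\lambda_i$ (work on a simply connected set avoiding the zero sets, extend by analyticity) matches the standard convention.

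One small point you leave implicit in the forward direction of (a): a first integral is by definition non-constant, so from $\sum\lambda_iK_i=0$ you must also know that $F$ is not constant when the $\lambda_i$ are not all zero. This does hold here — since the $f_i$ are distinct irreducible polynomials, the logarithmic differentials $df_i/f_i$ are linearly independent over $\mathbb{C}$, so $dF/F=\sum\lambda_i\,df_i/f_i\not\equiv 0$ — but a complete write-up should say so. The analogous degeneracy cannot occur in (b) because $s\neq 0$ already forces genuine time dependence, as you note. These are bookkeeping refinements, not gaps; your argument is sound.
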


\section{Reduction of the Lotka-Volterra systems in $\mathbb{R}^3$ to the Kolmogorov systems in $\mathbb{R}^2$}\label{sec:reduction}

As we said our objective is to study the global dynamics of the Lotka-Volterra systems \eqref{systemLV3} in dimension three, which have a rational first integral of degree two of the form $x^i y^j z^k$. The Darboux theory of integrability allow us to obtain a characterization of these systems.

We consider the irreducible invariant algebraic surfaces
$f_1(x,y,z)=x=0$, $f_2(x,y,z)=y=0$ and $f_3(x,y,z)=z=0$ of system \eqref{systemLV3}, with cofactors $K_1$, $K_2$ and $K_3$, respectively. As $K_i$ is the cofactor of $f_i$ we have that
$$
Xf_i = P \dfrac{\partial f_i}{\partial x} + Q \dfrac{\partial f_i}{\partial y} + R \dfrac{\partial f_i}{\partial z} = K_i f_i.
$$
Then for the invariant algebraic surfaces considered we get the cofactors
$ K_1 =   a_0 + a_1 x + a_2 y + a_3 z$, $ K_2 =  b_0 + b_1 x + b_2 y + b_3 z$ and $ K_3 = c_0 + c_1 x + c_2 y + c_3 z$, respectively.

Applying Theorem \ref{th_Darboux}, since we assume that  $x^{\lambda_1}y^{\lambda_2}z^{\lambda_3}$ is a first integral of system \eqref{systemLV3}, we get that there exist $\lambda_i \in \mathbb{C}$, with $i\in \left\lbrace 1,2,3 \right\rbrace $, not all zero, such that $\sum_{i=1}^{3}\lambda_i K_i = 0.$
Apart from the trivial solution $ \left\lbrace  \lambda_1 = 0,\: \lambda_2 = 0 ,\: \lambda_3 = 0 \right\rbrace$, there are the following three solutions of this equation:

\small
\begin{equation*}
\begin{split}
&S_1 = \left\lbrace    c_0 = 0,\: c_1 = 0 ,\: c_2 = 0,\: c_3 = 0,\: \lambda_2=0,\: \lambda_1=0       \right\rbrace,\\
&S_2 = \left\lbrace   b_0 =-\dfrac{c_0 \lambda_3}{\lambda_2},\: b_1=-\dfrac{c_1\lambda_3}{\lambda_2},\: b_2= - \dfrac{c_2 \lambda_3}{\lambda_2},\: b_3=-\dfrac{c_3\lambda_3}{\lambda_2},\: \lambda_1=0  \right\rbrace, \: \text{\normalsize{and}} \\
&S_3 = \left\lbrace   a_0=\dfrac{-b_0\lambda_2-c_0\lambda_3}{\lambda_1},\: a_1=\dfrac{-b_1\lambda_2-c_1\lambda_3}{\lambda_1},\: a_2=\dfrac{-b_2\lambda_2-c_2\lambda_3}{\lambda_1},\: a_3=\dfrac{-b_3\lambda_2-c_3\lambda_3}{\lambda_1}      \right\rbrace,\\
\end{split}
\end{equation*}
\normalsize
which give rise to three families of Lotka-Volterra polynomial differential systems of degree two in $\mathbb{R}^3$, with a first integral of the form $x^{\lambda_1}y^{\lambda_2}z^{\lambda_3}$. %Now we will characterise each one of these families.

If we consider the family given by solution $S_1$, as the parameters $c_i, \; i=0,...,3$, are zero, we have that $\dot{z}=0$ and the Lotka-Volterra system is reduced to:
\begin{equation*}
\begin{split}
& \dot{x} = x \: ( \:  a_0 + a_1 x + a_2 y + a_3 z \: ),\\
& \dot{y} = y \: ( \: b_0 + b_1 x + b_2 y + b_3 z \: ), \\
& \dot{z} = 0.
\end{split}
\end{equation*}
As $\dot{z}=0$, $z$ is constant and this system has $H=z$ as a first integral. Note that if we consider the first integral $H=x^{\lambda_1}y^{\lambda_2}z^{\lambda_3}$,
and apply the conditions given by $S_1$, it is $\lambda_1=\lambda_2=0$, we obtain $H=z^{\lambda_3}$, with $\lambda_3=2$ for getting the degree two, but in this case we will consider the simplest first integral. In each invariant plane with $z$ constant, we have a Lotka-Volterra polinomial differential system in $\mathbb{R}^2$. The phase portrait of these systems has been studied in \cite{LVdim2}, so we are not going to deal with this case.

In this paper we study the family given by the solution $S_2$.
This solution provides the values of parameters $b_i$ as a function of the parameters $\lambda_2$, $\lambda_3$ and $c_i$, with  $i=0,..,3$, so we can replace them in the expression of $\dot{y}$ obtaining
$$
\dot{y}= y \: \left(  \: -\dfrac{c_0 \lambda_3}{\lambda_2} -\dfrac{c_1 \lambda_3}{\lambda_2} x -\dfrac{c_2 \lambda_3} {\lambda_2} y -\dfrac{c_3 \lambda_3}{\lambda_2} z \: \right).  $$
If we denote $\lambda= - \lambda_3/\lambda_2$, then the original Lotka-Volterra system becomes
\begin{equation*}
\begin{split}
& \dot{x} = x  (  a_0 + a_1 x + a_2 y + a_3 z  ),\\
&\dot{y} = \lambda  y  (  c_0 + c_1 x + c_2 y + c_3 z  ), \\
& \dot{z} = z (  c_0 + c_1 x + c_2 y + c_3 z  ).
\end{split}
\end{equation*}
Given that $\lambda_1 = 0$, the first integral $H=x^{\lambda_1}y^{\lambda_2}z^{\lambda_3}$  is reduced to $H=y^{\lambda_2}z^{\lambda_3}$, but if this is a first integral, also
$H=\left( y^{\lambda_2}z^{\lambda_3}\right)^{-\frac{1}{\lambda_2}} = y^{-1} z^{-\frac{\lambda_3}{\lambda_2}}= y^{-1} z^\lambda = z^\lambda/y$
is a first integral.
If we want $H$ to be rational of degree two, we must take $\lambda = 2 $. In each level $H=1/h$, with $h \neq 0$, we will have
$ 1/h = z^2/y$, so $y = h z^2$
and then, for each $h$, the initial Lotka-Volterra system on dimension three reduces to the system on dimension two
\begin{equation*}
\begin{split}
& \dot{x} = x  (   a_0 + a_1 x + a_2 h  z^2 + a_3 z  ),\\
& \dot{z} = z  (  c_0 + c_1 x + c_2 h z^2 + c_3 z  ).
\end{split}
\end{equation*}

We must study the phase portrait of the systems of this family, but it is equivalent to study the phase portraits of the family of Kolmogorov systems in dimension two
\begin{equation}\label{systemKolmogorov1}
\begin{split}
& \dot{x} = x  (  a_0 + a_1 x + a_2 z^2 + a_3 z  ),\\
& \dot{z}= z  (  c_0 + c_1 x + c_2 z^2 + c_3 z  ).
\end{split}
\end{equation}

In the particular cases in which $H$ is zero or infinity, the differential system on dimension three is reduced to a Lotka-Volterra system on dimension two, having in each case  $z=0$ and $y=0$, respectively. We recall that these systems had already been studied in \cite{LVdim2}.

Systems \eqref{systemKolmogorov1} depend on eight parameters and the classification of all their distinct topological phase protraits is huge. For this reason we study the subclass of them having a Darboux invariant of the form $x^{\lambda_1} z^{\lambda_2} e^{st}$. By statement (b) of Theorem \ref{th_Darboux} the expression $\lambda_1 K_x+\lambda_2 K_z + s$ must be zero, where $K_x$ and $K_y$ are the cofactors of the invariant planes $x=0$ and $z=0$, respectively. Note that $s$ and $\lambda_1^2+ \lambda_2^2$ cannot be zero. We obtain the cofactors $K_x=a_0+a_1x+a_2 z^2 + a_3 z$ and $K_z=c_0+c_1 x + c_2 z^2 + c_3 z$ and then, solving the equation  $\lambda_1 K_x+\lambda_2 K_z + s =0$, we get the following two non-trivial solutions
\begin{equation*}
\begin{split}
&\tilde{S}_1 = \left\lbrace s=-a_0\lambda_1 - c_0 \lambda_2, \: a_1=-\frac{c_1\lambda_2}{\lambda_1}, \: a_2=-\dfrac{c_2\lambda_2}{\lambda_1}, \: a_3= - \dfrac{c_3\lambda_2}{\lambda_1}\right\rbrace,  \: \text{and}\\
&\tilde{S}_2 = \left\lbrace s=-c_0\lambda_2, \: c_1=0, \: c_2=0, \:  c_3=0, \:  \lambda_1=0    \right\rbrace.
\end{split}
\end{equation*}
So we have two subsystems from the initial system \eqref{systemKolmogorov1}. According to the conditions given by solution $\tilde{S_1}$ the first subsystem is
\begin{equation*}
\begin{split}
\dot{x}&=x\left( a_0 - \frac{c_1 \lambda_2}{\lambda_1}x - \frac{c_2\lambda_2}{\lambda_1} z^2- \frac{c_3 \lambda_2}{\lambda_1}z\right) , \\
\dot{z}&=z\left( c_0+c_1 x + c_2 z^2 + c_3 z \right).
\end{split}
\end{equation*}
If we denote $\lambda_2/\lambda_1=\mu$ and $\lambda_1=\lambda$, then this subsystem becomes
\begin{equation}\label{ss}
\begin{split}
\dot{x}&=x \left( a_0- \mu (c_1 x + c_2 z^2 + c_3 z)\right),\\
\dot{z}&=z\left( c_0+ c_1 x + c_2 z^2 + c_3 z\right),
\end{split}
\end{equation}
and its Darboux invariant is $x^\lambda z^{\lambda\mu} e^{-t\lambda(a_0+c_0\mu)}$. But if this is a Daboux invariant, also it is $x z^{\mu}e^{-t(a_0+c_0\mu)}$. Note that in order that we have a Darboux invariant $a_0+c_0\mu$ cannot be zero.

If we consider now the solution $\tilde{S_2}$ we get the subsystem
\begin{equation*}
\begin{split}
\dot{x}&= x \left( a_0 + a_1 x + a_2 z^2 + a_3 z \right),\\
\dot{z}&= c_0 z,
\end{split}
\end{equation*}
which is equivalent to the previous one, taking $\mu=0$ and interchanging the variables $x$ and $z$, so it is sufficient to study the Kolmogorov systems \eqref{ss} depending on six parameters.

\section{Properties of system \eqref{system}} \label{sec:properties}

In this section we state some results that will be used on the classification in order to reduce the number of phase portraits appearing. Note that if $c_2=0$, then the system \eqref{system} is a Lotka-Volterra system in dimension 2. A global topological classification of these systems has been completed in \cite{LVdim2}, so we limit our study to the case $c_2\neq0$.

We recall that for obtaining system \eqref{system} we have supposed that system \eqref{systemKolmogorov1} has the Darboux invariant $I=xz^\mu e^{-t(a_0+c_0\mu)}$, so it is required that $a_0+c_0\mu \neq0$.

\begin{proposition}\label{th_c1c3}
Consider system \eqref{system} and suppose that $(\tilde{x}(t), \tilde{z}(t))$ is a solution of this system. If we change $c_1$ by $-c_1$ (respectively $c_3$ by $-c_3$ ), then  $(-\tilde{x}(t), \tilde{z}(t))$ (respectively $(\tilde{x}(t),-\tilde{z}(t))$) is other solution of the obtained system.
\end{proposition}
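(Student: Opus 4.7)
The statement is a pair of discrete symmetries of the vector field, so the plan is simply to verify by direct substitution that the maps $(x,z)\mapsto(-x,z)$ and $(x,z)\mapsto(x,-z)$ conjugate the system with parameter $c_1$ (respectively $c_3$) to the system with the sign of that parameter flipped. The key structural observation is that the parameter $c_1$ enters the right-hand side of \eqref{system} only through the factor $c_1 x$ (appearing inside both equations), while $c_3$ enters only through $c_3 z$. Hence flipping the sign of $x$ (respectively $z$) is exactly equivalent to flipping the sign of $c_1$ (respectively $c_3$), once one accounts for the overall factors of $x$ and $z$ in front of each equation.

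First I would handle the $c_1$-symmetry. Set $X(t):=-\tilde x(t)$ and $Z(t):=\tilde z(t)$, and use that $(\tilde x,\tilde z)$ satisfies \eqref{system}. Computing $\dot X=-\dot{\tilde x}$ and substituting gives
\[
\dot X=-\tilde x\bigl(a_0-\mu(c_1\tilde x+c_2\tilde z^{2}+c_3\tilde z)\bigr)=X\bigl(a_0-\mu((-c_1)X+c_2Z^{2}+c_3Z)\bigr),
\]
and analogously
\[
\dot Z=\dot{\tilde z}=\tilde z\bigl(c_0+c_1\tilde x+c_2\tilde z^{2}+c_3\tilde z\bigr)=Z\bigl(c_0+(-c_1)X+c_2Z^{2}+c_3Z\bigr).
\]
Both right-hand sides are exactly \eqref{system} with $c_1$ replaced by $-c_1$, so $(X,Z)=(-\tilde x,\tilde z)$ is a solution of the modified system, as claimed.

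The $c_3$-symmetry is proved in exactly the same manner. Setting $X(t):=\tilde x(t)$ and $Z(t):=-\tilde z(t)$, the even power $z^{2}$ is unchanged, the odd term $c_3 z$ picks up a sign, and the prefactors $x$ and $z$ in front of each bracket match up with $\dot X=\dot{\tilde x}$ and $\dot Z=-\dot{\tilde z}$, producing the system with $c_3\mapsto -c_3$.

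There is essentially no obstacle: the whole proof is a two-line verification for each case. The only thing worth flagging is that this proposition is what legitimizes restricting to $c_1\ge 0$ and $c_3\ge 0$ in the hypothesis $(H_2)$ of Theorem \ref{th_global}, since each symmetry is an orientation-preserving homeomorphism of $\mathbb{R}^{2}$ (reflection across a coordinate axis) that extends to a topological equivalence on the Poincar\'e disc, mapping the phase portrait for $(c_1,c_3)$ to the phase portrait for $(-c_1,c_3)$ or $(c_1,-c_3)$.
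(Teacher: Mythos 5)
Your verification is correct and is exactly the routine substitution the paper has in mind (the paper in fact states Proposition \ref{th_c1c3} without any written proof, treating it as immediate). The only quibble is in your closing remark: a reflection across a coordinate axis is an orientation-\emph{reversing} homeomorphism of $\mathbb{R}^{2}$, not orientation-preserving, though this is harmless here since the paper's notion of topological equivalence allows the orientation of orbits to be reversed.
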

	
\begin{remark}
By Proposition \ref{th_c1c3} we can limit our study to Kolmogorov systems \eqref{system} with $c_1$ and $c_3$ non-negatives. In the cases with these parameters negatives, we will obtain phase portraits symmetric to the ones obtained in the positive cases, with respect to the $z$-axis when we change the sign of $c_1$, and with respect to the $x$-axis when we change the sign of $c_3$.
\end{remark}

\begin{corollary}\label{th_simmetry}
Consider system \eqref{system} and suppose $(\tilde{x}(t), \tilde{z}(t))$ is a solution. If $c_1=0$ (respectively $c_3=0$),  then $(-\tilde{x}(t),\tilde{z}(t))$ (respectively $(\tilde{x}(t), -\tilde{z}(t))$) is also a solution.
\end{corollary}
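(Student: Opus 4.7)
The plan is to derive this immediately from Proposition \ref{th_c1c3} by observing that when $c_1=0$ (respectively $c_3=0$), the operation of flipping the sign of that parameter leaves system \eqref{system} invariant. Concretely, Proposition \ref{th_c1c3} asserts that if $(\tilde x(t),\tilde z(t))$ solves system \eqref{system} with a given value of $c_1$, then $(-\tilde x(t),\tilde z(t))$ solves the system obtained by replacing $c_1$ with $-c_1$. When $c_1=0$, this modified system coincides with the original, so $(-\tilde x(t),\tilde z(t))$ is again a solution of \eqref{system}.

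The same argument, applied to the second half of Proposition \ref{th_c1c3}, handles the case $c_3=0$: replacing $c_3$ by $-c_3=0$ gives back the identical system, and the proposition then guarantees that $(\tilde x(t),-\tilde z(t))$ is a solution.

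There is essentially no obstacle here; the only subtlety is the book-keeping observation that $c_i=0$ is the unique value fixed under sign change, which is why the two statements of Proposition \ref{th_c1c3} collapse from symmetries between two different systems into symmetries of a single system. The geometric content is that the vanishing of $c_1$ (resp.\ $c_3$) makes the phase portrait invariant under reflection across the $z$-axis (resp.\ $x$-axis), a fact that will subsequently be exploited in Section \ref{sec:global} to reduce the number of topologically distinct phase portraits that must be enumerated.
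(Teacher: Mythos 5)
Your derivation is correct and matches the paper's intent: the paper offers no separate proof, presenting the statement as an immediate consequence of Proposition \ref{th_c1c3}, which is exactly the specialization you spell out (the sign flip of $c_i=0$ returns the same system, so the reflected trajectory solves the original equations). Nothing further is needed.
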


\begin{remark}
Corollary \ref{th_simmetry} simplifies the study of the cases with $c_1=0$ or $c_3=0$, because it proves that the phase portraits have to be symmetric with respect to the $z$-axis and $x$-axis respectively, and this fact will be useful in obtaining the global phase portraits from the local results.
\end{remark}

\begin{proposition}
Let $(\tilde{x}(t),\tilde{z}(t))$ be a solution of system \eqref{system}. In the next cases we obtain another system with solution  $(-\tilde{x}(-t),-\tilde{z}(-t))$.
\begin{enumerate}
\item If $a_0$, $c_0$ and $c_2$ are not zero, and we change the sign of all of them.

\item If $a_0=0$ and we change the sign of $c_0$ and $c_2$, which are not zero.

\item If $c_0=0$ and we change the sign of $a_0$ and $c_2$, which are not zero.
\end{enumerate}
\end{proposition}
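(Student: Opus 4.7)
My plan is to verify each case by direct substitution into the ODE. Set $X(t) := -\tilde{x}(-t)$ and $Z(t) := -\tilde{z}(-t)$; the chain rule then gives $\dot{X}(t) = \dot{\tilde{x}}(-t)$ and $\dot{Z}(t) = \dot{\tilde{z}}(-t)$, since the two minus signs (one from the outer $-$, one from the $-t$ in the argument) cancel. Because $(\tilde{x},\tilde{z})$ solves \eqref{system}, these right-hand sides equal the original vector field evaluated at $(\tilde{x}(-t),\tilde{z}(-t)) = (-X(t),-Z(t))$.

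Next I would substitute $\tilde{x}(-t) = -X(t)$ and $\tilde{z}(-t) = -Z(t)$ in the polynomials on the right. The key bookkeeping is that under $(x,z)\to(-x,-z)$ the monomials $c_1 x$ and $c_3 z$ flip sign while $c_2 z^2$ is invariant, and each outer factor $x$ or $z$ contributes an additional overall sign. Collecting terms, one finds
\begin{align*}
\dot{X}(t) &= X(t)\bigl[-a_0 - \mu\bigl(c_1 X(t) + (-c_2) Z(t)^2 + c_3 Z(t)\bigr)\bigr],\\
\dot{Z}(t) &= Z(t)\bigl[-c_0 + c_1 X(t) + (-c_2) Z(t)^2 + c_3 Z(t)\bigr],
\end{align*}
which is exactly system \eqref{system} with the parameters $(a_0,c_0,c_2)$ replaced by $(-a_0,-c_0,-c_2)$ and $c_1,c_3,\mu$ unchanged. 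This establishes item (1).

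Items (2) and (3) are immediate specializations of the computation in (1): when $a_0=0$, flipping the sign of $a_0$ is vacuous, so only the sign changes of $c_0$ and $c_2$ remain effective; likewise, when $c_0=0$, only the sign changes of $a_0$ and $c_2$ are effective. Thus no separate argument is needed for the last two items.

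There is no real conceptual obstacle; the proof is a routine sign-tracking computation of the same flavor as Proposition \ref{th_c1c3}. The only care required is keeping straight the interplay between the time reversal $t\to -t$ and the reflection $(x,z)\to(-x,-z)$, which together force the even-degree-in-$z$ coefficient $c_2$ to flip (along with the constant terms $a_0, c_0$) while leaving $c_1$ and $c_3$ invariant—precisely what the proposition asserts.
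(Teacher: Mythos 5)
Your computation is correct: the chain rule indeed gives $\dot{X}(t)=\dot{\tilde{x}}(-t)$, and substituting $(\tilde{x}(-t),\tilde{z}(-t))=(-X,-Z)$ into the quadratic right-hand sides flips exactly the signs of $a_0$, $c_0$ and $c_2$ while leaving $c_1$, $c_3$ and $\mu$ unchanged, with items (2) and (3) following as vacuous specializations. The paper states this proposition without proof, and your argument is precisely the routine sign-tracking verification it implicitly relies on.
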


\begin{remark}\label{remark_a0c0c2}
In order to classify all the phase portraits of the Kolmogorov systems \eqref{system}, according with the previous results, it is sufficient to consider $a_0\geq 0$. And when $a_0=0$ we will consider also $c_0>0$.
\end{remark}

\begin{remark}
In short, according with the previous results and considerations, from now on it will be sufficient to study the Kolmogorov systems \eqref{system} with the their parameters satisfying
\begin{align*}
(H)= \left\lbrace c_2\neq 0, a_0\geq 0, c_1\geq 0, c_3\geq 0, a_0+c_0 \mu\neq 0 \right\rbrace.
\end{align*}
\end{remark}

\begin{theorem}\label{th_contactpoints}
For system \eqref{system} the next statements hold.
\begin{enumerate}
\item  If $c_1\neq0$, then on any straight line $z=cte\neq0$, there exists only one contact point.

\item If $c_1=0$, then there exist two invariant straight lines $z=(\sqrt{c_3^2-4c_0c_2}-c_3)/(2c_2)$ and $z=-(\sqrt{c_3^2-4c_0c_2}+c_3)/(2c_2)$ if $c_3^2>4c_0c_2$, and one invariant straight line $z=-c_3/(2c_2)$ if $c_3^2=4c_0c_2$. There are not contact points on any other straight line $z=cte\neq0$.
\end{enumerate}
\end{theorem}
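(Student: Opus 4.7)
The plan is to work directly from the definition: a contact point of the vector field with a regular curve $\gamma$ is a non-singular point at which the vector field is tangent to $\gamma$. For a horizontal line $\{z=c\}$ the tangent direction is the $x$-axis, so the contact condition reduces to $\dot{z}=0$. Applied to system \eqref{system}, this reads
\begin{equation*}
z(c_0 + c_1 x + c_2 z^2 + c_3 z) = 0 .
\end{equation*}
Along $z=c$ with $c\neq0$, the first factor is nonzero, so the defining equation for contact points becomes
\begin{equation*}
c_0 + c_1 x + c_2 c^2 + c_3 c = 0 . \tag{$\ast$}
\end{equation*}

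For statement $(1)$, assume $c_1 \neq 0$. Then $(\ast)$ is a genuine linear equation in $x$, and so has the unique solution
\begin{equation*}
x = -\frac{c_0 + c_2 c^2 + c_3 c}{c_1},
\end{equation*}
which gives exactly one contact point on the line $z=c$. I would also note briefly that this point is not a singular point of the system except in the accidental case $x=0$, so it is a genuine contact point in the regular sense.

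For statement $(2)$, assume $c_1 = 0$. Then the left-hand side of $(\ast)$ no longer depends on $x$; therefore along $z = c$ the quantity $\dot z$ is either identically zero (in which case the whole line is invariant, not a locus of isolated contact points) or never vanishes (no contact points at all). The former occurs precisely when $c$ is a root of the quadratic $c_2 z^2 + c_3 z + c_0 = 0$. Solving this quadratic with the usual formula gives the two values in the statement when $c_3^2 - 4c_0 c_2 > 0$, the double root $z = -c_3/(2c_2)$ when $c_3^2 = 4c_0 c_2$, and no real roots otherwise, which matches the claim.

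There is no real obstacle: the whole argument is the observation that $\dot z$ factors as $z$ times a polynomial that is affine in $x$, so the case $c_1 = 0$ makes the contact locus degenerate into at most two full invariant lines. The only point that deserves a line of care in the write-up is that the case $c = 0$ is excluded by hypothesis (the $x$-axis is always invariant, which is a separate and trivial fact), so dividing by $c$ above is legitimate.
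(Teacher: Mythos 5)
Your proposal is correct and follows essentially the same route as the paper: impose $\dot z=0$ on the line $z=c\neq 0$, cancel the factor $z$, and observe that the remaining equation $c_0+c_1x+c_2c^2+c_3c=0$ is affine in $x$, yielding a unique solution when $c_1\neq0$ and degenerating, when $c_1=0$, into full invariant lines exactly at the real roots of $c_2z^2+c_3z+c_0=0$. The only (harmless) additions beyond the paper's argument are your remark that the contact point is genuine unless it happens to be a singular point and your explicit note that $c=0$ is excluded.
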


\begin{proof} First we suppose $c_1\neq0$ and consider a straight line $z=z_0\neq0$. Then the contact points on this straight line are those on which $\dot{z}=0$ and, as $z_0\neq0$, the only possible contact point is the one that satisfies $c_0+c_1x+c_2 z_0^2+c_3z_0=0$, i.e. the point such that its first coordinate is $x=-(c_2z_0^2+c_3z_0+c_0)/c_1$.
	
We consider now the case with $c_1=0$. Then looking for the points on the straight line $z=z_0\neq0$ satisfying $\dot{z}=0$, we obtain that they must verify the condition $c_0+c_2z_0^2+c_3z_0=0$, and solving this equation we get that either there are no contact points, or a full straight line of contact points , or two straight line of contact points, depending on the solutions $z_0$ of that equation.
\end{proof}

\section{Local study of finite singular points}\label{sec:finite}

System \eqref{system} has the following finite singularities:
		
\begin{enumerate}[$\bullet$]
	\vspace{0.1cm}
\item $P_0 = (0,0)$,
	\vspace{0.2cm}		
\item $P_1= \left(0, \dfrac{R_c - c_3}{2c_2} \right) $ and $P_2 = \left( 0, -\dfrac{ R_c + c_3}{2c_2} \right) $ if $c_3^2>4 c_0 c_2$,
		\vspace{0.2cm}	
\item  $P_3 = \left( 0,-\dfrac{c_3}{2c_2}\right) $ if $c_3^2=4 c_0 c_2$,
	\vspace{0.2cm}		
\item $P_4=\left( \dfrac{a_0}{c_1 \mu},0\right) $ if $ c_1 \mu \neq 0$.		
\end{enumerate}	
		\vspace{0.2cm}
We use the notation $R_c=\sqrt{c_3^2-4c_0c_2}$ in order to simplify the expressions which will appear. Moreover if $a_0=0$ and $c_1\mu=0$, all the points on the $z$-axis are singular points, and the system can be reduced to a Lotka-Volterra system in dimension 2. Therefore from now on we will consider the hypothesis
\begin{align*}
(H_1)= \left\lbrace c_2\neq 0, a_0\geq 0, c_1\geq 0, c_3\geq 0, a_0+c_0\mu\neq 0, a_0^2 + (c_1\mu)^2\neq0 \right\rbrace .
\end{align*}

Assuming $(H_1)$ there are $6$ different cases according to the finite singular points existing for system \eqref{system}, which are given in Table \ref{tab:cases1-6}. Then we study the possible local phase portraits in each one of the finite singular points under the hypothesis $(H_1)$.

\begin{table}[H]
\begin{center}
\begin{tabular}{|cll|}
\hline
\textbf{Case} & \textbf{Conditions} & \textbf{Finite singular points} \\
\hline
\hline
1&  $c_3^2>4c_0c_2$, $c_1\mu\neq0$. & $P_0$, $P_1$, $P_2$, $P_4$. \\	\hline
2 &  $c_3^2>4c_0c_2$, $c_1\mu=0$, $a_0\neq0$. & $P_0$, $P_1$, $P_2$. \\
\hline
3 &  $c_3^2=4c_0c_2$, $c_1\mu\neq0$. & $P_0$, $P_3$, $P_4$. \\
\hline
4 &  $c_3^2=4c_0c_2$, $c_1\mu=0$, $a_0\neq0$. & $P_0$, $P_3$.  \\
\hline
5 &  $c_3^2<4c_0c_2$, $c_1\mu\neq0$. & $P_0$, $P_4$. \\
\hline
6 &  $c_3^2<4c_0c_2$, $c_1\mu=0$, $a_0\neq0$. & $P_0$. \\
\hline
\end{tabular}
\caption{The different cases for the finite singular points.} \label{tab:cases1-6}
\end{center}
\end{table}

\vspace{-0.5cm}

The origin is always an isolated singular point  for system \eqref{system}, and we have the next classification for its phase portraits: if $a_0 c_0\neq0$ the singularity is hyperbolic and two cases are possible, the origin is a saddle point if $c_0 <0$, and it is an unstable node if $c_0 >0$. If $a_0 \neq 0$ and $c_0=0$ the singularity is semi-hyperbolic and it has two possibilities: if $c_3\neq0$ then the origin is a saddle-node, if $c_3=0$ and $c_2 <0$ it is a topological saddle, and if $c_3=0$ and $c_2 >0$ it is a topological unstable node. Finally if $a_0=0$ the origin is a semi-hyperbolic saddle-node.
	
When $P_1$ is a singular point of system \eqref{system}, it can present different phase portraits. If $c_0\neq0$ then $P_1$ is hyperbolic and it can present the following phase portraits: if $c_2(a_0+c_0\mu) (R_c-c_3)<0$ then $P_1$ is a saddle, if $a_0+c_0\mu<0 $ and $c_2(R_c-c_3)<0 $ it is a stable node, and finally if $a_0+c_0 \mu>0 $ and $c_2(R_c-c_3)>0$ it is an unstable node. The singular point $P_1$ collides with the origin if $c_1=0$.

When $P_2$ is a singular point of system \eqref{system}, it can present three different phase portraits: if $c_2(a_0+c_0\mu)<0$ then $P_2$ is a saddle, if $a_0+c_0\mu<0$ and $c_2 <0 $ then it is a stable node, and if $a_0+c_0\mu>0$ and $c_2 >0 $ it is an unstable node.

When $P_3$ is a singularity of system \eqref{system} it is  a semi-hyperbolic saddle-node if $c_3 \neq 0$, and it collides with the origin if $c_3 = 0$.

When $P_4$ is a singularity of system \eqref{system} it is hyperbolic if $a_0\neq0$ and can present two different phase portraits: if $(a_0+ c_0\mu) \mu > 0$ then $P_4$ is a saddle, and if $\mu(a_0+ c_0\mu) <0$ it is an stable node. If $a_0=0$ the singularity $P_4$ collides with the origin.
	
\begin{lemma}
Asumming hypothesis $(H_1)$ there are $50$ different cases according to the local phase portrait of the finite singular points of system \eqref{system}, which are given in Tables \ref{tab:cases_fin_local_1} - \ref{tab:cases_fin_local_6}.
\end{lemma}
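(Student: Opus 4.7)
The plan is to reduce the lemma to a purely combinatorial enumeration, since the existence of each finite singular point and its local topological type have already been characterized in the paragraphs immediately preceding the lemma statement.

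First, I would use Table \ref{tab:cases1-6} to partition the parameter space into the six cases distinguishing which of $P_0$, $P_1$, $P_2$, $P_3$, $P_4$ are present. Within each case I would list the admissible local types of every existing singular point, reading them directly off the classification just given: $P_0$ can be an unstable node, a saddle, a saddle-node, a topological node, a topological saddle, or a semi-hyperbolic saddle-node; $P_1$ and $P_2$ can each be a saddle, a stable node, or an unstable node; $P_3$ is always a semi-hyperbolic saddle-node; and $P_4$ is either a saddle or a stable node. For each case the candidate tuples of types form a finite product, and the task is to keep only those tuples which correspond to a nonempty open region of parameters compatible with the case conditions and with hypothesis $(H_1)$.

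The subtle part is that the types at different singular points are not independent. The sign of $a_0 + c_0\mu$ simultaneously governs the nodal versus saddle character of $P_1$, $P_2$ and $P_4$; the sign of $c_2$ enters the classifications of $P_0$ (through its semi-hyperbolic subcases) as well as of $P_1$ and $P_2$; the sign of $R_c - c_3$ couples the types of $P_1$ and $P_2$; and the inequality defining each case, such as $c_3^2 > 4c_0c_2$ or $c_3^2 = 4c_0c_2$, further restricts the admissible signs of $c_0$ and $c_2$ in combination with $(H_1)$'s constraints $a_0\ge 0$, $c_1\ge 0$, $c_3\ge 0$. For each tentative row I would verify joint realizability of these sign constraints by exhibiting a representative parameter choice, and discard any combination that is forced into contradiction.

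The main obstacle will be precisely this coupling: a naive product of the number of options at each singular point vastly overcounts, so the enumeration must be carried out by hand, case by case, with explicit reference to the joint inequalities. Once the surviving combinations are collected into Tables \ref{tab:cases_fin_local_1}--\ref{tab:cases_fin_local_6}, summing their cardinalities across the six cases yields exactly $50$ distinct local configurations, which establishes the lemma.
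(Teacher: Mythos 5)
Your proposal is correct and follows essentially the same route as the paper: a case-by-case enumeration over the six cases of Table \ref{tab:cases1-6}, pruning the product of candidate local types by exploiting the coupling of the sign conditions on $c_2$, $a_0+c_0\mu$ and $R_c-c_3$ (typically deriving contradictions by squaring $R_c \lessgtr c_3$ to obtain the sign of $c_0c_2$), and counting the surviving combinations. The only difference is one of emphasis: the paper argues mainly by eliminating impossible combinations rather than by exhibiting explicit representative parameters for the surviving ones, but the underlying argument is the same.
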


\begin{proof}	
We have to analyse cases 1 to 6 in Table \ref{tab:cases1-6} and determine the local phase portraits of the singular points existing in each one of them, according to their individual classification. We start with the first one, in which the conditions, $c_3^2>4c_0c_2$ and $c_1\mu\neq0$ hold. The singular points are $P_0$, $P_1$, $P_2$ and $P_4$. We shall consider three subcases: $a_0=0$, $c_0=0$ and $a_0c_0\neq0$.
		
Consider case $c_0=0$ in which the origin is a saddle-node and $P_1$ collides with the origin. Since $c_0=0$ and $a_0>0$, the singular point $P_2$ is a saddle if $c_2<0$, and an unstable node if $c_2>0$. In these two cases $P_4$ can be either a saddle if $\mu>0$, or a stable node if $\mu<0$. This leads to cases 1.1 to 1.4 in Table \ref{tab:cases_fin_local_1}.
				
We continue with the case $a_0=0$ in which $P_0$ is again a saddle-node, but in this case it coincides with $P_4$. Suppose that $P_1$ is an unstable node, then we have $c_0\mu>0$ and $c_2(R_c-c_3)>0$. By Remark \ref{remark_a0c0c2} we will only consider the case $c_0>0$. Then if $c_2>0$ and $R_c-c_3>0$ taking into account the expression of $R_c$ and squaring both terms, we get that $c_3^2-4c_0c_2>c_3^2$, so $c_0c_2<0$, which leads to a contradiction. The same occurs if we suppose $c_2<0$. Therefore $P_1$ cannot be an unstable node. If $P_1$ is a saddle, then $P_2$ can be a saddle or an unstable node, but not a stable node, which is only possible if $c_0<0$, by an analogous reasoning to the previous one. If $P_1$ is a stable node then $c_0\mu<0$, so $P_2$ can be a saddle or stable node, but not an unstable node because it requires that $c_0\mu>0$. This leads to cases 1.5 to 1.8.
			
The last case is $a_0c_0\neq0$ in which the origin is a hyperbolic singular point. We start when $P_0$ is a saddle, then $c_0<0$. First we consider that $P_1$ is also a saddle, and so $c_2(a_0+c_0\mu)(R_c-c_3)<0$. If $P_2$ is a saddle then $c_2(a_0+c_0\mu)<0$, and we get $R_c-c_3>0$. From this we deduce like in previous cases that $c_0c_2<0$, but we are supposing $c_0<0$ and so $c_2>0$ and $a_0+c_0\mu<0$.  From the last inequality $a_0<-c_0\mu$, and so $\mu$ has to be positive. In short $\mu(a_0+c_0\mu)<0$, and consequently $P_4$ can only be a stable node. If $P_0$ and $P_1$ are saddles, but $P_2$ is a stable node, reasoning in an analogous way we get that $P_4$ is again a stable node. This leads to cases 1.9 and 1.10. Note that if $P_0$ and $P_1$ are saddles it is impossible for $P_2$ to be an unstable node. In that case we would have that $c_0<0$, $c_2>0$, $a_0+c_0\mu>0$ and $R_c-c_3<0$. From this last inequality we get that $c_0c_2>0$, which is a contradiction. We consider now the cases where $P_0$ is a saddle and $P_1$ an unstable node, in which the conditions $c_0<0$, $a_0+c_0\mu>0$ and $c_2(R_c-c_3)>0$ hold. It is obvious that $P_2$ cannot be a stable node because it requires that $a_0+c_0\mu<0$, so $P_2$ is a saddle if $c_2<0$ and an unstable node if $c_2>0$. In both cases $P_4$ can be either a saddle if $\mu>0$, or a stable node if $\mu<0$. This leads to cases 1.11 to 1.14. Note that the case with $P_0$ a saddle and $P_1$ a stable node is not possible, because we would have $c_0<0$, $a_0+c_0\mu<0$ and $c_2(R_c-c_3)<0$. If $c_2>0$ then $R_c-c_3<0$, whence we deduce $c_0c_2>0$ and get a contradiction. The same argument is valid if $c_2<0$. With analogous reasoning as in the case where $P_0$ is an unstable node we get the subcases 1.15 to 1.20.
				
Now we study case 2 of Table \ref{tab:cases1-6}, in which $c_3^2>4c_0c_2$, $c_1\mu=0$ and $a_0\neq0$. We shall consider three cases: $c_0<0$, $c_0>0$ and $c_0=0$.
				
We start with case $c_0<0$ in which $P_0$ is a saddle. If $P_1$ is a saddle, then $P_2$ can be a saddle or a stable node. If $P_2$ is an unstable node, then we have the conditions $c_2(a_0+c_0\mu) (R_c-c_3)<0$, $c_2>0$ and $a_0+c_0\mu>0$, so $R_c-c_3<0$, and we deduce $c_0c_2>0$ which is a contradiction. $P_1$ cannot be a stable node, because in that case we would have the conditions $c_0<0$, $a_0+c_0\mu<0$ and $c_2(R_c-c_3)<0$ which lead to a contradiction in the following way: if $c_2>0$ then $R_c-c_3<0$ and squaring we deduce $c_0c_2>0$ which is not possible because $c_0<0$ and we are supposing $c_2>0$. An analogous reasoning works in the case $c_2<0$. If $P_1$ is an unstable node then $P_2$ can be either a saddle or an unstable node, but not a stable node because it requires $a_0+c_0\mu$ to be negative, but we already know that this expression is positive because it is a condition in order that $P_1$ be an unstable node. This leads to cases 2.1 to 2.4 of Table \ref{tab:cases_fin_local_2}. 		
		
We continue with the case $c_0>0$, in which $P_0$ is an unstable node. If $P_1$  is a saddle, then $P_2$ can be a saddle or an unstable node. If $P_2$ is a stable node then we have the conditions $c_0>0$ , $c_2(a_0+c_0\mu)(R_c-c_3)<0$, $a_0+c_0\mu<0$ and $c_2<0$. Thus we have $R_c-c_3<0$ and squaring we obtain $c_0c_2>0$ which is a contradiction. This leads to cases 2.5 and 2.6. If $P_1$ is a stable node it can be proved similarly to previous cases, that $P_2$ cannot be an unstable node. This leads to cases 2.7 and 2.8. $P_1$ cannot be an unstable node, because in that case we would have the conditions $c_0>0$, $a_0+c_0\mu>0$ and $c_2(R_c-c_3)>0$ which lead to a contradiction in the following way: if $c_2>0$ then $R_c-c_3>0$, and squaring we deduce $c_0c_2<0$ which is not possible because $c_0>0$ and we are supposing $c_2>0$. An analogous reasoning works in the case $c_2<0$. 		
		
Al last we have the case $c_0=0$. Necessarily $c_3\neq0$ so the origin is a saddle-node. Also we have that $P_1$ coincides with the origin. For the singular point $P_2$ we have that it is a saddle if $c_2<0$, and an unstable node if $c_2>0$. This leads to cases 2.9 and 2.10.
		
We study case 3 of Table \ref{tab:cases1-6} in which $c_3^2=4c_0c_2$ and $c_1\mu\neq0$. Then $c_0=0$ if and only if $c_3=0$. We consider $a_0>0$ and $c_0<0$, then the origin is a saddle and $P_3$ a saddle-node (as $c_3\neq0$). The singular point $P_4$ is either a saddle or a stable node, depending on the sign of $\mu(a_0+c_0\mu)$. The same is valid in the case $a_0>0$ and $c_0<0$, except for the origin which is now an unstable node. We get the cases 3.1 to 3.4 of Table \ref{tab:cases_fin_local_3}. We continue with the case in which $a_0=0$ and so $P_0$ is a saddle-node, $P_4$ coincides with $P_0$ and $P_3$ is a saddle-node. This correspond with case 3.5. At last we have the condition $c_0=0$, under which $P_3$ coincides with $P_0$. If $c_2<0$ then it is a topological saddle, and if $c_2>0$ it is a topological unstable node. In any case $P_4$ can be either a saddle or a stable node. This leads to cases 3.6 to 3.9.
		
Now we address the case 4 of Table \ref{tab:cases1-6} in which $c_3^2=4c_0c_2$, $c_1\mu=0$ and $a_0\neq0$. The origin is a saddle if $c_0<0$, and an unstable node if $c_0>0$. If $c_0=0$ then $c_3=0$, so we distinguish two semi-hyperbolic possibilities for the origin: if $c_2<0$ it is a topological saddle, and if $c_2>0$ it is a topological unstable node. The classification of $P_3$ is totally determined by the one of $P_0$, because it only depends on whether $c_3$ is zero or not. We get cases 4.1 to 4.4.
		
In case 5 of Table \ref{tab:cases1-6} the conditions $c_3^2<4c_0c_2$ and $c_1\mu\neq0$ hold. The singular points are $P_0$ and $P_4$. From condition $c_3^2<4c_0c_2$ we get that $c_0\neq0$. If $a_0=0$ then the origin is a saddle-node and $P_4$ coincides with the origin. If $a_0\neq0$ then both singular points are hyperbolic, and it leads to cases 5.2 to 5.5.
		
Finally in case 6 of Table \ref{tab:cases1-6} we have the conditions  $c_3^2<4c_0c_2$, $c_1\mu=0$ and $a_0\neq0$. The unique singular point is the origin and as $c_0$ cannot be zero, it is either a saddle or an unstable node.
\end{proof}

\begin{table}
\begin{center}
\begin{tabular}{|cp{5.5cm}p{5cm}|}
\multicolumn{3}{l}{ \textbf{Case 1: } $\: \boldsymbol{c_3^2>4c_0c_2}$, $\boldsymbol{c_1\mu\neq0}$.}\\
\hline
\hline
\small\textbf{Sub.}& \small\textbf{Conditions} & \small\textbf{Classification} \\
\hline
\hline

1.1
&
\small $a_0>0$, $c_0=0$, $\mu>0$, $c_2<0$.
&
\small $P_0\equiv P_1$  saddle-node, $P_2$ saddle, \newline $P_4$ saddle.\\
\hline
			
1.2
&
\small $a_0>0$, $c_0=0$, $\mu>0$,  $c_2>0$.
&
\small $P_0\equiv P_1$  saddle-node, \newline  $P_2$ unstable node, $P_4$ saddle.\\
\hline
			
1.3
&
\small $a_0>0$, $c_0=0$, $\mu<0$,  $c_2<0$.
&
\small $P_0\equiv P_1$  saddle-node, $P_2$ saddle, \newline  $P_4$ stable node.\\
\hline

1.4
&
\small $a_0>0$, $c_0=0$, $\mu<0$,  $c_2>0$.
&
\small $P_0\equiv P_1$  saddle-node, \newline  $P_2$ unstable node, $P_4$ stable node.\\
\hline
			
1.5
&
\small $a_0=0$, $c_0>0$, $c_2\mu<0$, $R_c-c_3>0$.
&
\small $P_0\equiv P_4$  saddle-node, $P_1$ saddle, \newline $P_2$ saddle.\\
\hline

1.6
&
\small $a_0=0$, $c_0>0$, $R_c-c_3<0$, $\mu>0$, $c_2>0$.
&
\small $P_0\equiv P_4$  saddle-node, $P_1$ saddle, \newline $P_2$ unstable node.\\
\hline
			
1.7
&
\small $a_0=0$, $c_0>0$, $\mu<0$, $R_c-c_3<0$, $c_2>0$.
&
\small $P_0\equiv P_4$  saddle-node, \newline  $P_1$ stable node, $P_2$ saddle.\\
\hline
			
1.8
&
\small $a_0=0$, $c_0>0$, $\mu<0$, $c_2<0$, \newline $R_c-c_3>0$.
&
\small $P_0\equiv P_4$  saddle-node, \newline  $P_1$ stable node, $P_2$ stable node.\\
\hline
			
1.9
&
\small $a_0>0$, $c_0<0$, $\mu>0$, $(a_0+c_0\mu)<0$, $c_2>0$, $R_c-c_3>0$.
&
\small $P_0$ saddle, $P_1$ saddle, $P_2$ saddle, \newline  $P_4$ stable node.\\
\hline
			
1.10
&
\small $a_0>0$, $c_0<0$, $c_2<0$, $\mu>0$, \newline $a_0+c_0\mu<0$, $R_c-c_3<0$.
&
\small $P_0$ saddle, $P_1$ saddle, \newline  $P_2$ stable node, $P_4$ stable node.\\
\hline
				
1.11
&
\small $a_0>0$, $c_0<0$, $c_2<0$, $\mu>0$, \newline $a_0+c_0\mu>0$, $(R_c-c_3)<0$.
&
\small $P_0$ saddle, $P_1$ unstable node, \newline  $P_2$ saddle, $P_4$ saddle.\\
\hline
			
1.12
&
\small $a_0>0$, $c_0<0$, $c_2<0$, $\mu<0$, \newline $a_0+c_0\mu>0$, $(R_c-c_3)<0$.
&
\small $P_0$ saddle, $P_1$ unstable node, \newline  $P_2$ saddle, $P_4$ stable node.\\
\hline
			
1.13
&
\small $a_0>0$, $c_0<0$, $\mu>0$, $a_0+c_0\mu>0$, $c_2>0$, $R_c-c_3>0$.
&
\small $P_0$ saddle, $P_1$ unstable node, \newline  $P_2$ unstable node, $P_4$ saddle.\\
\hline
			
1.14
&
\small $a_0>0$, $c_0<0$, $\mu<0$, $a_0+c_0\mu>0$, $c_2>0$, $R_c-c_3>0$.
&
\small $P_0$ saddle, $P_1$ unstable node, \newline  $P_2$ unstable node, $P_4$ stable node.\\
\hline
			
1.15
&
\small $a_0>0$, $c_0>0$, $\mu(a_0+c_0\mu)>0$, \newline $c_2(a_0+c_0\mu)<0$, $R_c-c_3>0$.
&
\small $P_0$ unstable node, $P_1$ saddle, \newline $P_2$ saddle, $P_4$ saddle.\\
\hline

1.16
&
\small $a_0>0$, $c_0>0$, $\mu(a_0+c_0\mu)<0$, \newline $c_2(a_0+c_0\mu)<0$, $R_c-c_3>0$.
&
\small $P_0$ unstable node, $P_1$ saddle, \newline  $P_2$ saddle, $P_4$ stable node.\\
\hline
			
1.17
&
\small $a_0>0$, $c_0>0$, $\mu>0$, $R_c-c_3<0$, $a_0+c_0\mu>0$, $c_2>0$.
&
\small $P_0$ unstable node, $P_1$ saddle, \newline $P_2$ unstable node, $P_4$ saddle.\\
\hline
		
1.18
&
\small $a_0>0$, $c_0>0$, $\mu<0$, $R_c-c_3<0$, $a_0+c_0\mu>0$, $c_2>0$.
&
\small $P_0$ unstable node, $P_1$ saddle, \newline $P_2$ unstable node, $P_4$ stable node.\\
\hline
			
1.19
&
\small $a_0>0$, $c_0>0$, $\mu<0$, $a_0+c_0\mu<0$, $R_c-c_3<0$, $c_2>0$.
&
\small $P_0$ unstable node, $P_1$ stable node, $P_2$ saddle, $P_4$ saddle.\\
\hline
		
1.20
&
\small $a_0>0$, $c_0>0$, $\mu<0$, $a_0+c_0\mu<0$, $c_2<0$, $R_c-c_3>0$.
&
\small $P_0$ unstable node, $P_1$ stable node, $P_2$ stable node, $P_4$ saddle.\\
\hline

\end{tabular}
\caption{Classification in case 1 of Table \ref{tab:cases1-6} according with the local phase portraits of  finite singular points.}
\label{tab:cases_fin_local_1}
\end{center}
\end{table}	

\begin{table}
\begin{center}
\begin{tabular}{|c|p{5.5cm}p{5cm}|}
\multicolumn{3}{l}{\textbf{Case 2: } $\: \boldsymbol{c_3^2>4c_0c_2}$, $\boldsymbol{c_1\mu=0}$, $\boldsymbol{a_0>0}$.}\\
\hline
\hline
\small\textbf{Sub.}& \small\textbf{Conditions} & \small\textbf{Classification} \\
\hline
\hline

2.1
&
\small $c_0<0$, $c_2(a_0+c_0\mu)<0$, $R_c-c_3>0$.
&
\small $P_0$ saddle, $P_1$ saddle, $P_2$ saddle.\\
\hline

2.2
&
\small  $c_0<0$, $R_c-c_3<0$, $a_0+c_0\mu<0$, $c_2<0$.
&
\small $P_0$ saddle, $P_1$ saddle, \newline $P_2$ stable node.\\
\hline
			
2.3
&
\small $c_0<0$,  $a_0+c_0\mu>0$, $R_c-c_3<0$, $c_2<0$.
&
\small $P_0$ saddle, $P_1$ unstable node, \newline $P_2$ saddle.\\
\hline
			
2.4
&
\small $c_0<0$, $a_0+c_0\mu>0$, $c_2>0$,\newline $R_c-c_3>0$.
&
\small $P_0$ saddle, $P_1$ unstable node, \newline$P_2$ unstable node.\\
\hline
			
2.5
&
\small $c_0>0$, $c_2(a_0+c_0\mu)<0$, $R_c-c_3>0$.
&
\small $P_0$ unstable node, $P_1$ saddle, \newline $P_2$ saddle.\\
\hline
			
2.6
&
\small  $c_0>0$, $R_c-c_3<0$, $a_0+c_0\mu>0$, $c_2>0$.
&
\small $P_0$ unstable node, $P_1$ saddle, \newline $P_2$ unstable node.\\
\hline
	
2.7
&
\small $c_0>0$, $a_0+c_0\mu<0$, $R_c-c_3<0$, $c_2>0$.
&
\small $P_0$ unstable node, $P_1$ stable node, $P_2$ saddle.\\
\hline
			
2.8
&
\small $c_0>0$, $a_0+c_0\mu<0$, $c_2<0$, \newline $R_c-c_3>0$.
&
\small $P_0$ unstable node, $P_1$ stable node, $P_2$ stable node.\\
\hline
			
2.9
&
\small $c_0=0$, $a_0>0$, $c_2<0$.
&
\small $P_0\equiv P_1$ saddle-node, $P_2$ saddle.\\
\hline
			
2.10
&
\small $c_0=0$, $a_0>0$, $c_2>0$.
&
\small $P_0\equiv P_1$ saddle-node, \newline $P_2$ unstable node.\\
\hline
			
\end{tabular}
\caption{Classification in case 2 of Table \ref{tab:cases1-6} according with the local phase portraits of finite singular points.}
\label{tab:cases_fin_local_2}
\end{center}
\end{table}	
		
\vspace{-1cm}	

\begin{table}
\begin{center}
\begin{tabular}{|c|p{5.5cm}p{5cm}|}
\multicolumn{3}{l}{ \textbf{Case 3: } $\: \boldsymbol{c_3^2=4c_0c_2}$, $\boldsymbol{c_1\mu\neq0}$.}\\
\hline
\hline
\small\textbf{Sub.}& \small\textbf{Conditions} & \small\textbf{Classification} \\
\hline
\hline
		
3.1
&
\small $a_0>0$, $c_0<0$,  $\mu(a_0+c_0\mu)>0$.
&
\small $P_0$ saddle, $P_3$ saddle-node, \newline$P_4$ saddle.\\
\hline
	
3.2
&
\small $a_0>0$, $c_0<0$,  $\mu(a_0+c_0\mu)<0$.
&
\small $P_0$ saddle, $P_3$ saddle-node, \newline $P_4$ stable node.  \\
\hline
		
3.3
&
\small $a_0>0$, $c_0>0$,  $\mu(a_0+c_0\mu)>0$.
&
\small $P_0$ unstable node, $P_3$ saddle-node, $P_4$ saddle.\\
\hline
			
3.4
&
\small $a_0>0$, $c_0>0$,  $\mu(a_0+c_0\mu)<0$.
&
\small $P_0$ unstable node, $P_3$ saddle-node, $P_4$ stable node.\\
\hline
			
3.5
&
\small $a_0=0$, $c_0>0$.
&
\small $P_0\equiv P_4$ saddle-node, \newline $P_3$ saddle-node.\\
\hline
		
3.6
&
\small $c_0=0$, $a_0>0$, $c_2<0$, $\mu>0$.
&
\small $P_0\equiv P_3$ topological saddle,  \newline $P_4$ saddle.\\
\hline
		
3.7
&
\small $c_0=0$, $a_0>0$, $c_2<0$, $\mu <0$.
&
\small $P_0\equiv P_3$ topological saddle,  \newline $P_4$ stable node.\\
\hline
		
3.8
&
\small $c_0=0$, $a_0>0$, $c_2>0$, $\mu>0$.
&
\small $P_0\equiv P_3$ topological unstable node, $P_4$ saddle.\\
\hline

3.9
&
\small $c_0=0$, $a_0>0$, $c_2>0$, $\mu<0$.
&
\small $P_0\equiv P_3$ topological unstable node, $P_4$ stable node.\\
\hline
			
\end{tabular}
\caption{Classification in case 3 of Table \ref{tab:cases1-6} according with the local phase portraits of finite singular points.}
\label{tab:cases_fin_local_3}
\end{center}
\end{table}	

\vspace{-1cm}

\begin{table}
\begin{center}
\begin{tabular}{|cp{4.7cm}p{5.8cm}|}
\multicolumn{3}{l}{ \textbf{Case 4: } $\: \boldsymbol{c_3^2=4c_0c_2}$, $\boldsymbol{c_1\mu=0}$, $\boldsymbol{a_0>0}$.}\\
\hline
\hline
\small\textbf{Sub.}& \small\textbf{Conditions} & \small\textbf{Classification} \\
\hline
\hline
	
4.1
&
\small $c_0<0$.
&
\small  $P_0$ saddle, $P_3$ saddle-node.\\
\hline
			
4.2
&
\small $c_0>0$.
&
\small $P_0$ unstable node, $P_3$ saddle-node.\\
\hline
		
4.3
&
\small $c_0=0$, $c_2<0$.
&
\small $P_0\equiv P_3$ topological saddle. \\
\hline
		
4.4
&
\small $c_0=0$, $c_2>0$.
&
\small $P_0\equiv P_3$ topological unstable node. \\
\hline
			
\end{tabular}
\caption{Classification in case 4 of Table \ref{tab:cases1-6} according with the local phase portraits of finite singular points.}
\label{tab:cases_fin_local_4}
\end{center}
\end{table}	

\vspace{-2cm}

\begin{table}
\begin{center}
\begin{tabular}{|cp{4.7cm}p{5.8cm}|}
\multicolumn{3}{l}{ \textbf{Case 5: } $\: \boldsymbol{c_3^2<4c_0c_2}$, $\boldsymbol{c_1\mu\neq0}$.}\\
\hline
\hline
\small\textbf{Sub.}& \small\textbf{Conditions} & \small\textbf{Classification} \\
\hline
\hline
	
5.1
&
\small $a_0=0$.
&
\small  $P_0\equiv P_4$ saddle-node.\\
\hline

5.2
&
\small $a_0>0$, $c_0<0$, $\mu(a_0+c_0\mu)>0$.
&
\small $P_0$ saddle,  $P_4$ saddle.\\
\hline
	
5.3
&
\small $a_0>0$, $c_0<0$, $\mu(a_0+c_0\mu)<0$.
&
\small $P_0$ saddle,  $P_4$ stable node.\\
\hline

5.4
&
\small $a_0>0$, $c_0>0$, $\mu(a_0+c_0\mu)>0$.
&
\small $P_0$ unstable node,  $P_4$ saddle.\\
\hline
	
5.5
&
\small $a_0>0$, $c_0>0$, $\mu(a_0+c_0\mu)<0$.
&
\small $P_0$ unstable node,  $P_4$ stable node.\\
\hline
			
\end{tabular}
\caption{Classification in case 5 of Table \ref{tab:cases1-6} according with the local phase portraits of finite singular points.}
\label{tab:cases_fin_local_5}
\end{center}
\end{table}

\begin{table}
\begin{center}
\begin{tabular}{|cp{4.7cm}p{5.8cm}|}
\multicolumn{3}{l}{ \textbf{Case 6: } $\: \boldsymbol{c_3^2<4c_0c_2}$, $\boldsymbol{c_1\mu=0}$, $\boldsymbol{a_0>0}$.}\\
\hline
\hline
\small\textbf{Sub.}& \small\textbf{Conditions} & \small\textbf{Classification} \\
\hline
\hline
		
6.1
&
\small $c_0<0$. \textcolor{white}{Problemas de espacio}
&
\small $P_0$ saddle.\\
\hline
		
6.2
&
\small $c_0>0$.\textcolor{white}{Problemas de espacio}
&
\small $P_0$ unstable node.\\
\hline
				
\end{tabular}
\caption{Classification in case 6 of Table \ref{tab:cases1-6} according with the local phase portraits of finite singular points.}
\label{tab:cases_fin_local_6}
\end{center}
\end{table}	

\newpage

\section{Local study of infinite singular points}\label{sec:infinite}

In order to study the behavior of the trajectories of system \eqref{system} near infinity we consider its Poincar\'e compactification. For the moment we assume the same hypothesis $(H_1)$ than in previous sections. According to equations \eqref{Poincare_comp_U1} and \eqref{Poincare_comp_U2},  we get the compactification in the local charts $U_1$ and $U_2$ respectively. From Section \ref{sec:preliminaries} it is enough to study the singular points over $v=0$ in the chart $U_1$ and the origin of the chart $U_2$.

In chart $U_1$ system \eqref{system} writes
\begin{equation}\label{systemU1}
\begin{split}
\dot{u}&= c_2(\mu+1)u^3+c_3(\mu+1)u^2v+(c_0-a_0)uv^2+c_1(\mu+1)uv,\\
\dot{v}&=c_2\mu u^2v+c_3\mu u v^2 - a_0 v^3 + c_1 \mu v^2.
\end{split}
\end{equation}
Taking $v=0$ we get $\dot{u}\mid_{v=0}=c_2(\mu+1)u^3$ and $\dot{v} \mid_{v=0} =0$. Therefore if $\mu=-1$ all points at infinity are singular points, and we will not deal with this situation in this paper. In other case, if $\mu\neq-1$ the only singular point is the origin of $U_1$, which we denote by $O_1$. The linear part of system \eqref{systemU1} at the origin is identically zero, so we must use the blow-up technique to study it, leading to the following result which is proved in Subsections \ref{subsec:O1_c1nonzero} and \ref{subsec:O1_c1zero}.
From now on we include the condition $\mu\neq-1$ in our hypothesis, so we will work under the conditions
\begin{align*}
(H_2)= \left\lbrace c_2\neq 0, a_0\geq 0, c_1\geq 0, c_3\geq 0, a_0+c_0\mu\neq 0, a_0c_1\mu\neq0, \mu\neq-1 \right\rbrace.
\end{align*}

\begin{lemma}\label{lemma_O1}
Asumming hypothesis $(H_2)$ the origin of the chart $U_1$ is an infinite singular point of system \eqref{system}, and it has $47$ distinct local phase portraits described in Figure \ref{fig:localppO1}.
\end{lemma}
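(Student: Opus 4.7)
The plan is to desingularize the origin $O_1$ of chart $U_1$ by the blow-up technique of Subsection~\ref{subsec:blowup}. The lowest-degree homogeneous parts of the vector field \eqref{systemU1} are
\[
P_2(u,v)=c_1(\mu+1)uv, \qquad Q_2(u,v)=c_1\mu v^2,
\]
so the characteristic polynomial is $\mathcal{F}(u,v)=uQ_2(u,v)-vP_2(u,v)=-c_1uv^2$. Under hypothesis $(H_2)$ one has $a_0>0$, $c_1>0$, $\mu\neq 0,-1$, hence $\mathcal{F}\not\equiv 0$ and $O_1$ is a nondicritical singularity with characteristic directions $v=0$ (double) and $u=0$ (simple). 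Both directions therefore need to be explored by the blow-up.

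The next step is to carry out the vertical directional blow-up $(u,v)\mapsto(u,w)=(u,v/u)$ as in \eqref{sis_teo_blowup2}, cancel the common factor $u^{m-1}=u$ (here $m=2$), and then locate and classify the singular points that lie on the exceptional divisor $\{u=0\}$. These correspond to the zeros of $\mathcal{F}(1,w)$, yielding a single new singularity at $w=0$ whose linear part again vanishes, so a second blow-up is required, and further iterations will be needed in degenerate subcases; Dumortier's finiteness theorem \cite{blowupDumortier} guarantees termination. In parallel, a horizontal blow-up (or a look at the dual chart) is required to examine the direction $u=0$ missed by the vertical chart. At each step I would use the hyperbolic, semi-hyperbolic and nilpotent classification theorems of Chapters~2--3 of \cite{Libro} to read off the local portrait of each new singular point, triggering yet another blow-up whenever a linearly zero point reappears.

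Once the cascade terminates, I would apply Propositions~2.1 and~2.2 of \cite{blowup} to blow down along the chain, reconstructing the full sectorial decomposition around $O_1$ (the numbers $e$, $h$, $p$ of elliptic, hyperbolic and parabolic sectors together with their cyclic order). The enumeration of the $47$ local phase portraits is then a case analysis driven by the signs and vanishings of the free parameters $c_0, c_2, c_3, \mu$ (with $a_0,c_1>0$ fixed by $(H_2)$, and subject to $a_0+c_0\mu\neq 0$ and $\mu\neq -1$): each admissible sign pattern yields a possibly different resolved geometry, and the full list is what is displayed in Figure~\ref{fig:localppO1}. Symmetries from Proposition~\ref{th_c1c3} and Corollary~\ref{th_simmetry} are used to reduce the enumeration to the parameter range already fixed by $(H_2)$.

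The principal obstacle is the bookkeeping of the iterated blow-ups. Because $v=0$ is a double characteristic direction, at least one further blow-up is unavoidable at the first-generation singularity $w=0$, and in the subcases where additional parameters vanish (for example $c_3=0$, or $c_0=0$ so that the infinite singular point degenerates in an extra way) deeper cascades appear in which nilpotent or again linearly-zero singularities must be normalized by hand. The delicate point is tracking the correspondence between sectors born at different levels of the cascade and ensuring that the cyclic ordering around $O_1$ is consistent across all sign patterns; this is presumably why the argument is naturally split into the two subsections \ref{subsec:O1_c1nonzero} and \ref{subsec:O1_c1zero} referenced after the statement, according to which sub-leading coefficient governs the deepest level of the desingularization.
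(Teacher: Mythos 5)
Your overall strategy --- iterated directional blow-ups at $O_1$, a sign-pattern case analysis on the remaining parameters, and a blow-down to reconstruct the sectorial decomposition --- is the route the paper follows in Subsections \ref{subsec:O1_c1nonzero} and \ref{subsec:O1_c1zero}, and your computation of the characteristic polynomial $\mathcal{F}=-c_1uv^2$ agrees with the paper's. There are, however, two genuine gaps. First, you read the condition ``$a_0c_1\mu\neq0$'' in $(H_2)$ literally and conclude $c_1>0$ and $\mu\neq0$ throughout. That reading is inconsistent with the rest of the paper (the condition is the carry-over of $a_0^2+(c_1\mu)^2\neq0$ from $(H_1)$), and, more importantly, it erases the entire case $c_1=0$, which is Subsection \ref{subsec:O1_c1zero} and which produces the portraits (L19)--(L47), i.e.\ $29$ of the $47$ portraits the lemma claims. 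When $c_1=0$ the quadratic part of \eqref{systemU1} vanishes, the characteristic polynomial becomes $\mathcal{F}=-uv\left(c_2u^2+c_3uv+c_0v^2\right)$, a common factor $u^2$ can be cancelled after the first blow-up, and the exceptional divisor then carries up to three singular points, giving the five cases A--E of Table \ref{tab:cases_blowup_c1zero}; none of this is visible in your plan. Your closing guess that the split into two subsections reflects ``which sub-leading coefficient governs the deepest level'' is a symptom of this: the split is precisely $c_1\neq0$ versus $c_1=0$.

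Second, you assert that the blow-down ``reconstructs the full sectorial decomposition.'' It does not always do so: in several configurations (those leading to (L8), (L9), (L11), (L12), (L22), (L30), (L33), (L43), (L46), (L47)) the local data are compatible with either a pair of elliptic sectors or a pair of hyperbolic sectors, and the paper can only discard the hyperbolic alternative a posteriori, by computing indices of the global phase portraits on $\mathbb{S}^2$ and invoking the Poincar\'e--Hopf theorem (Theorem \ref{th_PoincareHopf}) in Section \ref{sec:global}. A purely local argument of the kind you outline cannot close these cases. Two smaller points: the second-level blow-up becomes dicritical exactly when $\mu=-2$ (the paper treats this separately via system \eqref{sis_blowup6}), which your generic appeal to ``further iterations in degenerate subcases'' does not isolate; and the direction $u=0$ need not be handled by a horizontal blow-up, since $u=0$ is an invariant line on which the flow is given explicitly by $\dot v\mid_{u=0}=c_1\mu v^2-a_0v^3$.
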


\begin{figure}[H]
\centering
\begin{subfigure}[h]{2.4cm}
\centering
\includegraphics[width=2.4cm]{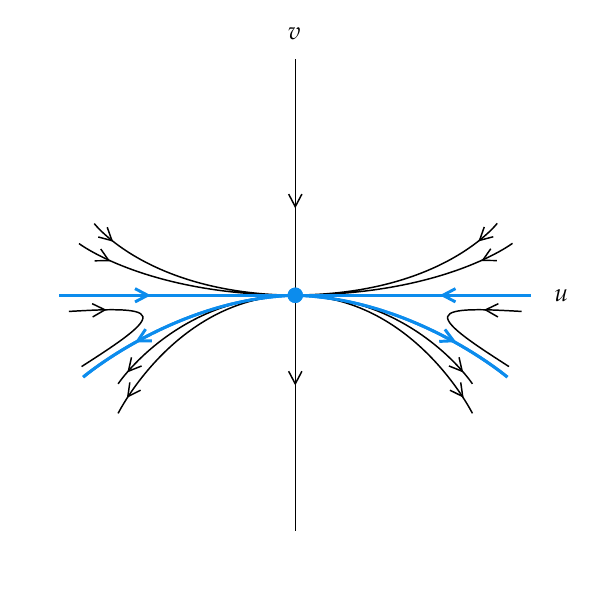}
\caption*{(L1)}
\end{subfigure}
\begin{subfigure}[h]{2.4cm}
\centering
\includegraphics[width=2.4cm]{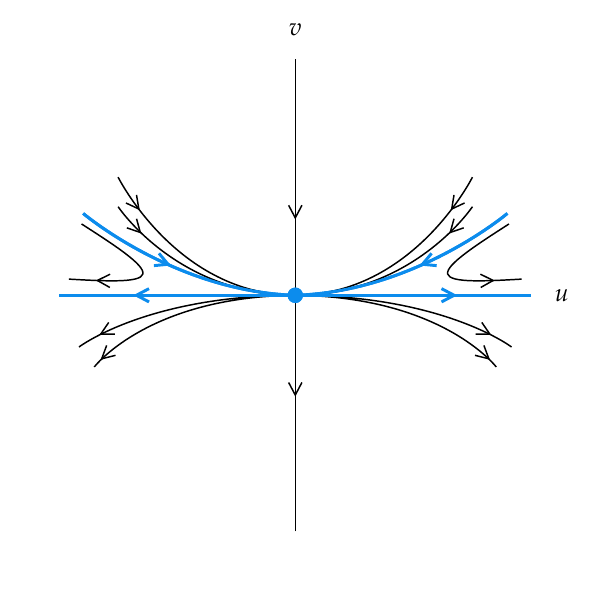}\\
\caption*{(L2)}
\end{subfigure}
\begin{subfigure}[h]{2.4cm}
\centering
\includegraphics[width=2.4cm]{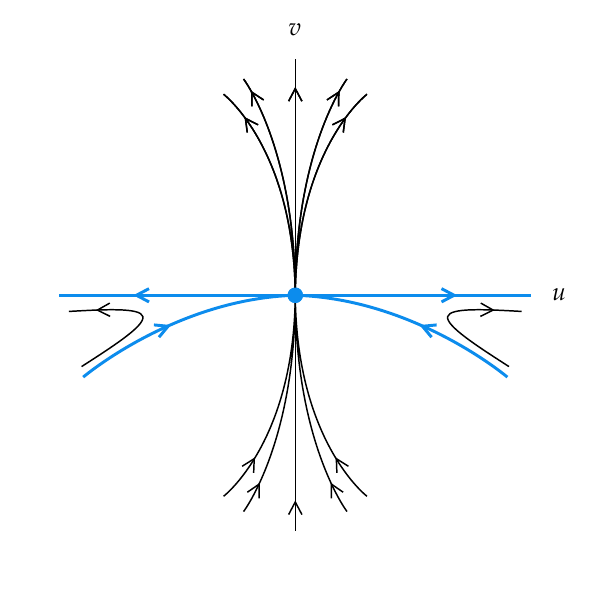}
\caption*{(L3)}
\end{subfigure}
\begin{subfigure}[h]{2.4cm}
\centering
\includegraphics[width=2.4cm]{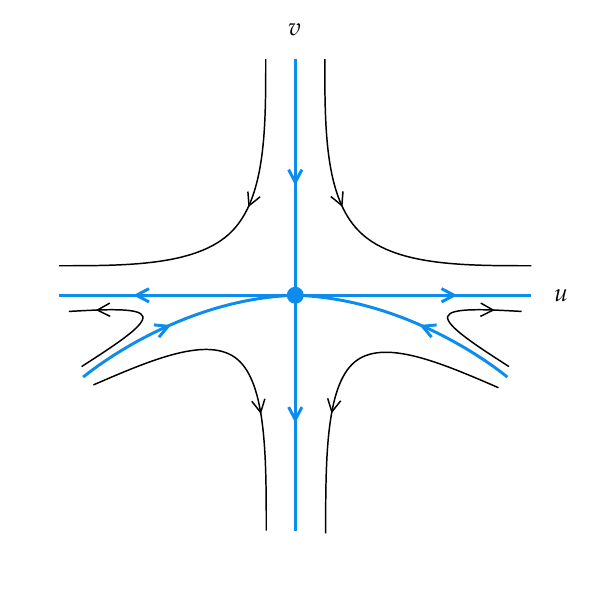}
\caption*{(L4)}
\end{subfigure}
\begin{subfigure}[h]{2.4cm}
\centering
\includegraphics[width=2.4cm]{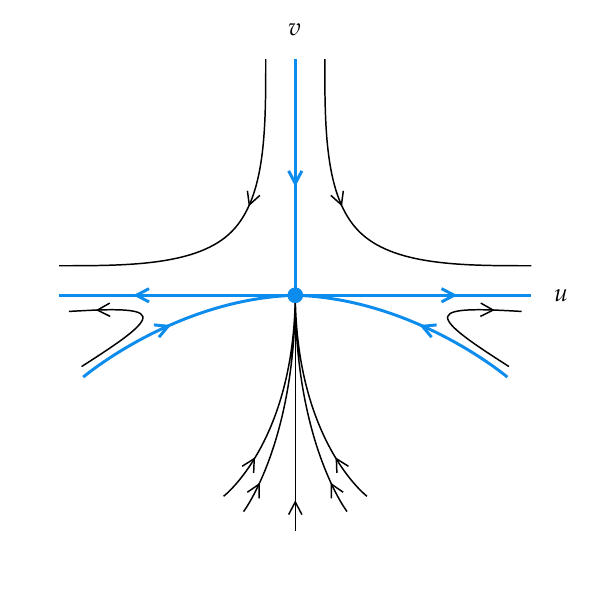}
\caption*{(L5)}
\end{subfigure}
\begin{subfigure}[h]{2.4cm}
\centering
\includegraphics[width=2.4cm]{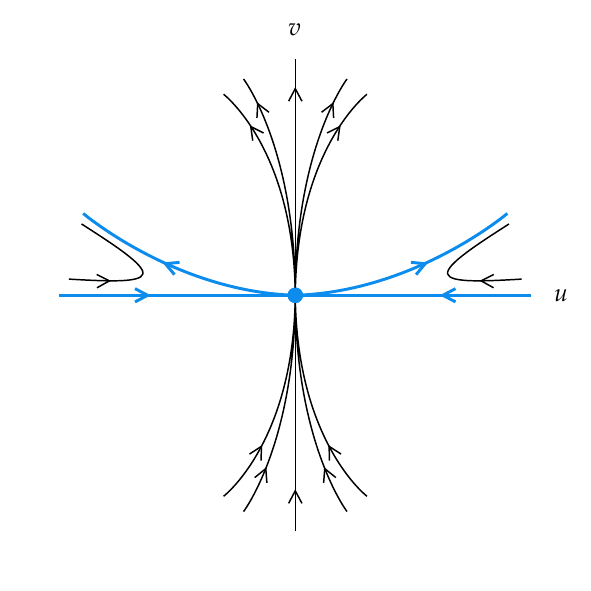}
\caption*{(L6)}
\end{subfigure}
\begin{subfigure}[h]{2.4cm}
\centering
\includegraphics[width=2.4cm]{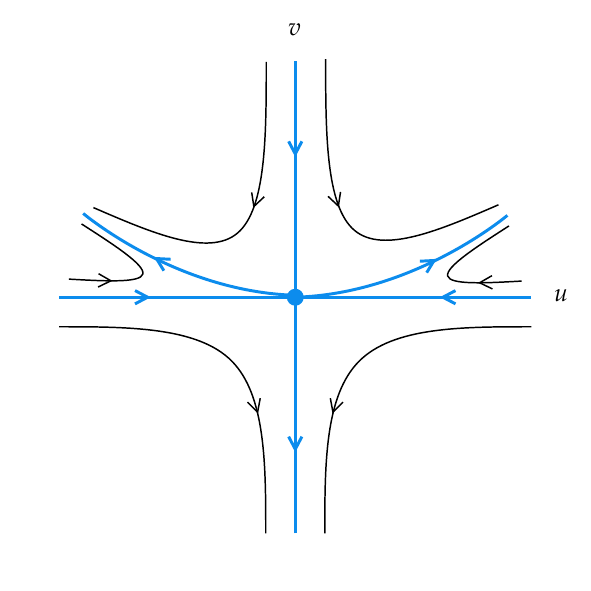}
\caption*{(L7)}
\end{subfigure}
\begin{subfigure}[h]{2.4cm}
\centering
\includegraphics[width=2.4cm]{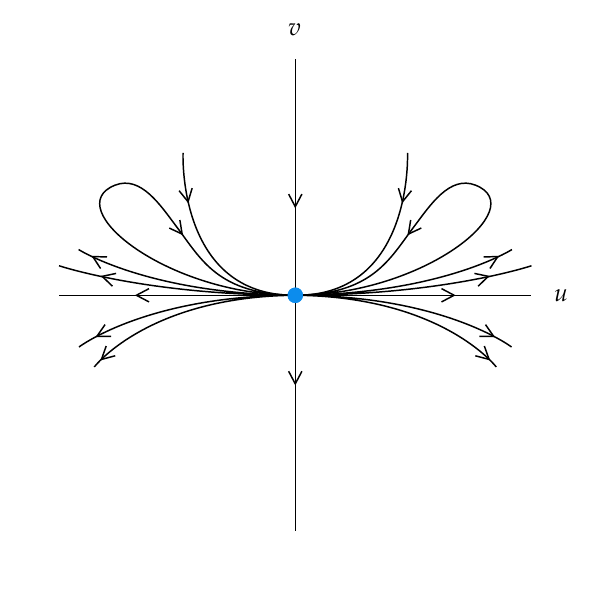}
\caption*{(L8)}
\end{subfigure}
\begin{subfigure}[h]{2.4cm}
\centering
\includegraphics[width=2.4cm]{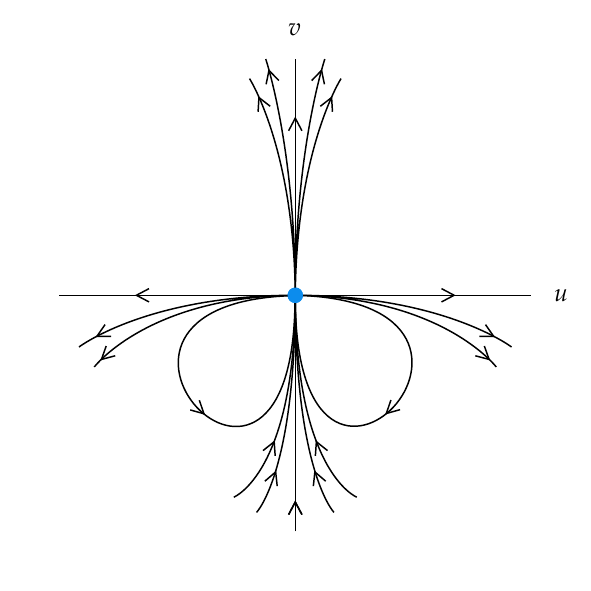}
\caption*{(L9)}
\end{subfigure}
\begin{subfigure}[h]{2.4cm}
\centering
\includegraphics[width=2.4cm]{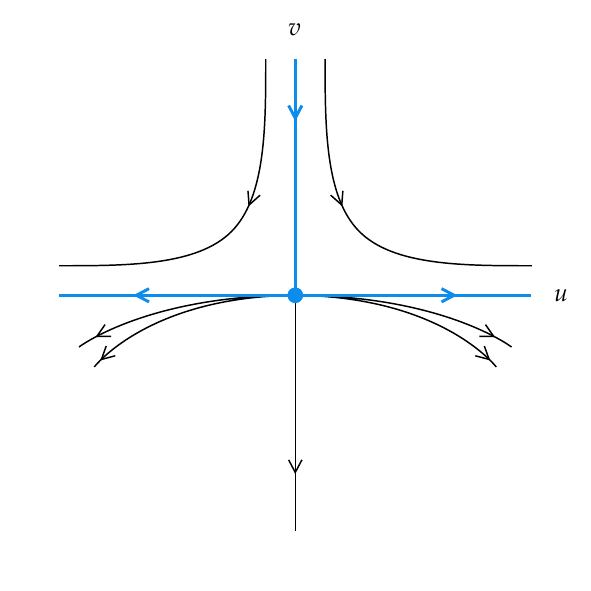}
\caption*{(L10)}
\end{subfigure}
\begin{subfigure}[h]{2.4cm}
\centering
\includegraphics[width=2.4cm]{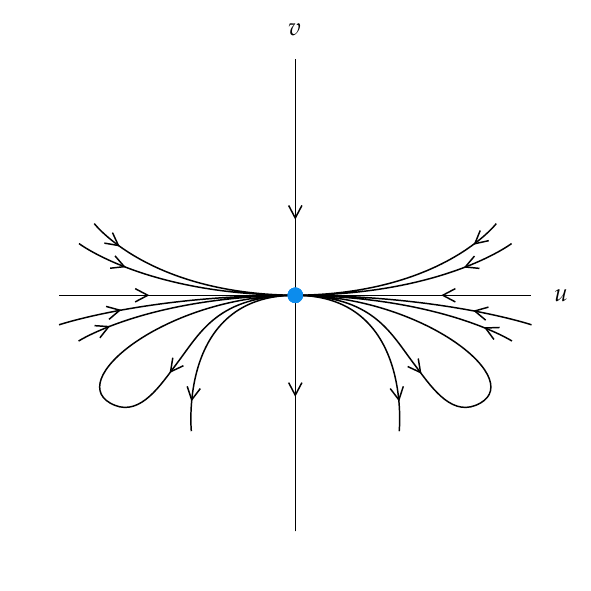}
\caption*{(L11)}
\end{subfigure}
\begin{subfigure}[h]{2.4cm}
\centering
\includegraphics[width=2.4cm]{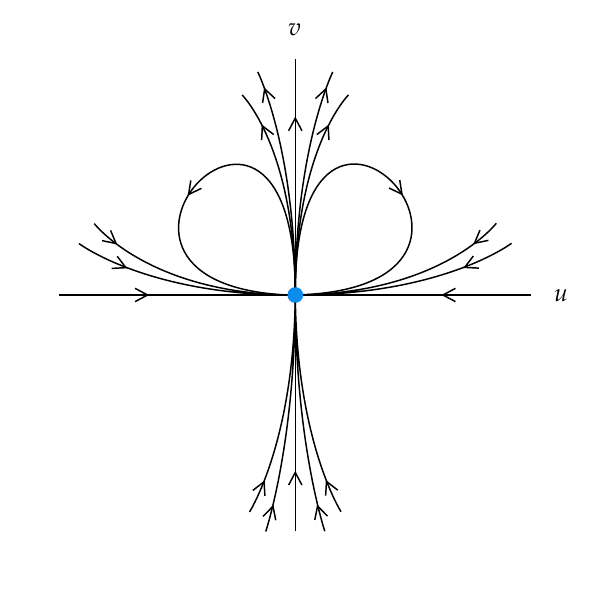}
\caption*{(L12)}
\end{subfigure}
\begin{subfigure}[h]{2.4cm}
\centering
\includegraphics[width=2.4cm]{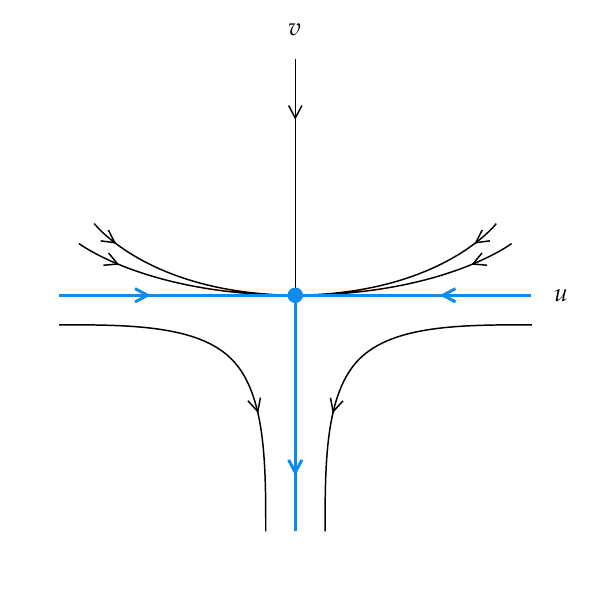}
\caption*{(L13)}
\end{subfigure}
\begin{subfigure}[h]{2.4cm}
\centering
\includegraphics[width=2.4cm]{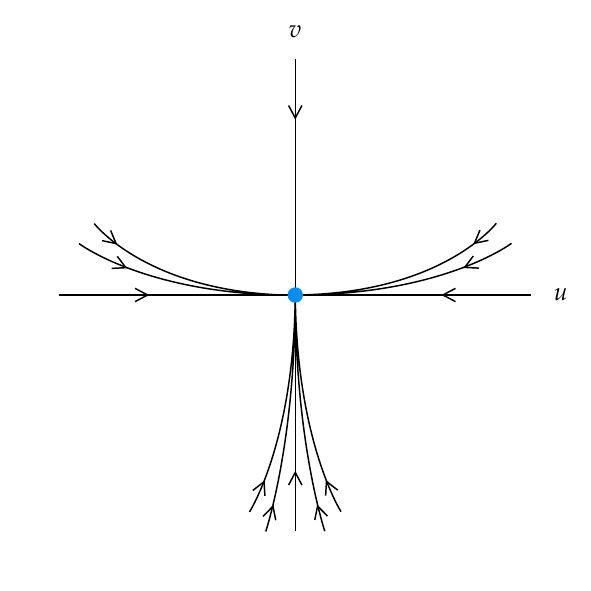}
\caption*{(L14)}
\end{subfigure}
\begin{subfigure}[h]{2.4cm}
\centering
\includegraphics[width=2.4cm]{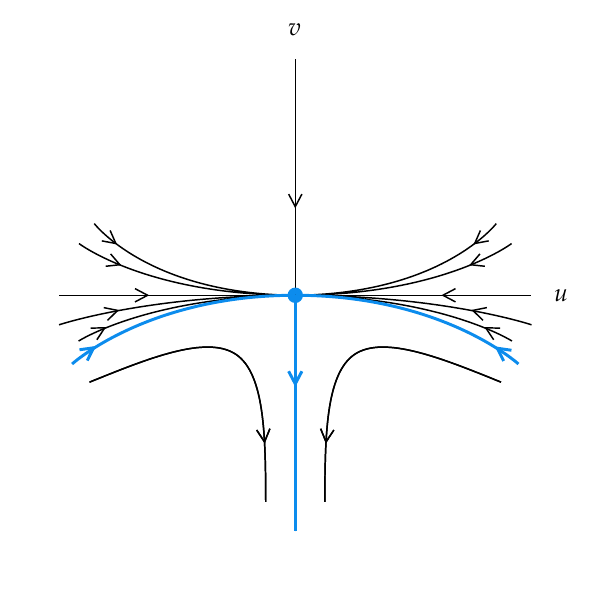}
\caption*{(L15)}
\end{subfigure}
\begin{subfigure}[h]{2.4cm}
\centering
\includegraphics[width=2.4cm]{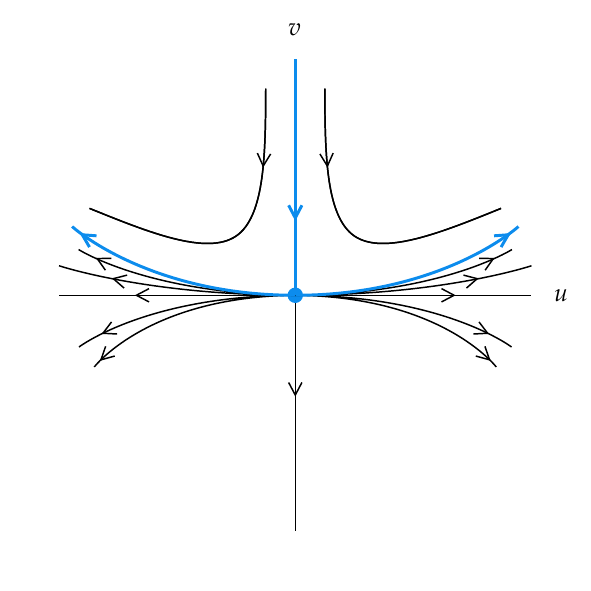}
\caption*{(L16)}
\end{subfigure}	
\begin{subfigure}[h]{2.4cm}
\centering
\includegraphics[width=2.4cm]{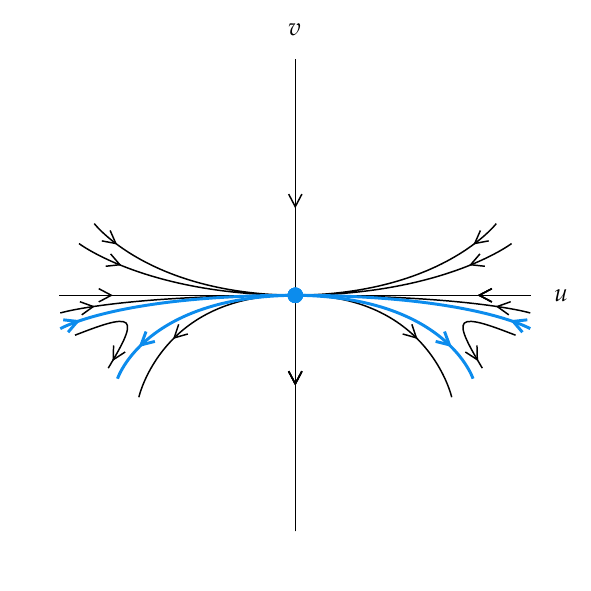}
\caption*{(L17)}
\end{subfigure}
\begin{subfigure}[h]{2.4cm}
\centering
\includegraphics[width=2.4cm]{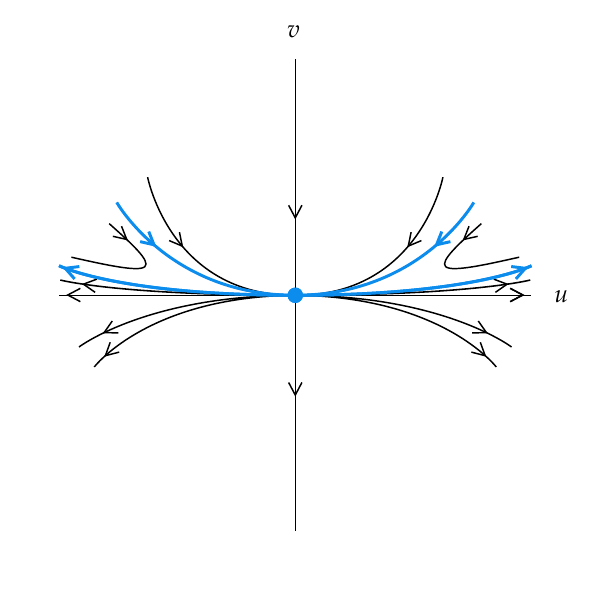}
\caption*{(L18)}
\end{subfigure}
\begin{subfigure}[h]{2.4cm}
\centering
\includegraphics[width=2.4cm]{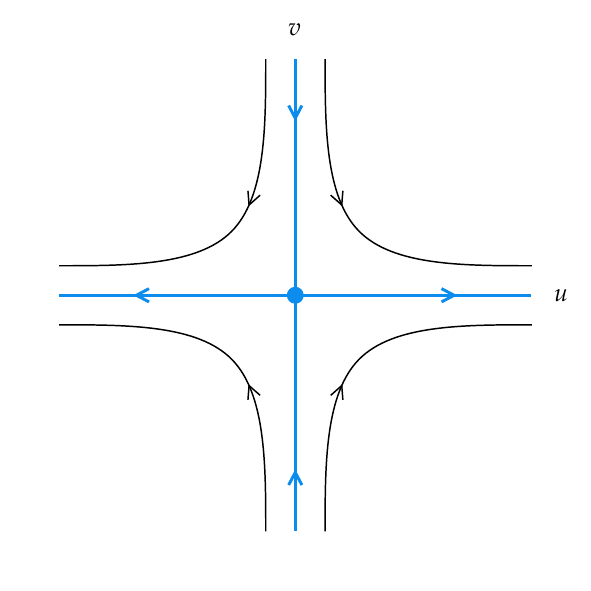}
\caption*{(L19)}
\end{subfigure}
\begin{subfigure}[h]{2.4cm}
\centering
\includegraphics[width=2.4cm]{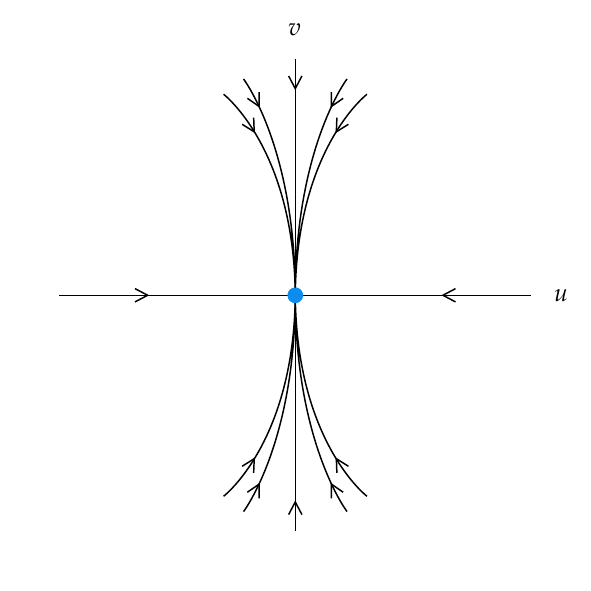}
\caption*{(L20)}
\end{subfigure}
\begin{subfigure}[h]{2.4cm}
\centering
\includegraphics[width=2.4cm]{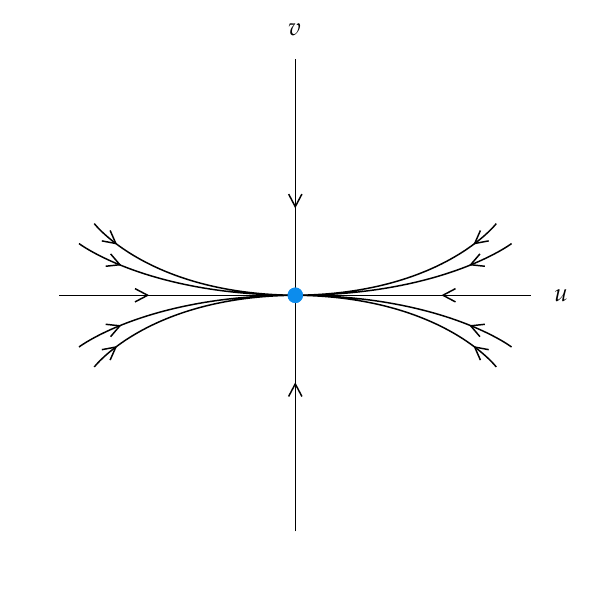}
\caption*{(L21)}
\end{subfigure}
\begin{subfigure}[h]{2.4cm}
\centering
\includegraphics[width=2.4cm]{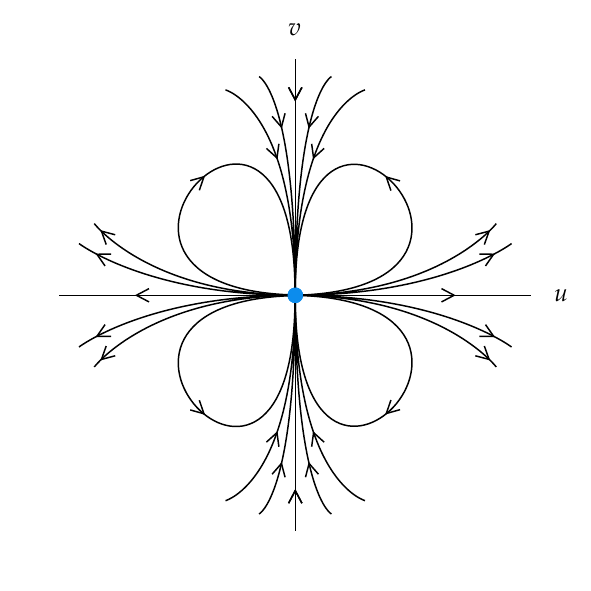}
\caption*{(L22)}
\end{subfigure}
\begin{subfigure}[h]{2.4cm}
\centering
\includegraphics[width=2.4cm]{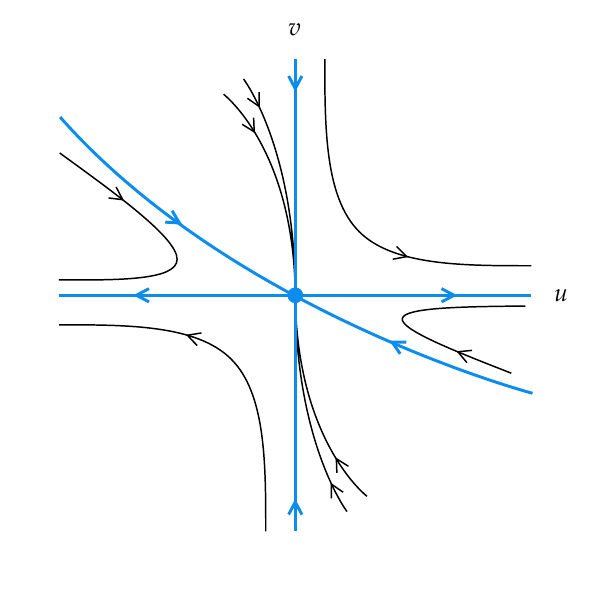}
\caption*{(L23)}
\end{subfigure}
\begin{subfigure}[h]{2.4cm}
\centering
\includegraphics[width=2.4cm]{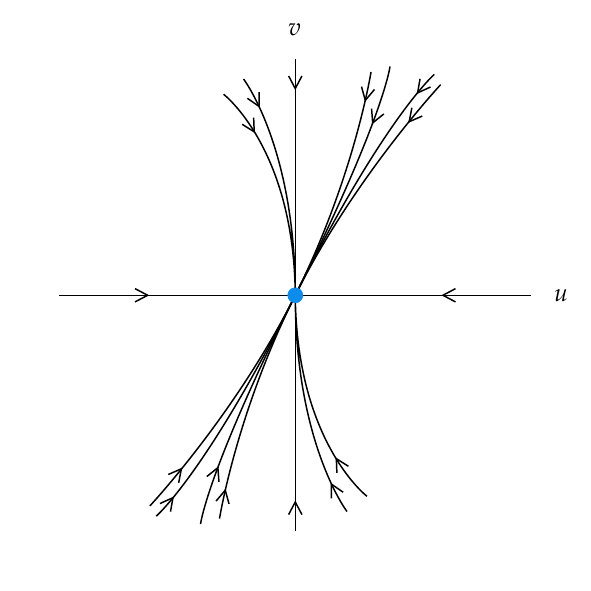}
\caption*{(L24)}
\end{subfigure}
\begin{subfigure}[h]{2.4cm}
\centering
\includegraphics[width=2.4cm]{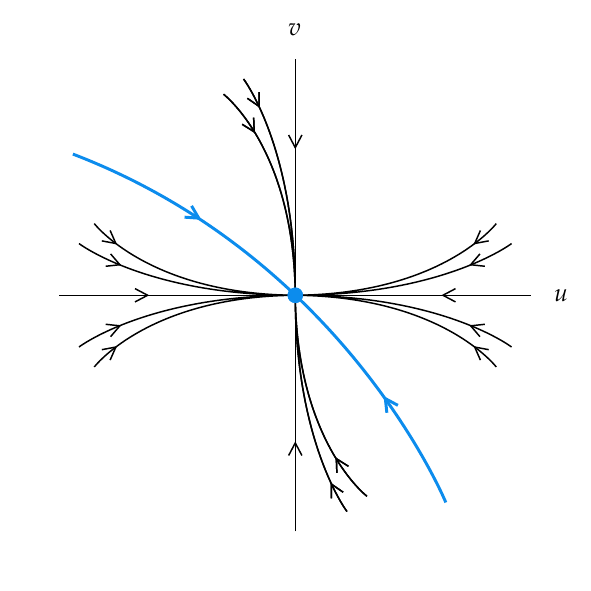}
\caption*{(L25)}
\end{subfigure}
\end{figure}

\begin{figure}[H]
\centering
\ContinuedFloat
\begin{subfigure}[h]{2.4cm}
	\centering
	\includegraphics[width=2.4cm]{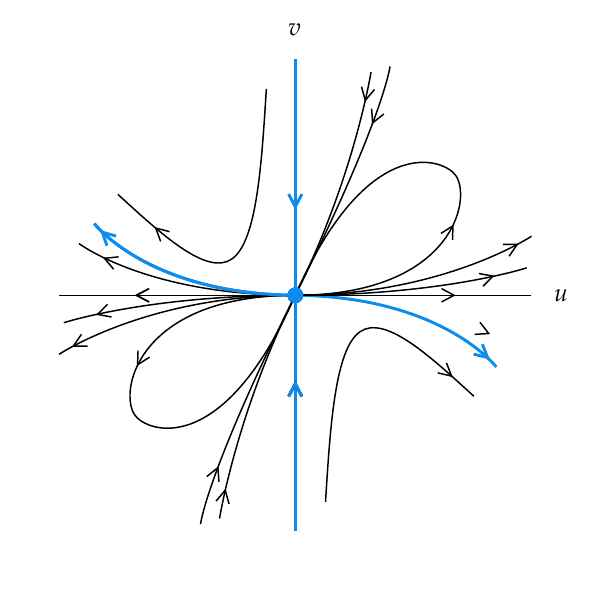}
	\caption*{(L26)}
\end{subfigure}
\begin{subfigure}[h]{2.4cm}
	\centering
	\includegraphics[width=2.4cm]{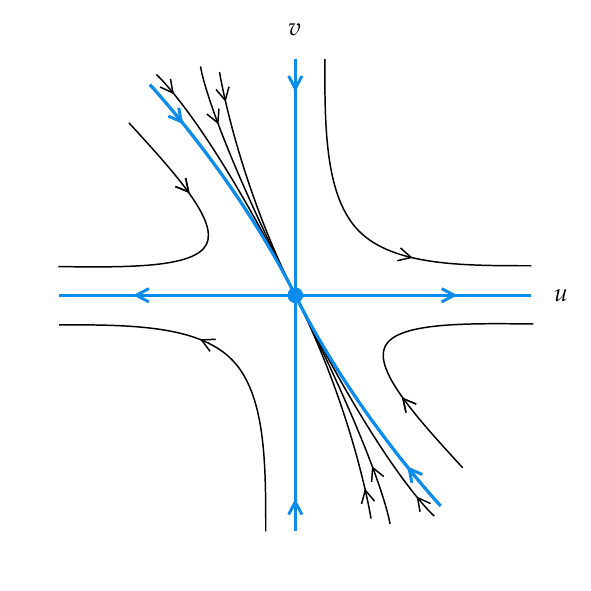}
	\caption*{(L27)}
\end{subfigure}
\begin{subfigure}[h]{2.4cm}
	\centering
	\includegraphics[width=2.4cm]{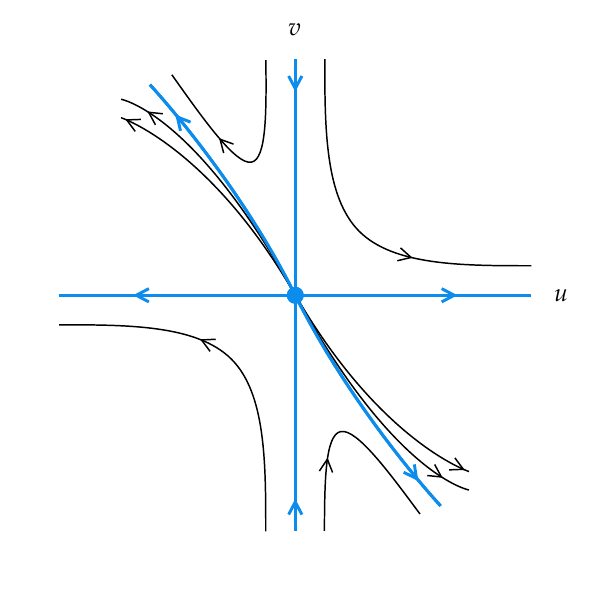}
	\caption*{(L28)}
\end{subfigure}
\begin{subfigure}[h]{2.4cm}
	\centering
	\includegraphics[width=2.4cm]{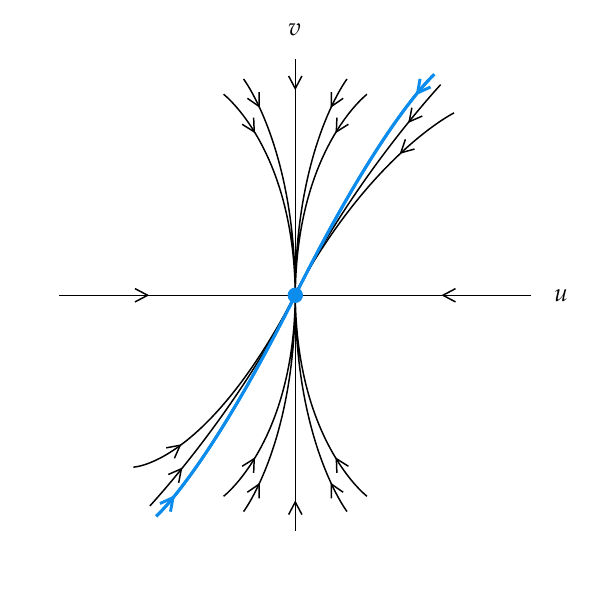}
	\caption*{(L29)}
\end{subfigure}
\begin{subfigure}[h]{2.4cm}
	\centering
	\includegraphics[width=2.4cm]{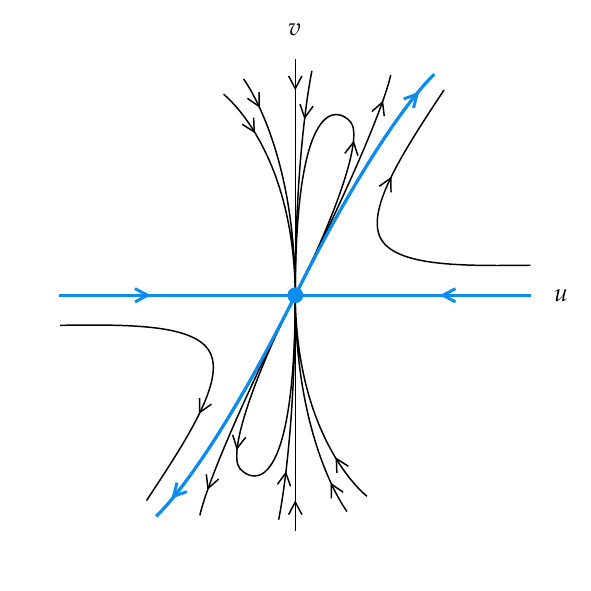}
	\caption*{(L30)}
\end{subfigure}
\begin{subfigure}[h]{2.4cm}
\centering
\includegraphics[width=2.4cm]{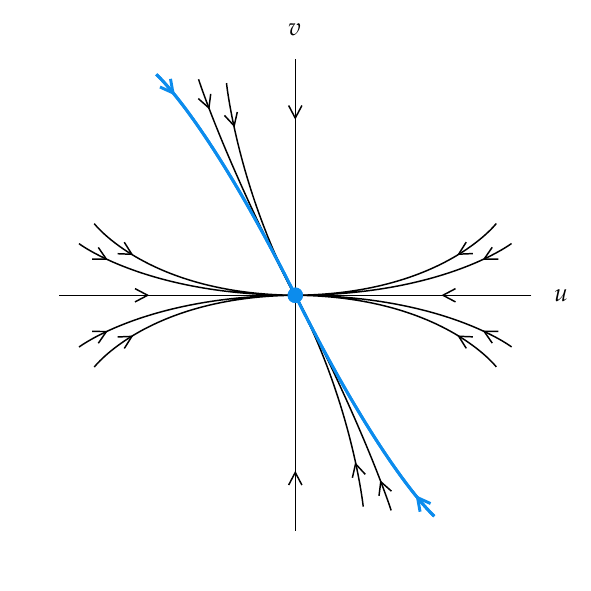}
\caption*{(L31)}
\end{subfigure}
\begin{subfigure}[h]{2.4cm}
\centering
\includegraphics[width=2.4cm]{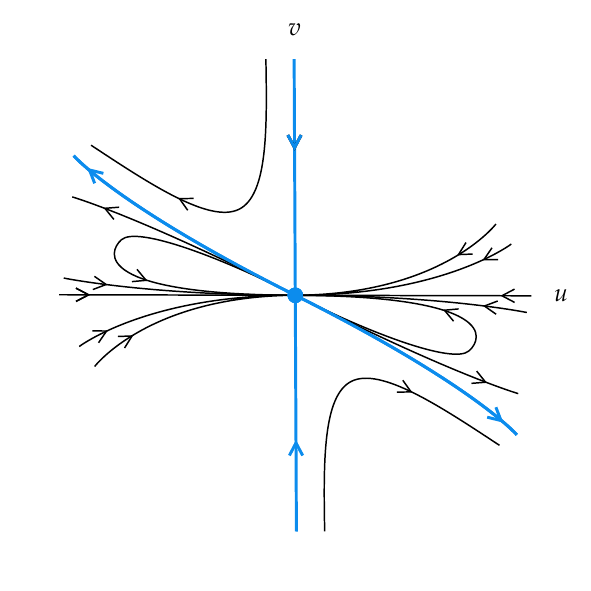}
\caption*{(L32)}
\end{subfigure}
\begin{subfigure}[h]{2.4cm}
\centering
\includegraphics[width=2.4cm]{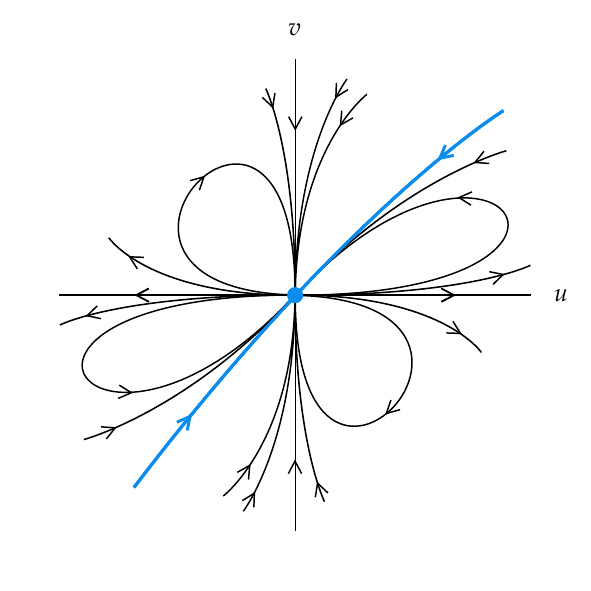}
\caption*{(L33)}
\end{subfigure}
\begin{subfigure}[h]{2.4cm}
\centering
\includegraphics[width=2.4cm]{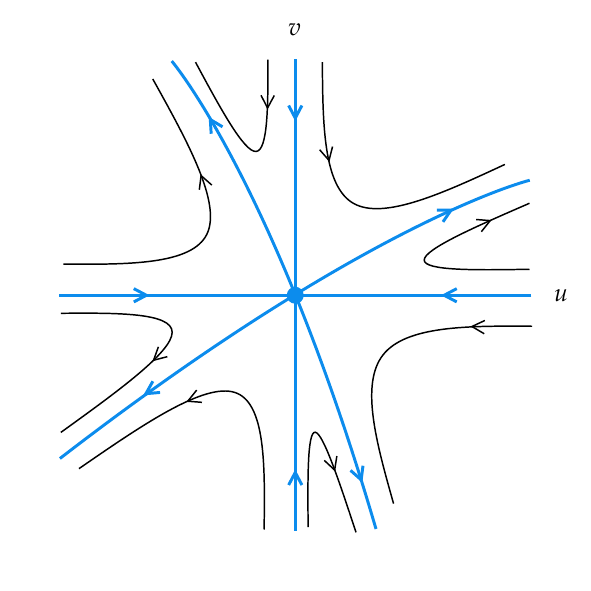}
\caption*{(L34)}
\end{subfigure}
\begin{subfigure}[h]{2.4cm}
\centering
\includegraphics[width=2.4cm]{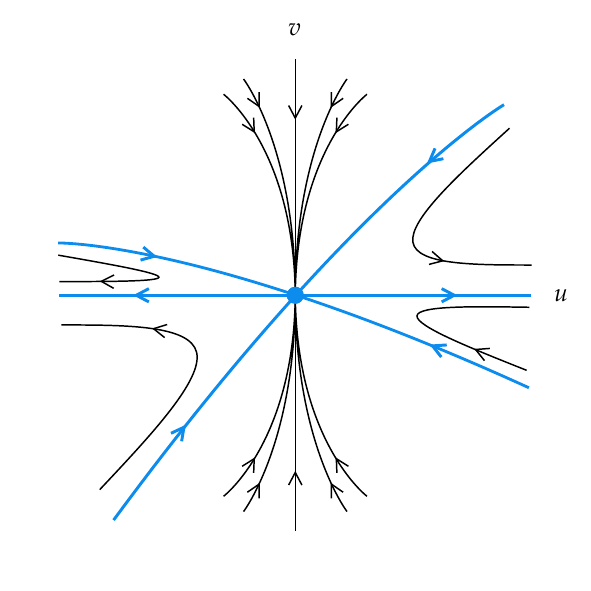}
\caption*{(L35)}
\end{subfigure}
\begin{subfigure}[h]{2.4cm}
\centering
\includegraphics[width=2.4cm]{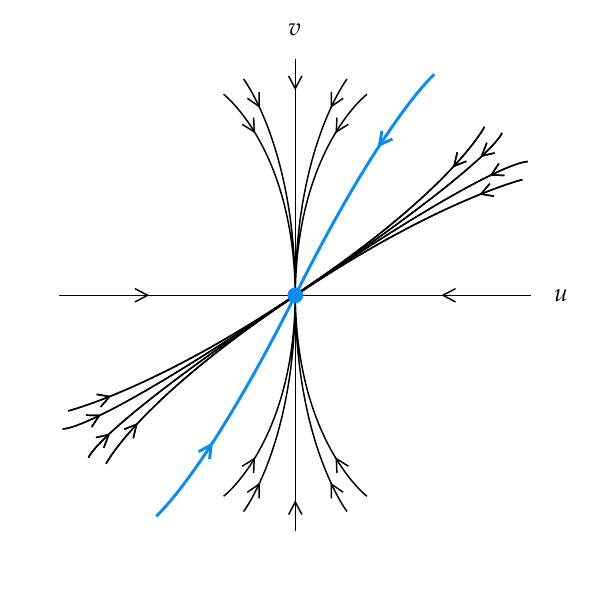}
\caption*{(L36)}
\end{subfigure}
\begin{subfigure}[h]{2.4cm}
\centering
\includegraphics[width=2.4cm]{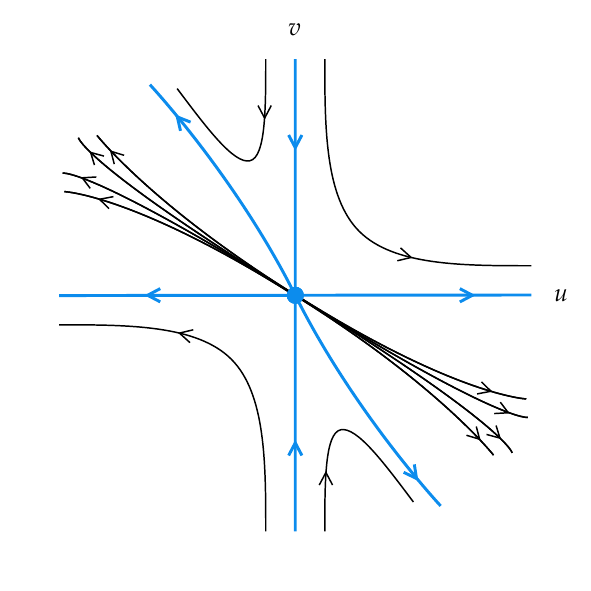}
\caption*{(L37)}
\end{subfigure}
\begin{subfigure}[h]{2.4cm}
\centering
\includegraphics[width=2.4cm]{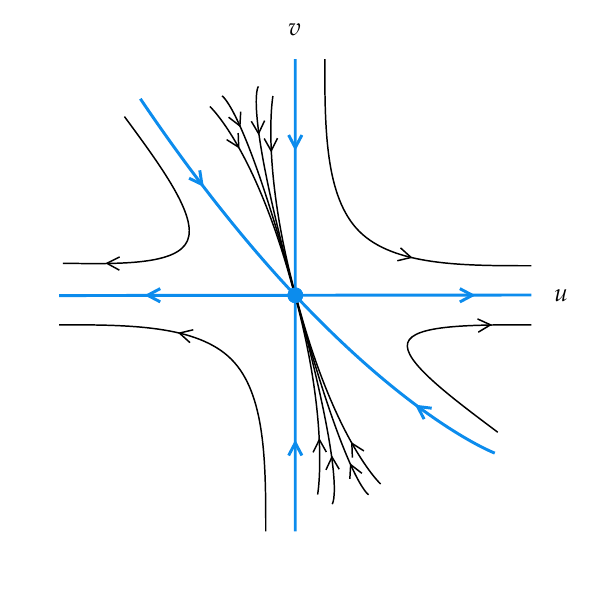}
\caption*{(L38)}
\end{subfigure}
\begin{subfigure}[h]{2.4cm}
\centering
\includegraphics[width=2.4cm]{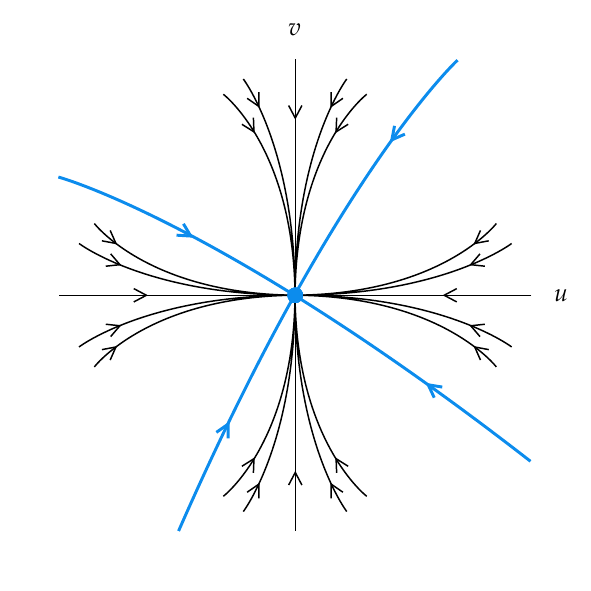}
\caption*{(L39)}
\end{subfigure}
\begin{subfigure}[h]{2.4cm}
\centering
\includegraphics[width=2.4cm]{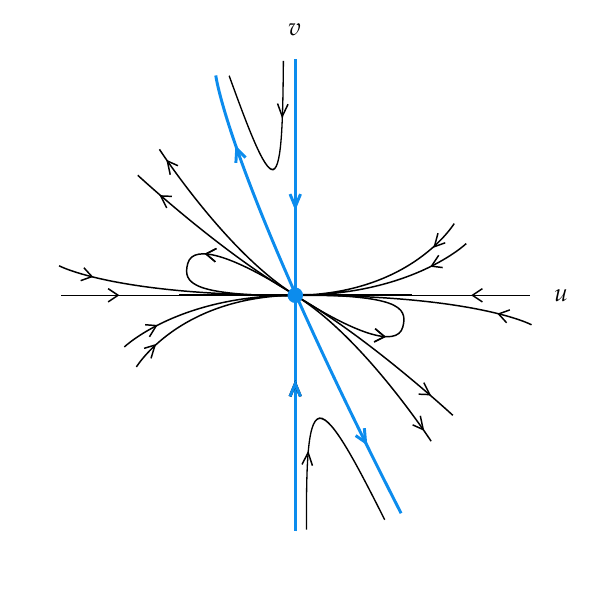}
\caption*{(L40)}
\end{subfigure}
\begin{subfigure}[h]{2.4cm}
\centering
\includegraphics[width=2.4cm]{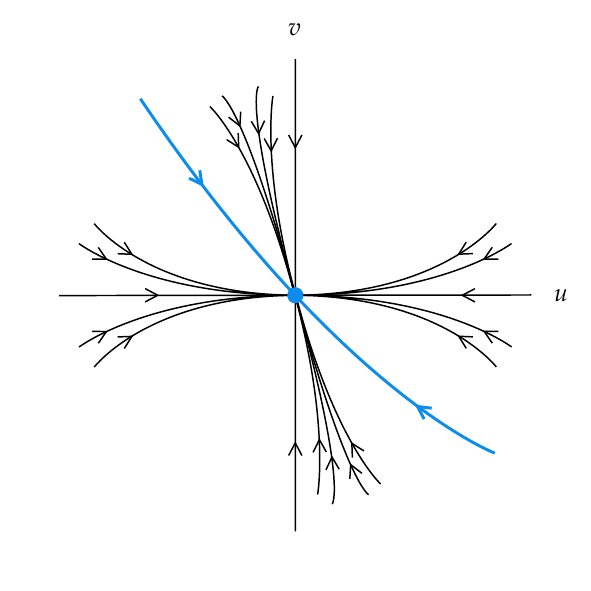}
\caption*{(L41)}
\end{subfigure}
\begin{subfigure}[h]{2.4cm}
\centering
\includegraphics[width=2.4cm]{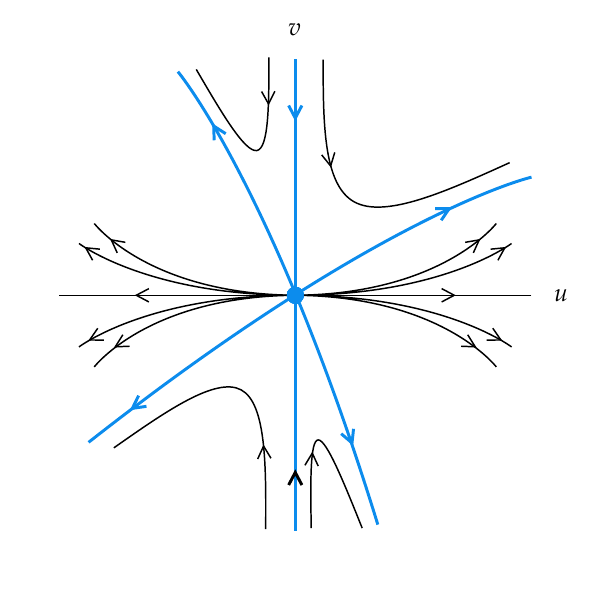}
\caption*{(L42)}
\end{subfigure}
\begin{subfigure}[h]{2.4cm}
\centering
\includegraphics[width=2.4cm]{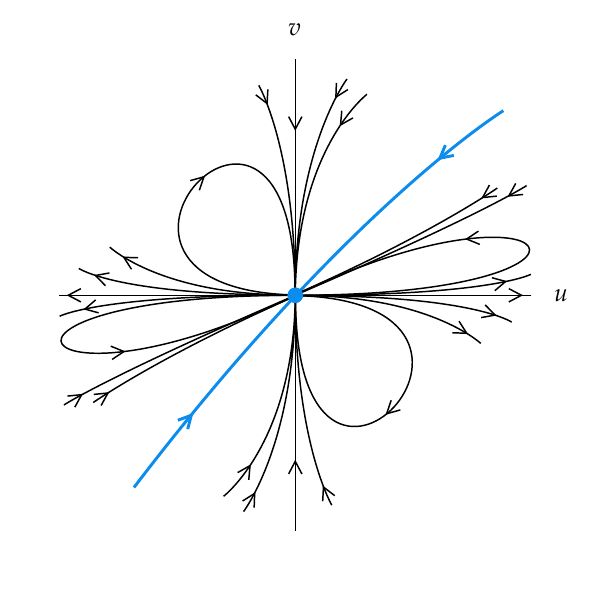}
\caption*{(L43)}
\end{subfigure}
\begin{subfigure}[h]{2.4cm}
\centering
\includegraphics[width=2.4cm]{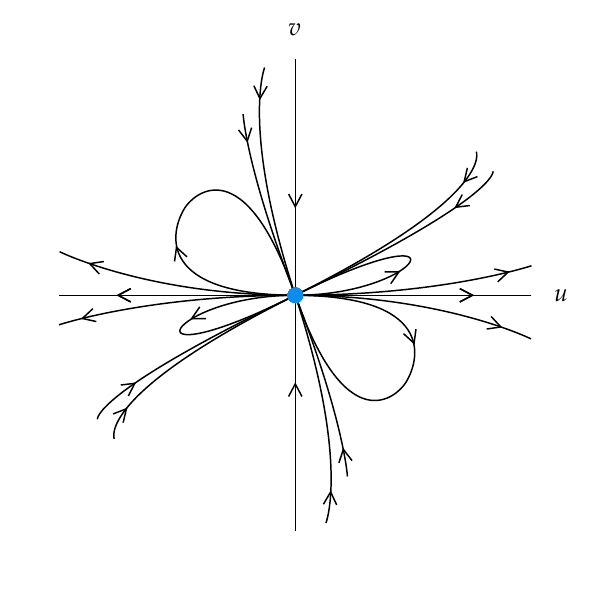}
\caption*{(L44)}
\end{subfigure}
\begin{subfigure}[h]{2.4cm}
\centering
\includegraphics[width=2.4cm]{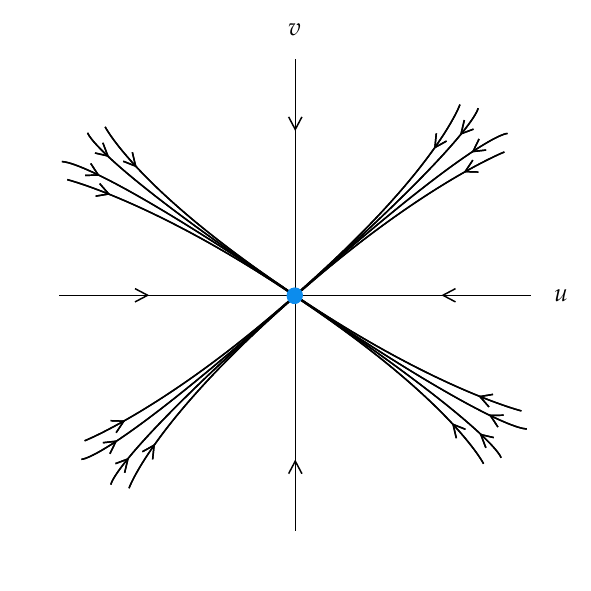}
\caption*{(L45)}
\end{subfigure}
\begin{subfigure}[h]{2.4cm}
\centering
\includegraphics[width=2.4cm]{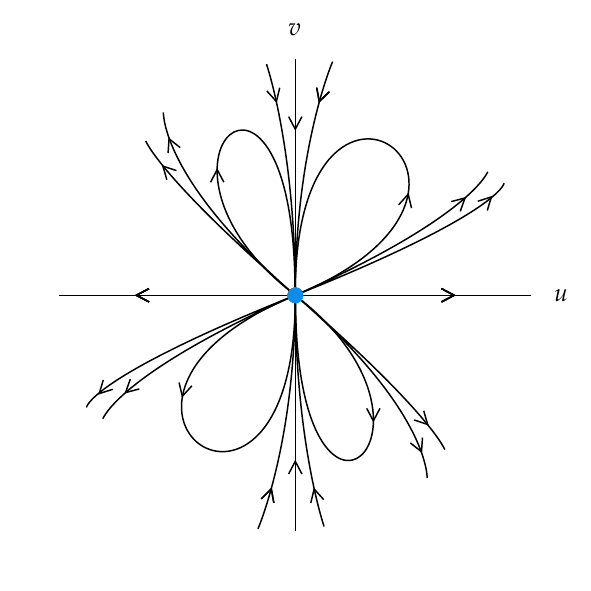}
\caption*{(L46)}
\end{subfigure}
\begin{subfigure}[h]{2.4cm}
\centering
\includegraphics[width=2.4cm]{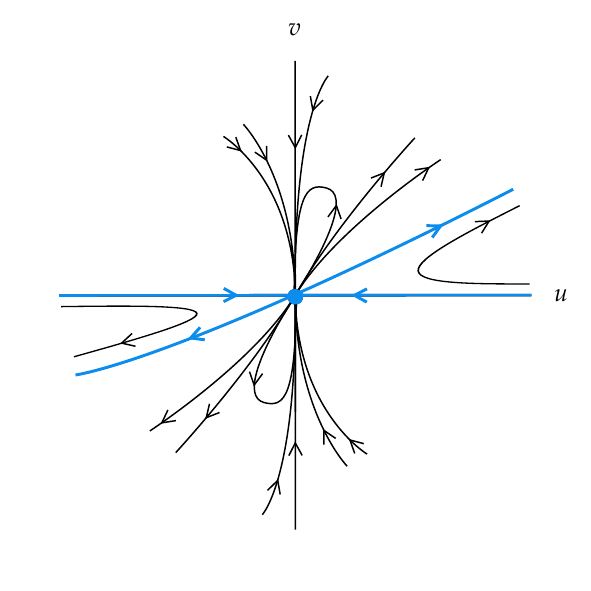}
\caption*{(L47)}
\end{subfigure}
\caption{Local phase portraits of the infinite singular point $O_1$.}
\label{fig:localppO1}
\end{figure}

For system \eqref{systemU1}, if $c_1\neq0$ the characteristic polynomial is $\mathcal{F}=-c_1uv^2\not \equiv 0$, so the origin is a nondicrital singular point. If $c_1=0$ the characteristic polynomial is $\mathcal{F}=-c_3u^2v-c_2u^3v-c_0uv^3$, which cannot be identically zero because $c_2\neq0$. We will study this two cases separately.

\subsection{Case $c_1$ non-zero}\label{subsec:O1_c1nonzero}

Consider $c_1\neq0$. We introduce the new variable $w_1$ by means of the variable change $uw_1=v$, and get the system
\begin{equation}\label{sis_blowup2}
\begin{split}
\dot{u}&=(c_0-a_0)u^3w_1^2+c_3(\mu+1)u^3w_1+c_2(\mu+1)u^3+c_1(\mu+1)u^2w_1,\\
\dot{w_1}&=-c_0u^2w_1^3-c_3u^2w_1^2-c_2u^2w_1-c_1uw_1^2.
\end{split}
\end{equation}
Now we cancel the common factor $u$, getting the system
\begin{equation}\label{sis_blowup3}
\begin{split}
\dot{u}&=(c_0-a_0)u^2w_1^2+c_3(\mu+1)u^2w_1+c_2(\mu+1)u^2+c_1(\mu+1)uw_1,\\
\dot{w_1}&=-c_0uw_1^3-c_3uw_1^2-c_2uw_1-c_1w_1^2,
\end{split}
\end{equation}
for which the only singular point on the exceptional divisor is the origin, and it is linearly zero, so we have to repeat the process. Now the characteristic polynomial is $\mathcal{F}=-c_2(\mu+2)u^2w_1- c_1(\mu+2)uw_1^2$, so the origin is a nondicritical singular point if $\mu\neq-2$, and it is dicritical if $\mu=-2$. In both cases we introduce the new variable $w_2$ doing the change $uw_2=w_1$, obtaining the system
\begin{equation}\label{sis_blowup4}
\begin{split}
\dot{u}&=(c_0-a_0)u^4w_2^2+c_3(\mu+1)u^3w_2+c_2(\mu+1)u^2+c_1(\mu+1)u^2w_2,\\
\dot{w_2}&=(a_0-2c_0)u^3w_2^3-c_3(\mu+2)u^2w_2^2-c_1(\mu+2)uw_2^2-c_2(\mu+2)uw_2.
\end{split}
\end{equation}
In the nondicritical case we have to cancel the common factor $u$ obtaining
\begin{equation}\label{sis_blowup5}
\begin{split}
\dot{u}&=(c_0-a_0)u^3w_2^2+c_3(\mu+1)u^2w_2+c_2(\mu+1)u+c_1(\mu+1)uw_2,\\
\dot{w_2}&=(a_0-2c_0)u^2w_2^3-c_3(\mu+2)uw_2^2-c_1(\mu+2)w_2^2-c_2(\mu+2)w_2.
\end{split}
\end{equation}
But in the dicritical case, when $\mu=-2$, we must cancel the common factor $u^2$ from system \eqref{sis_blowup4}, and we obtain the system
\begin{equation}\label{sis_blowup6}
\begin{split}
\dot{u}&=(c_0-a_0)u^2w_2^2-c_3uw_2-c_2-c_1w_2,\\
\dot{w_2}&=(a_0-2c_0)uw_2^3.
\end{split}
\end{equation}

\subsubsection{Nondicritical case}

In this case it is necessary to study the singular points of system \eqref{sis_blowup5} on the exceptional divisor. The origin is always a singular point, and we denote it by $Q_0$. As $\mu+2\neq0$ there is another singular point, $Q_1=\left( 0,-c_2/c_1\right)$ and we  determine their local phase portraits.

The origin $Q_0$ is always hyperbolic. It is a saddle if $\mu\in(-\infty, -2)\cup (-1, +\infty)$, a stable node if $c_2>0$ and $\mu\in (-2,-1)$, and an unstable node if $c_2<0$ and $\mu\in (-2,-1)$.

The singular point $Q_1$ is semi-hyperbolic. If $c_2(a_0+c_0\mu)>0$ then $Q_1$ is a topological saddle, if $c_2(\mu+2)>0$ and $(\mu+2)(a_0+c_0\mu)<0$ then it is a topological unstable node, and if $c_2(\mu+2)<0$ and $(\mu+2)(a_0+c_0\mu)>0$ it is a topological stable node. These conditions come together in the next 7 subcases.

\begin{enumerate}[(1)]
\item If $\mu \in (-\infty,-2)\cup (-1,+\infty)$ and $c_2(a_0+c_0\mu)>0$, then $Q_0$ is a saddle and $Q_1$ a topological saddle.
In order to determine the phase portrait around the $w_2$-axis for system \eqref{sis_blowup5}, we must fix the sign of $c_2$, which determines the position of the singular point $Q_1$, and also the sign of $\mu+1$, which determines the sense of the flow along the $x$-axis. Thus we deal with the following subcases.
	
\vspace{0.1cm}
Subcase (1.1). Let $\mu<-2$ (so $\mu+1<0$) and $c_2>0$.  Then the singular point $Q_1$ is on the negative part of the $w_2$-axis and the expression $\dot{u}\mid_{w_2=0}=c_2(\mu+1)u$ determines the sense of the flow, so the phase portrait is the one in Figure \ref{fig:blowup_c1_1.1}(a).
	
To return to system \eqref{sis_blowup4} we multiply by $u$, thus the orbits in the second and third quadrants change their orientation. Moreover all the points on the $w_2$-axis become singular points. The resultant phase portrait is given in Figure \ref{fig:blowup_c1_1.1}(b).
	
When going back to the $(u,w_1)$-plane the second and the third quadrants swap from the $(u,w_2)$-plane, and the exceptional divisor shrinks to a point, and hence the orbits are slightly modified. Attending to the expresions of $\dot{u}\mid_{w_1=0}=c_2(\mu+1)u^2$ and $\dot{w_1}\mid_{u=0}=-c_1w_1^2$, we know the sense of the flow along the axes. Following the results mentioned in Subsection \ref{subsec:blowup}, the separatrix of the singular point $Q_1=\left( 0,-c_2/c_1\right)$ in the $(u,w_2)$-plane, becomes the separatrix with slope $-c_2/c_1$ in the $(u,w_1)$-plane. We get the phase portrait given in Figure \ref{fig:blowup_c1_1.1}(c), and multiplying again by $u$, the one given in Figure \ref{fig:blowup_c1_1.1}(d).
	
\begin{figure}[H]
\centering
\begin{subfigure}[h]{3cm}
\centering
\includegraphics[width=3cm]{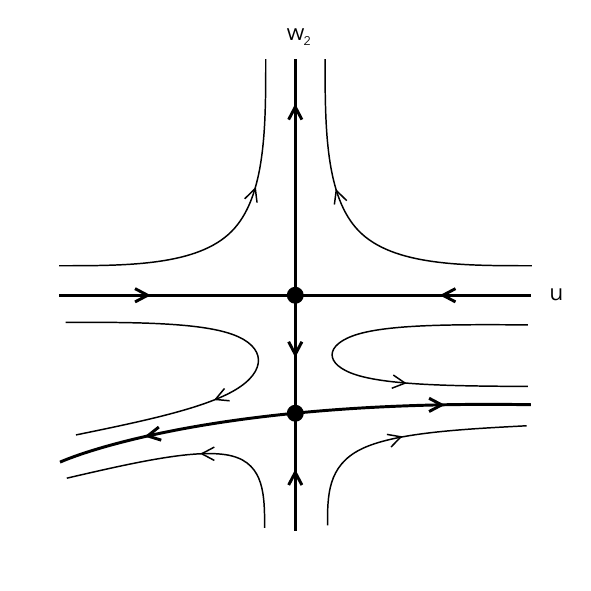}
\caption*{(a)}
\end{subfigure}
\begin{subfigure}[h]{3cm}
\centering
\includegraphics[width=3cm]{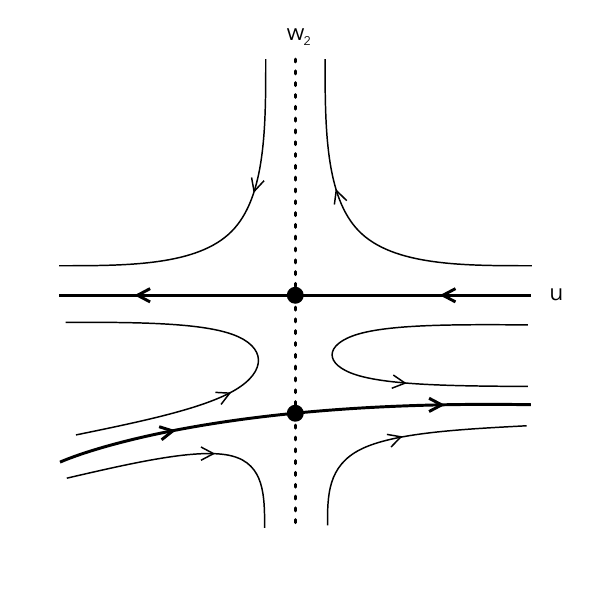}\\
\caption*{(b)}
\end{subfigure}
\begin{subfigure}[h]{3cm}
\centering
\includegraphics[width=3cm]{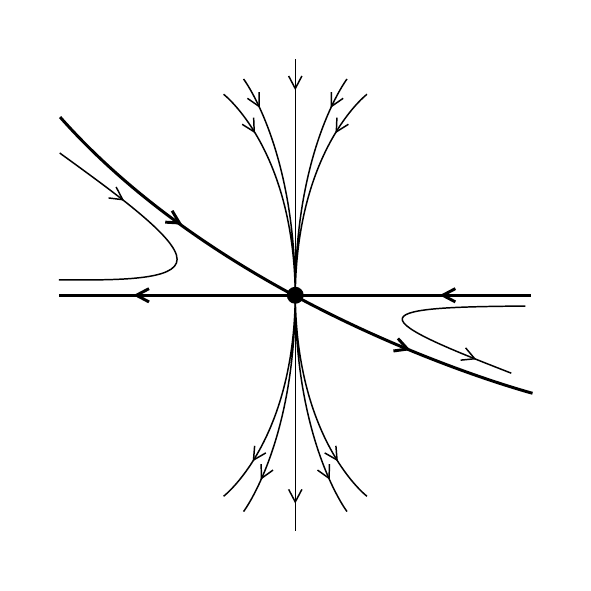}
\caption*{(c)}
\end{subfigure}
\begin{subfigure}[h]{3cm}
\centering
\includegraphics[width=3cm]{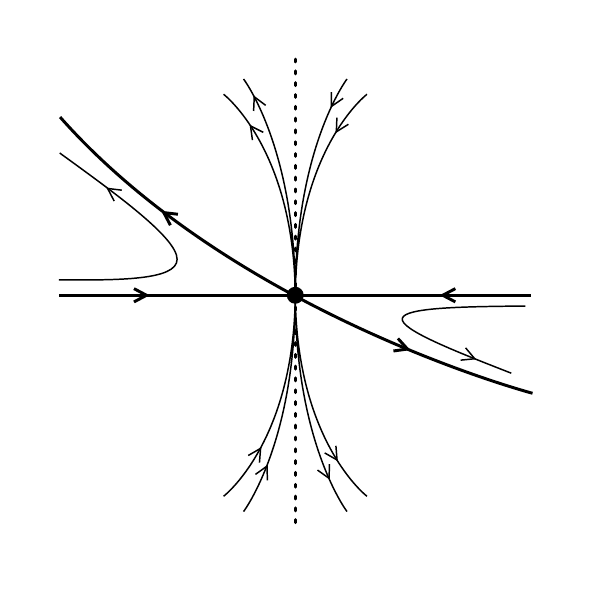}
\caption*{(d)}
\end{subfigure}
\caption{Desingularization of the origin of system \eqref{systemU1} with $c_1\neq0$. Nondicritical case (1.1).}
\label{fig:blowup_c1_1.1}
\end{figure}
	
Finally we must go back to the $(u,v)$-plane, swapping the second and the third quadrants and contracting the exceptional divisor to the origin. The orbits tending to the origin in forward or backward time, became orbits tending to the origin in forward or backward time with slope zero, i.e. tangent to the $u$-axis. According to the expressions $\dot{u}\mid_{v=0}=c_1\mu v^2 -a_0 v^3$ and $\dot{u}\mid_{v=0}=c_2(\mu+1)u^3$, which determine the sense of the flow along the axes, we get the local phase portrait at the origin for system \eqref{systemU1} given in Figure \ref{fig:localppO1}(L1).

\vspace{0.1cm}
	
Subcase (1.2). If we maintain $\mu<-2$ but take $c_2<0$, the reasoning is essentially similar to the one we have given in the previous case, and we obtain the phase portrait (L2) of Figure \ref{fig:localppO1}.

\vspace{0.1cm}

Subcase (1.3). Let $\mu>-1$  and $c_2>0$. This determines the position of the singular point $Q_1$ and the sense of the flow along the axes, so around the $w_2$-axis we obtain the phase portrait given in Figure \ref{fig:blowup_c1_1.3}(a).
	
As in the previous subcase we multiply by $u$ obtaining the phase portrait given in Figure \ref{fig:blowup_c1_1.3}(b), as the orbits in the second and third quadrants change their orientation and all the point in the $w_2$-axis become singular points.
	
In order to undo the variable change we analyze the sense of the flow along the axes according to the expression $\dot{u}\mid_{w_1=0}= c_2(\mu+1)u^2$, which determines that the flow goes in the positive sense of the $u$-axis, and $\dot{w_1}\mid_{u=0}=-c_1w_1^2$ which determines that the flow goes in the negative sense of the $w_1$-axis. Moreover we swap the second and third quadrants, and press the exceptional divisor into the origin, modifying the orbits. We obtain the phase portrait given in Figure \ref{fig:blowup_c1_1.3}(c). Multiplying again by $u$ we obtain the phase portrait \ref{fig:blowup_c1_1.3}(d).
	
Now we have to undo de second variable change. We note that $\dot{u}\mid_{v=0}=c_2(\mu+1)u^3$, so the flow gets away from the origin along the $u$-axis, nevertheless the sense of the flow along the $v$-axis is not determined by $\dot{v}\mid_{u=0}=c_1\mu v^2-a_0v^3$, it depends on the constant $\mu$. If $\mu>0$ the flow goes in the positive sense of the $v$-axis, if $\mu<0$ in the opposite sense and, if $\mu=0$ the flow goes to the origin. Thus we must distinguish three subcases and in each of them, modifying the orbits properly, we obtain, respectively, the phase portraits given in Figure \ref{fig:localppO1}(L3), (L4) and (L5).
	
	\vspace{0.1cm}
Subcase (1.4). Let $\mu>-1$ and $c_2<0$. By a similar reasoning to the previous one, we obtain the phase portraits (L6) and (L7) of Figure \ref{fig:localppO1}.
	
\begin{figure}[H]
\centering
\begin{subfigure}[h]{3cm}
\centering
\includegraphics[width=3cm]{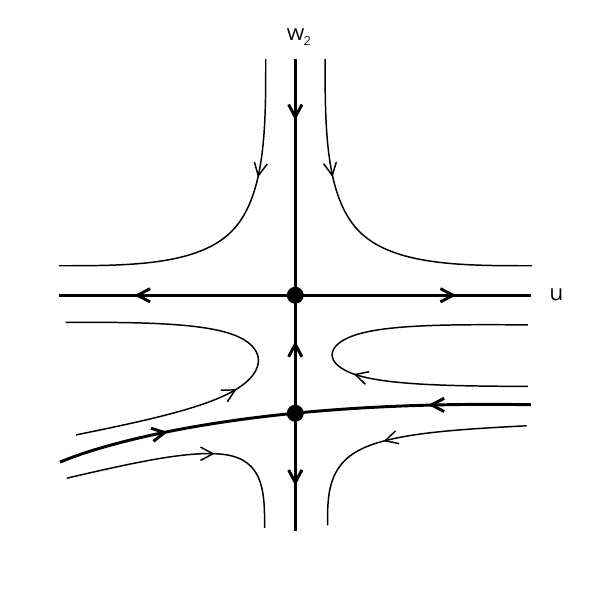}
\caption*{(a)}
\end{subfigure}
\begin{subfigure}[h]{3cm}
\centering
\includegraphics[width=3cm]{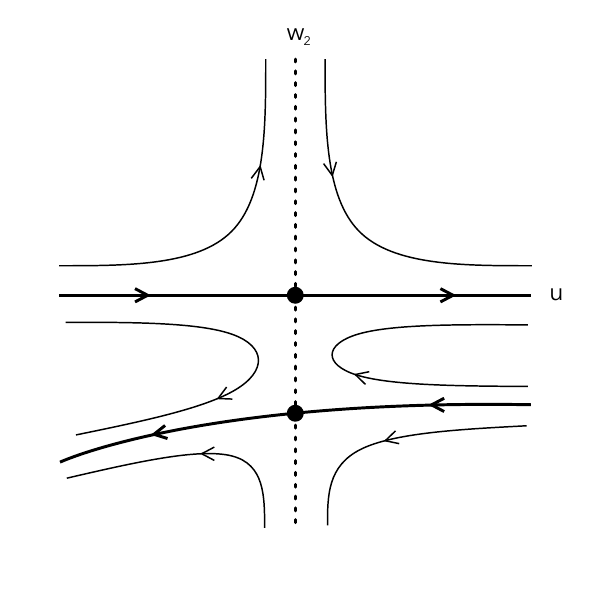}\\
\caption*{(b)}
\end{subfigure}
\begin{subfigure}[h]{3cm}
\centering
\includegraphics[width=3cm]{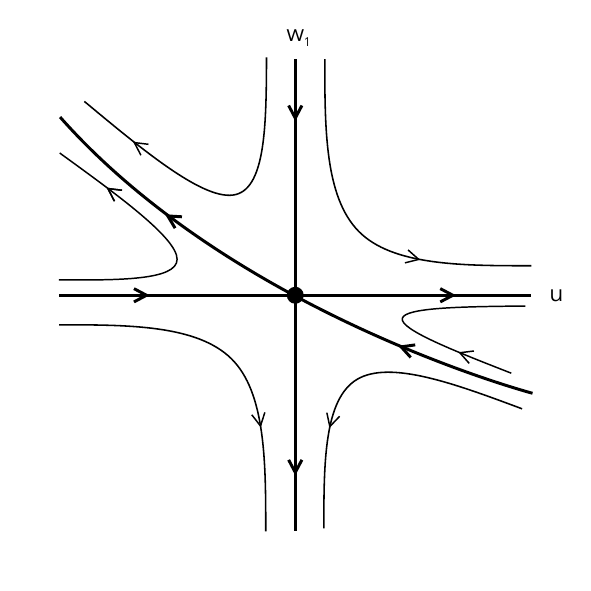}
\caption*{(c)}
\end{subfigure}
\begin{subfigure}[h]{3cm}
\centering
\includegraphics[width=3cm]{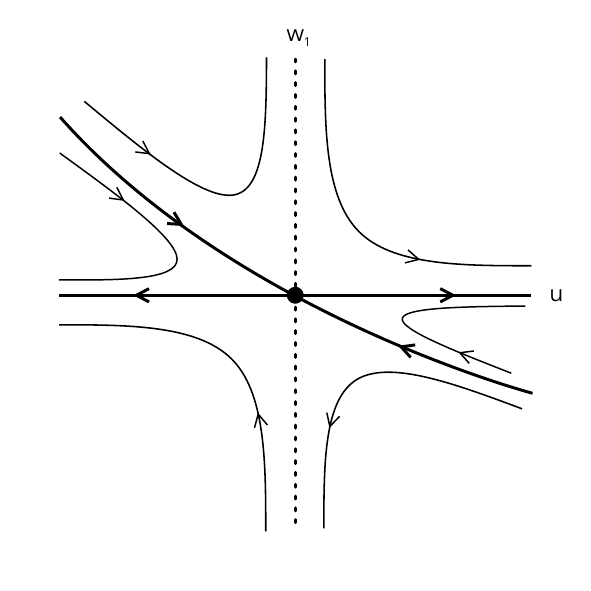}
\caption*{(d)}
\end{subfigure}
\caption{Desingularization of the origin of system \eqref{systemU1} with $c_1\neq0$. Nondicritical case (1.3).}
\label{fig:blowup_c1_1.3}
\end{figure}

\item  If $\mu \in (-\infty,-2)\cup (-1,+\infty)$, $c_2(\mu+2)>0$ and $(\mu+2)(a_0+c_0\mu)<0$, then $Q_0$ is a saddle and $Q_1$ a topological unstable node. We must distinguish two cases according with the sign of $c_2$.
	
\vspace{0.1cm}
Subcase (2.1). We consider $c_2<0$ so $\mu<-2$ and $a_0+c_0\mu>0$. The singular point $Q_1$ is on the positive $w_2$-axis and it is an unstable node, so the sense of the flow along the axes is determined, and we obtain the phase portrait given in Figure \ref{fig:blowup_c1_2_1}(a). Multiplying by $u$ we obtain Figure \ref{fig:blowup_c1_2_1}(b).
	
We see that for system \eqref{sis_blowup3} the flow goes in the negative sense along the $w_1$-axis and in the positive sense along the $u$-axis, according to the expressions $\dot{u}\mid_{w_1=0}=c_2(\mu+1)u^2 $ and $\dot{w_1}\mid_{u=0}=-c_1w_1^2$. We undo the variable change modifying the orbits properly, and we note that it must exist an hyperbolic or elliptic sector in both first and third quadrants, thus it can appear the configuration given in Figure \ref{fig:blowup_c1_2_1}(c) or the one given in Figure \ref{fig:blowup_c1_2_1_b}(a). From the first of them multiplying by $u$ we obtain \ref{fig:blowup_c1_2_1}(d), and if we undo the variable change in a similarly way than in the previous cases, we get the phase portrait in Figure \ref{fig:localppO1}(L8).
	
If we consider hyperbolic sectors we continue undoing the blow up from  Figure \ref{fig:blowup_c1_2_1_b}(a), obtaining successively the phase portraits \ref{fig:blowup_c1_2_1_b}(b) and \ref{fig:blowup_c1_2_1_b}(c).
However in our study we have proved, by means of the index theory, that only the case with elliptic sectors is feasible in the global phase portraits obtained. More detailed explanations will be given in Section \ref{sec:global} but, roughly speaking we know that the index of the vector field on the sphere must be 2, and this index is the sum of the indices of all singularities, which depend on the sectors that they have, so if the index is 2 considering two elliptic sectors in a particular singular point, it cannot be 2 if we change those sectors for hyperbolic ones. In conclusion the only phase portrait that will appear in this case is (L8) of Figure \ref{fig:localppO1}.
	
\begin{figure}
\centering
\begin{subfigure}[h]{3cm}
\centering
\includegraphics[width=3cm]{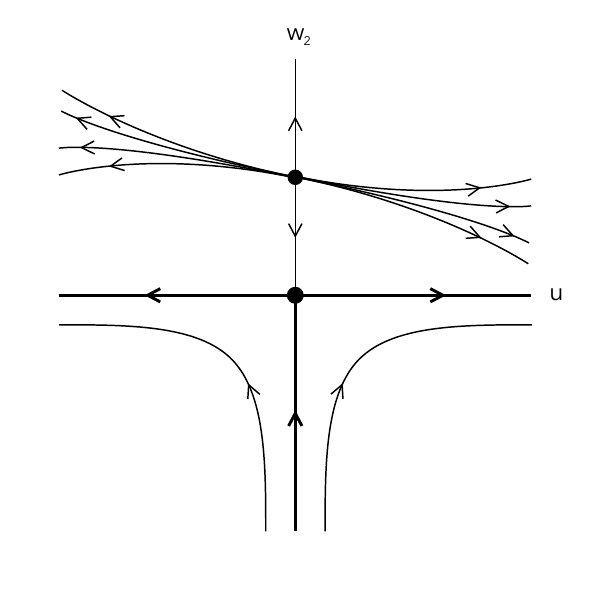}
\caption*{(a)}
\end{subfigure}
\begin{subfigure}[h]{3cm}
\centering
\includegraphics[width=3cm]{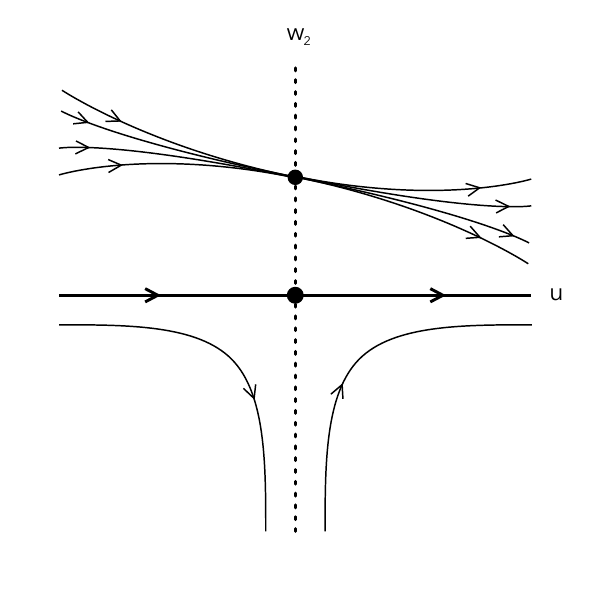}\\
\caption*{(b)}
\end{subfigure}
\begin{subfigure}[h]{3cm}
\centering
\includegraphics[width=3cm]{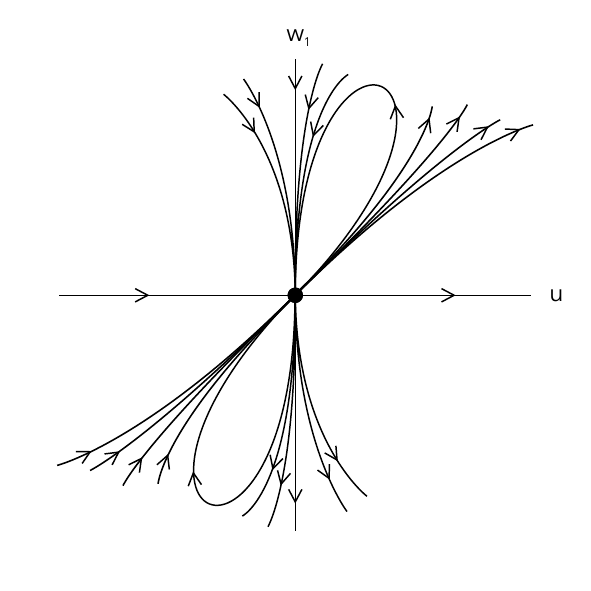}
\caption*{(c)}
\end{subfigure}
\begin{subfigure}[h]{3cm}
\centering
\includegraphics[width=3cm]{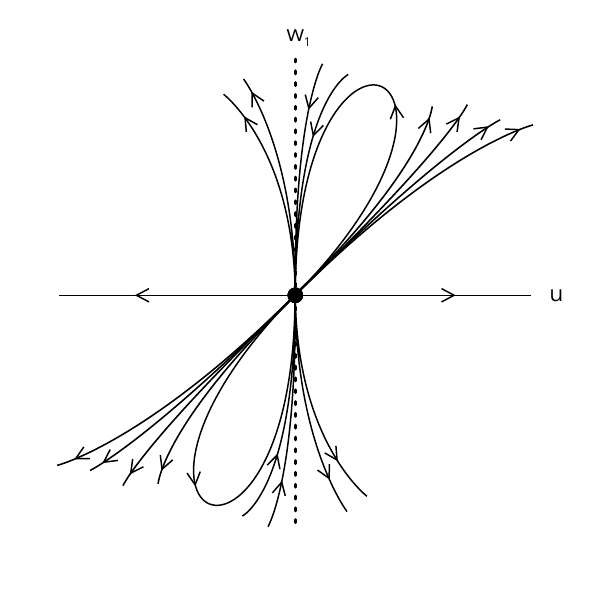}
\caption*{(d)}
\end{subfigure}
\caption{Desingularization of the origin of system \eqref{systemU1} with $c_1\neq0$. Nondicritical case (2.1).}
\label{fig:blowup_c1_2_1}
\end{figure}
	
\begin{figure}
\centering
\begin{subfigure}[h]{3cm}
\centering
\includegraphics[width=3cm]{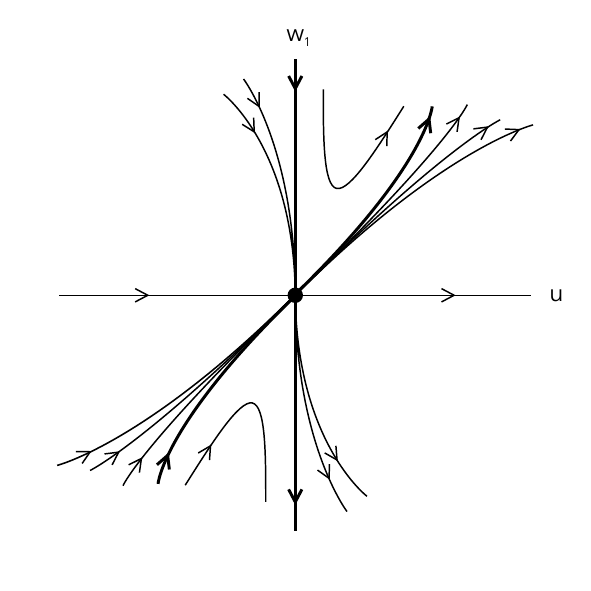}
\caption*{(c)}
\end{subfigure}
\begin{subfigure}[h]{3cm}
\centering
\includegraphics[width=3cm]{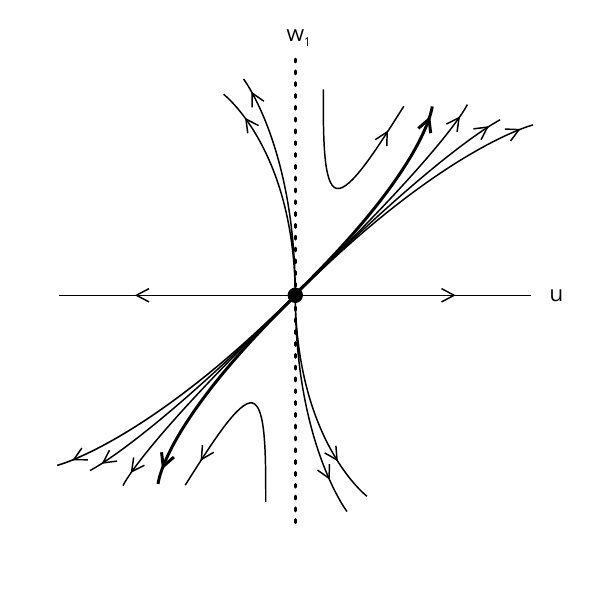}
\caption*{(d)}
\end{subfigure}
\begin{subfigure}[h]{3cm}
\centering
\includegraphics[width=3cm]{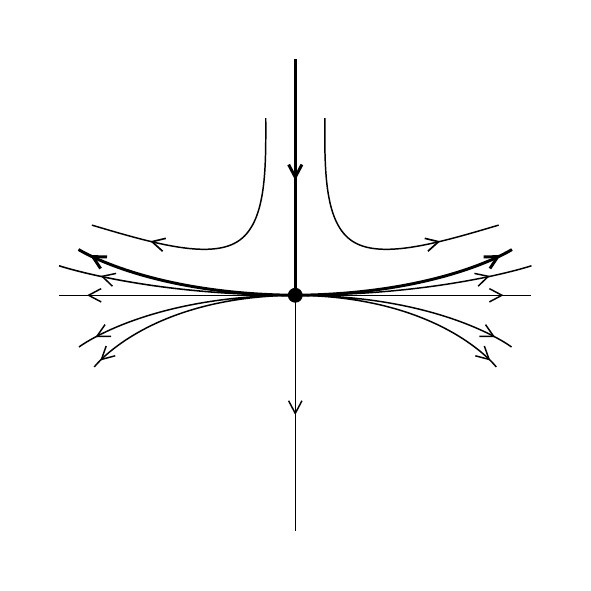}\\
\caption*{(b)}
\end{subfigure}
\caption{Desingularization of the origin of system \eqref{systemU1} with $c_1\neq0$. Alternative to nondicritical case (2.1).}
\label{fig:blowup_c1_2_1_b}
\end{figure}
	
From now on we will omit the reasonings about how to undo the variable changes for obtaining the final phase portrait, because they are similar to the ones of the previous cases. The results obtained are the following.

\vspace{0.1cm}
Subcase (2.2). Let $c_2>0$ so $\mu>-1$ and $a_0+c_0\mu<0$. We obtain the phase portraits (L9) and (L10) of Figure \ref{fig:localppO1}.  In (L9) it is possible to consider hyperbolic sectors instead of the elliptic ones, but applying index theory to the global phase portraits obtained in our study, we note that only the phase portrait with elliptic sectors is feasible.

\vspace{0.1cm}
\item  If $\mu \in (-\infty,-2)\cup (-1,+\infty)$, $c_2(\mu+2)<0$ and $(\mu+2)(a_0+c_0\mu)>0$, then $Q_0$ is a saddle and $Q_1$ a topological stable node. If $c_2>0$ we obtain the phase portrait (L11) of Figure \ref{fig:localppO1}, and if $c_2<0$ we obtain the phase portraits (L12), (L13) and (L14) of Figure \ref{fig:localppO1}. In (L11) and (L12) it would be possible that the elliptic sectors appearing were hyperbolic sectors, but again we have proved that only the elliptic option is feasible according to the index theory.

\vspace{0.1cm}
\item If $c_2>0$, $\mu\in (-2,-1)$ and $a_0+c_0\mu>0$, then $Q_0$ is a stable node and $Q_1$ a topological saddle.  We obtain the phase portrait (L15) of Figure \ref{fig:localppO1}.

\vspace{0.1cm}	
\item If $c_2>0$, $\mu\in (-2,-1)$ and $a_0+c_0\mu<0$, then $Q_0$ is a stable node and $Q_1$ a topological unstable node. We obtain the phase portrait (L11) of Figure \ref{fig:localppO1}.

\vspace{0.1cm}
\item  If $c_2<0$, $\mu\in (-2,-1)$ and $a_0+c_0\mu<0$, then $Q_0$ is an  unstable node and $Q_1$ a topological saddle. We obtain the phase portrait (L16) of Figure \ref{fig:localppO1}.
	
\vspace{0.1cm}
\item If $c_2<0$, $\mu\in (-2,-1)$ and $a_0+c_0\mu>0$, then $Q_0$ is an unstable node and $Q_1$ a topological stable node.  We obtain the phase portrait (L8) of Figure \ref{fig:localppO1}.
\end{enumerate}

\subsubsection{Dicritical case}

Now we must study the singular points on the exceptional divisor of system \eqref{sis_blowup6}. In this case there is only one singular point, $R=\left(0, -c_2/c_1\right) $ which is non-degenerated. We shall distinguish several subcases.

\begin{enumerate}[$\bullet$]

\item If $c_3^2<-4c_2(a_0-2c_0)$ and $c_2c_3<0$, then $P$ is a stable focus. We shall distinguish two cases depending on the sign of the parameter $ c_2$, because it determines if the singular point is on the positive or negative $u$-axis. We consider $c_2>0$. In Figure \ref{fig:blowup_c1_dicritical_1} the blowing-down process is represented.  The phase portrait around the $u$-axis is the one given in Figure 	\ref{fig:blowup_c1_dicritical_1}(a), multiplying by $u^2$ we obtain (b), undoing the second variable change we obtain (c), multiplying by $u$ we get (d) and finally, undoing the first variable change we get the phase portrait (L11) of Figure \ref{fig:localppO1}. Taking $c_2<0$ and by the same method we obtain the phase portrait (L8) of Figure \ref{fig:localppO1}.
	
	\begin{figure}[h]
		\centering
		\begin{subfigure}[h]{3cm}
			\centering
			\includegraphics[width=3cm]{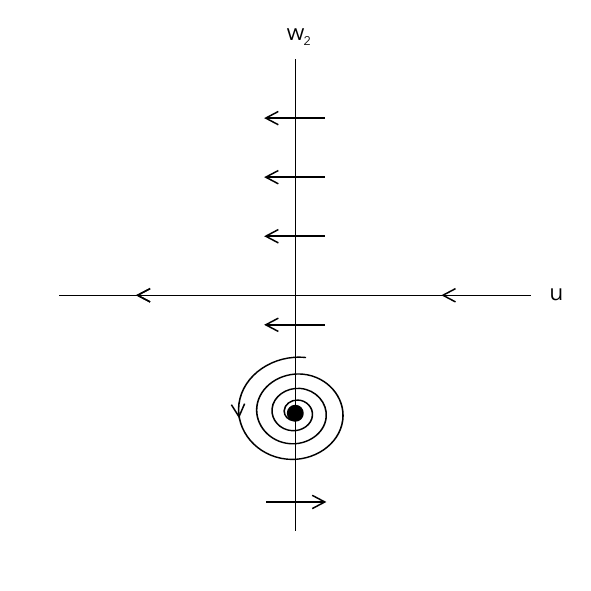}
			\caption*{(a)}
		\end{subfigure}
		\begin{subfigure}[h]{3cm}
			\centering
			\includegraphics[width=3cm]{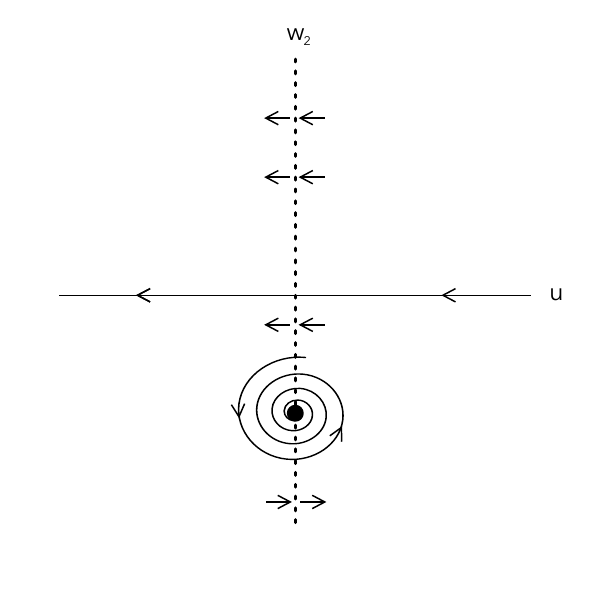}\\
			\caption*{(b)}
		\end{subfigure}
		\begin{subfigure}[h]{3cm}
			\centering
			\includegraphics[width=3cm]{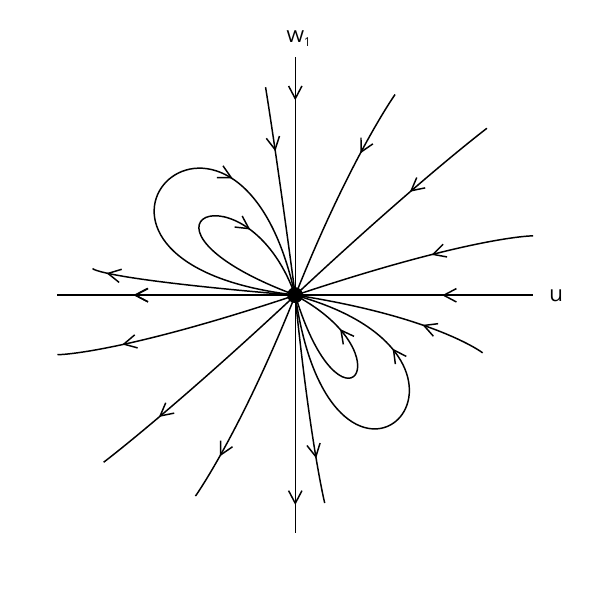}
			\caption*{(c)}
		\end{subfigure}
		\begin{subfigure}[h]{3cm}
			\centering
			\includegraphics[width=3cm]{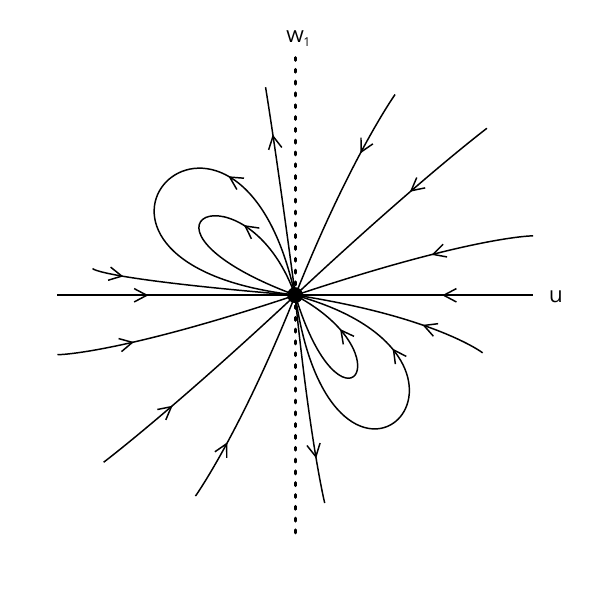}
			\caption*{(d)}
		\end{subfigure}
		\caption{Desingularization of the origin of system \eqref{systemU1} with $c_1\neq0$. Dicritical case (1), $c_2>0$.}
		\label{fig:blowup_c1_dicritical_1}
	\end{figure}
	
From now on we omit explanations in cases in which similar arguments are valid, and same results are obtained. In order to simplify the notation we define $\beta=\sqrt{c_3^2+4c_2(a_0-2c_0)}$.
	
\vspace{0.2cm}

\item If $c_3^2<-4c_2(a_0-2c_0)$ and $c_2c_3>0$, then $P$ is an unstable focus. The reasoning is analogous to the one of the previous case and we obtain the same phase portraits: (L11) if $c_2>0$, and  (L8) if $c_2<0$.

\vspace{0.2cm}

\item If $c_3^2=-4c_2(a_0-2c_0)$ and $c_2c_3<0$ or if $c_3^2>-4c_2(a_0-2c_0)$, $c_2(c_3-\beta)<0$ and $c_2(c_3+\beta)<0$, then $P$ is a stable node. If $c_3^2=-4c_2(a_0-2c_0)$ and $c_2c_3>0$ or if $c_3^2>-4c_2(a_0-2c_0)$, $c_2(c_3-\beta)>0$ and $c_2(c_3+\beta)>0$, then $P$ is an unstable node. In both cases the phase portrait obtained is again (L11) if $c_2>0$, and (L8) if $c_2<0$.

\vspace{0.2cm}

\item If $c_3=0$ and $c_2(a_0-2c_0)<0$ then $P$ is a linear center. In this case the singular point $P$ could be a center or a focus, but the final phase portrait obtained when $P$ is a center is the same as the one we obtained previously for the case with a focus, so the result is (L11) if $c_2>0$, and (L8) if $c_2<0$.

\vspace{0.2cm}

\item If $c_3^2>-4c_2(a_0-2c_0)$ and $(c_3-\beta)(c_3+\beta)<0$, or if $c_3=0$ and $c_2(a_0-2c_0)>0$, then $P$ is a saddle. The blowing-down considering $c_2>0$ is represented in Figure \ref{fig:blowup_c1_dicritical_5}. The final result is (L17)  of Figure \ref{fig:localppO1}. If we take $c_2<0$ we obtain the phase portrait (L18).
	
	\begin{figure}[h]
		\centering
		\begin{subfigure}[h]{3cm}
			\centering
			\includegraphics[width=3cm]{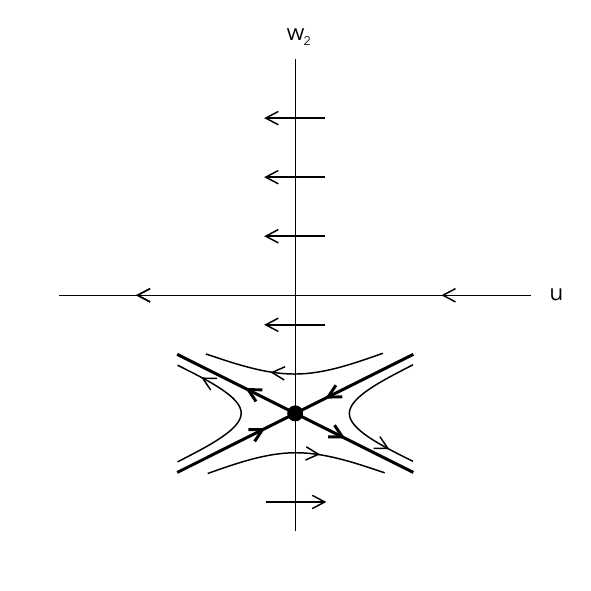}
			\caption*{System \eqref{sis_blowup6}}
		\end{subfigure}
		\begin{subfigure}[h]{3cm}
			\centering
			\includegraphics[width=3cm]{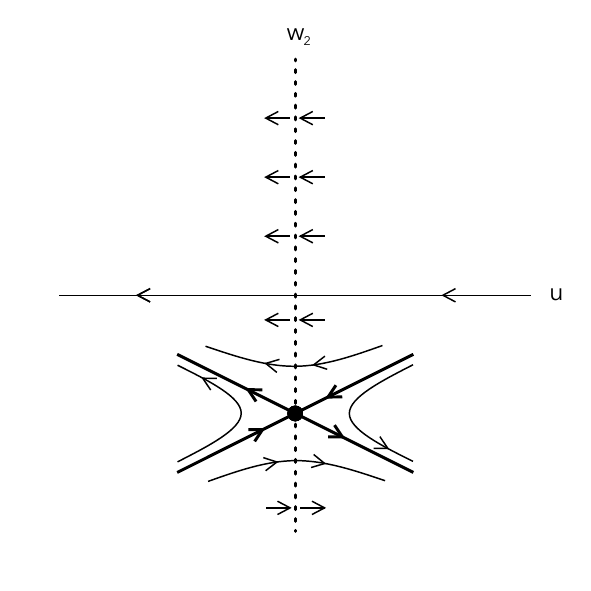}\\
			\caption*{System \eqref{sis_blowup4}}
		\end{subfigure}
		\begin{subfigure}[h]{3cm}
			\centering
			\includegraphics[width=3cm]{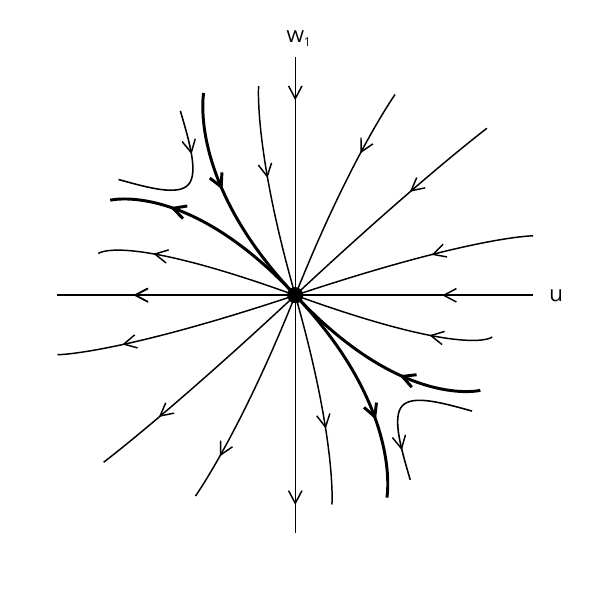}
			\caption*{System \eqref{sis_blowup3}}
		\end{subfigure}
		\begin{subfigure}[h]{3cm}
			\centering
			\includegraphics[width=3cm]{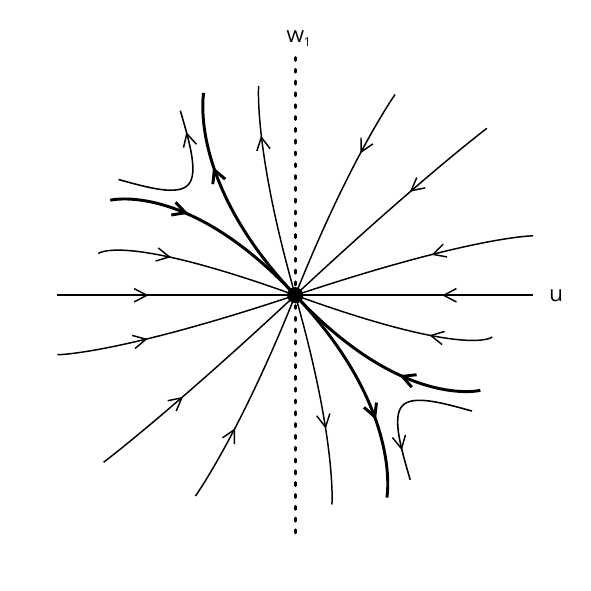}
			\caption*{System \eqref{sis_blowup2}}
		\end{subfigure}
		\caption{Desingularization of the origin of system \eqref{systemU1} with $c_1\neq0$. Dicritical case (5), $c_2>0$.}
		\label{fig:blowup_c1_dicritical_5}
	\end{figure}

\end{enumerate}

\subsection{Case $c_1$ zero}\label{subsec:O1_c1zero}

We consider system \eqref{systemU1} and do the same variable change that we did in the case with $c_1\neq0$, the result is obviously system \eqref{sis_blowup2} but taking $c_1=0$, i.e.
\begin{equation}\label{sis_blowup2*}
\begin{split}
\dot{u}&=(c_0-a_0)u^3w_1^2+c_3(\mu+1)u^3w_1+c_2(\mu+1)u^3,\\
\dot{w_1}&=-c_0u^2w_1^3-c_3u^2w_1^2-c_2u^2w_1.
\end{split}
\end{equation}
In this case we can cancel a common factor $u^2$ getting the system
\begin{equation}\label{sis_blowup3*}
\begin{split}
\dot{u}&=(c_0-a_0)uw_1^2+c_3(\mu+1)uw_1+c_2(\mu+1)u,\\
\dot{w_1}&=-c_0w_1^3-c_3w_1^2-c_2w_1,
\end{split}
\end{equation}
for which we must study the singular points on the exceptional divisor, i.e. on  the straight line $u=0$.

The origin $S_0=\left( 0,0\right)$ is always a singular point. The other singular points on this line are those for which $w_1$ is a solution of $c_0w_1^2+c_3w_1+c_2=0$. If $c_0\neq0$ and $c_3^2>4c_0c_2$ then $S_1=\left( 0,-(R_c+c_3)/(2c_0)\right)$  and $S_2=\left( 0,(R_c-c_3)/(2c_0)\right)$ are singular points. If $c_0\neq0$ and $c_3^2=4c_0c_2$, then $S_3=\left( 0,-c_3/2c_0\right)$ is a singular point, and finally, if $c_0$ and $c_3$ are non-zero, $S_4=\left( 0,-c_2/c_3\right) $ is a singular point.

\begin{table}[h]
	\begin{center}
		\begin{tabular}{|cll|}
			\hline
			\textbf{Case} & \textbf{Conditions} & \textbf{Singular points} \\
			\hline
			\hline
			A& $c_0=0$, $c_3=0$. & $S_0$. \\
			\hline
			B& $c_0=0$, $c_3\neq0$. & $S_0$, $S_4$. \\
			\hline
			C& $c_0\neq0$, $c_3^2<4c_0c_2$. & $S_0$. \\
			\hline
			D& $c_0\neq0$, $c_3^2=4c_0c_2$. & $S_0$, $S_3$.  \\
			\hline
			E& $c_0\neq0$, $c_3^2>4c_0c_2$.  & $S_0$, $S_1$, $S_2$. \\
			\hline
		\end{tabular}
		\caption{Cases with the singular points on the exceptional divisor of system \eqref{sis_blowup3*}.}
		\label{tab:cases_blowup_c1zero}
	\end{center}
\end{table}

In summary we shall study the five cases given in Table \ref{tab:cases_blowup_c1zero}. For doing this we study separately the local phase portrait of each singular point assuming in each case the necessary hypothesis for its existence.

The singular points $S_0$, $S_1$ and $S_2$ are hyperbolic. $S_0$  is a saddle if  $\mu>-1$, a stable node if  $c_2>0$ and $\mu<-1$, and an unstable node  if $c_2<0$ and $\mu<-1$.
$S_1$ is a saddle if $c_0(a_0+c_0\mu)<0$, a stable node if $c_0>0$ and $(a_0+c_0\mu)>0$, and an unstable node if $c_0<0$ and $(a_0+c_0\mu)<0$. $S_2$ is a saddle if $c_0(a_0+c_0\mu) (R_c-c_3)<0$, a stable node if $c_0(R_c-c_3)>0$ and $(a_0+c_0\mu)>0$, and an unstable node if $c_0(R_c-c_3)<0$ and $(a_0+c_0\mu)<0$.

The singular point $S_3$ is a semi-hyperbolic saddle-node and finally, $S_4$ is a hyperbolic saddle if $c_2>0$, and a hyperbolic stable node if $c_2<0$.	

Using these informations we study the next cases from the five given in Table \ref{tab:cases_blowup_c1zero}. 

\vspace{0.2cm}
First of all we study case (A) in which the only singular point is the origin, so we have the next three possibilities.

If $c_0=c_3=0$ and $\mu>-1$, then $S_0$ is a saddle. In order to determine the phase portrait around the $w_1$-axis for system \ref{sis_blowup3*}, we must fix the sign of $c_2$, which determines the sense of the flow along the axes. Considering $c_2>0$ we get the phase portrait given in Figure \eqref{fig:localppO1}(L19), and with $c_2<0$ we obtain the phase portrait (L20).
	
If $c_0=c_3=0$, $\mu<-1$ and $c_2>0$, then $S_0$ is a stable node, and we obtain the phase portrait (L21) of Figure \eqref{fig:localppO1}.
	
If $c_0=c_3=0$, $\mu<-1$ and $c_2<0$, then $S_0$ is an unstable node, and we get the phase portrait (L22) of Figure \eqref{fig:localppO1}.

\vspace{0.2cm}
	
In case (B), fixed the phase portrait of $S_4$, only two phase portraits will be possible for the origin, as the sign of $c_2$ is determined, and so we get the four following cases.

If $c_0=0$, $c_3\neq0$, $\mu>-1$ and $c_2>0$, then $S_0$ and $S_4$ are both saddle points and from the blowing-down we obtain the phase portrait (L23) of Figure \eqref{fig:localppO1}.
	
If  $c_0=0$, $c_3\neq0$, $\mu>-1$ and $c_2<0$, then $S_0$ is a saddle and $S_4$ a stable node. We obtain the phase portrait (L24) of Figure \eqref{fig:localppO1}.
	
If  $c_0=0$, $c_3\neq0$, $\mu<-1$ and $c_2>0$, then $S_0$ is a stable node and $S_4$ a saddle. We obtain the phase portrait (L25) of Figure \eqref{fig:localppO1}.
	
If  $c_0=0$, $c_3\neq0$, $\mu<-1$ and $c_2<0$, then $S_0$ is an unstable node and $S_4$ a stable node. We obtain the phase portrait (L26) of Figure \eqref{fig:localppO1}.

\vspace{0.2cm}
Again in case (C) the only singular point is the origin so we distinguish three cases, and obtain the same local phase portrait that in case (A), but under different conditions. If $c_0\neq0$, $c_3^2<4c_0c_2$ and $\mu>-1$, then $S_0$ is a saddle. Attending to the sign of $c_2$, which determines the sense of the flow on the axes, we consider the following cases: if $c_2>0$ we obtain the phase portrait (L19) of Figure \eqref{fig:localppO1}, and if $c_2<0$ we obtain the phase portrait (L20) of Figure \eqref{fig:localppO1}.
	
If $c_0\neq0$, $c_3^2<4c_0c_2$, $\mu<-1$ and $c_2>0$, then $S_0$ is a stable node. We obtain the phase portrait (L21) of Figure \eqref{fig:localppO1}.
	
If $c_0\neq0$, $c_3^2<4c_0c_2$, $\mu<-1$ and $c_2<0$, then $S_0$ is an unstable node. We obtain the phase portrait (L22) of Figure \eqref{fig:localppO1}.

\vspace{0.2cm}	
In case (D) apart from the origin, there exists the singular point $S_3$, which is always a saddle node, so again we get only three cases.

If $c_0\neq0$, $c_3^2=4c_0c_2$ and $\mu>-1$, then $S_0$ is a saddle and $S_3$ a saddle-node. We must distinguish four subcases according to the signs of $c_0$ and $a_0+c_0\mu$, which determine the position of the saddle-node $S_3$ and its sectors.

If $c_0>0$ and $a_0+c_0\mu>0$ we obtain the phase portrait (L27) of Figure \eqref{fig:localppO1}, if $c_0>0$ and $a_0+c_0\mu<0$ we get the phase portrait (L28) of Figure \eqref{fig:localppO1}, if $c_0<0$ and $a_0+c_0\mu>0$ we have the phase portrait (L29), and if $c_0<0$ and $a_0+c_0\mu<0$ the phase portrait (L30).

If $c_0\neq0$, $c_3^2=4c_0c_2$, $\mu<-1$ and $c_2>0$, then $S_0$ is a stable node and $S_3$ a saddle-node. We distinguish two subcases setting the sign of $a_0+c_0\mu$ which determines the position of the sectors of the saddle-node $S_3$. If $a_0+c_0\mu>0$ we obtain the phase portrait (L31) of Figure \eqref{fig:localppO1}, and if $a_0+c_0\mu<0$ we obtain the phase portrait (L32) of Figure \eqref{fig:localppO1}.
	
If $c_0\neq0$, $c_3^2=4c_0c_2$, $\mu<-1$ and $c_2<0$, then $S_0$ is an unstable node and $S_3$ a saddle-node. The only possibility is that $a_0+c_0\mu>0$, and we obtain the phase portrait (L33) of Figure \eqref{fig:localppO1}.

\vspace{0.2cm}

In case (E) there exist three singular points, with three possible phase portraits for each of them, however, many of the combinations are not possible, and only 13 cases will be feasible.

First, due to the conditions which define the local phase portrait in each singular point, it is obvious that if $S_1$ is a stable node, then $S_2$ cannot be an unstable node, and if $S_1$ is an unstable node, $S_2$ cannot be a stable node, due to the sign of $a_0+c_0\mu$.

If $S_0$ and $S_2$ were stable nodes and $S_1$ a saddle, the conditions $c_2>0$, $R_c-c_3<0$, and $c_0<0$ will hold. Squaring both terms in the condition $R_c<c_3$ we obtain $c_3^2-4c_0c_2<c_3^2$, and then $c_0c_2>0$, which is a contradiction. The same reasoning is valid in the next two cases.

If $S_0$ and $S_2$ are unstable nodes and $S_1$ a saddle, then the conditions $c_2<0$, $R_c-c_3<0$ and $c_0>0$ hold, and if $S_0$ is an unstable node, $S_1$ a stable node and $S_2$ a saddle, then the same three conditions hold.

If $S_0$, $S_1$ and $S_2$ are stable nodes, the conditions $c_2>0$, $R_c-c_3>0$ and $c_0>0$ hold. Now we take condition $R_c<c_3$ and squaring both terms we obtain $c_3^2-4c_0c_2<c_3^2$, and then $c_0c_2>0$, which is a contradiction.

If $S_0$ is a stable node and $S_1$ an unstable node, the conditions $\mu<-1$, $c_0<0$ and $a_0+c_0\mu<0$ hold. Then according to the signs of $c_0$ and $\mu$ which are fixed,  $a_0<-c_0\mu<0$ which contradicts the hypothesis $(H_2)$. The same reasoning is valid if $S_0$ and $S_1$ are unstable nodes, because the same conditions hold. Now we will study the feasible cases.

\begin{enumerate}
\item[(E1)] If $c_0\neq0$, $c_3^2>4c_0c_2$, $\mu>-1$, $c_0(a_0+c_0\mu)(2c_0c_2- c_3^2-c_3R_c)>0$ and $c_0(a_0+c_0\mu)(R_c-c_3)(2c_0c_2-c_3^2+c_3R_c)>0$, then $S_0$, $S_1$ and $S_2$ are saddles. We must distinguish two subcases depending on the position of the singular points $S_1$ and $S_2$ on the $w_1$-axis. First if $S_1$ is on the negative $w_1$-axis and $S_2$ on the positive $w_1$-axis, that corresponds with conditions $c_0>0$, $R_c-c_3>0$ and $c_2<0$, we obtain the phase portrait (L34) of Figure \ref{fig:localppO1}. Note that if $R_c-c_3>0$, the singular points $S_1$ and $S_2$ are one on the positive part of the axis and the other on the negative part, but in any case the absolute value of the second coordinate of $S_1$ is greater or equal than the absolute value of the second coordinate of $S_2$, and this determines the relation between the slopes of orbits in the phase portraits. Conversely if we have $S_1$ on the positive $w_1$-axis and $S_2$ on the negative one, i.e. under the conditions $c_0<0$, $R_c-c_3>0$ and $c_2>0$, we obtain the phase portrait (L35) of Figure \eqref{fig:localppO1}.

\vspace{0.1cm}	

\item[(E2)] If $c_0\neq0$, $c_3^2>4c_0c_2$, $\mu>-1$,  $c_0(a_0+c_0\mu)(2c_0c_2- c_3^2-c_3R_c)>0$, $c_0(R_c-c_3)>0$ and  $(a_0+c_0\mu)(2c_0c_2-c_3^2+c_3R_c)<0$, then $S_0$ and $S_1$ are saddles and $S_2$ is a stable node. If $c_0>0$ then $R_c-c_3>0$ and so $c_3^2-4c_0c_2>c_3^2$ and $c_0c_2<0$. If $a_0+c_0\mu>0$ then $2c_0c_2-c_3^2-c_3R_c>0$ which is not possible because $2c_0c_2<0$ and we subtract two positive terms. Conversely if $a_0+c_0\mu<0$ then $2c_0c_2-c_3^2<c_3R_c$ and $2c_0c_2-c_3^2>-c_3R_c$, so $\abs{2c_0c_2-c_3^2}<c_3R_c$. Squaring we get $4c_0^2c_2^2<0$ which is not possible. If $c_0<0$ then $R_c-c_3<0$ and we deduce $c_2<0$. If $a_0+c_0\mu<0$ then $2c_0c_2-c_3^2-c_3R_c>0$, but $c_3^2-2c_0c_2>c_3^2-4c_0c_2>0$ so $2c_0c_2-c_3^2<0$ and subtracting $c_3R_c>0$ the result cannot be positive. In conclusion we deduce that $c_0,c_2<0$ and $a_0+c_0\mu>0$. Hence we have $-(R_c+c_3)/(2c_0)>(R_c-c_3)/(2c_0)>0$. This determines the only possible position of the singular points which are both in the positive $w_1$-axis. Undoing the blow up we obtain the phase portrait (L36) of Figure \ref{fig:localppO1}.
	
	\vspace{0.1cm}	
	
\item[(E3)] If $c_0\neq0$, $c_3^2>4c_0c_2$, $\mu>-1$, $c_0(a_0+c_0\mu)(2c_0c_2- ^2- c_3R_c)>0$, $c_0(R_c-c_3)<0$ and $(a_0+c_0\mu)(2c_0c_2-c_3^2+c_3R_c)>0$, then $S_0$ and $S_1$ are saddles and $S_2$ is an unstable node. Therefore we deduce that $0>(R_c-c_3)/(2c_0)>-(R_c+c_3)/(2c_0)$, so both singular points are on the negative $w_1$ axis, $S_1$ under $S_2$. We obtain the phase portrait (L37) of Figure \eqref{fig:localppO1}.
	
	\vspace{0.1cm}	
	
\item[(E4)] If $c_0\neq0$, $c_3^2>4c_0c_2$, $\mu>-1$, $c_0>0$, $(a_0+c_0\mu) (2c_0c_2- c_3^2-c_3R_c)<0$ and $c_0(R_c-c_3)(a_0+c_0\mu)(2c_0c_2-c_3^2+c_3R_c)>0$, then $S_0$ and $S_2$ are saddles and $S_1$ is a stable node. Then we deduce that $-(R_c+c_3)/(2c_0)<(R_c-c_3)/(c_0)<0$, so both singular points are on the negative $w_1$ axis, $S_1$ under $S_2$. We obtain the phase portrait (L38) of Figure \eqref{fig:localppO1}.
	
	\vspace{0.1cm}	
	
\item[(E5)]  If $c_0\neq0$, $c_3^2>4c_0c_2$, $\mu>-1$, $c_0>0$, $(a_0+c_0\mu) (2c_0c_2-c_3^2 -c_3R_c)<0$, $c_0(R_c-c_3)>0$ and  $(a_0+c_0\mu)(2c_0c_2-c_3^2+ )<0$, then $S_0$ is a saddle and $S_1$ and $S_2$ are stable nodes. We obtain the phase portrait (L45) of Figure \eqref{fig:localppO1}.
	
	\vspace{0.1cm}	
	
\item[(E6)] If $c_0\neq0$, $c_3^2>4c_0c_2$, $\mu>-1$, $c_0<0$, $(a_0+c_0\mu) (2c_0c_2- c_3^2-c_3R_c)>0$ and $(a_0+c_0\mu)(R_c-c_3)(2c_0c_2-c_3^2+c_3R_c)<0$, then $S_0$ and $S_2$ are saddles and $S_1$ is an unstable node. Hence we deduce that $0<(R_c-c_3)/(2c_0)<-(R_c+c_3)/(2c_0)$, so both singular points are on the positive $w_1$ axis, $S_2$ under $S_1$. We obtain the phase portrait (L47) of Figure \eqref{fig:localppO1}.
	
	\vspace{0.1cm}	
	
\item[(E7)] If $c_0\neq0$, $c_3^2>4c_0c_2$, $\mu>-1$, $c_0<0$, $(a_0+c_0\mu) (2c_0c_2- c_3^2-c_3R_c)>0$, $R_c-c_3>0$ and $(a_0+c_0\mu)(2c_0c_2-c_3^2 +c_3R_c)>0$, then $S_0$ is a saddle and $S_1$ and $S_2$ are unstable nodes. We obtain the phase portrait (L46) of Figure \eqref{fig:localppO1}.

\vspace{0.1cm}	
	
\item[(E8)] If $c_0\neq0$, $c_3^2>4c_0c_2$,  $c_2>0$, $\mu<-1$, $c_0(a_0+c_0\mu) (2c_0c_2-c_3^2-c_3R_c)>0$ and $c_0(a_0+c_0\mu)(R_c-c_3)(2c_0c_2-c_3^2+c_3R_c)>0$, then $S_0$ is a stable node and $S_1$ and $S_2$ are saddles. We obtain the phase portrait (L39) of Figure \eqref{fig:localppO1}.
	
	\vspace{0.1cm}	
	
\item[(E9)] If $c_0\neq0$, $c_3^2>4c_0c_2$, $c_2>0$, $\mu<-1$, $c_0(a_0+c_0\mu) (2c_0c_2-c_3^2-c_3R_c)>0$, $c_0(R_c-c_3)<0$ and $(a_0+c_0\mu)(2c_0c_2-c_3^2 +c_3R_c)>0$, then $S_0$ is a stable node, $S_1$ is a saddle and $S_2$ is an unstable node. We obtain the phase portrait (L40) of Figure \eqref{fig:localppO1}.
	
	\vspace{0.1cm}	
	
\item[(E10)]  If $c_0\neq0$, $c_3^2>4c_0c_2$, $c_2>0$, $\mu<-1$, $c_0>0$, $(a_0+c_0\mu) (2c_0c_2-c_3^2-c_3R_c)<0$ and $(a_0+c_0\mu)(R_c-c_3) (2c_0c_2-c_3^2+ c_3R_c)>0$, then $S_0$ and $S_1$ are stable nodes and $S_2$ is a saddle. We obtain the phase portrait (L41) of Figure \eqref{fig:localppO1}.
	
	\vspace{0.1cm}	
	
\item[(E11)]  If $c_0\neq0$, $c_3^2>4c_0c_2$, $c_2<0$, $\mu<-1$, $c_0(a_0+c_0\mu) (2c_0c_2 -c_3^2-c_3R_c)>0$ and $c_0(a_0+c_0\mu)(R_c-c_3)(2c_0c_2-c_3^2+c_3R_c)>0$, then $S_0$ is an unstable node and $S_1$ and $S_2$ are saddles. We obtain the phase portrait (L42) of Figure \eqref{fig:localppO1}.
	
	\vspace{0.1cm}	
	
\item[(E12)]  If $c_0\neq0$, $c_3^2>4c_0c_2$, $c_2<0$, $\mu<-1$, $c_0(a_0+c_0\mu) (2c_0c_2- c_3^2-c_3R_c)>0$, $c_0(R_c-c_3)>0$ and $(a_0+c_0\mu)(2c_0c_2-c_3^2 +c_3R_c)<0$, then $S_0$ is an unstable node, $S_1$ is a saddle and $S_2$ is a stable node. We obtain the phase portrait (L43) of Figure \eqref{fig:localppO1}.
	
	\vspace{0.1cm}	
	
\item[(E13)]  If $c_0\neq0$, $c_3^2>4c_0c_2$, $c_2<0$, $\mu<-1$, $c_0>0$, $(a_0+c_0\mu) (2c_0c_2- c_3^2-c_3R_c)<0$, $R_c-c_3>0$, $(a_0+c_0\mu)(2c_0c_2- c_3^2+c_3R_c)<0$, then $S_0$ is an unstable node and $S_1$ and $S_2$ are stable nodes. We obtain the phase portrait (L44) of Figure \eqref{fig:localppO1}.
\end{enumerate}
	
Note that in the phase portraits (L22), (L30), (L33), (L43), (L46) and (L47) of Figure \ref{fig:localppO1} it is possible to consider hyperbolic sectors instead of the elliptic ones, but we have only represented the elliptic cases by the same reason given before, i.e. because applying the index theory to the phase portraits in the sphere $\mathbb{S}^2$ described in Section \ref{sec:global}, we proved that they are the only feasible.
	
Completed the study in the local chart $U_1$, we address the study of the origin of chart $U_2$ which turned out to be much simpler. The system has the expression
\begin{equation}\label{systemU2}
\begin{split}
\dot{u}&=-c_1(\mu+1)u^2v+(a_0-c_0)uv^2-c_3(\mu+1)uv-c_2(\mu+1)u,\\
\dot{v}&=-c_1 uv^2 - c_0 v^3 - c_3 v^2 - c_2 v.
\end{split}
\end{equation}
	
\begin{lemma}\label{lemma_O2}
The origin of chart $U_2$ is always a hyperbolic infinite singular point of system \eqref{system}. It is a saddle if $\mu<-1$, a stable node if $c_2>0$ and $\mu>-1$, and an unstable node if $c_2<0$ and $\mu>-1$.
\end{lemma}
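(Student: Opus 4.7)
The plan is to reduce everything to a straightforward linearization computation. Since system \eqref{systemU2} has no constant terms in $\dot{u}$ or $\dot{v}$ and no linear terms involving the ``other'' variable at the origin, the Jacobian at $(u,v)=(0,0)$ should be diagonal, and then the standard hyperbolic classification from Chapter 2 of \cite{Libro} applies.

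First, I would compute directly the Jacobian matrix of \eqref{systemU2} at the origin. Differentiating $\dot{u}=-c_1(\mu+1)u^2v+(a_0-c_0)uv^2-c_3(\mu+1)uv-c_2(\mu+1)u$ and $\dot{v}=-c_1 uv^2 - c_0 v^3 - c_3 v^2 - c_2 v$, every term except $-c_2(\mu+1)u$ and $-c_2 v$ is of order at least two, so the linear part is
\begin{equation*}
J(0,0)=\begin{pmatrix} -c_2(\mu+1) & 0 \\ 0 & -c_2 \end{pmatrix},
\end{equation*}
with eigenvalues $\lambda_1=-c_2(\mu+1)$ and $\lambda_2=-c_2$.

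Next I would invoke hypothesis $(H_2)$, which guarantees $c_2\neq 0$ and $\mu\neq -1$; hence $\lambda_1\lambda_2 = c_2^2(\mu+1)\neq 0$ and both eigenvalues are nonzero, so the origin of $U_2$ is a hyperbolic singular point. The classification is then immediate from the signs of the eigenvalues: $\lambda_1\lambda_2<0$ iff $\mu+1<0$, giving a saddle when $\mu<-1$; if $\mu>-1$ both eigenvalues share the sign of $-c_2$, yielding a stable node when $c_2>0$ and an unstable node when $c_2<0$.

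There is essentially no obstacle here: the origin of $U_2$ always turns out to be hyperbolic under $(H_2)$, and the Jacobian is already diagonal so no change of basis is needed. The only thing to be careful about is recording that the multiplication by $(-1)^{d-1}$ in the chart $V_2$ (cf.\ Subsection \ref{subsec:Poincare}, with $d=3$) preserves the vector field, so the description obtained in $U_2$ already gives the complete local picture around the corresponding point at infinity.
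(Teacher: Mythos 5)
Your proof is correct and follows exactly the route the paper intends: the paper simply displays system \eqref{systemU2} and states the lemma, the implicit argument being precisely the linearization at the origin with diagonal Jacobian $\mathrm{diag}(-c_2(\mu+1),-c_2)$, nonzero eigenvalues under $(H_2)$, and the sign analysis you give. Nothing is missing.
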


\section{Global phase portraits} \label{sec:global}
		
In order to prove the global result stated in Theorem \ref{th_global}, we will bring together the local information obtained in the previous sections. We start our classification from the cases in Tables \ref{tab:cases_fin_local_1} to  \ref{tab:cases_fin_local_6}. In some of them the conditions determine only one local phase portrait in each one of the infinite singular points but, in many others we shall distinguish several possibilities. In some cases the local information gives rise to  only one  phase portrait, this occurs when the sepatrices can be connected in only one way, but in others several global possibilities appear, and we shall prove which of them are feasible.

In Table \ref{tab:global} we give, for each case in the Tables \ref{tab:cases_fin_local_1} to \ref{tab:cases_fin_local_6}, the local phase portrait of the infinite singularities $O_1$ and $O_2$ (in most cases this depends on the parameters), and also we give the global phase portrait on the Poincar\'e disc obtained. And now we detail the reasonings in some cases, although they will not be showed in all cases to avoid repetitions.

\vspace{0.1cm}

\textbf{Case 1.1.} The infinite singular point $O_1$ has the local phase portrait (L12) given in Figure \ref{fig:localppO1}, and $O_2$ is an unstable node. As we said in Section \ref{sec:infinite}, the elliptic sectors appearing in phase portrait (L12) could be hyperbolic sectors if we attend only to local results, but now having all the global information we can prove that they are elliptic by using the index theory. By Theorem \ref{th_PoincareHopf} the sum of the indices of all the singular points on the Poincar\'e sphere has to be $2$. To compute this sum we must consider that the finite singular points on the Poincar\'e disc appear twice on the sphere (on the northern hemisphere and on the southern hemisphere).
Thus if we denote by $ind_F$ the sum of the indices of the finite singular points, and by $ind_I$ the sum of the indices of the infinite singular points, the equality $2 ind_F + ind_I=2$ must be satisfied.

In this particular case the finite singular points are a saddle-node whose index is $0$, and two saddles whose index is $-1$, so $ind_F=-2$. We deduce that $ind_I$ must be $6$. The infinite singular points are $O_1$ and $O_2$, the origins of the local charts $U_1$ and $U_2$, and the origins of the symmetric local charts $V_1$ and $V_2$, which have the same index. Since $O_2$ is a node, and so it has index $1$, we get that the sum of the indices of $O_2$ and its symmetric must be $4$, i.e. the index of $O_2$ has to be $2$. From the Poincar\'e formula for the index given in subsection \ref{subsec:indices} we get
\begin{equation*}
\frac{e-h}{2}+1=2 \Rightarrow e-h=2.
\end{equation*}
Hence only the case with tho elliptic sectors on the  local phase portait (L12) is possible, because if we had two hyperbolic sectors instead of the elliptic ones, the index of $O_2$ would be zero.

We recall that by an analogous application of the index theory in the corresponding cases, it can be concluded that elliptic sectors appearing in the local phase portraits (L8), (L9), (L11), (L22), (L30), (L33), (L43), (L46) and (L47) of $O_1$, are indeed elliptic rather than hyperbolic.

In this case 1.1 there is only one possible phase portrait on the Poincaré disc, the one given in Figure \eqref{fig:global_sis2.1} (G1).

\vspace{0.2cm}

\textbf{Case 1.6.} In this case $O_1$ has the local phase portrait (L3) and $O_2$ is a stable node. From the local results we can obtain three possible global phase portraits given in Figure 	\ref{fig:global_1_17_1}.
\begin{figure}[H]
	\centering
	\begin{subfigure}[h]{3cm}
		\centering
		\includegraphics[width=3cm]{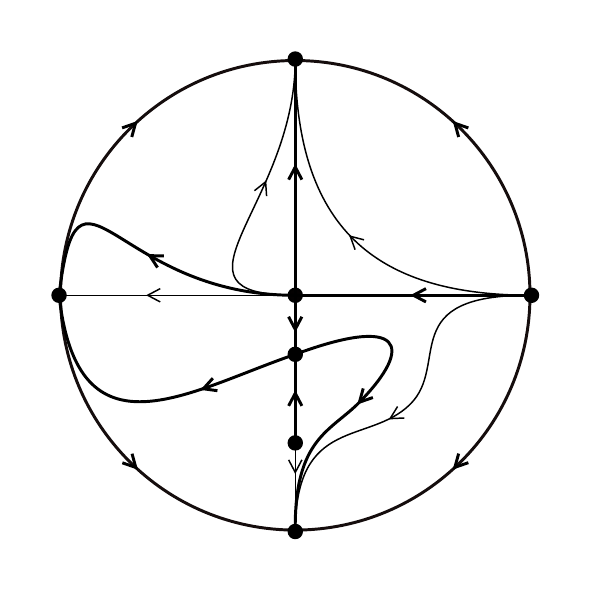}
		\caption*{Subcase 1}
	\end{subfigure}
	\begin{subfigure}[h]{3cm}
		\centering
		\includegraphics[width=3cm]{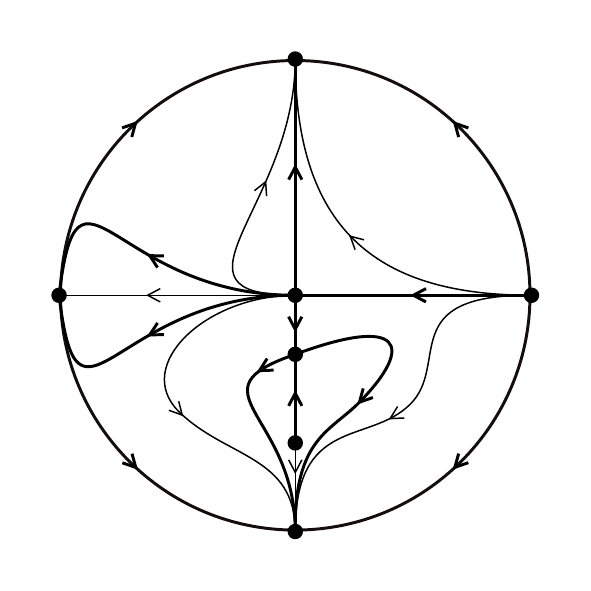}
		\caption*{Subcase 2}
	\end{subfigure}
	\begin{subfigure}[h]{3cm}
		\centering
		\includegraphics[width=3cm]{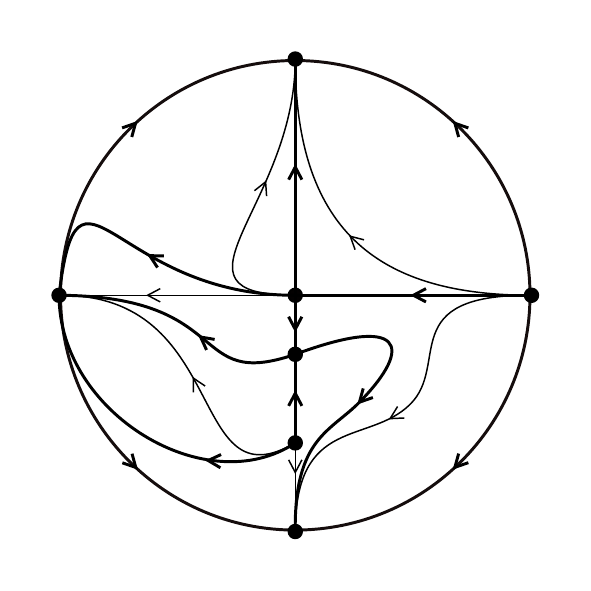}
		\caption*{Subcase 3}
	\end{subfigure}
	\caption{Possible global phase portraits in case 1.6.}
	\label{fig:global_1_17_1}
\end{figure}

By Theorem \ref{th_contactpoints} on the straight lines $z=z_0\neq0$ cannot be more than one contact point, but as it is shown in Figure 	\ref{fig:global_1_17_1_contacpoints}, if subcases 1 and 2 are feasible, there exist straight lines $z=z_0$, with $ -(R_c+c_3)/(2c_2)<z_0 <(R_c-c_3)/(2c_2)$, on which there exist two contact points, so we deduce that the only possible global phase portrait is the subcase 3, i.e. (G10) of Figure \ref{fig:global_sis2.1}.
\begin{figure}[H]
	\centering
	\begin{subfigure}[h]{3cm}
		\centering
		\includegraphics[width=3cm]{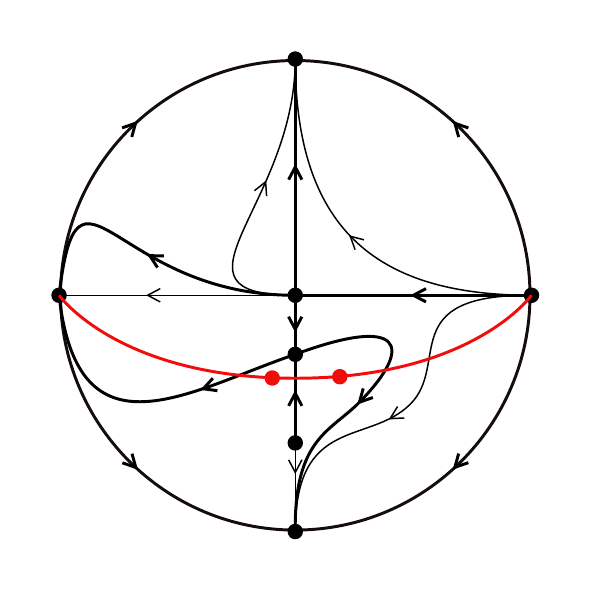}
		\caption*{Subcase 1}
	\end{subfigure}
	\begin{subfigure}[h]{3cm}
		\centering
		\includegraphics[width=3cm]{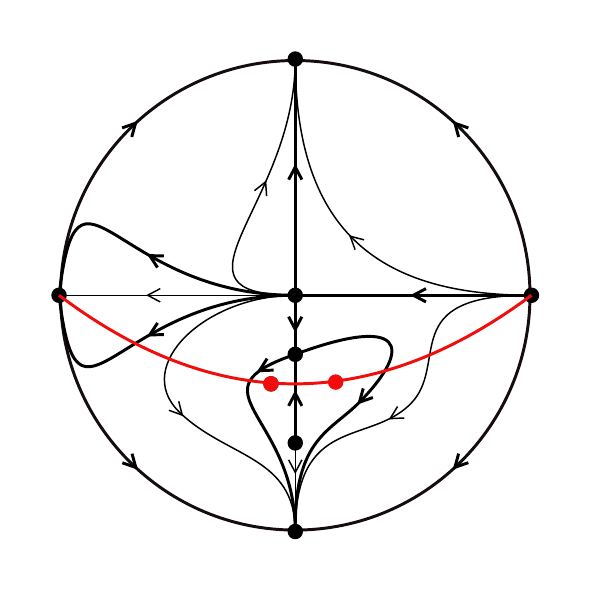}
		\caption*{Subcase 2}
	\end{subfigure}
	\caption{Straight lines with two contact points on the two first subcases of 1.6}
	\label{fig:global_1_17_1_contacpoints}
\end{figure}

\textbf{Case 1.10.} In this case $O_1$ has the local phase portrait (L6) and $O_2$ is an unstable node. From the local results we can obtain three possible global phase portraits, but in two of them shown in Figure	\ref{fig:global_1_29_1_contacpoints} we can find straight lines $z=z_0\neq0$ with $(R_c-c_3)/(2c_2)<z_0<-(R_c+c_3)/(2c_2)$, on which there are two contact points, so according to Theorem \ref{th_contactpoints} they are not possible. Then the only possibility is the phase portrait (G20) of Figure \ref{fig:global_sis2.1}.
\begin{figure}[H]
	\centering
	\begin{subfigure}[h]{3cm}
		\centering
		\includegraphics[width=3cm]{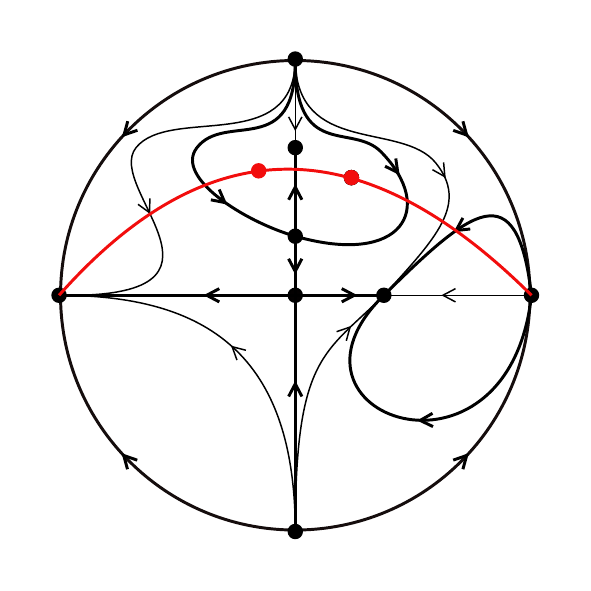}
		\caption*{Subcase 1}
	\end{subfigure}
	\begin{subfigure}[h]{3cm}
		\centering
		\includegraphics[width=3cm]{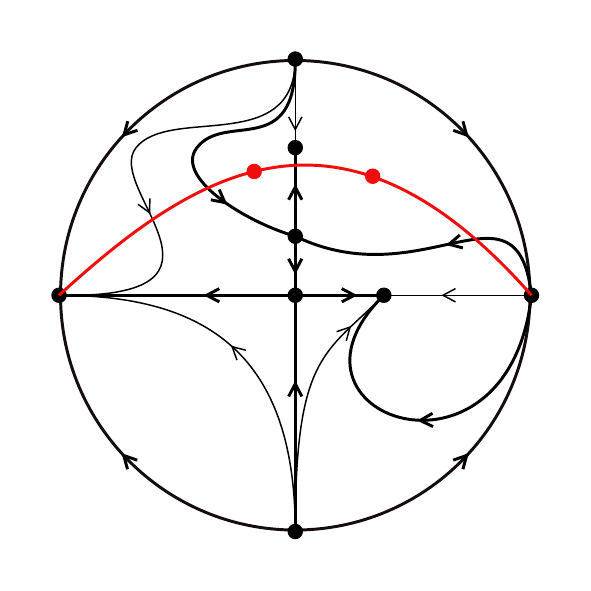}
		\caption*{Subcase 2}
	\end{subfigure}
	\caption{Straight lines with two contact points on two subcases of 1.10.}
	\label{fig:global_1_29_1_contacpoints}
\end{figure}

\textbf{Case 2.2.} In this case $O_1$ has the local phase portrait (L47) and $O_2$ is an unstable node. From the local results and by Theorem \ref{th_simmetry} the phase portrait is symmetric, we obtain three possible global phase portraits, the ones given of Figure \ref{fig:global_2_2_5}.
\vspace{-0.2cm}
\begin{figure}[H]
\centering
	\begin{subfigure}[h]{3cm}
		\centering
		\includegraphics[width=3cm]{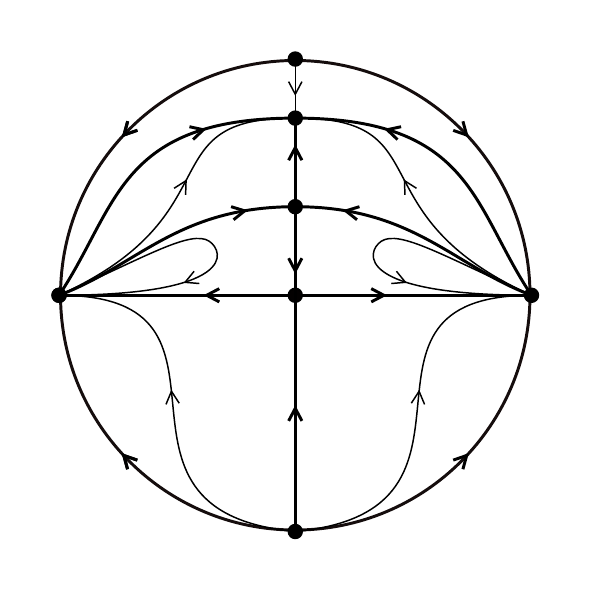}
		\caption*{Subcase 1}
	\end{subfigure}
	\begin{subfigure}[h]{3cm}
		\centering
		\includegraphics[width=3cm]{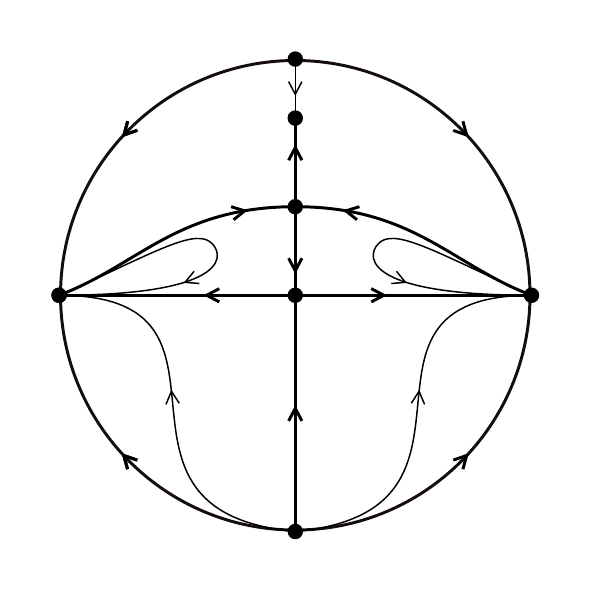}
		\caption*{Subcase 2}
	\end{subfigure}
	\begin{subfigure}[h]{3cm}
		\centering
		\includegraphics[width=3cm]{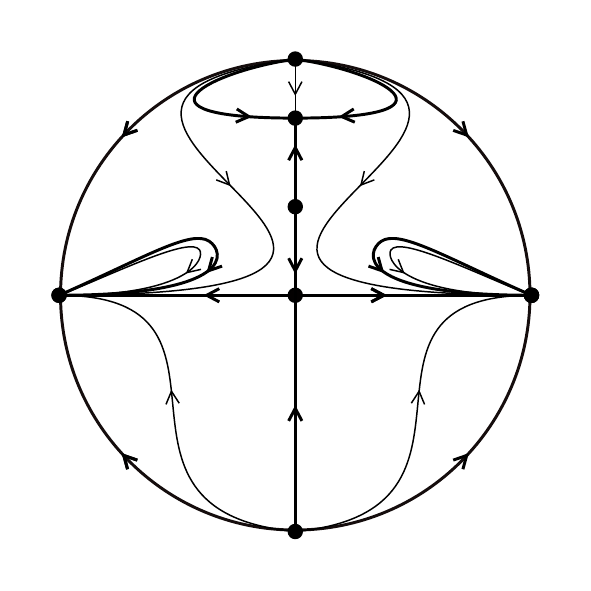}
		\caption*{Subcase 3}
	\end{subfigure}
	\caption{Possible global phase portraits in case (2.2).}
	\label{fig:global_2_2_5}
\end{figure}
By Theorem \ref{th_contactpoints} we know that, under the conditions of this case, two invariant straight lines $z=\pm(R_c-c_3)/(2c_2)$  must exist, and it is only possible in the subcase 1, which provides the phase portrait (G42) of Figure \ref{fig:global_sis2.1}.

\vspace{0.2cm}

\textbf{Case 2.6.} Here we distinguish three subcases and, in two of them, three global phase portraits appear, but in each case we use different arguments to prove wich of the options is realizable. If $\mu=0$, then $O_1$ has the local phase portrait (L5) and $O_2$ is a stable node. We obtain three phase portraits, but we conclude that two of them are not feasible because we can find staight lines $z=z_0\neq0$ with two contact points, as it is shown in Figure \ref{fig:global_2_16_1_contactpoints}. Therefore there is only one global phase portrait, the (G50) of Figure \ref{fig:global_sis2.1}.
\begin{figure}[H]
	\centering
	\begin{subfigure}[h]{3cm}
		\centering
		\includegraphics[width=3cm]{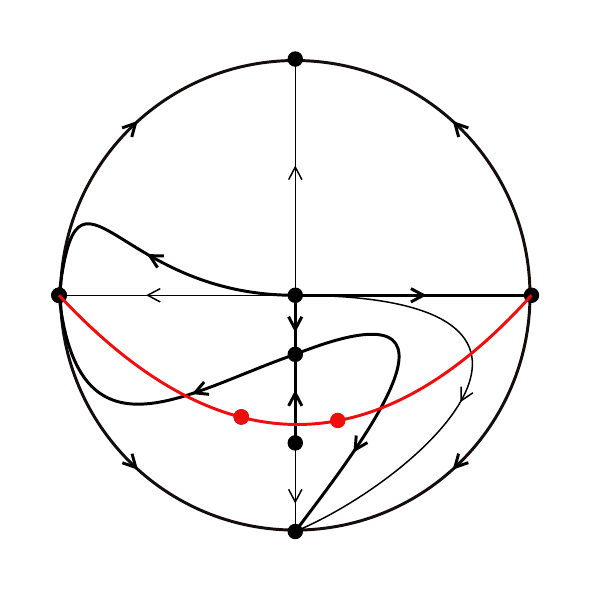}
		\caption*{Subcase 1}
	\end{subfigure}
	\begin{subfigure}[h]{3cm}
		\centering
		\includegraphics[width=3cm]{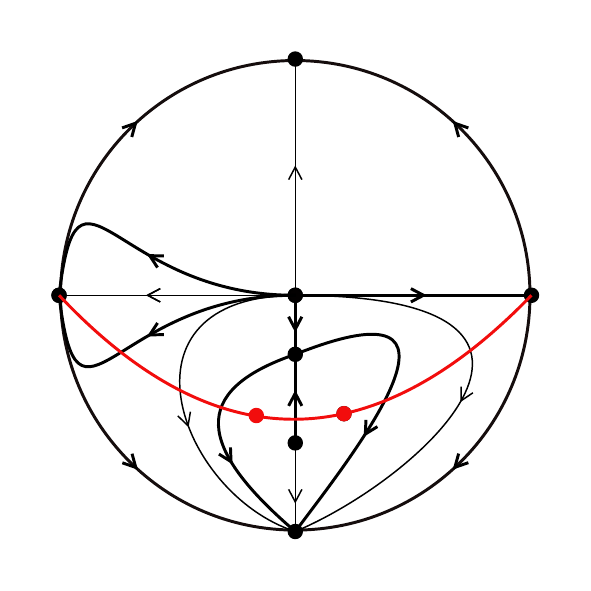}
		\caption*{Subcase 2}
	\end{subfigure}
	\caption{Straight lines with two contact points on two subcases.}
	\label{fig:global_2_16_1_contactpoints}
\end{figure}

If $c_1=0$ and $\mu>-1$, then $O_1$ has the local phase portrait (L38) and $O_2$ is a stable node. We obtain three possible global phase portraits, the ones given in Figure \ref{fig:global_2_16_3}. By Theorem \ref{th_contactpoints} we know that, under the conditions of this case, two invariant straight lines $z=\pm(R_c-c_3)/(2c_2)$ must exist, and it is only possible in the subcase 3, which provides the phase portrait (G51) of Figure \ref{fig:global_sis2.1}.
\begin{figure}[H]
	\centering
	\begin{subfigure}[h]{3cm}
		\centering
		\includegraphics[width=3cm]{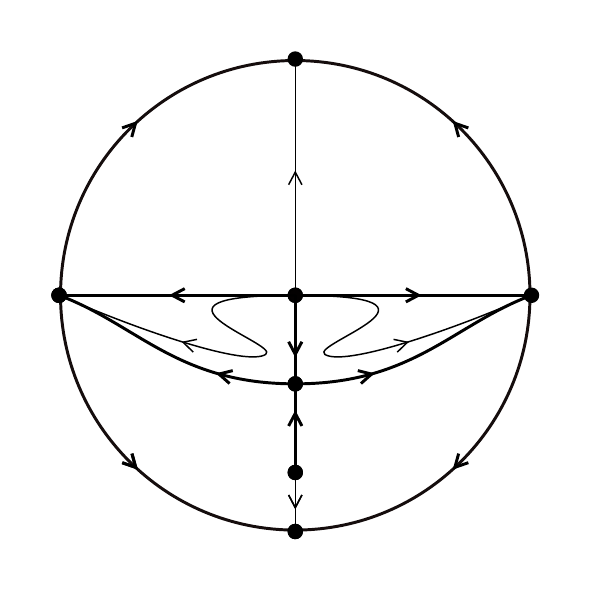}
		\caption*{Subcase 1}
	\end{subfigure}
	\begin{subfigure}[h]{3cm}
		\centering
		\includegraphics[width=3cm]{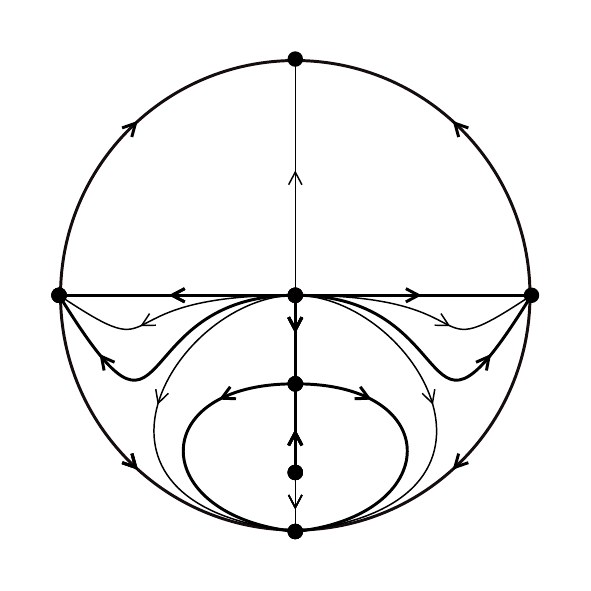}
		\caption*{Subcase 2}
	\end{subfigure}
	\begin{subfigure}[h]{3cm}
		\centering
		\includegraphics[width=3cm]{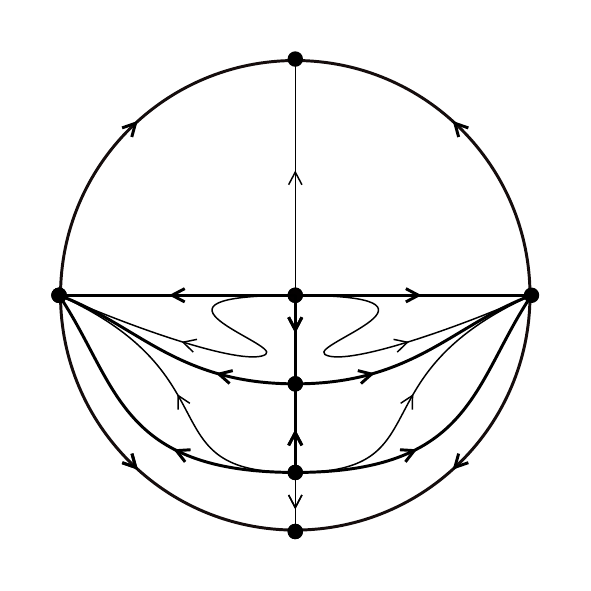}
		\caption*{Subcase 3}
	\end{subfigure}
	\caption{Possible global phase portraits in case 2.6 with $c_1=0$ and $\mu>-1$.}
	\label{fig:global_2_16_3}
\end{figure}

If $c_1=0$ and $\mu<-1$, then $O_1$ has the local phase portrait (L41) and $O_2$ is a saddle. In this case we obtain only one phase portrait (G52) of Figure \ref{fig:global_sis2.1}.

\vspace{0.2cm}

\textbf{Case 3.2.} Here we distinguish three subcases and in two of them there is only one possible global phase portait. More precisely, if $\mu<-1$ then $O_1$ has the local phase portrait (L8), $O_2$ is a saddle and the global phase portrait is (G64). If $\mu\in(-1,0)$ then $O_1$ has the local phase portrait (L13), $O_2$ is an unstable node and we obtain the phase portrait (G65).

If $\mu>0$ then $O_1$ has the local phase portrait (L6) and $O_2$ is an unstable node, but in this case we get three phase portraits, the ones given in Figure	\ref{fig:global_3_2_1}. By Theorem \ref{th_contactpoints}, there must exist a contact point on each straight line $z=z_0$, but if in subcases 1 and 2 we take a straight line $z=z_0$ with $z_0>-c_3/(2c_2)$, there are not contact points on it, so those subcases are not feasible. The only possibility is the subcase 3, which provides the phase portrait (G63) of Figure \ref{fig:global_sis2.1}.
\begin{figure}[H]
\centering
	\begin{subfigure}[h]{3cm}
		\centering
		\includegraphics[width=3cm]{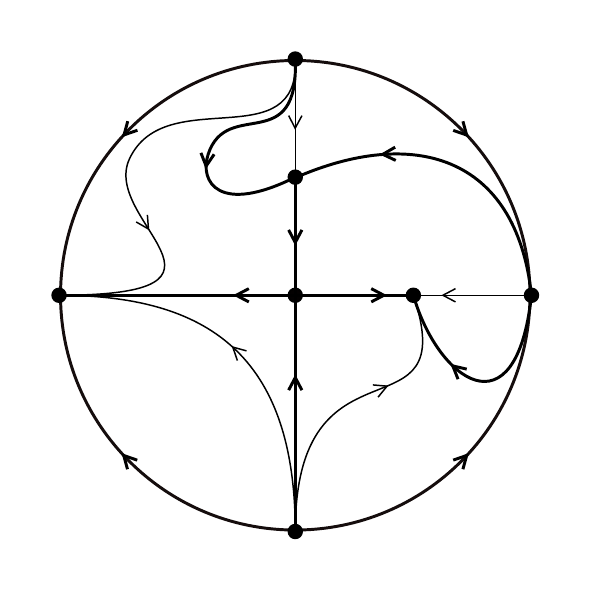}
		\caption*{Subcase 1}
	\end{subfigure}
	\begin{subfigure}[h]{3cm}
		\centering
		\includegraphics[width=3cm]{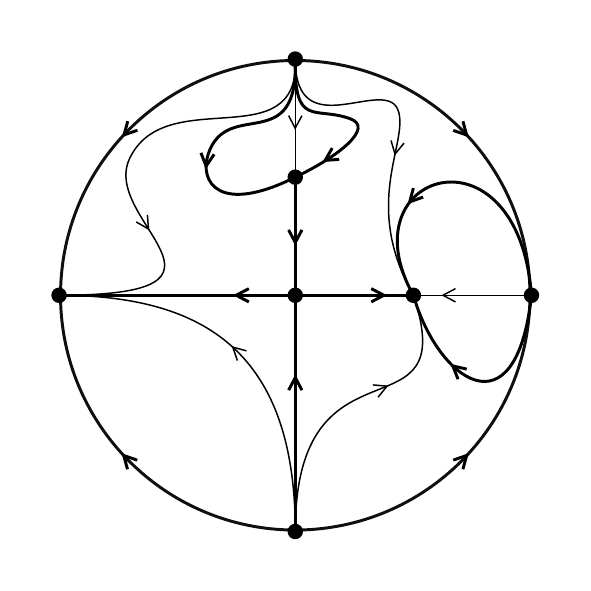}
		\caption*{Subcase 2}
	\end{subfigure}
	\begin{subfigure}[h]{3cm}
		\centering
		\includegraphics[width=3cm]{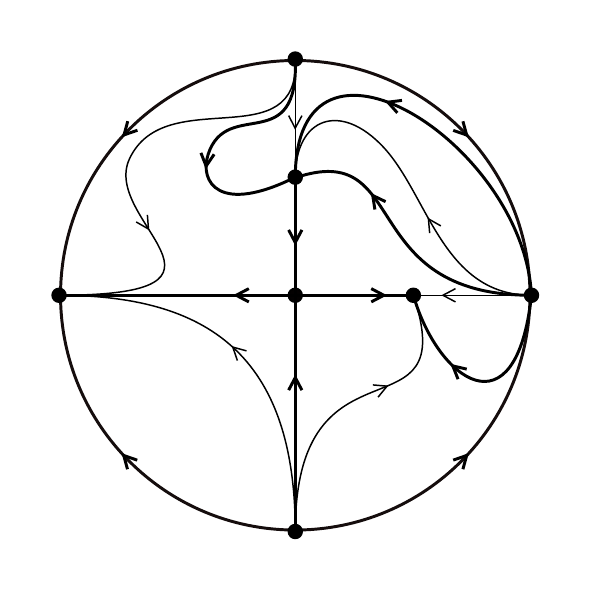}
		\caption*{Subcase 3}
	\end{subfigure}
	\caption{Possible global phase portraits in case 3.2 with $\mu>0$.}
	\label{fig:global_3_2_1}
\end{figure}

\vspace{0.1cm}

\textbf{Case 4.2.} Here we distinguish five different subcases. First if $c_1=0$, $\mu<-1$ and $a_0+c_0\mu>0$, then $O_1$ has the local phase portrait (L31) and $O_2$ is a saddle. In this case we obtain the global phase portrait (G90).

If $c_1=0$, $\mu<-1$ and $a_0+c_0\mu<0$, then $O_1$ has the local phase portrait (L32) and $O_2$ is a saddle. By Corollary \ref{th_simmetry} the phase portrait must be symmetric so we obtain three possibilities, given in Figure \ref{fig:global_4_3_8}. We further know that there must exist an invariant straight line $z=-c_3/2c_2$, so we can deduce that subcase 2 is not feasible  because that invariant straight line does not exist. As we also know that this invariant straight line is a separatrix in the local phase portrait of $O_1$, the one appearing in (L32), the subcase 3 is not feasible, because there would exist another separatrix over the invariant straight line that does not appear on (L32). So finally the only possible phase portrait is (G91).

The same happens in the next cases in which we initially obtain three possibilities but we can discard two of them with the same arguments, so finally we get the next results. If $\mu=0$, then $O_1$ has the local phase portrait (L5) and $O_2$ is a stable node and we obtain the global phase portrait (G87). If $c_1=0$, $\mu>-1$ and $a_0+c_0\mu>0$, then $O_1$ has the local phase portrait (L27), $O_2$ is a stable node, and we obtain the phase portrait (G88). Finally if $c_1=0$, $\mu>-1$ and $a_0+c_0\mu<0$, then $O_1$ has the local phase portrait (L28), $O_2$ is a stable node, and the global phase portait is (G89).
\vspace{-0.1cm}
\begin{figure}[H]
	\centering
	\begin{subfigure}[h]{3cm}
		\centering
		\includegraphics[width=3cm]{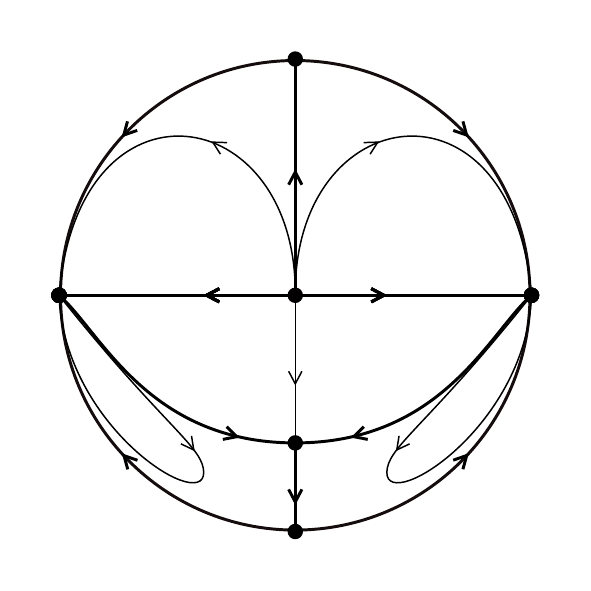}
		\caption*{Subcase 1}
	\end{subfigure}
	\begin{subfigure}[h]{3cm}
		\centering
		\includegraphics[width=3cm]{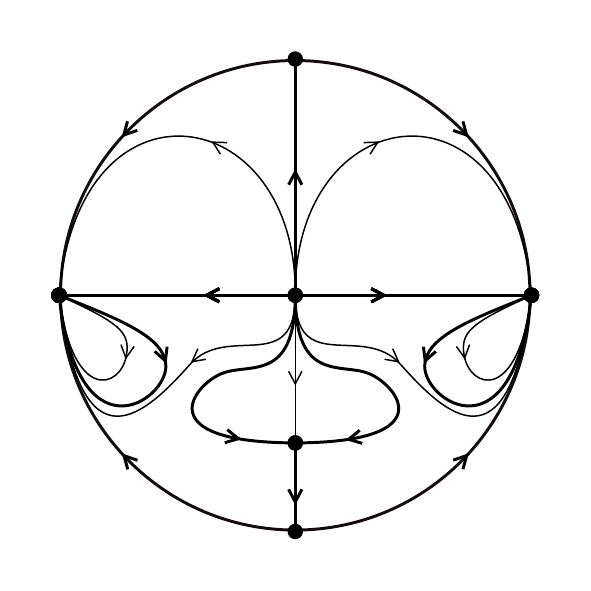}
		\caption*{Subcase 2}
	\end{subfigure}
	\begin{subfigure}[h]{3cm}
		\centering
		\includegraphics[width=3cm]{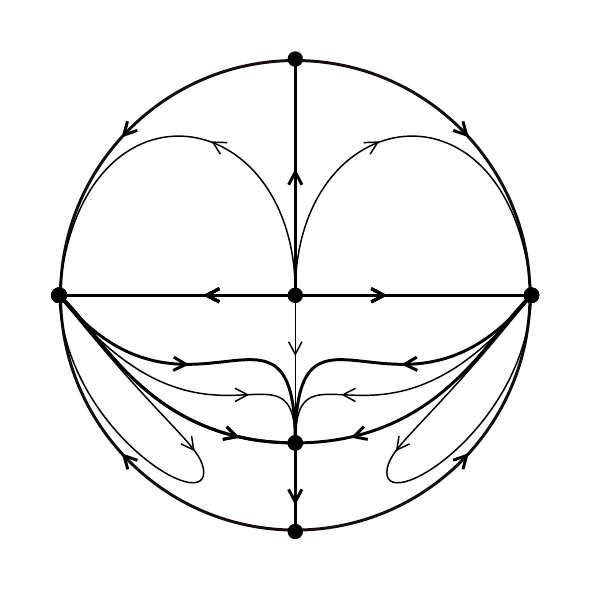}
		\caption*{Subcase 3}
	\end{subfigure}
	\caption{Possible global phase portraits in case 4.2 with $c_1=0$, $\mu<-1$ and $a_0+c_0\mu<0$.}
	\label{fig:global_4_3_8}
\end{figure}

\vspace{0.1cm}

The same methods that we have used in the previous cases for determine which of the global phase portraits are realizable, must be used in some other cases, namely, 1.17, 1.18 with $\mu\geq-2$, 1.19, 2.7, 3.3, 3.4 with $\mu\geq-2$, 3.5 with $\mu>0$, and finally 4.1 with $c_1=0$, $\mu>-1$ and $a_0+c_0\mu<0$.

\newpage

\begin{table}[H]
\begin{center}
		%\resizebox{7.5cm}{!}{%
		\begin{tabular}{|c|c|c|c|c|}
			\hline
			\small\textbf{Case}& \small\textbf{Conditions}& \small\textbf{$\boldsymbol{O_1}$}& \small\textbf{$\boldsymbol{O_2}$} & \small\textbf{Global} \\
			\hline
			\hline
			
			 1.1
			&
			
			&
			L12
			&
			Unstable node
			&
			G1 \\
			\hline
			
			1.2
			&
			
			&
			L3
			&
			Stable node
			&
			G2 \\
		\hline

			\multirow{2}{1cm}{\centering{1.3}}
			&
			$\mu<-1$
			&
			L8
			&
			Saddle
			&
			G3 \\
			\cline{2-5}
			
			\multirow{2}{*}
			&
			$\mu\in(-1,0)$
			&
			L13
			&
			Unstable node
			&
			G4 \\
			\hline

			\multirow{4}{1cm}{\centering{1.4}}
			&
			$\mu<-2$
			&
			L1
			&
			Saddle
			&
			G5 \\
			\cline{2-5}
			
			\multirow{4}{*}
			&
			$\mu\in(-1,0)$
			&
			L4
			&
			Saddle
			&
			G7 \\
			\cline{2-5}
			
			\multirow{4}{*}
			&
			$\mu\in(-2,-1)$
			&
			L15
			&
			Saddle
			&
			G7\\
			\cline{2-5}
			
			\multirow{4}{*}
			&
			$\mu=-2$
			&
			L17
			&
			Saddle
			&
			G8 \\
			\hline

			1.5
			&
			
			&
			L12
			&
			Unstable node
			&
			G9 \\
			\hline

			1.6
			&
			
			&
			L3
			&
			Stable node
			&
			G10 \\
			\hline

		\multirow{2}{1cm}{\centering{1.7}}
		&
		$\mu\in(-1,0)$
		&
		L10
		&
		Stable node
		&
		G11 \\
		\cline{2-5}
		
		\multirow{2}{*}
		&
		$\mu<-1$
		&
		L11
		&
		Saddle
		&
		G12 \\
		\hline

				\multirow{4}{1cm}{\centering{1.8}}
				&
				$\mu<-2$
				&
				L2
				&
				Saddle
				&
				G13 \\
				\cline{2-5}
				
				\multirow{4}{*}
				&
				$\mu\in(-1,0)$
				&
				L7
				&
				Unstable node
				&
				G14 \\
				\cline{2-5}
				
				\multirow{4}{*}
				&
				$\mu\in(-2,-1)$
				&
				L16
				&
				Saddle
				&
				G15 \\
				\cline{2-5}
				
				\multirow{4}{*}
				&
				$\mu=-2$
				&
				L18
				&
				Saddle
				&
				G16 \\
				\hline
			
				1.9
				&
				
				&
				L9
				&
				Stable node
				&
				G19 \\
				\hline
				
				1.10
				&
				
				&
				L6
				&
				Unstable node
				&
				G20 \\
				\hline
				
				1.11
				&
				
				&
				L12
				&
				Unstable node
				&
				G17 \\
				\hline

				\multirow{2}{1cm}{\centering{1.12}}
				&
				$\mu<-1$
				&
				L8
				&
				Saddle
				&
				G21 \\
				\cline{2-5}
				
				\multirow{2}{*}
				&
				$\mu\in(-1,0)$
				&
				L13
				&
				Unstable node
				&
				G22 \\
				\hline
				
				1.13
				&
				
				&
				L3
				&
				Stable node
				&
				G18 \\
				\hline

				\multirow{4}{1cm}{\centering{1.14}}
				&
				$\mu<-2$
				&
				L1
				&
				Saddle
				&
				G23 \\
				\cline{2-5}
				
				\multirow{4}{*}
				&
				$\mu\in(-1,0)$
				&
				L4
				&
				Stable node
				&
				G24 \\
				\cline{2-5}
				
				\multirow{4}{*}
				&
				$\mu\in(-2,-1)$
				&
				L15
				&
				Saddle
				&
				G25 \\
				\cline{2-5}

				\multirow{4}{*}
				&
				$\mu=-2$
				&
				L17
				&
				Saddle
				&
				G26 \\
				\hline
				
				1.15
				&
				
				&
				L12
				&
				Unstable node
				&
				G27 \\
				\hline
				
				\multirow{2}{1cm}{\centering{1.16}}
				&
				$\mu<-1$
				&
				L8
				&
				Saddle
				&
				G35 \\
				\cline{2-5}
				
				\multirow{2}{*}
				&
				$\mu\in(-1,0)$
				&
				L13
				&
				Unstable node
				&
				G36 \\
				\hline
				
				1.17
				&
				
				&
				L3
				&
				Stable node
				&
				G28 \\
				\hline
				
				\multirow{4}{1cm}{\centering{1.18}}
				&
				$\mu<-2$
				&
				L1
				&
				Saddle
				&
				G37 \\
				\cline{2-5}
				
				\multirow{4}{*}
				&
				$\mu\in(-1,0)$
				&
				L4
				&
				Stable node
				&
				G38 \\
				\cline{2-5}
				
				\multirow{4}{*}
				&
				$\mu\in(-2,-1)$
				&
				L15
				&
				Saddle
				&
				G39 \\
				\cline{2-5}

				\multirow{4}{*}
				&
				$\mu=-2$
				&
				L17
				&
				Saddle
				&
				G40 \\
				\hline

				\multirow{2}{1cm}{\centering{1.19}}
				&
				$\mu\in(-1,0)$
				&
				L10
				&
				Saddle
				&
				G29 \\
				\cline{2-5}
				
				\multirow{2}{*}
				&
				$\mu<-1$
				&
				L11
				&
				Saddle
				&
				G30 \\
				\hline

				\multirow{4}{1cm}{\centering{1.20}}
				&
				$\mu<-2$
				&
				L2
				&
				Saddle
				&
				G31 \\
				\cline{2-5}
				
				\multirow{4}{*}
				&
				$\mu\in(-1,0)$
				&
				L7
				&
				Unstable node
				&
				G32 \\
				\cline{2-5}
				
				\multirow{4}{*}
				&
				$\mu\in(-2,-1)$
				&
				L16
				&
				Saddle
				&
				G33 \\
				\cline{2-5}

				\multirow{4}{*}
				&
				$\mu=-2$
				&
				L18
				&
				Saddle
				&
				G34 \\
				\hline

				2.1
				&
				
				&
				L46
				&
				Stable node
				&
				G41 \\
				\hline
				
				2.2
				&
				
				&
				L47
				&
				Unstable node
				&
				G42 \\
				\hline

		\end{tabular}
	%}
\end{center}
\end{table}	

%\vspace{1cm}

	\begin{table}[H]
		\begin{center}
				%\resizebox{7.5cm}{!}{%
			\begin{tabular}{|c|c|c|c|c|}
				\hline
				\small\textbf{Case}& \small\textbf{Conditions}& \small\textbf{$\boldsymbol{O_1}$}& \small\textbf{$\boldsymbol{O_2}$} & \small\textbf{Global} \\
				\hline
				\hline

					\multirow{3}{1cm}{\centering{2.3}}
				&
				$\mu=0$
				&
				L14
				&
				\multirow{2}{3cm}{\centering{Unstable node}}
				&
				\multirow{2}{1cm}{\centering{G43}} \\
				\cline{2-3}
				
				\multirow{3}{*}
				&
				$c_1=0$, $\mu>-1$
				&
				L36
				&
				\multirow{2}{*}
				&
				\multirow{2}{*} {\phantom{h}}  \\
				\cline{2-5}
				
				\multirow{3}{*}
				&
				$c_1=0$, $\mu<-1$
				&
				L43
				&
				Saddle
				&
				G44 \\
				\hline

			\multirow{3}{1cm}{\centering{2.4}}
			&
			$\mu=0$
			&
			L5
			&
			Stable node
			&
			G45\\
			\cline{2-5}
			
			\multirow{3}{*}
			&
			$c_1=0$, $\mu>-1$
			&
			L35
			&
			Stable node
			&
			G46\\
			\cline{2-5}
			
			\multirow{3}{*}
			&
			$c_1=0$, $\mu<-1$
			&
			L39
			&
			Saddle
			&
			G47 \\
			\hline

			\multirow{3}{1cm}{\centering{2.5}}
			&
			$\mu=0$
			&
			L14
			&
			\multirow{2}{3cm}{\centering{Unstable node}}
			&
			\multirow{2}{1cm}{\centering{G48}}
			\\
			\cline{2-3}
			
			\multirow{3}{*}
			&
			$c_1=0$, $\mu>-1$
			&
			L45
			&
			\multirow{2}{*}
			&
			\multirow{2}{*} {\phantom{h}}
			\\
			\cline{2-5}

			\multirow{3}{*}
			&
			$c_1=0$, $\mu<-1$
			&
			L44
			&
			Saddle
			&
			G49 \\
			\hline
						
				\multirow{3}{1cm}{\centering{2.6}}
			&
			$\mu=0$
			&
			L5
			&
			Stable node
			&
			G50\\
			\cline{2-5}
			
			\multirow{3}{*}
			&
			$c_1=0$, $\mu>-1$
			&
			L38
			&
			Stable node
			&
			G51 \\
			\cline{2-5}
			
			\multirow{3}{*}
			&
			$c_1=0$, $\mu<-1$
			&
			L41
			&
			Saddle
			&
			G52 \\
			\hline

		\multirow{2}{1cm}{\centering{2.7}}
		&
		$\mu>-1$
		&
		L37
		&
		Stable node
		&
		G53\\
		\cline{2-5}
		
		\multirow{2}{*}
		&
		$\mu<-1$
		&
		L40
		&
		Saddle
		&
		G54 \\
		\hline
		
		\multirow{2}{1cm}{\centering{2.8}}
		&
		$\mu>-1$
		&
		L34
		&
		Unstable node
		&
		G55\\
		\cline{2-5}
		
		\multirow{2}{*}
		&
		$\mu<-1$
		&
		L42
		&
		Saddle
		&
		G56 \\
		\hline

		\multirow{3}{1cm}{\centering{2.9}}
		&
		$\mu=0$
		&
		L14
		&
		\multirow{2}{3cm}{\centering{Unstable node}}
		&
		\multirow{2}{1cm}{\centering{G57}}
		\\
		\cline{2-3}
		
		\multirow{3}{*}
		&
		$c_1=0$, $\mu>-1$
		&
		L24
		&
		\multirow{2}{*}
		&
		\multirow{2}{*} {\phantom{h}}
		\\
		\cline{2-5}

		\multirow{3}{*}
		&
		$c_1=0$, $\mu<-1$
		&
		L26
		&
		Saddle
		&
		G58 \\
		\hline
	
		\multirow{3}{1cm}{\centering{2.10}}
		&
		$\mu=0$
		&
		L5
		&
		Stable node
		&
		G59\\
		\cline{2-5}
		
		\multirow{3}{*}
		&
		$c_1=0$, $\mu>-1$
		&
		L23
		&
		Stable node
		&
		G60 \\
		\cline{2-5}
		
		\multirow{3}{*}
		&
		$c_1=0$, $\mu<-1$
		&
		L25
		&
		Saddle
		&
		G61 \\
		\hline

		3.1
		&
	
		&
		L12
		&
		Unstable node
		&
		G62 \\
		\hline

		\multirow{3}{1cm}{\centering{3.2}}
		&
		$\mu>0$
		&
		L6
		&
		Unstable node
		&
		G63\\
		\cline{2-5}
		
		\multirow{3}{*}
		&
		$\mu<-1$
		&
		L8
		&
		Stable node
		&
		G64 \\
		\cline{2-5}
		
		\multirow{3}{*}
		&
		$\mu\in(-1,0)$
		&
		L13
		&
		Unstable node
		&
		G65 \\
		\hline
		
		\multirow{3}{1cm}{\centering{3.3}}
		&
		$\mu>0$
		&
		L3
		&
		Stable node
		&
		G66\\
		\cline{2-5}
		
		\multirow{3}{*}
		&
		$\mu<-1$
		&
		L11
		&
		Stable node
		&
		G68 \\
		\cline{2-5}
		
		\multirow{3}{*}
		&
		$\mu\in(-1,0)$
		&
		L10
		&
		Stable node
		&
		G67 \\
		\hline

		\multirow{4}{1cm}{\centering{3.4}}
		&
		$\mu<-2$
		&
		L1
		&
		Saddle
		&
		G69 \\
		\cline{2-5}
		
		\multirow{4}{*}
		&
		$\mu\in(-1,0)$
		&
		L4
		&
		Stable node
		&
		G70 \\
		\cline{2-5}
		
		\multirow{4}{*}
		&
		$\mu\in(-2,-1)$
		&
		L15
		&
		Saddle
		&
		G71 \\
		\cline{2-5}
		
		\multirow{4}{*}
		&
		$\mu=-2$
		&
		L17
		&
		Saddle
		&
		G72 \\
		\hline

		\multirow{3}{1cm}{\centering{3.5}}
		&
		$\mu>0$
		&
		L3
		&
		Stable node
		&
		G7 \\
		\cline{2-5}
		
		\multirow{3}{*}
		&
		$\mu\in(-1,0)$
		&
		L10
		&
		Stable node
		&
		G74 \\
		\cline{2-5}
		
		\multirow{3}{*}
		&
		$\mu<-1$
		&
		L11
		&
		Saddle
		&
		G75 \\
		\hline
		
	    3.6
		&
		
		&
		L12
		&
		Unstable node
		&
		G76 \\
		\hline
		
			\multirow{2}{1cm}{\centering{3.7}}
		&
		$\mu<-1$
		&
		L8
		&
		Saddle
		&
		G77 \\
		\cline{2-5}
		
		\multirow{2}{*}
		&
		$\mu\in(-1,0)$
		&
		L13
		&
		Unstable node
		&
		G78 \\
		\hline
		
		3.8
		&
		
		&
		L3
		&
		Stable node
		&
		G79 \\
		\hline
		
			\multirow{4}{1cm}{\centering{3.9}}
		&
		$\mu<-2$
		&
		L1
		&
		Saddle
		&
		G80 \\
		\cline{2-5}
		
		\multirow{4}{*}
		&
		$\mu\in(-1,0)$
		&
		L4
		&
		Stable node
		&
		G81\\
		\cline{2-5}
		
		\multirow{4}{*}
		&
		$\mu\in(-2,-1)$
		&
		L15
		&
		Saddle
		&
		G82\\
		\cline{2-5}
		
		\multirow{4}{*}
		&
		$\mu=-2$
		&
		L17
		&
		Saddle
		&
		G83 \\
		\hline
	
\end{tabular}
%}
\end{center}
\end{table}	
	
%\end{multicols}
	
\begin{table}
	\begin{center}
		\begin{tabular}{|c|c|c|c|c|}
			\hline
			\small\textbf{Case}& \small\textbf{Conditions}& \small\textbf{$\boldsymbol{O_1}$}& \small\textbf{$\boldsymbol{O_2}$} & \small\textbf{Global} \\
			\hline
			\hline

				\multirow{4}{1cm}{\centering{4.1}}
				&
				$\mu=0$
				&
				L14
				&
				\multirow{2}{3cm}{\centering{Unstable node}}
				&
				\multirow{2}{1cm}{\centering{G84}} \\
				\cline{2-3}
				
				\multirow{4}{*}
				&
				$c_1=0$, $\mu>-1$, $a_0+c_0\mu>0$
				&
				L29
				&
				\multirow{2}{*}
				&
				\multirow{2}{*}  {\phantom{h}} 
				\\
				\cline{2-5}
				
				\multirow{4}{*}
				&
				$c_1=0$, $\mu>-1$, $a_0+c_0\mu<0$
				&
				L30
				&
				Unstable node
				&
				G85\\
				\cline{2-5}
				
				\multirow{4}{*}
				&
				$c_1=0$, $\mu<-1$
				&
				L33
				&
				Saddle
				&
				G86 \\
				\hline

				\multirow{5}{1cm}{\centering{4.2}}
				&
				$\mu=0$
				&
				L5
				&
				Stable node
				&
			    G87\\
				\cline{2-5}
				
				\multirow{5}{*}
				&
				$c_1=0$, $\mu>-1$, $a_0+c_0\mu>0$
				&
				L27
				&
			    Stable node
				&
				G88 \\
				\cline{2-5}
				
				\multirow{5}{*}
				&
				$c_1=0$, $\mu>-1$, $a_0+c_0\mu<0$
				&
				L28
				&
				Stable node
				&
				G89 \\
				\cline{2-5}
				
				\multirow{5}{*}
				&
				$c_1=0$, $\mu<-1$, $a_0+c_0\mu>0$
				&
				L31
				&
				Saddle
				&
				G90 \\
				\cline{2-5}
				
				\multirow{5}{*}
				&
				$c_1=0$, $\mu<-1$, $a_0+c_0\mu<0$
				&
				L32
				&
				Saddle
				&
				G91 \\
				\hline
				
				\multirow{3}{1cm}{\centering{4.3}}
				&
				$\mu=0$
				&
				L14
				&
				\multirow{2}{3cm}{\centering{Unstable node}}
				&
				\multirow{2}{1cm}{\centering{G92}} \\
				\cline{2-3}
				
				\multirow{3}{*}
				&
				$c_1=0$, $\mu>-1$
				&
				L20
				&
				\multirow{2}{*}
				&
				\multirow{2}{*}  {\phantom{h}}  \\
				\cline{2-5}
				
				\multirow{3}{*}
				&
				$c_1=0$, $\mu<-1$
				&
				L22
				&
				Saddle
				&
				G93 \\
				\hline
				
				\multirow{3}{1cm}{\centering{4.4}}
				&
				$\mu=0$
				&
				L5
				&
				Stable node
				&
				G94 \\
				\cline{2-5}
				
				\multirow{3}{*}
				&
				$c_1=0$, $\mu>-1$
				&
				L19
				&
				Stable node
				&
				G95  \\
				\cline{2-5}
				
				\multirow{3}{*}
				&
				$c_1=0$, $\mu<-1$
				&
				L21
				&
				Saddle
				&
				G96 \\
				\hline
				
				\multirow{3}{1cm}{\centering{5.1}}
				&
				$\mu>0$
				&
				L3
				&
				Stable node
				&
				G97 \\
				\cline{2-5}
				
				\multirow{3}{*}
				&
				$\mu\in(-1,0)$
				&
				L10
				&
				Stable node
				&
				G98  \\
				\cline{2-5}
				
				\multirow{3}{*}
				&
				$\mu<-1$
				&
				L11
				&
				Saddle
				&
				G99 \\
				\hline

				5.2
				&
				
				&
				L12
				&
				Unstable node
				&
				G76 \\
				\hline

					\multirow{3}{1cm}{\centering{5.3}}
				&
				$\mu>0$
				&
				L6
				&
				Unstable node
				&
				G100 \\
				\cline{2-5}
				
				\multirow{3}{*}
				&
				$\mu<-1$
				&
				L8
				&
				Saddle
				&
				G77  \\
				\cline{2-5}
				
				\multirow{3}{*}
				&
				$\mu\in(-1,0)$
				&
				L13
				&
				Unstable node
				&
				G78 \\
				\hline

						\multirow{3}{1cm}{\centering{5.4}}
				&
				$\mu>0$
				&
				L3
				&
				Stable node
				&
				G79 \\
				\cline{2-5}
				
				\multirow{3}{*}
				&
				 $\mu\in(-1,0)$
				&
				L10
				&
				Stable node
				&
				G101  \\
				\cline{2-5}
				
				\multirow{3}{*}
				&
				$\mu<-1$
				&
				L11
				&
				Saddle
				&
				G102 \\
				\hline

				\multirow{4}{1cm}{\centering{5.5}}
				&
				$\mu<-2$
				&
				L1
				&
				Saddle
				&
				G80 \\
				\cline{2-5}
				
				\multirow{4}{*}
				&
				$\mu\in(-1,0)$
				&
				L4
				&
				Stable node
				&
				G81  \\
				\cline{2-5}
				
				\multirow{4}{*}
				&
				$\mu\in(-2,-1)$
				&
				L15
				&
				Saddle
				&
				G82  \\
				\cline{2-5}
				
				\multirow{4}{*}
				&
				$\mu=-2$
				&
				L17
				&
				Saddle
				&
				G83 \\
				\hline

				\multirow{3}{1cm}{\centering{6.1}}
				&
				$\mu=0$
				&
				L14
				&
				\multirow{2}{3cm}{\centering{Unstable node}}
				&
				\multirow{2}{1cm}{\centering{G92}}
				\\
				\cline{2-3}
				
				\multirow{3}{*}
				&
				$c_1=0$, $\mu>-1$
				&
				L20
				&
				\multirow{2}{*}
				&
				\multirow{2}{*} {\phantom{h}}
				\\
				\cline{2-5}

				\multirow{3}{*}
				&
				$c_1=0$, $\mu<-1$
				&
				L22
				&
				Saddle
				&
				G93 \\
				\hline

				\multirow{3}{1cm}{\centering{6.2}}
				&
				$\mu=0$
				&
				L5
				&
				Stable node
				&
				G94
				\\
				\cline{2-5}
				
				\multirow{3}{*}
				&
				$c_1=0$, $\mu>-1$
				&
				L19
				&
				Stable node
				&
				G95
				\\
				\cline{2-5}

				\multirow{3}{*}
				&
				$c_1=0$, $\mu<-1$
				&
				L21
				&
				Saddle
				&
				G96 \\
				\hline

\end{tabular}
\caption{Classification of global phase portrais of system \eqref{system}.}
\label{tab:global}
\end{center}
\end{table}	

\phantom{-}

\begin{figure}[H]
	\centering
		\begin{subfigure}[h]{2.4cm}
		\centering
		\includegraphics[width=2.4cm]{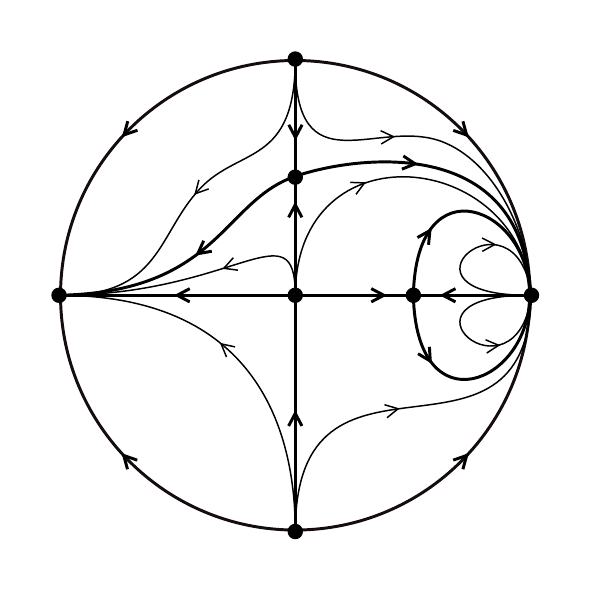}
		\caption*{\scriptsize (G1) [R=8, S=21]}
	\end{subfigure}
	\begin{subfigure}[h]{2.4cm}
		\centering
		\includegraphics[width=2.4cm]{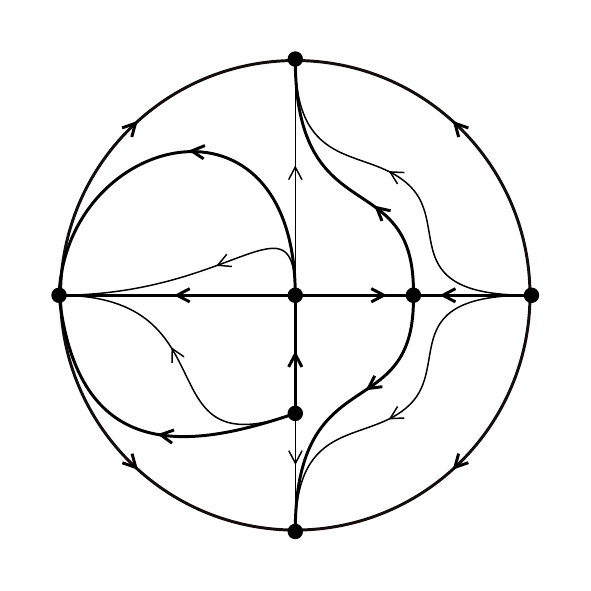}\\
		\caption*{\scriptsize(G2) [R=6, S=19]}
	\end{subfigure}
	\begin{subfigure}[h]{2.4cm}
		\centering
		\includegraphics[width=2.4cm]{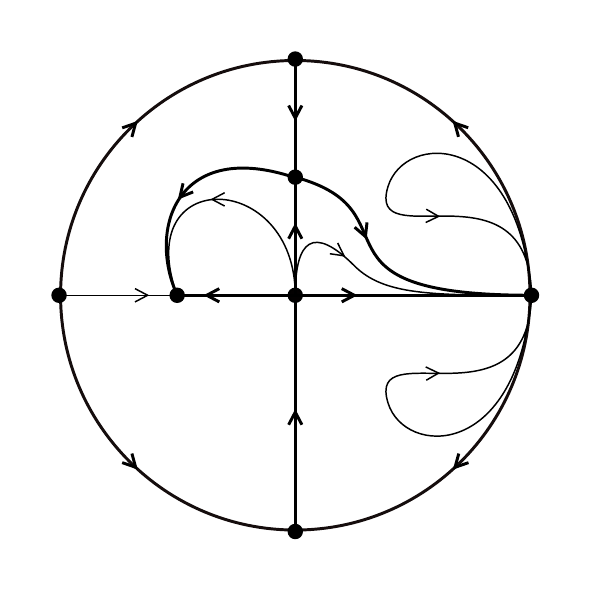}
		\caption*{\scriptsize (G3) [R=5, S=18]}
	\end{subfigure}
	\begin{subfigure}[h]{2.4cm}
		\centering
		\includegraphics[width=2.4cm]{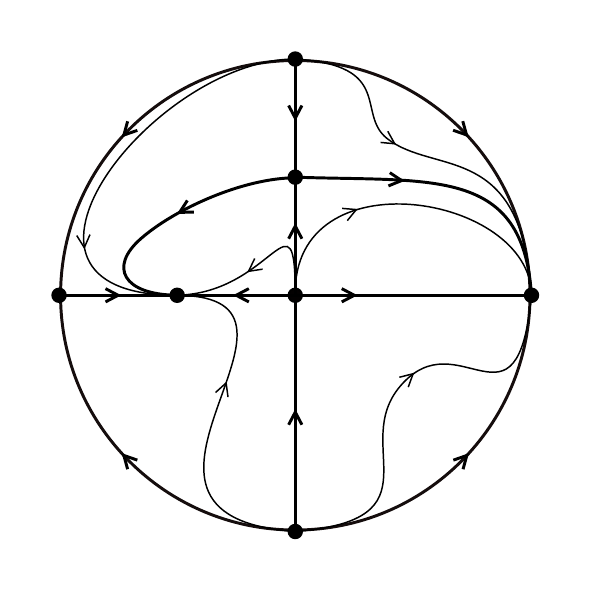}
		\caption*{\scriptsize (G4) [R=6, S=19]}
	\end{subfigure}
	\begin{subfigure}[h]{2.4cm}
		\centering
		\includegraphics[width=2.4cm]{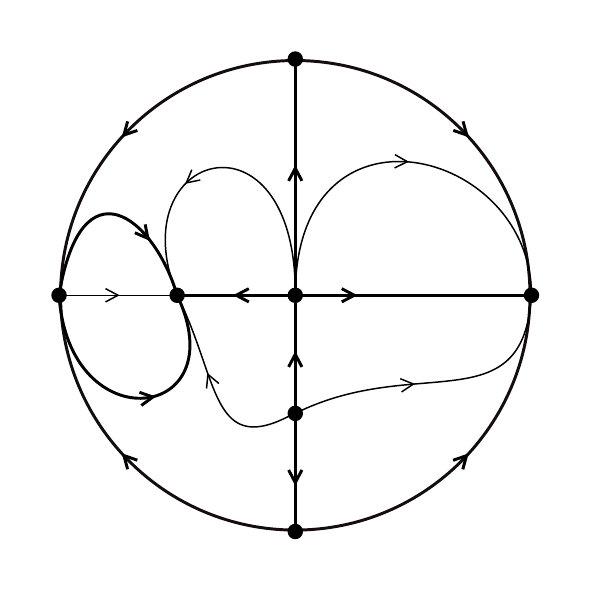}
		\caption*{\scriptsize (G5) [R=5, S=18]}
	\end{subfigure}
	\begin{subfigure}[h]{2.4cm}
		\centering
		\includegraphics[width=2.4cm]{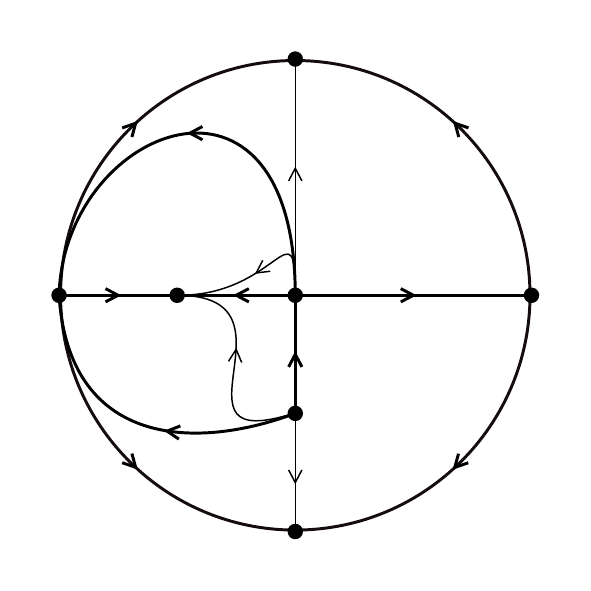}
		\caption*{\scriptsize (G6) [R=4, S=17]}
	\end{subfigure}
	\begin{subfigure}[h]{2.4cm}
		\centering
		\includegraphics[width=2.4cm]{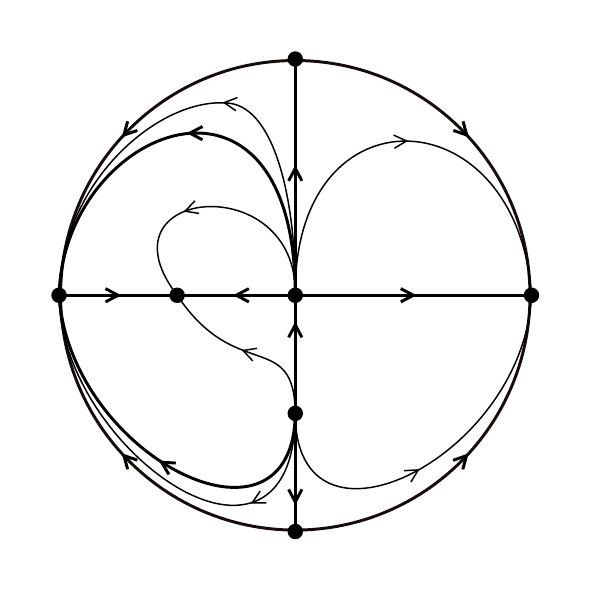}
		\caption*{\scriptsize (G7) [R=6, S=19]}
	\end{subfigure}
	\begin{subfigure}[h]{2.4cm}
		\centering
		\includegraphics[width=2.4cm]{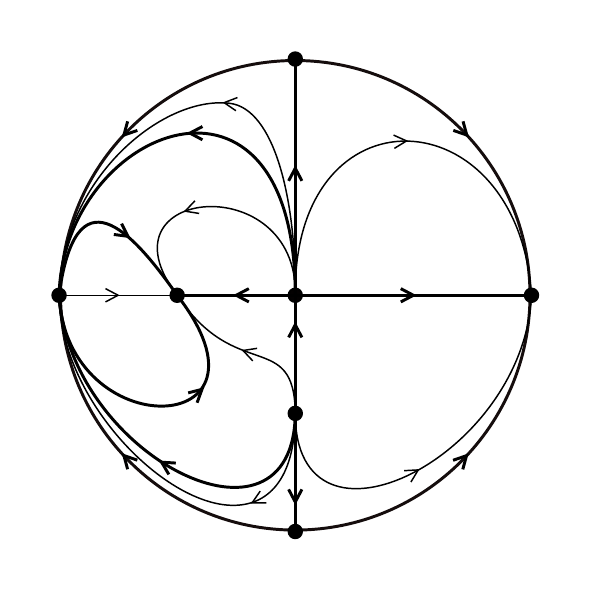}
		\caption*{\scriptsize (G8) [R=7, S=20]}
	\end{subfigure}
	\begin{subfigure}[h]{2.4cm}
		\centering
		\includegraphics[width=2.4cm]{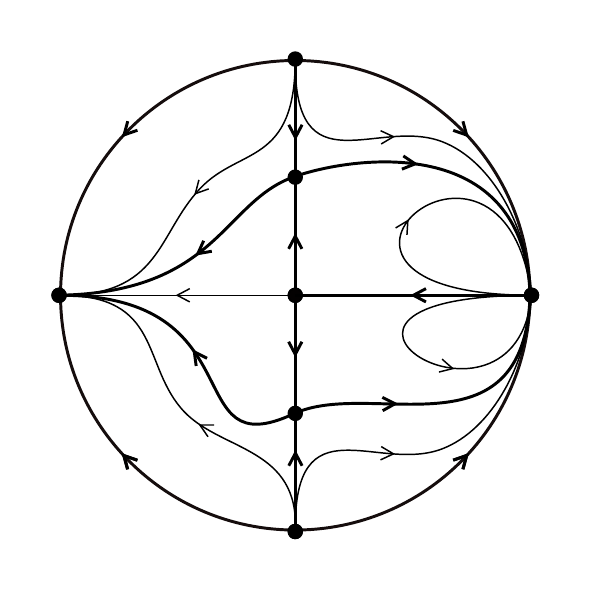}
		\caption*{\scriptsize (G9) [R=7, S=20]}
	\end{subfigure}
	\begin{subfigure}[h]{2.4cm}
		\centering
		\includegraphics[width=2.4cm]{chap2/global/G10}
		\caption*{\scriptsize (G10) [R=6, S=19]}
	\end{subfigure}
	\begin{subfigure}[h]{2.4cm}
		\centering
		\includegraphics[width=2.4cm]{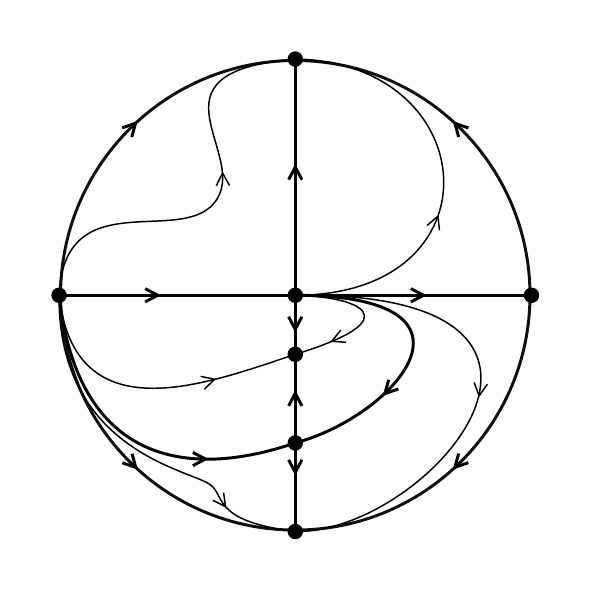}
		\caption*{\scriptsize (G11) [R=6, S=19]}
	\end{subfigure}
	\begin{subfigure}[h]{2.4cm}
		\centering
		\includegraphics[width=2.4cm]{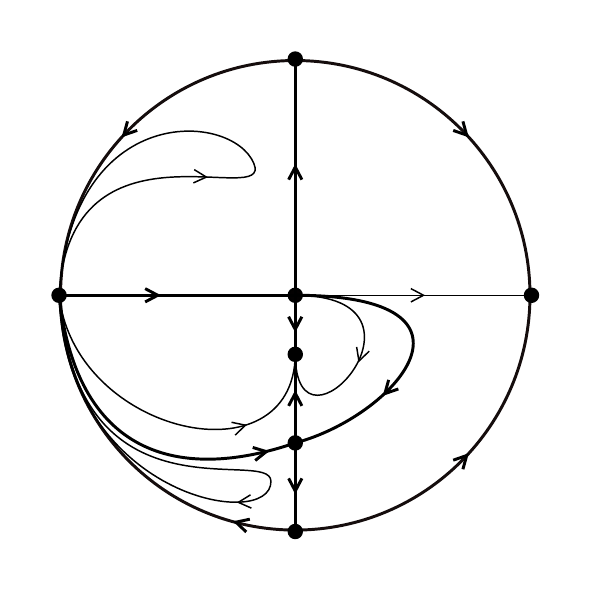}
		\caption*{\scriptsize (G12) [R=5, S=18]}
	\end{subfigure}
	\begin{subfigure}[h]{2.4cm}
		\centering
		\includegraphics[width=2.4cm]{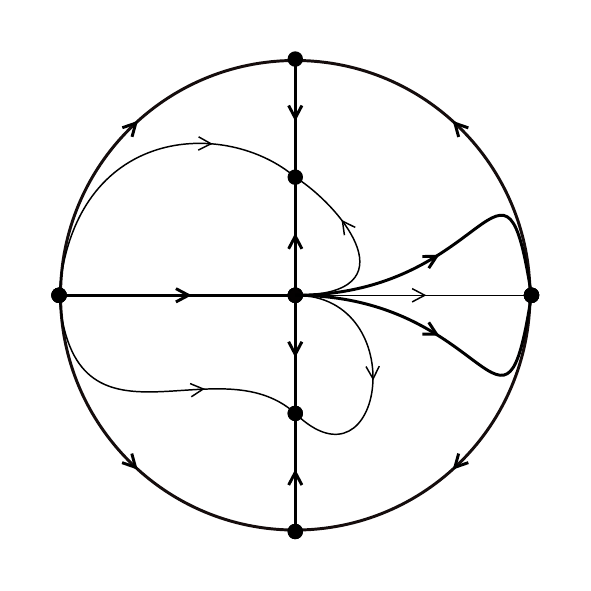}
		\caption*{\scriptsize (G13) [R=5, S=18]}
	\end{subfigure}
	\begin{subfigure}[h]{2.4cm}
		\centering
		\includegraphics[width=2.4cm]{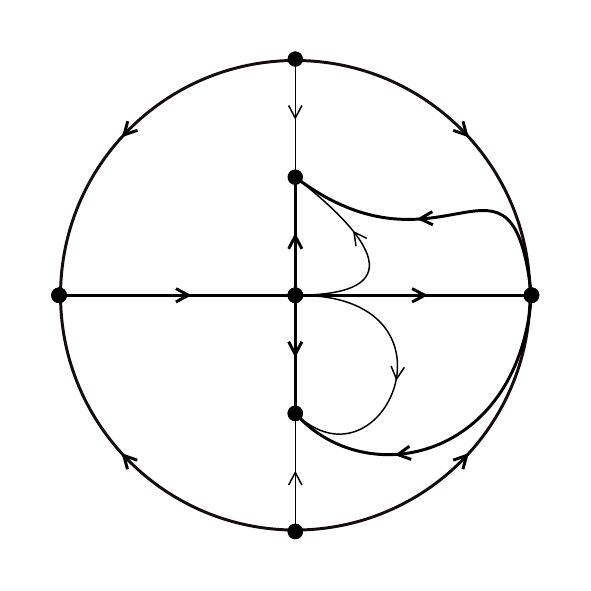}
		\caption*{\scriptsize (G14) [R=4, S=17]}
	\end{subfigure}
	\begin{subfigure}[h]{2.4cm}
		\centering
		\includegraphics[width=2.4cm]{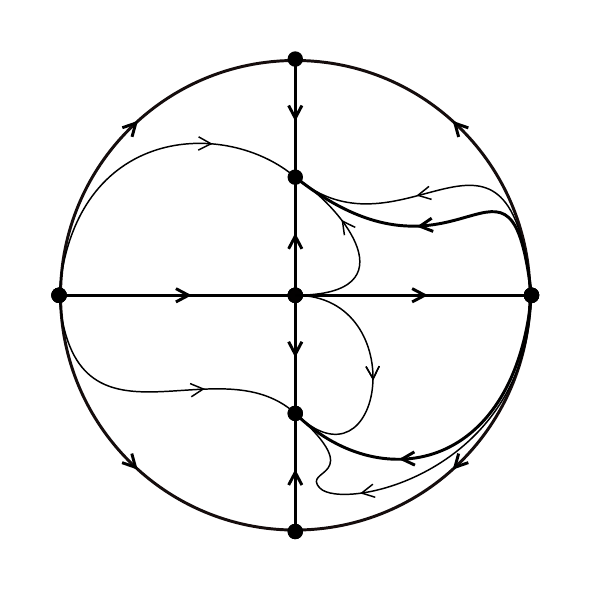}
		\caption*{\scriptsize (G15) [R=6, S=19]}
	\end{subfigure}
	\begin{subfigure}[h]{2.4cm}
		\centering
		\includegraphics[width=2.4cm]{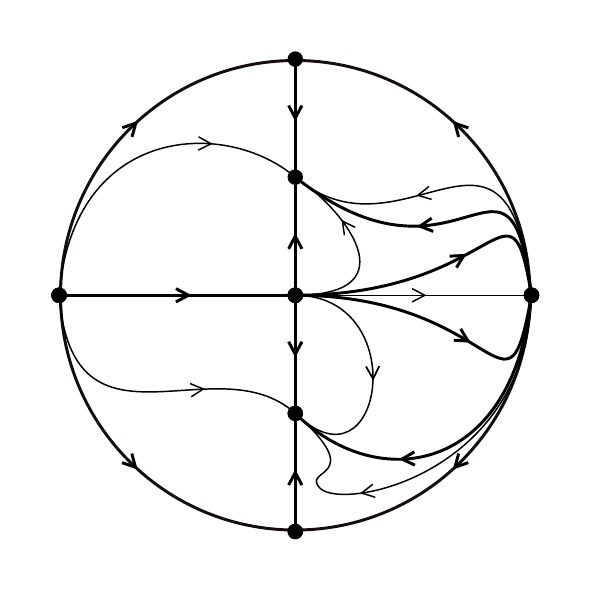}
		\caption*{\scriptsize (G16) [R=7, S=20]}
	\end{subfigure}
	\begin{subfigure}[h]{2.4cm}
		\centering
		\includegraphics[width=2.4cm]{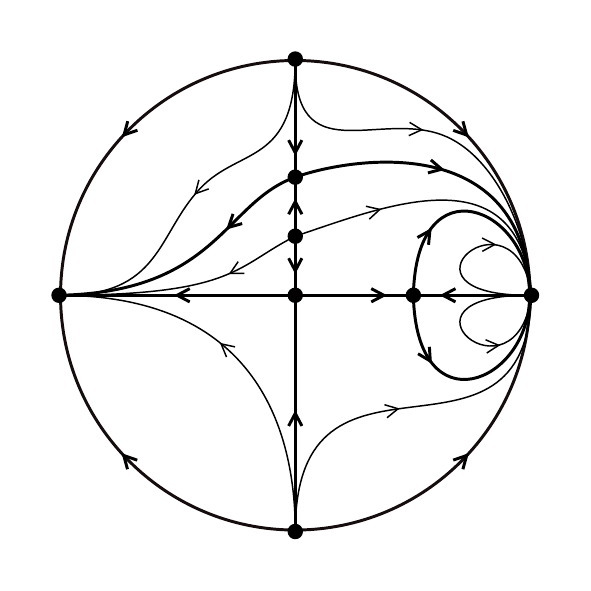}
		\caption*{\scriptsize (G17) [R=8, S=23]}
	\end{subfigure}
	\begin{subfigure}[h]{2.4cm}
		\centering
		\includegraphics[width=2.4cm]{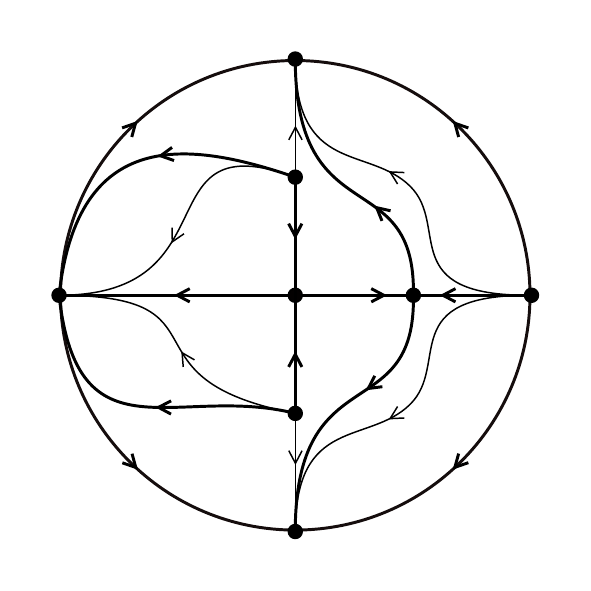}
		\caption*{\scriptsize (G18) [R=6, S=21]}
	\end{subfigure}
	\begin{subfigure}[h]{2.4cm}
		\centering
		\includegraphics[width=2.4cm]{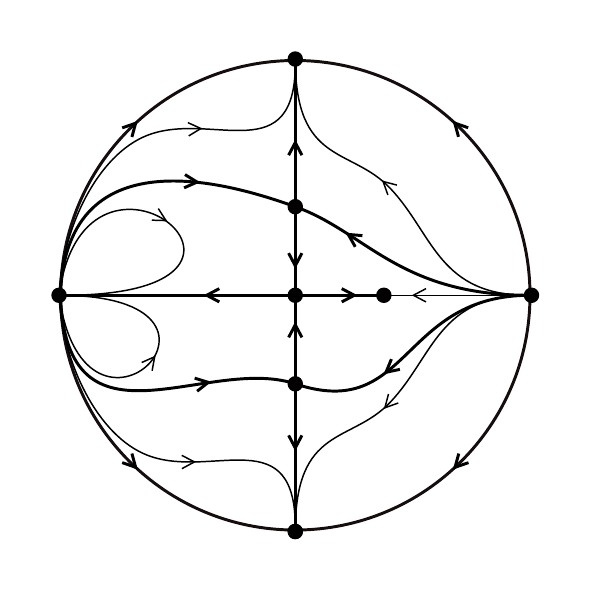}
		\caption*{\scriptsize (G19) [R=7, S=22]}
	\end{subfigure}
	\begin{subfigure}[h]{2.4cm}
		\centering
		\includegraphics[width=2.4cm]{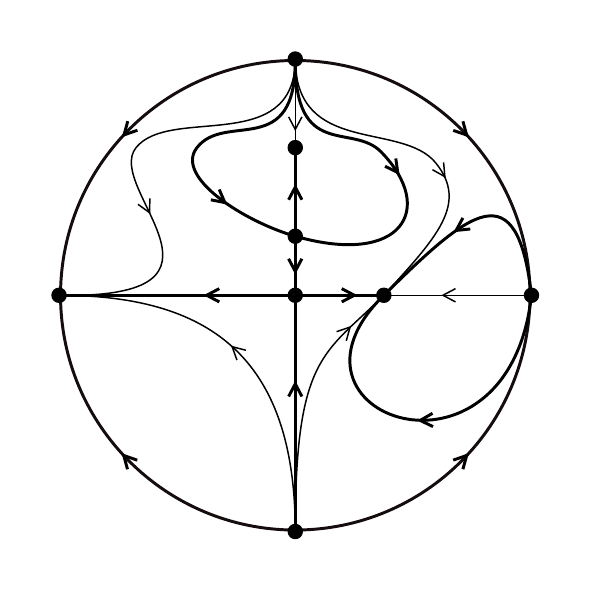}
		\caption*{\scriptsize (G20) [R=6, S=21]}
	\end{subfigure}
	\begin{subfigure}[h]{2.4cm}
	\centering
	\includegraphics[width=2.4cm]{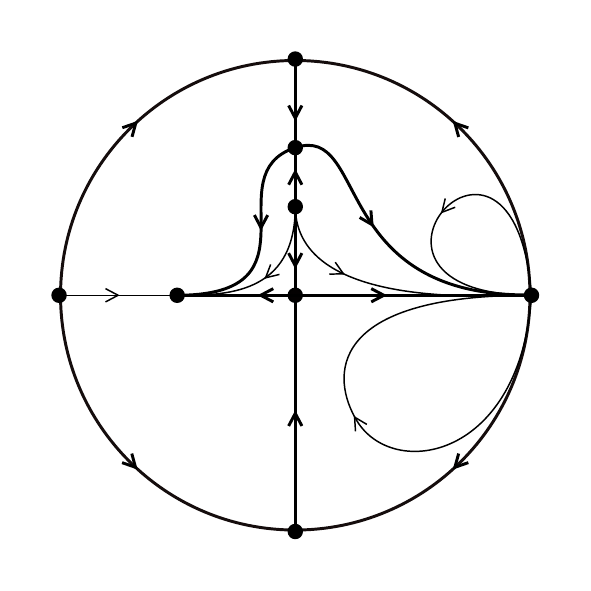}
	\caption*{\scriptsize (G21) [R=5, S=20]}
\end{subfigure}
\begin{subfigure}[h]{2.4cm}
	\centering
	\includegraphics[width=2.4cm]{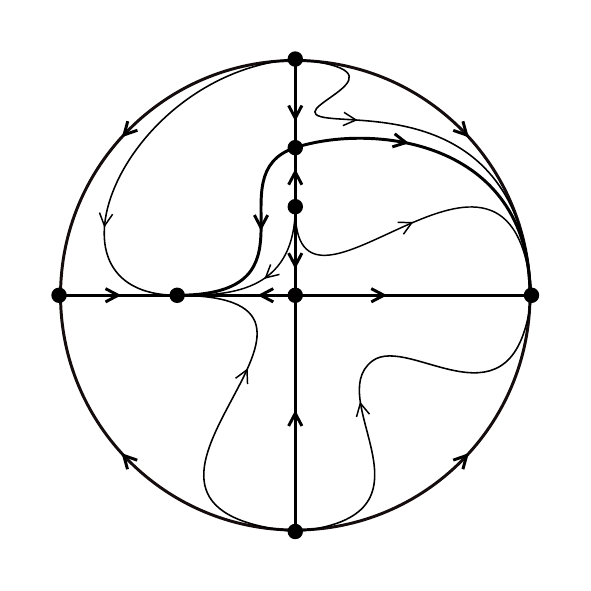}
	\caption*{\scriptsize (G22) [R=6, S=21]}
\end{subfigure}
\begin{subfigure}[h]{2.4cm}
	\centering
	\includegraphics[width=2.4cm]{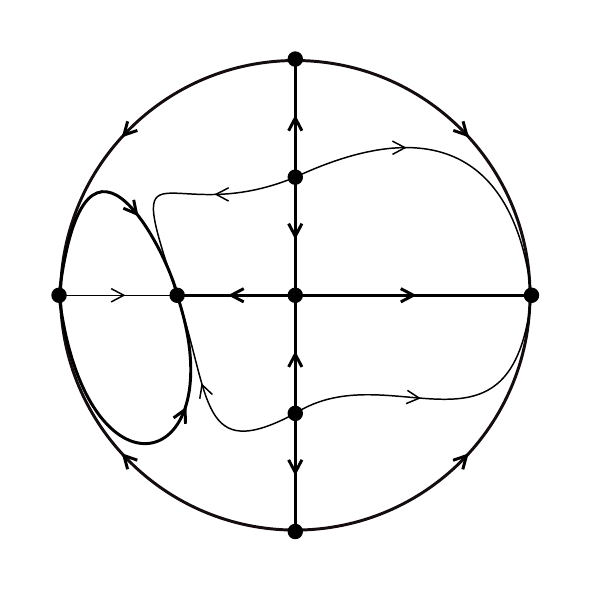}
	\caption*{\scriptsize (G23) [R=5, S=20]}
\end{subfigure}
\begin{subfigure}[h]{2.4cm}
	\centering
	\includegraphics[width=2.4cm]{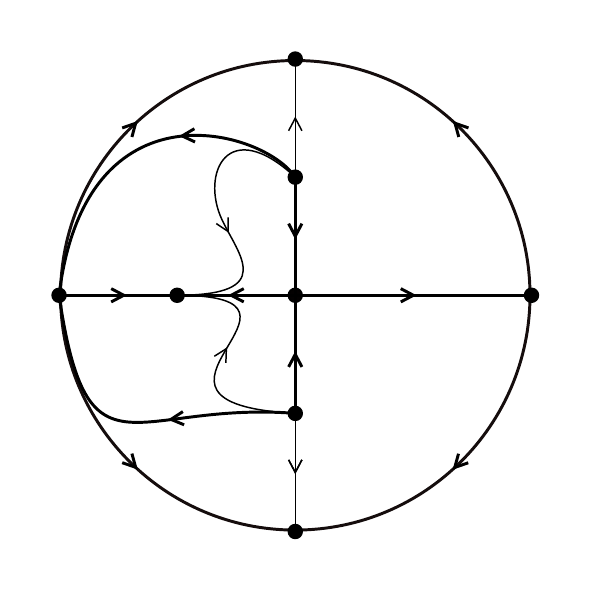}
	\caption*{\scriptsize (G24) [R=4, S=19]}
\end{subfigure}
	\begin{subfigure}[h]{2.4cm}
	\centering
	\includegraphics[width=2.4cm]{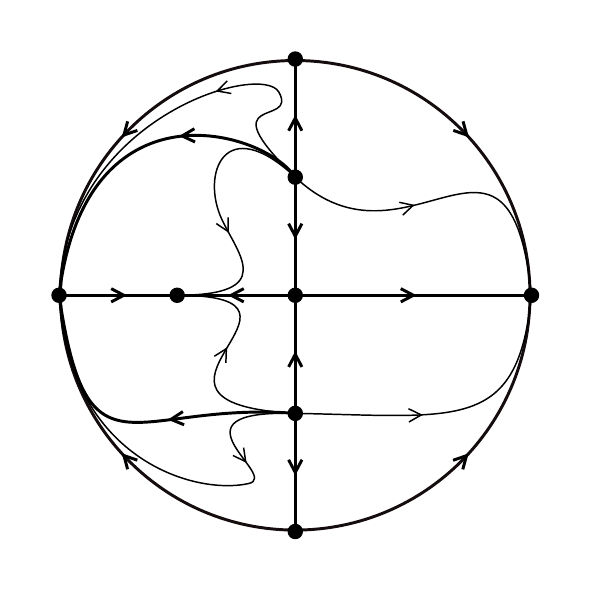}
	\caption*{\scriptsize (G25) [R=6, S=21]}
\end{subfigure}
	\begin{subfigure}[h]{2.4cm}
	\centering
	\includegraphics[width=2.4cm]{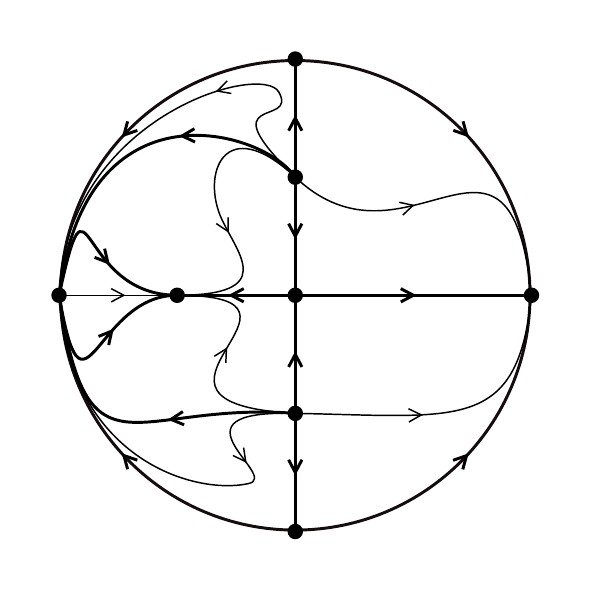}
	\caption*{\scriptsize (G26) [R=7, S=22]}
\end{subfigure}
\begin{subfigure}[h]{2.4cm}
	\centering
	\includegraphics[width=2.4cm]{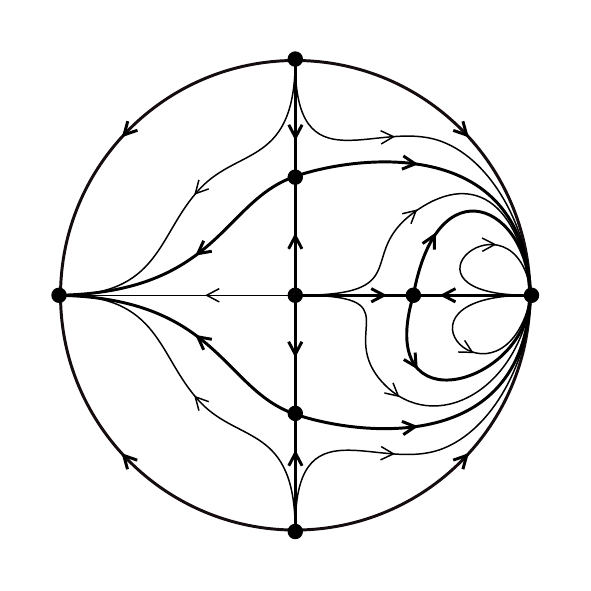}
	\caption*{\scriptsize (G27) [R=9, S=24]}
\end{subfigure}
\begin{subfigure}[h]{2.4cm}
	\centering
	\includegraphics[width=2.4cm]{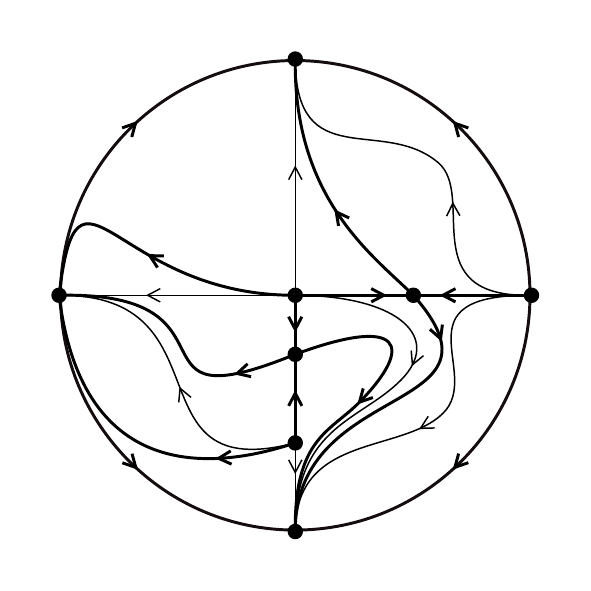}
	\caption*{\scriptsize (G28) [R=7, S=22]}
\end{subfigure}
\begin{subfigure}[h]{2.4cm}
	\centering
	\includegraphics[width=2.4cm]{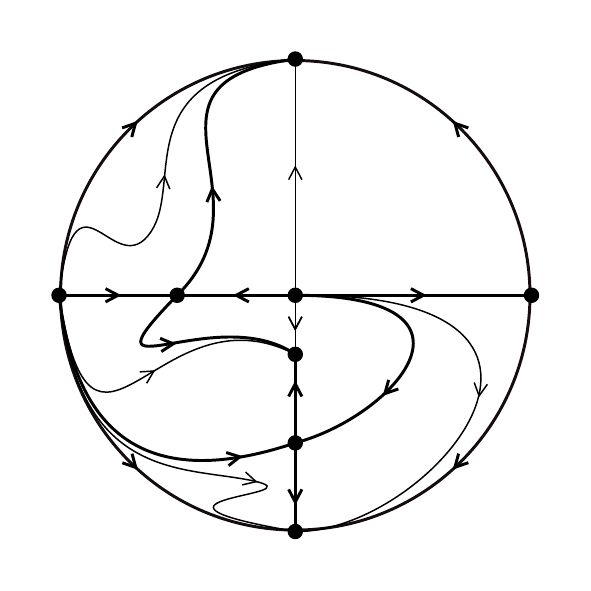}
	\caption*{\scriptsize (G29) [R=6, S=21]}
\end{subfigure}
\begin{subfigure}[h]{2.4cm}
	\centering
	\includegraphics[width=2.4cm]{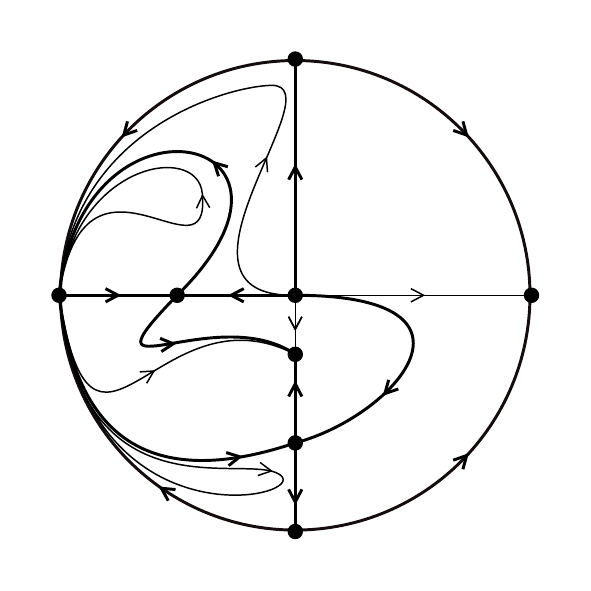}
	\caption*{\scriptsize (G30) [R=6, S=21]}
\end{subfigure}
\begin{subfigure}[h]{2.4cm}
	\centering
	\includegraphics[width=2.4cm]{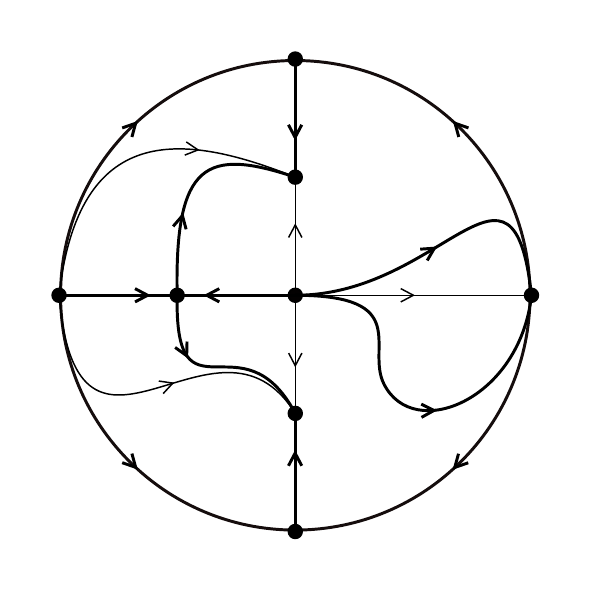}
	\caption*{\scriptsize (G31) [R=5, S=20]}
\end{subfigure}
\begin{subfigure}[h]{2.4cm}
	\centering
	\includegraphics[width=2.4cm]{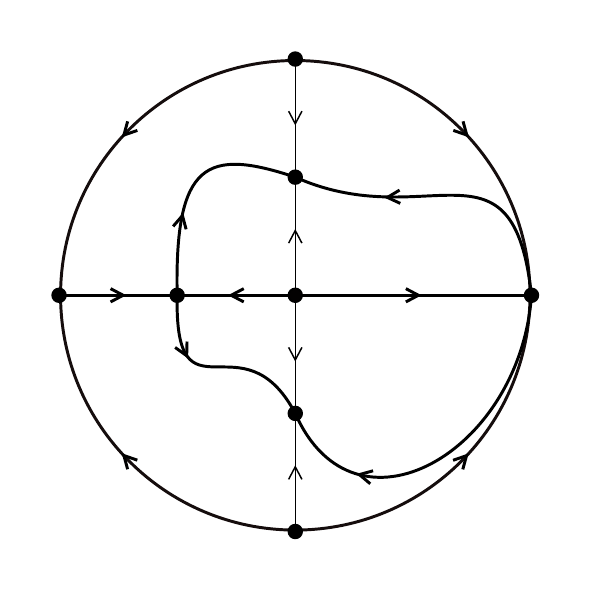}
	\caption*{\scriptsize (G32) [R=4, S=19]}
\end{subfigure}
\begin{subfigure}[h]{2.4cm}
	\centering
	\includegraphics[width=2.4cm]{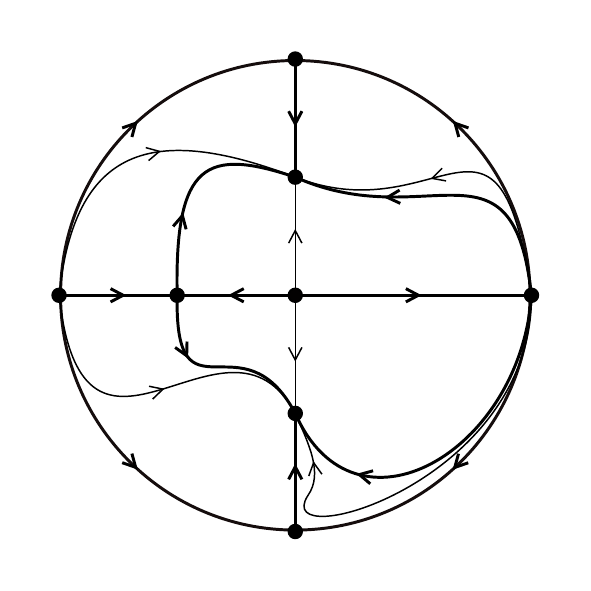}
	\caption*{\scriptsize (G33) [R=6, S=21]}
\end{subfigure}
\begin{subfigure}[h]{2.4cm}
	\centering
	\includegraphics[width=2.4cm]{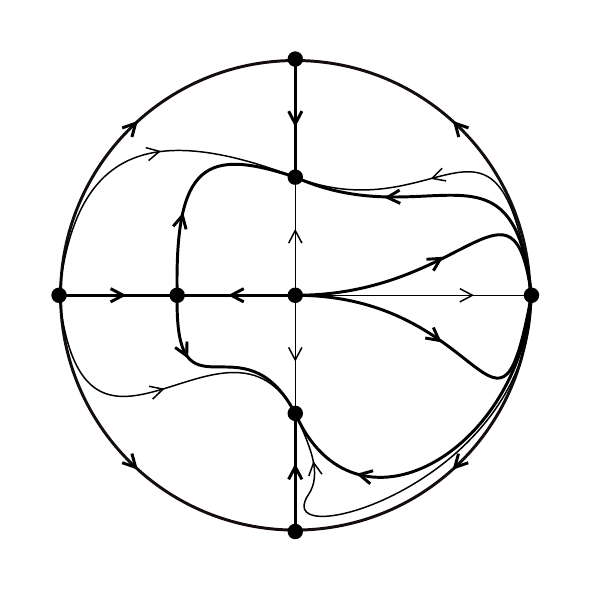}
	\caption*{\scriptsize (G34) [R=7, S=22]}
\end{subfigure}
\begin{subfigure}[h]{2.4cm}
	\centering
	\includegraphics[width=2.4cm]{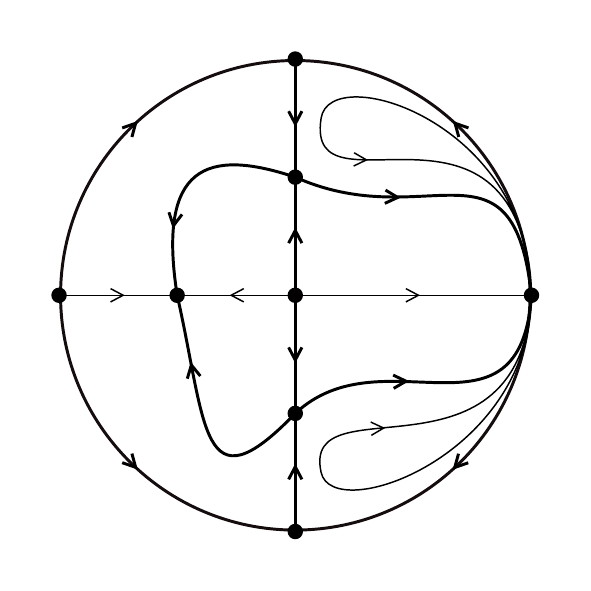}
	\caption*{\scriptsize (G35) [R=5, S=20]}
\end{subfigure}
\end{figure}

\begin{figure}[H]
	\centering
	\ContinuedFloat

	\begin{subfigure}[h]{2.4cm}
		\centering
		\includegraphics[width=2.4cm]{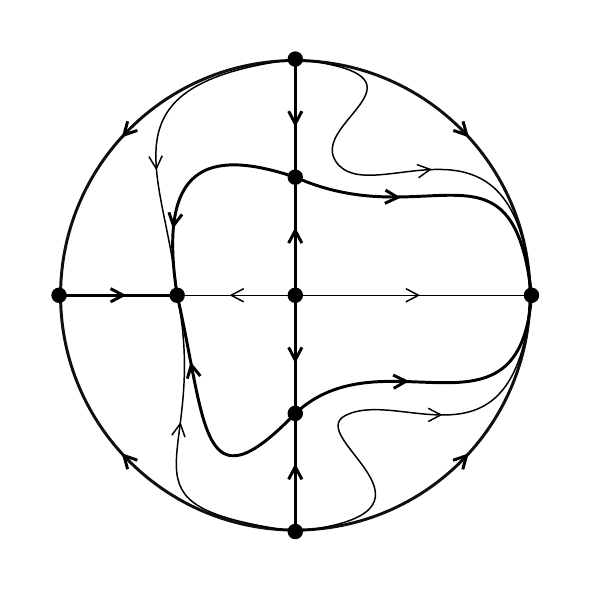}
		\caption*{\scriptsize (G36) [R=6, S=21]}
	\end{subfigure}
	\begin{subfigure}[h]{2.4cm}
		\centering
		\includegraphics[width=2.4cm]{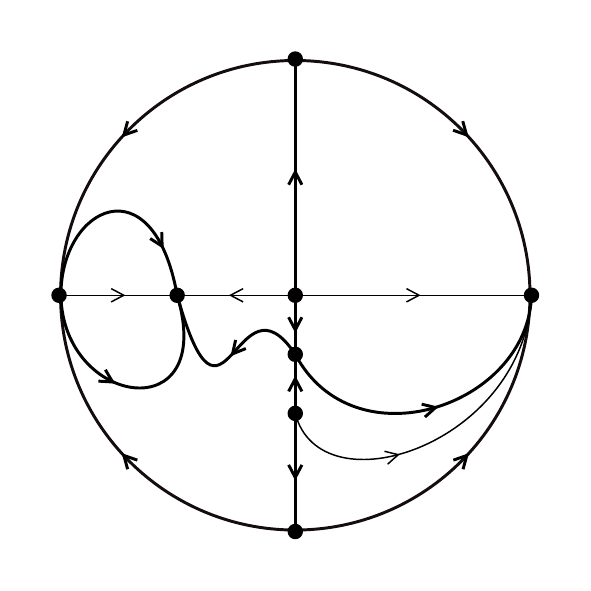}
		\caption*{\scriptsize (G37) [R=5, S=20]}
	\end{subfigure}
	\begin{subfigure}[h]{2.4cm}
		\centering
		\includegraphics[width=2.4cm]{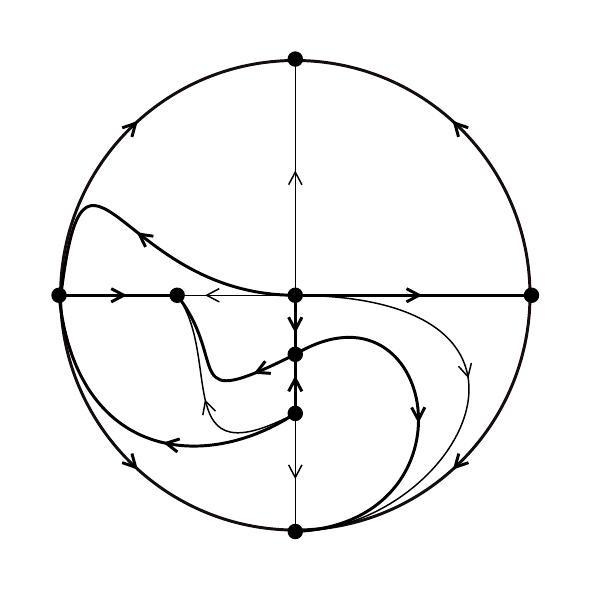}
		\caption*{\scriptsize (G38) [R=5, S=20]}
	\end{subfigure}
	\begin{subfigure}[h]{2.4cm}
		\centering
		\includegraphics[width=2.4cm]{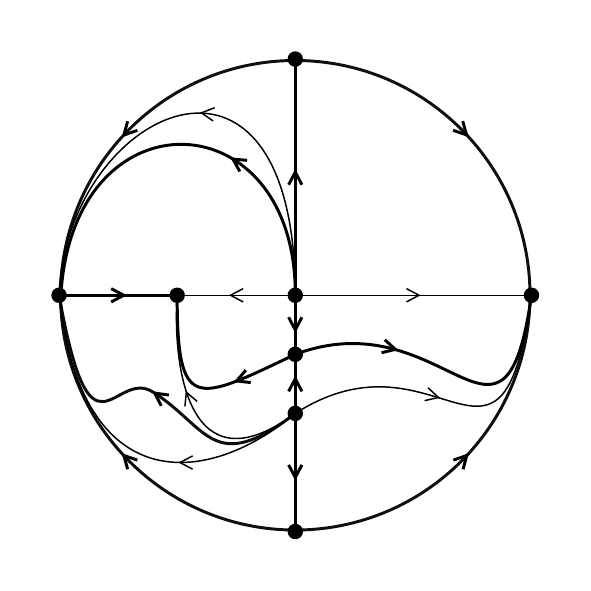}
		\caption*{\scriptsize (G39) [R=6, S=21]}
	\end{subfigure}
	\begin{subfigure}[h]{2.4cm}
		\centering
		\includegraphics[width=2.4cm]{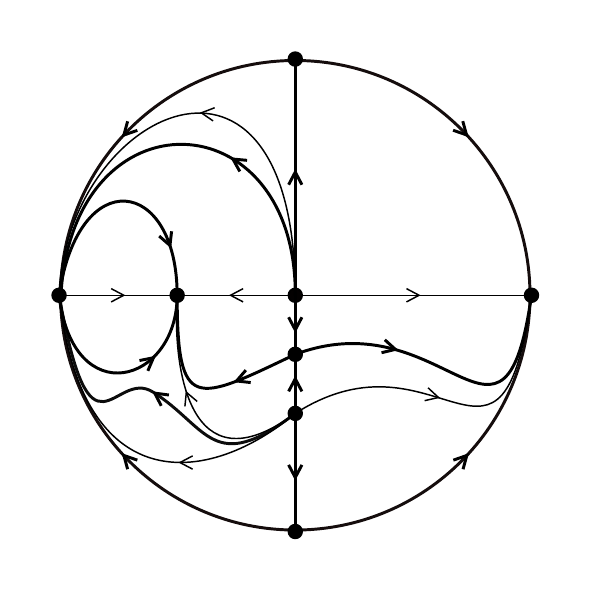}
		\caption*{\scriptsize (G40) [R=7, S=22]}
	\end{subfigure}
	\begin{subfigure}[h]{2.4cm}
	\centering
	\includegraphics[width=2.4cm]{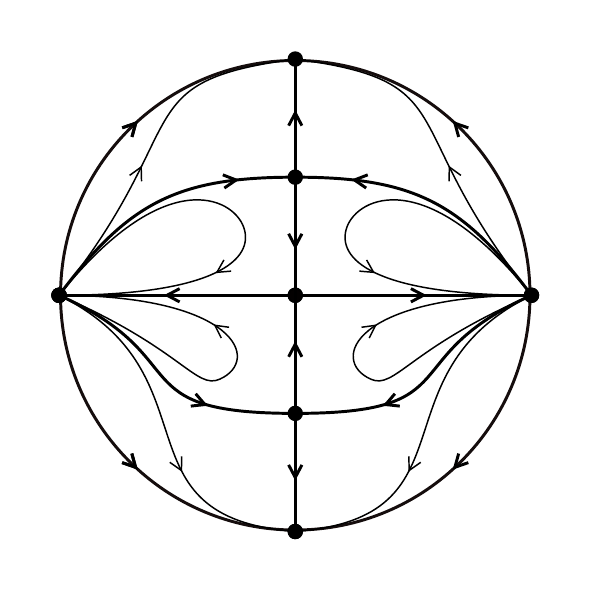}
	\caption*{\scriptsize (G41) [R=8, S=21]}
\end{subfigure}
\begin{subfigure}[h]{2.4cm}
	\centering
	\includegraphics[width=2.4cm]{chap2/global/G42}
	\caption*{\scriptsize (G42) [R=7, S=20]}
\end{subfigure}
\begin{subfigure}[h]{2.4cm}
	\centering
	\includegraphics[width=2.4cm]{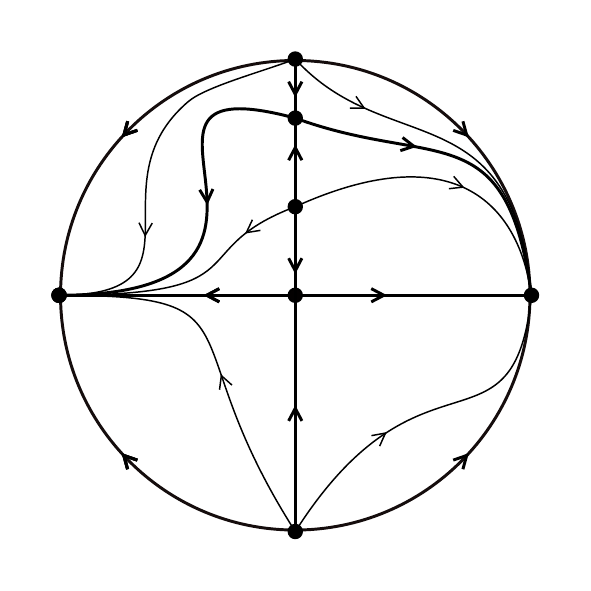}
	\caption*{\scriptsize (G43) [R=6, S=19]}
\end{subfigure}
\begin{subfigure}[h]{2.4cm}
	\centering
	\includegraphics[width=2.4cm]{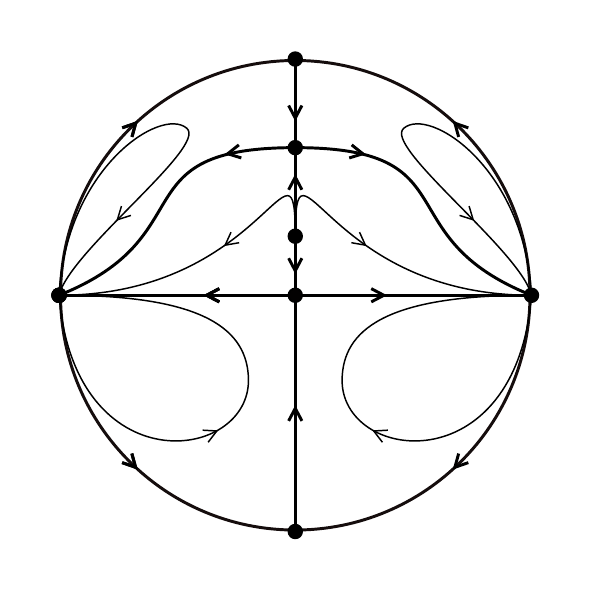}
	\caption*{\scriptsize (G44) [R=6, S=19]}
\end{subfigure}
\begin{subfigure}[h]{2.4cm}
	\centering
	\includegraphics[width=2.4cm]{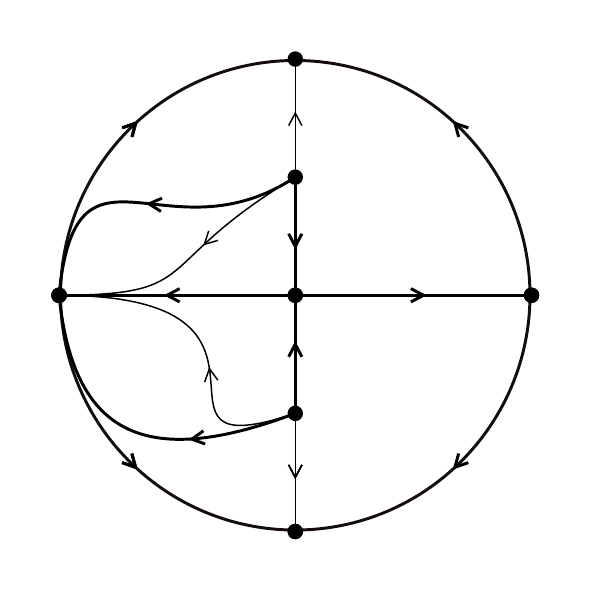}
	\caption*{\scriptsize (G45) [R=4, S=17]}
\end{subfigure}
\begin{subfigure}[h]{2.4cm}
	\centering
	\includegraphics[width=2.4cm]{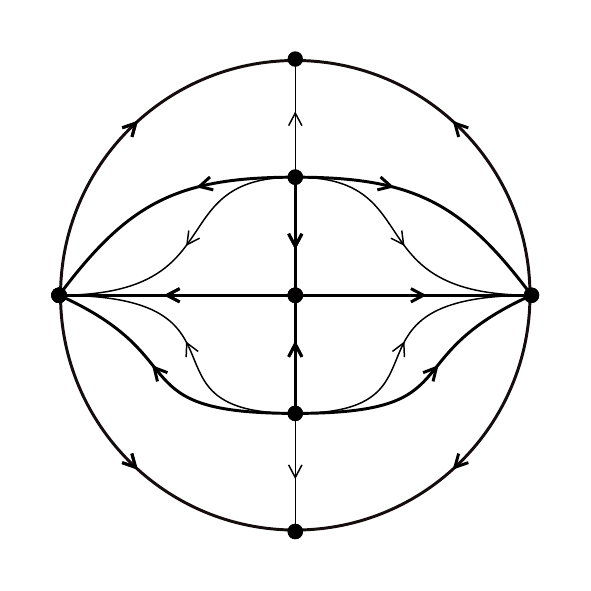}
	\caption*{\scriptsize (G46) [R=6, S=19]}
\end{subfigure}
\begin{subfigure}[h]{2.4cm}
	\centering
	\includegraphics[width=2.4cm]{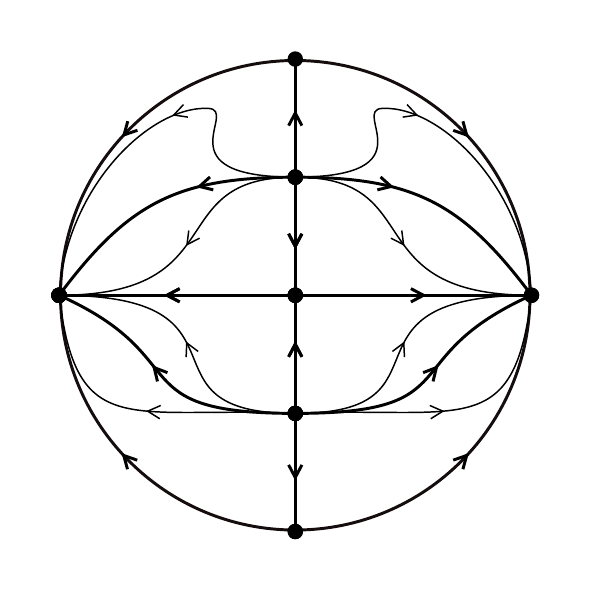}
	\caption*{\scriptsize (G47) [R=8, S=21]}
\end{subfigure}
	\begin{subfigure}[h]{2.4cm}
	\centering
	\includegraphics[width=2.4cm]{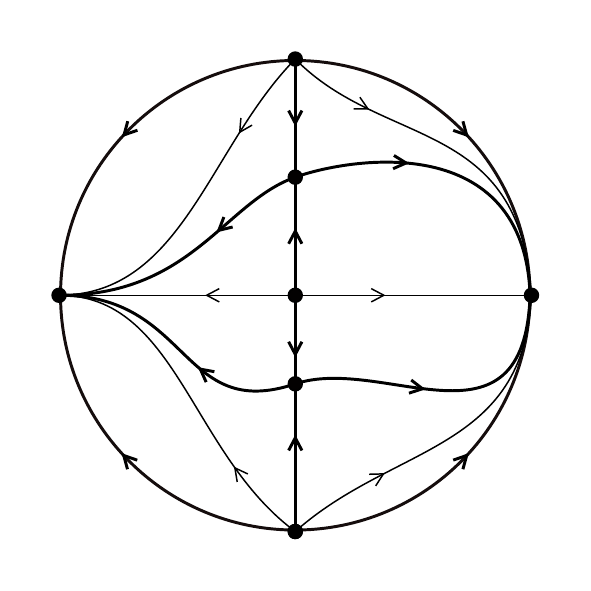}
	\caption*{\scriptsize (G48) [R=6, S=19]}
\end{subfigure}
	\begin{subfigure}[h]{2.4cm}
	\centering
	\includegraphics[width=2.4cm]{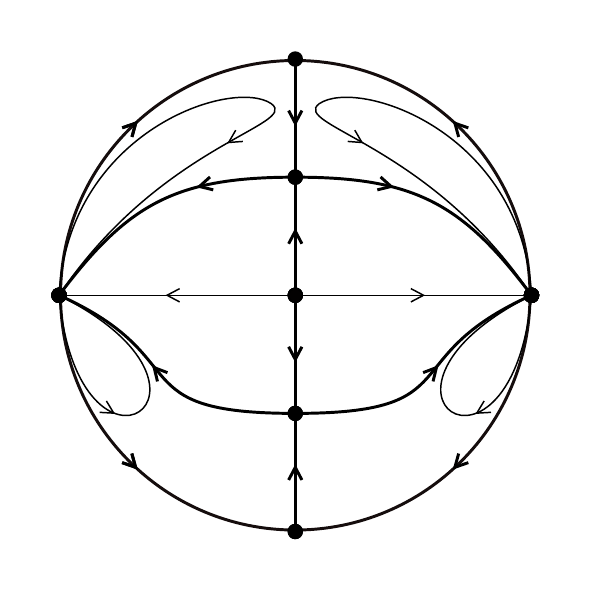}
	\caption*{\scriptsize (G49) [R=6, S=19]}
\end{subfigure}
\begin{subfigure}[h]{2.4cm}
	\centering
	\includegraphics[width=2.4cm]{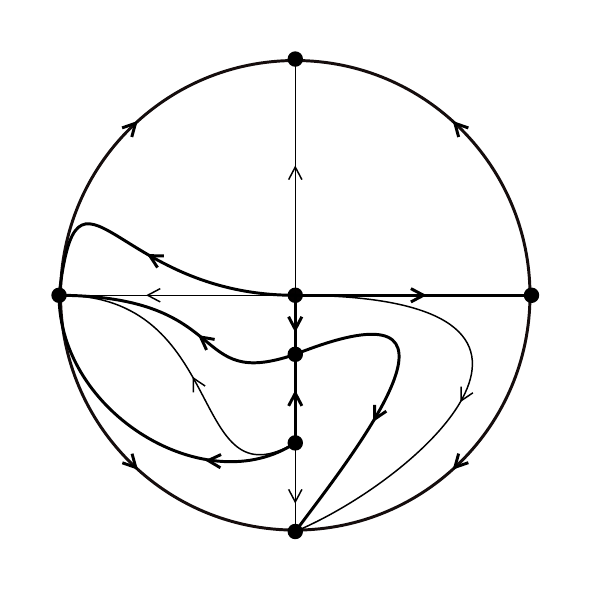}
	\caption*{\scriptsize (G50) [R=5, S=18]}
\end{subfigure}
	\begin{subfigure}[h]{2.4cm}
	\centering
	\includegraphics[width=2.4cm]{chap2/global/G51}
	\caption*{\scriptsize (G51) [R=6, S=19]}
\end{subfigure}
\begin{subfigure}[h]{2.4cm}
	\centering
	\includegraphics[width=2.4cm]{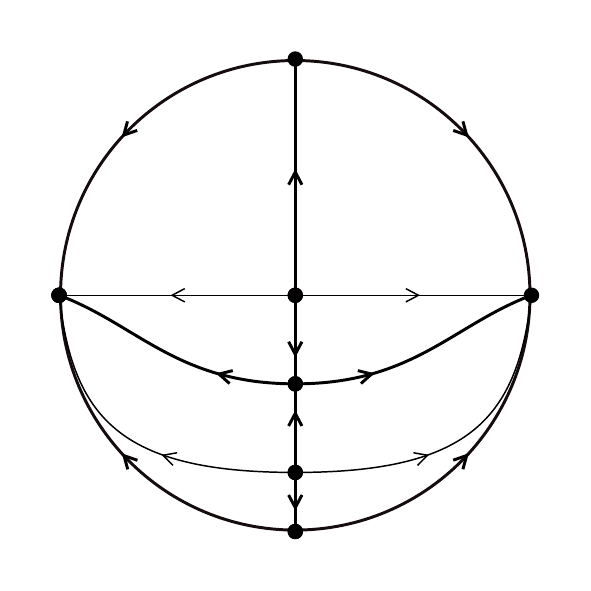}
	\caption*{\scriptsize (G52) [R=4, S=17]}
\end{subfigure}
\begin{subfigure}[h]{2.4cm}
	\centering
	\includegraphics[width=2.4cm]{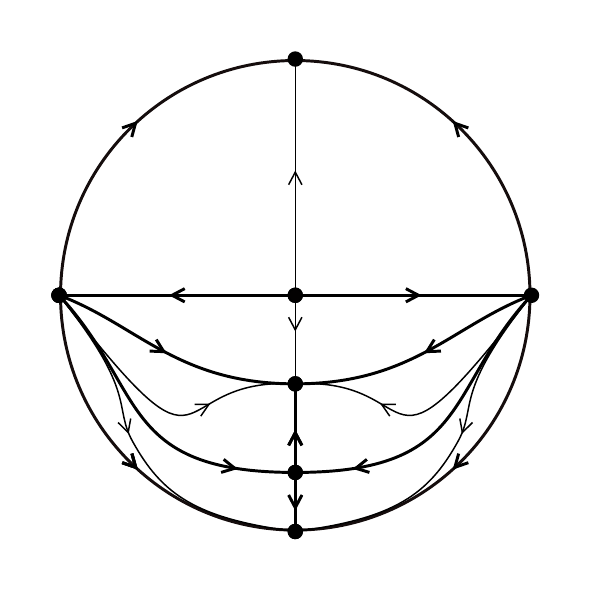}
	\caption*{\scriptsize (G53) [R=6, S=19]}
\end{subfigure}
\begin{subfigure}[h]{2.4cm}
	\centering
	\includegraphics[width=2.4cm]{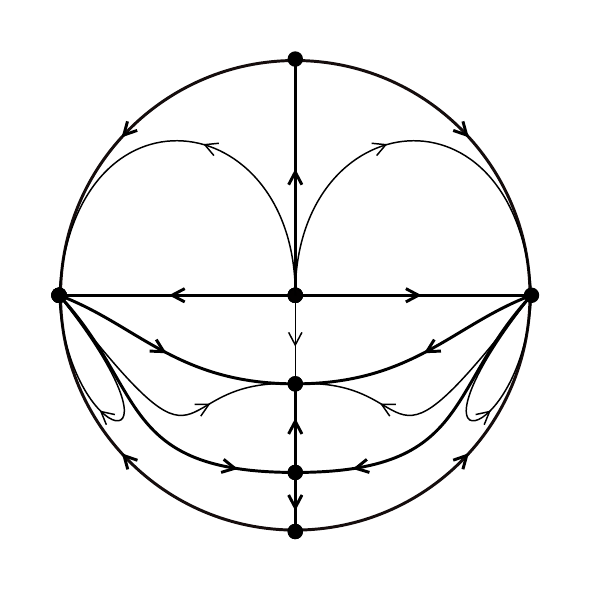}
	\caption*{\scriptsize (G54)[R=7, S=20]}
\end{subfigure}
\begin{subfigure}[h]{2.4cm}
	\centering
	\includegraphics[width=2.4cm]{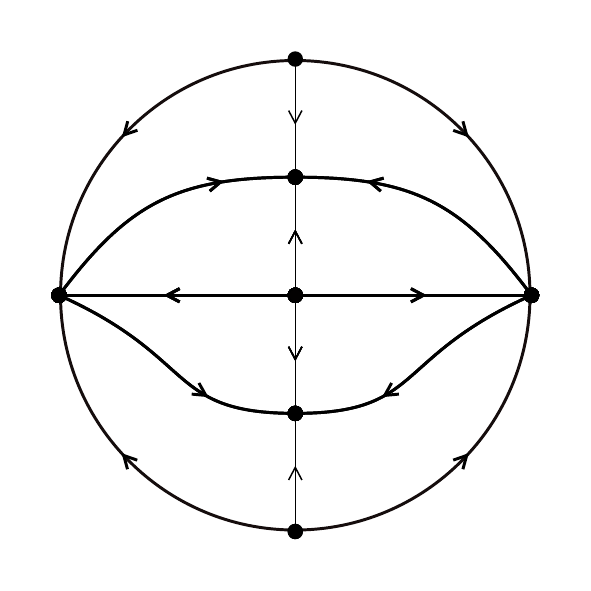}
	\caption*{\scriptsize (G55) [R=4, S=17]}
\end{subfigure}
\begin{subfigure}[h]{2.4cm}
	\centering
	\includegraphics[width=2.4cm]{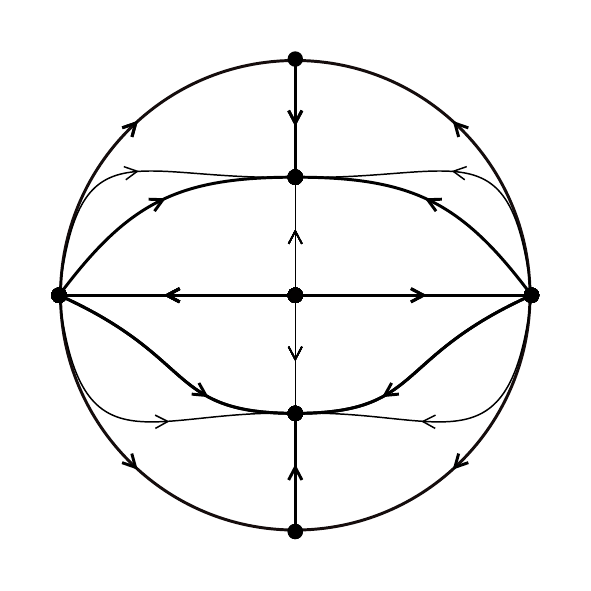}
	\caption*{\scriptsize (G56) [R=6, S=19]}
\end{subfigure}
\begin{subfigure}[h]{2.4cm}
	\centering
	\includegraphics[width=2.4cm]{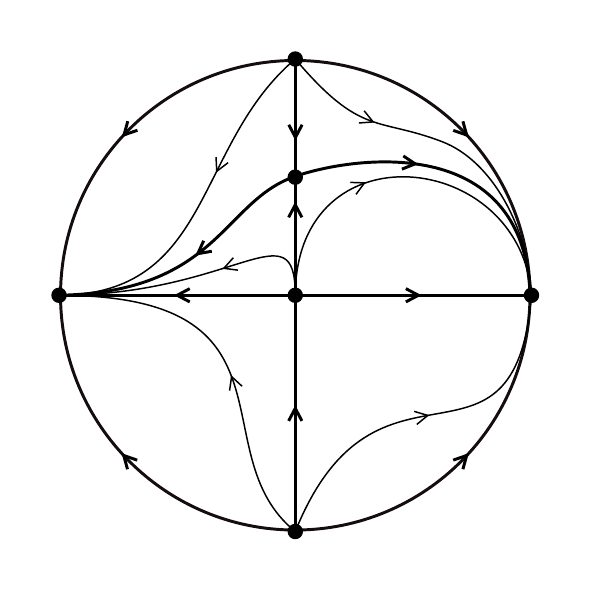}
	\caption*{\scriptsize (G57) [R=6, S=17]}
\end{subfigure}
\begin{subfigure}[h]{2.4cm}
	\centering
	\includegraphics[width=2.4cm]{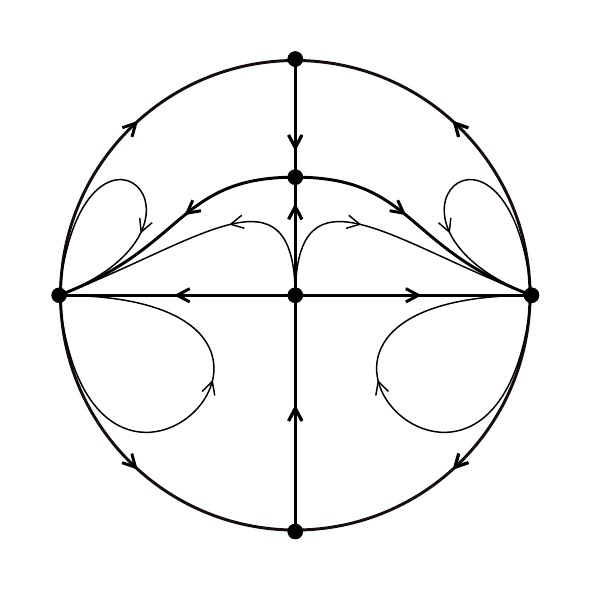}
	\caption*{\scriptsize (G58) [R=6, S=17]}
\end{subfigure}
\begin{subfigure}[h]{2.4cm}
	\centering
	\includegraphics[width=2.4cm]{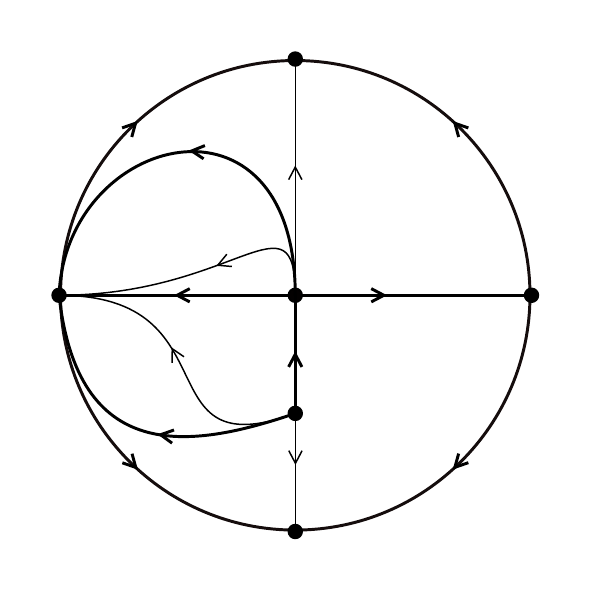}
	\caption*{\scriptsize (G59) [R=4, S=15]}
\end{subfigure}
\begin{subfigure}[h]{2.4cm}
	\centering
	\includegraphics[width=2.4cm]{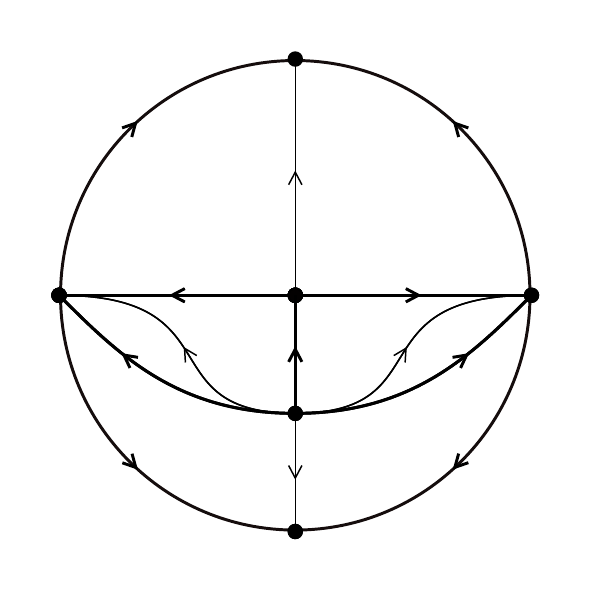}
	\caption*{\scriptsize (G60) [R=4, S=15]}
\end{subfigure}
\begin{subfigure}[h]{2.4cm}
	\centering
	\includegraphics[width=2.4cm]{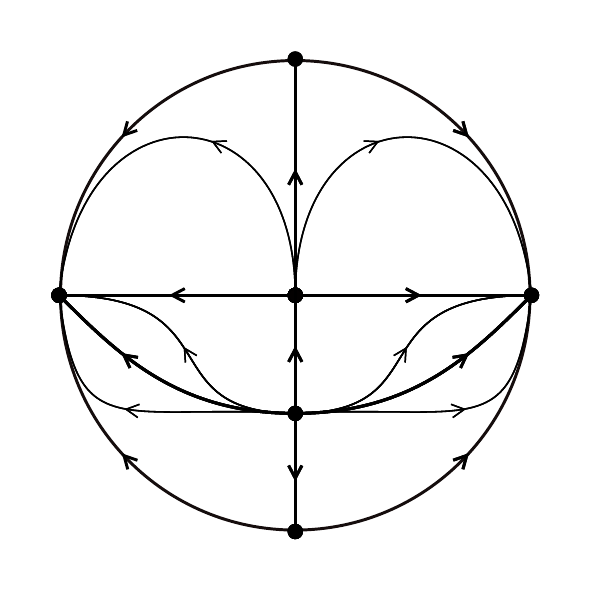}
	\caption*{\scriptsize (G61) [R=6, S=17]}
\end{subfigure}
\begin{subfigure}[h]{2.4cm}
	\centering
	\includegraphics[width=2.4cm]{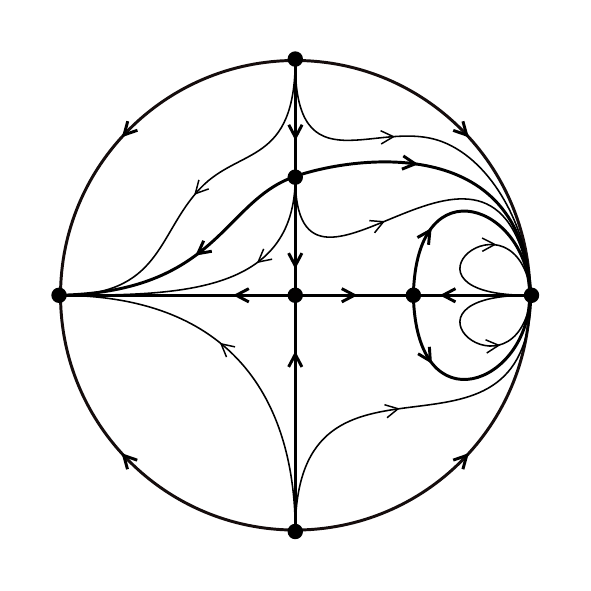}
	\caption*{\scriptsize (G62) [R=8, S=21]}
\end{subfigure}
\begin{subfigure}[h]{2.4cm}
	\centering
	\includegraphics[width=2.4cm]{chap2/global/G63}
	\caption*{\scriptsize (G63) [R=6, S=19]}
\end{subfigure}
\begin{subfigure}[h]{2.4cm}
	\centering
	\includegraphics[width=2.4cm]{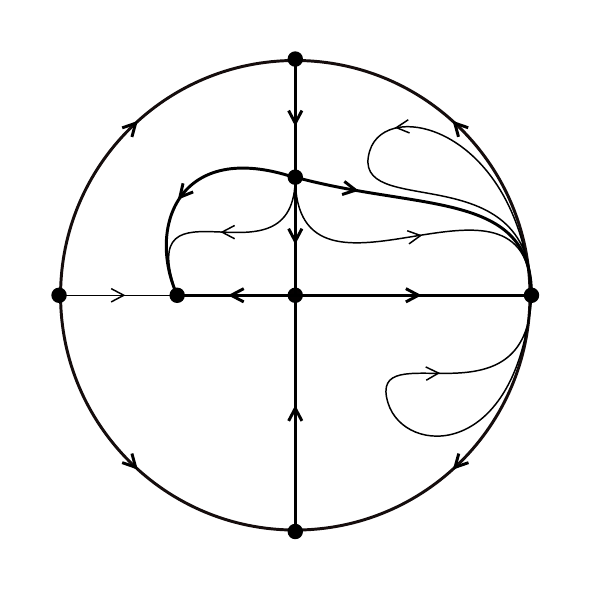}
	\caption*{\scriptsize (G64) [R=5, S=18]}
\end{subfigure}
\begin{subfigure}[h]{2.4cm}
	\centering
	\includegraphics[width=2.4cm]{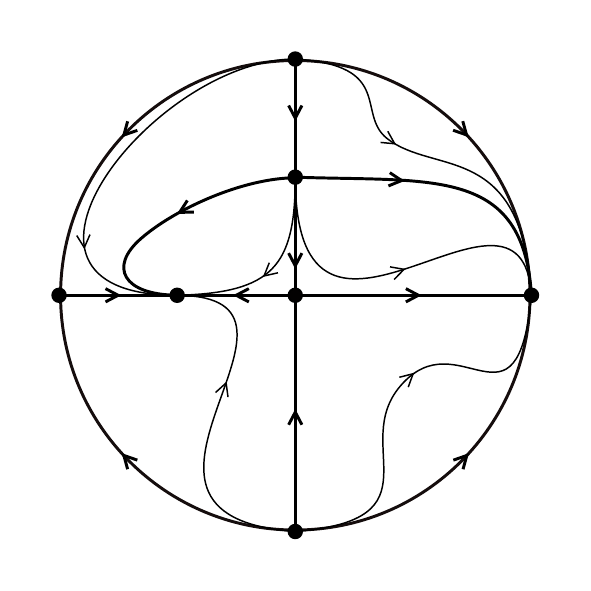}
	\caption*{\scriptsize (G65) [R=6, S=19]}
\end{subfigure}
\begin{subfigure}[h]{2.4cm}
	\centering
	\includegraphics[width=2.4cm]{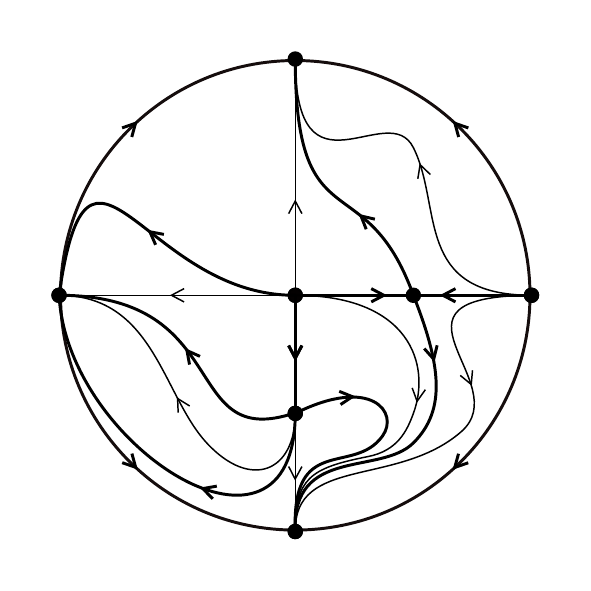}
	\caption*{\scriptsize (G66) [R=7, S=20]}
\end{subfigure}
\begin{subfigure}[h]{2.4cm}
	\centering
	\includegraphics[width=2.4cm]{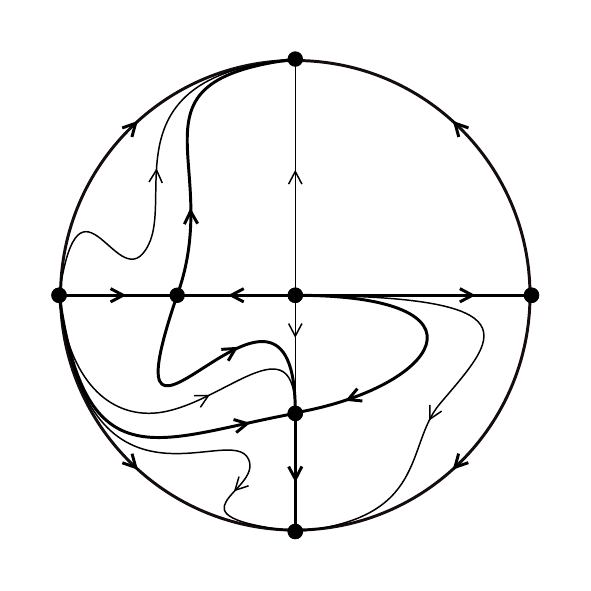}
	\caption*{\scriptsize (G67) [R=6, S=19]}
\end{subfigure}
\begin{subfigure}[h]{2.4cm}
	\centering
	\includegraphics[width=2.4cm]{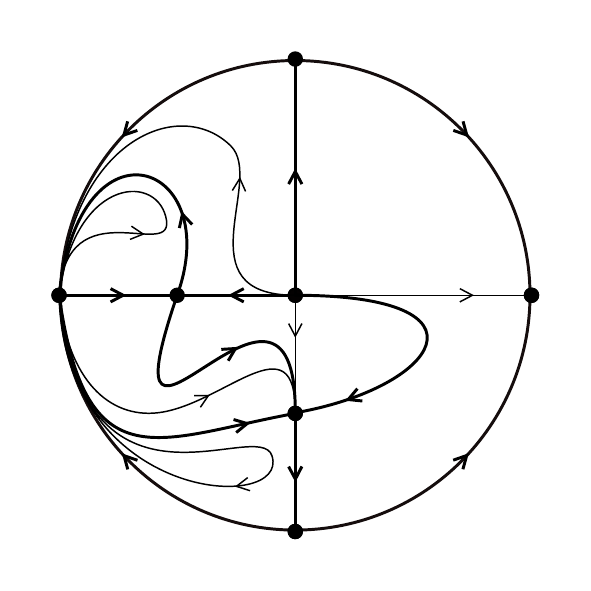}
	\caption*{\scriptsize (G68) [R=6, S=19]}
\end{subfigure}
\begin{subfigure}[h]{2.4cm}
	\centering
	\includegraphics[width=2.4cm]{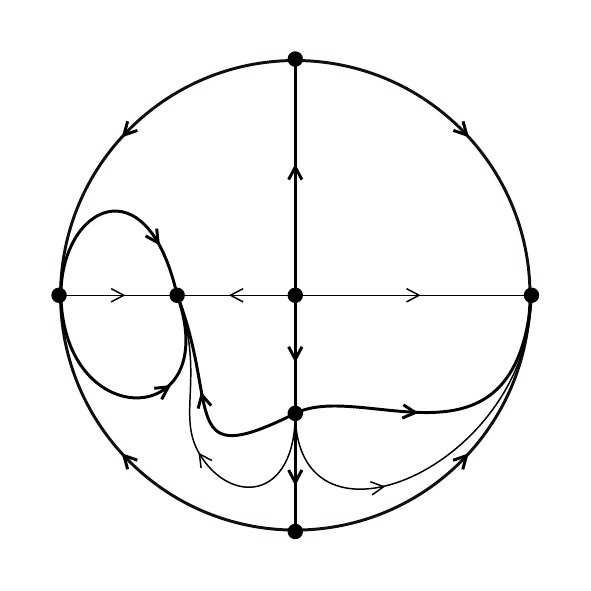}
	\caption*{\scriptsize (G69) [R=5, S=18]}
\end{subfigure}
\begin{subfigure}[h]{2.4cm}
	\centering
	\includegraphics[width=2.4cm]{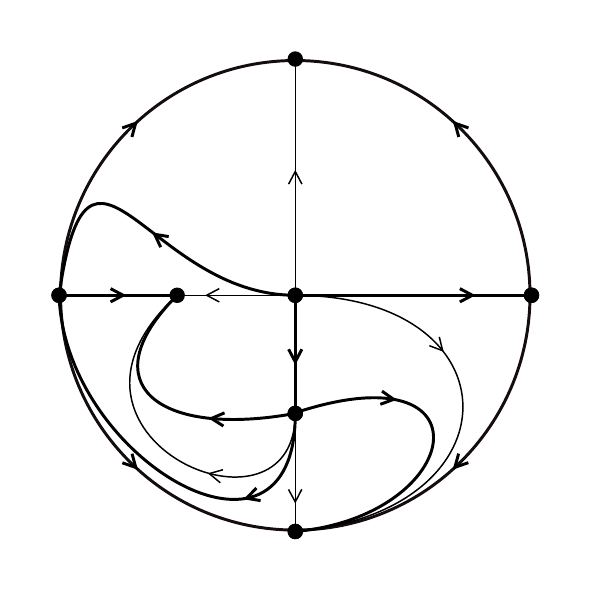}
	\caption*{\scriptsize (G70) [R=5, S=18]}
\end{subfigure}
\end{figure}

\begin{figure}[H]
	\centering
	\ContinuedFloat
	\begin{subfigure}[h]{2.4cm}
		\centering
		\includegraphics[width=2.4cm]{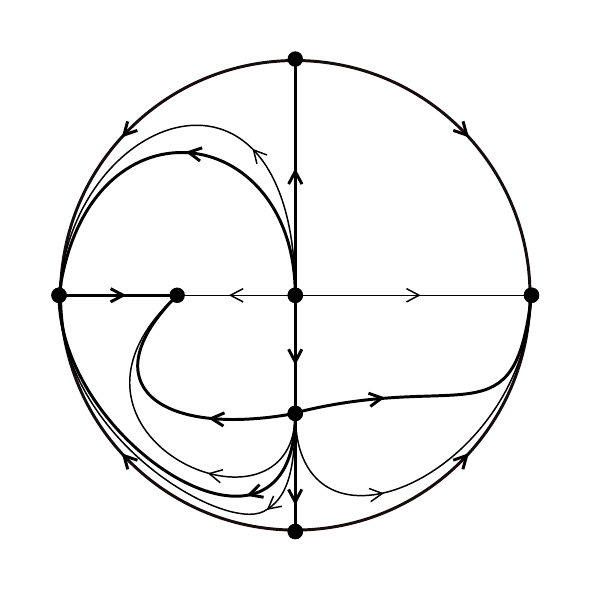}
		\caption*{\scriptsize (G71) [R=6, S=19]}
	\end{subfigure}
	\begin{subfigure}[h]{2.4cm}
		\centering
		\includegraphics[width=2.4cm]{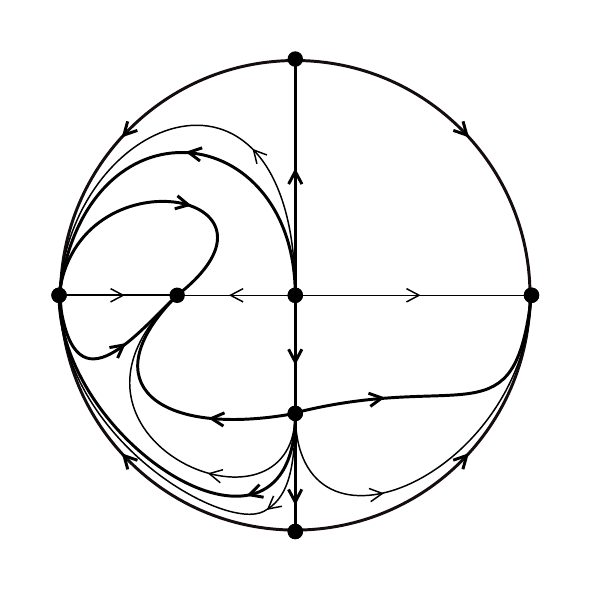}
		\caption*{\scriptsize (G72) [R=7, S=20]}
	\end{subfigure}
		\begin{subfigure}[h]{2.4cm}
		\centering
		\includegraphics[width=2.4cm]{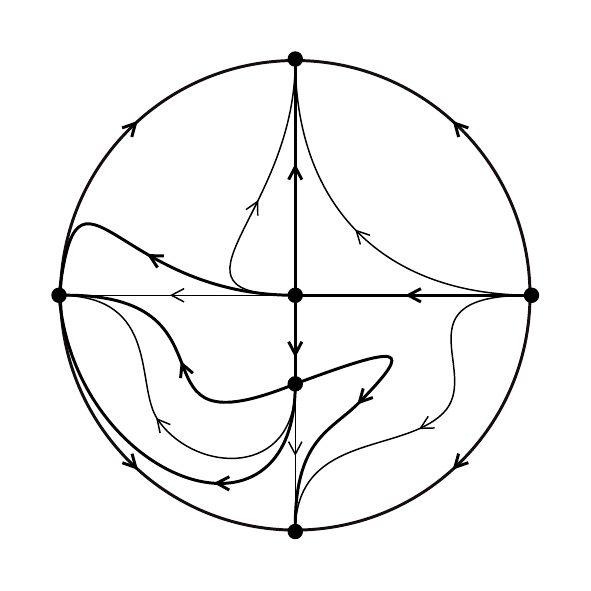}
		\caption*{\scriptsize (G73) [R=6, S=17]}
	\end{subfigure}
	\begin{subfigure}[h]{2.4cm}
		\centering
		\includegraphics[width=2.4cm]{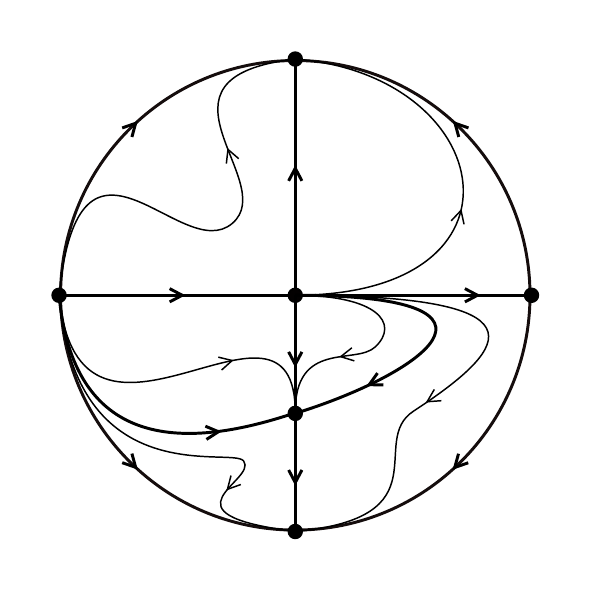}
		\caption*{\scriptsize (G74) [R=6, S=17]}
	\end{subfigure}
	\begin{subfigure}[h]{2.4cm}
		\centering
		\includegraphics[width=2.4cm]{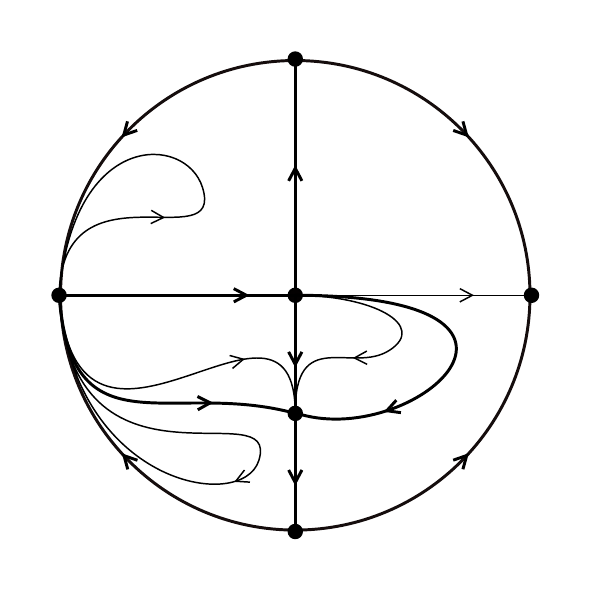}
		\caption*{\scriptsize (G75) [R=5, S=16]}
	\end{subfigure}
	\begin{subfigure}[h]{2.4cm}
		\centering
		\includegraphics[width=2.4cm]{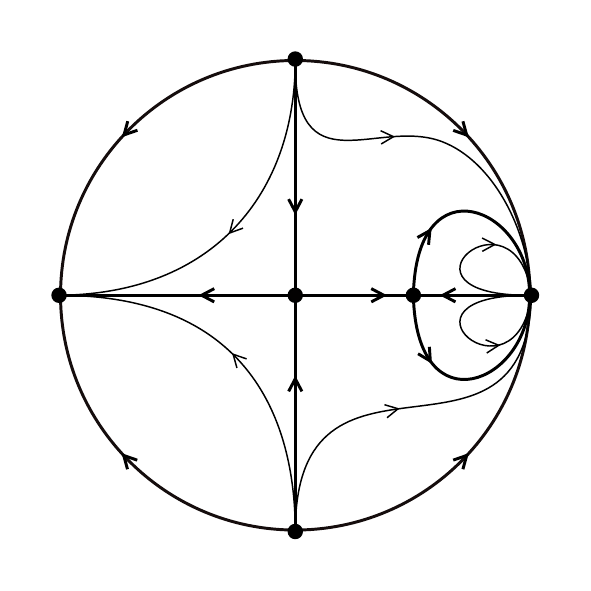}
		\caption*{\scriptsize (G76) [R=6, S=17]}
	\end{subfigure}
	\begin{subfigure}[h]{2.4cm}
		\centering
		\includegraphics[width=2.4cm]{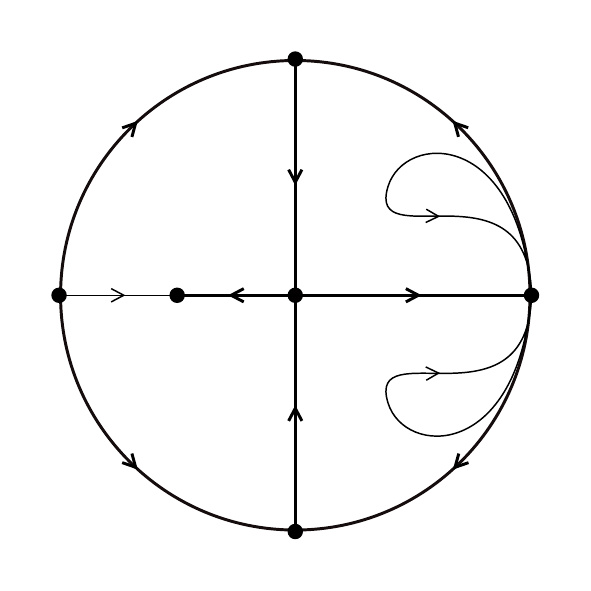}
		\caption*{\scriptsize (G77) [R=3, S=14]}
	\end{subfigure}
	\begin{subfigure}[h]{2.4cm}
		\centering
		\includegraphics[width=2.4cm]{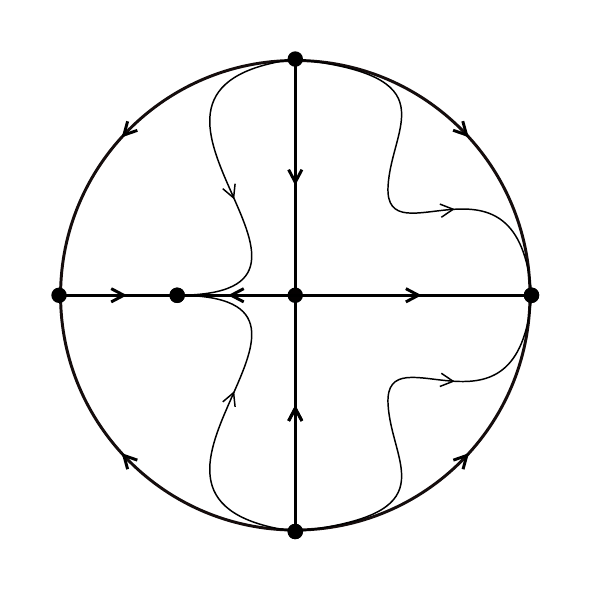}
		\caption*{\scriptsize (G78) [R=4, S=15]}
	\end{subfigure}
	\begin{subfigure}[h]{2.4cm}
		\centering
		\includegraphics[width=2.4cm]{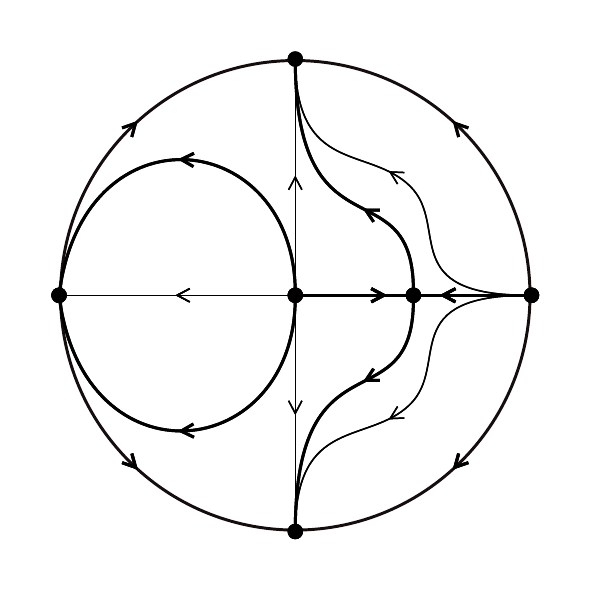}
		\caption*{\scriptsize (G79) [R=5, S=16]}
	\end{subfigure}
	\begin{subfigure}[h]{2.4cm}
		\centering
		\includegraphics[width=2.4cm]{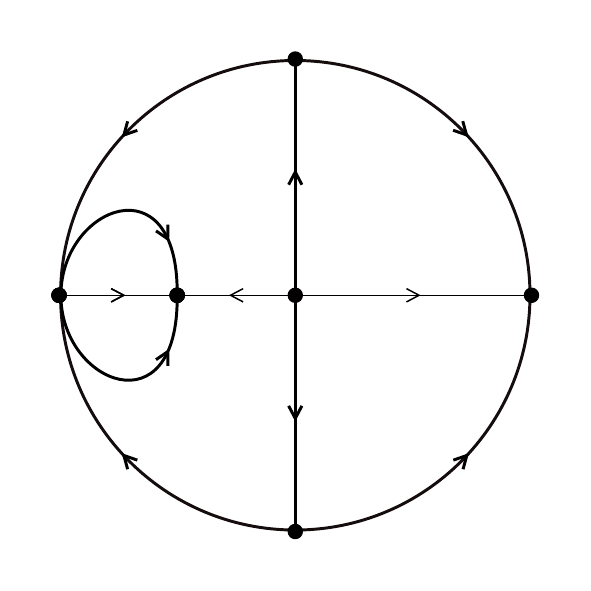}
		\caption*{\scriptsize (G80) [R=3, S=14]}
	\end{subfigure}
	\begin{subfigure}[h]{2.4cm}
		\centering
		\includegraphics[width=2.4cm]{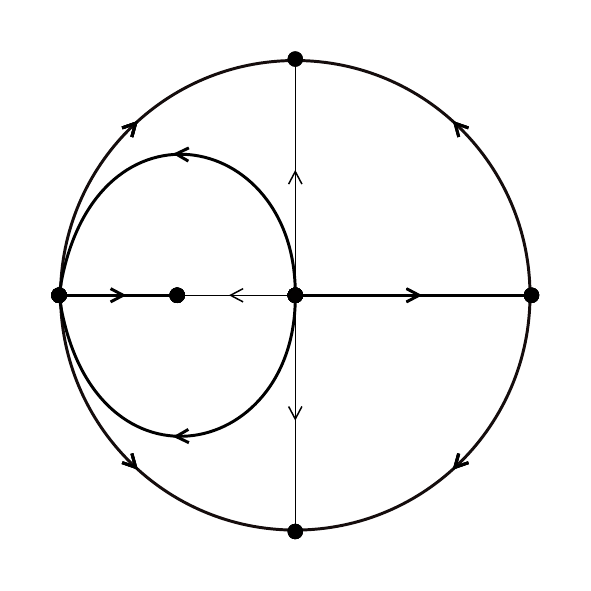}
		\caption*{\scriptsize (G81) [R=3, S=14]}
	\end{subfigure}
	\begin{subfigure}[h]{2.4cm}
		\centering
		\includegraphics[width=2.4cm]{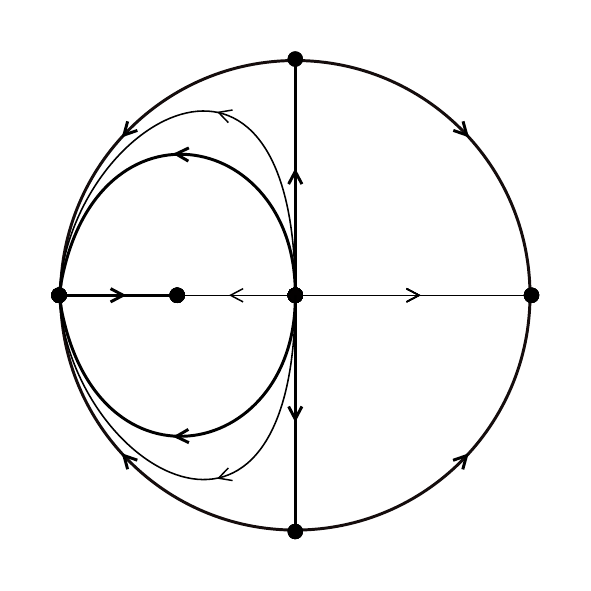}
		\caption*{\scriptsize (G82) [R=4, S=15]}
	\end{subfigure}
	\begin{subfigure}[h]{2.4cm}
		\centering
		\includegraphics[width=2.4cm]{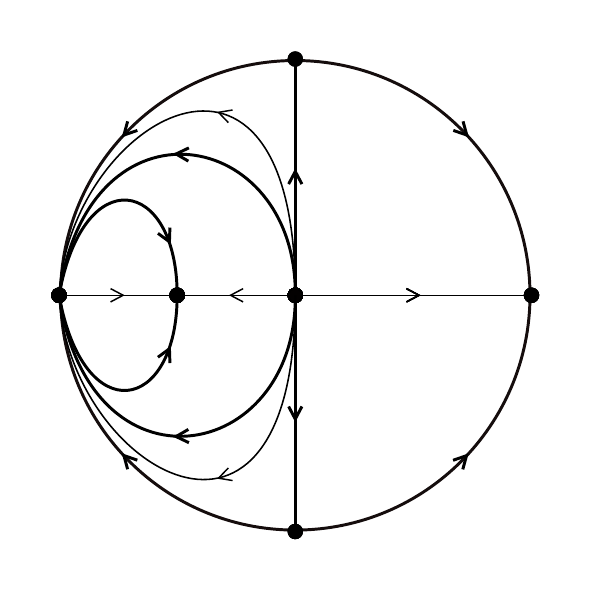}
		\caption*{\scriptsize (G83) [R=5, S=16]}
	\end{subfigure}
	\begin{subfigure}[h]{2.4cm}
		\centering
		\includegraphics[width=2.4cm]{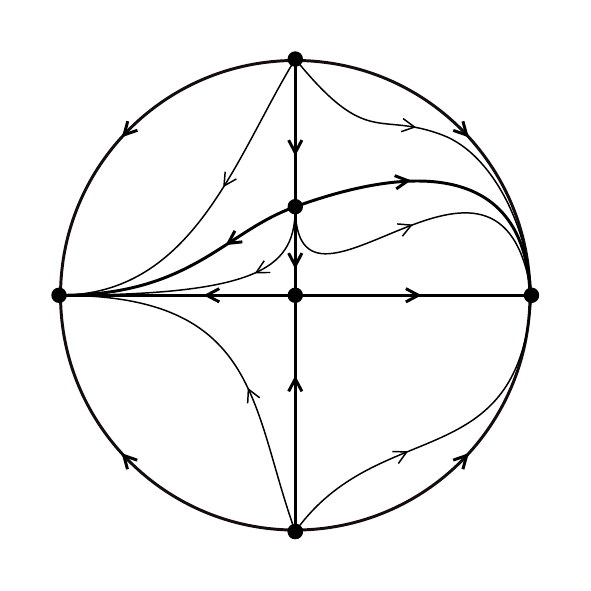}
		\caption*{\scriptsize (G84) [R=6, S=17]}
	\end{subfigure}
	\begin{subfigure}[h]{2.4cm}
		\centering
		\includegraphics[width=2.4cm]{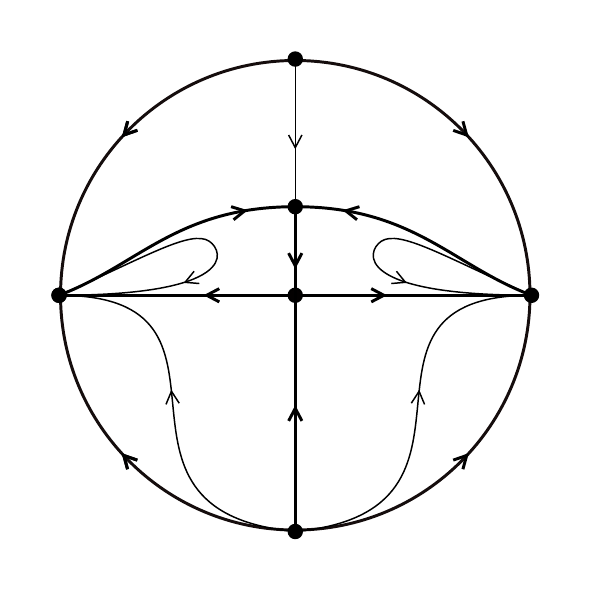}
		\caption*{\scriptsize (G85) [R=5, S=16]}
	\end{subfigure}
	\begin{subfigure}[h]{2.4cm}
		\centering
		\includegraphics[width=2.4cm]{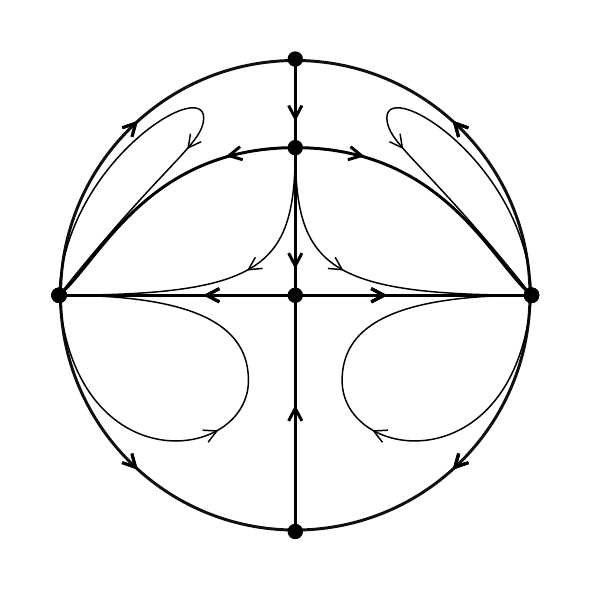}
		\caption*{\scriptsize (G86) [R=6, S=17]}
	\end{subfigure}
	\begin{subfigure}[h]{2.4cm}
		\centering
		\includegraphics[width=2.4cm]{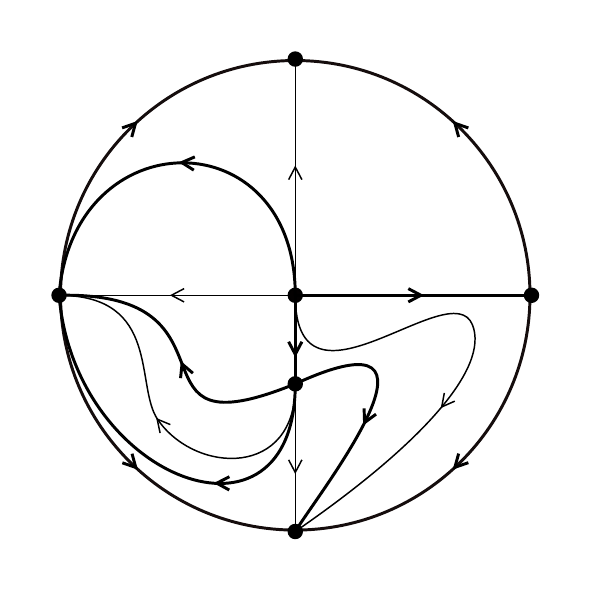}
		\caption*{\scriptsize (G87) [R=5, S=16]}
	\end{subfigure}
	\begin{subfigure}[h]{2.4cm}
		\centering
		\includegraphics[width=2.4cm]{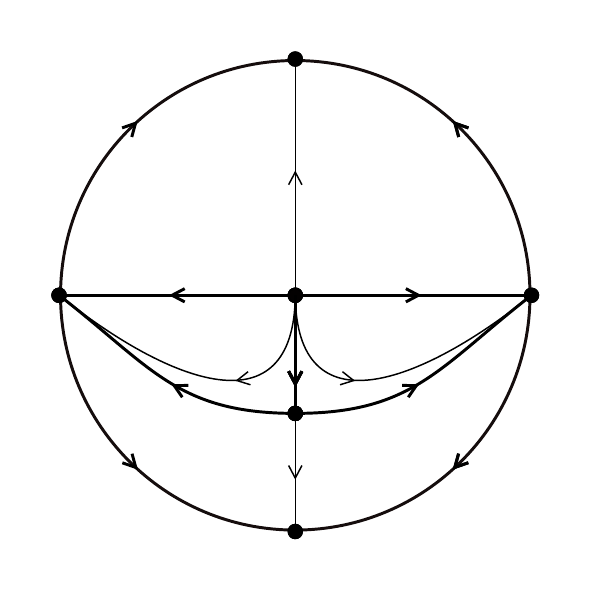}
		\caption*{\scriptsize (G88) [R=4, S=15]}
	\end{subfigure}
	\begin{subfigure}[h]{2.4cm}
		\centering
		\includegraphics[width=2.4cm]{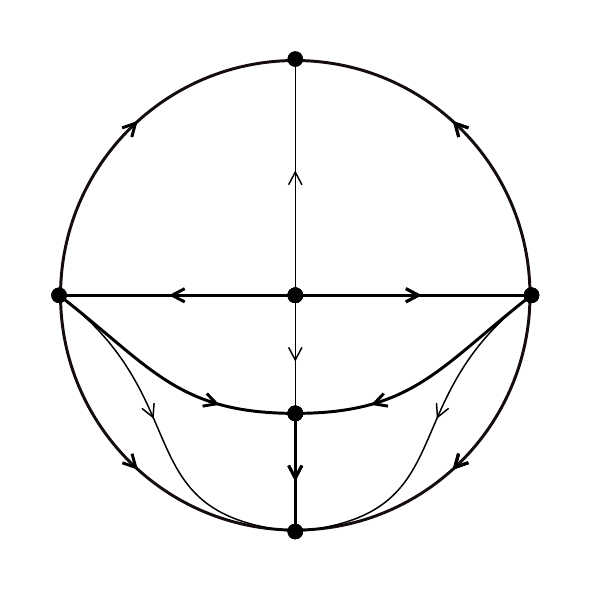}
		\caption*{\scriptsize (G89) [R=4, S=15]}
	\end{subfigure}
	\begin{subfigure}[h]{2.4cm}
		\centering
		\includegraphics[width=2.4cm]{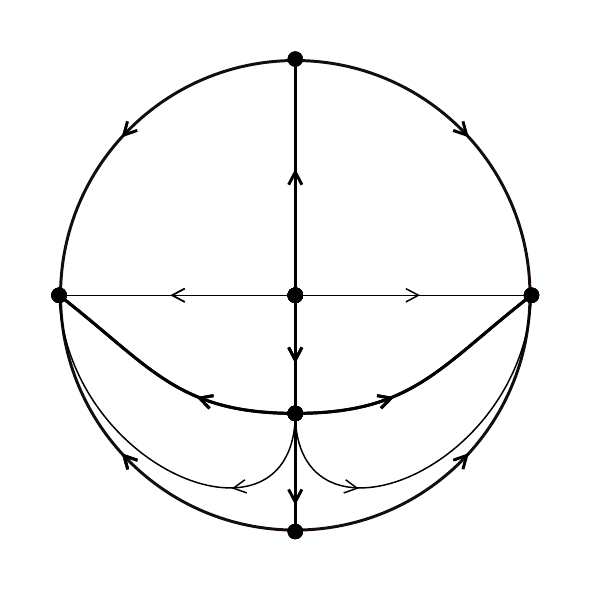}
		\caption*{\scriptsize (G90) [R=4, S=15]}
	\end{subfigure}	
	\begin{subfigure}[h]{2.4cm}
		\centering
		\includegraphics[width=2.4cm]{chap2/global/G91}
		\caption*{\scriptsize (G91) [R=5, S=16]}
	\end{subfigure}
	\begin{subfigure}[h]{2.4cm}
		\centering
		\includegraphics[width=2.4cm]{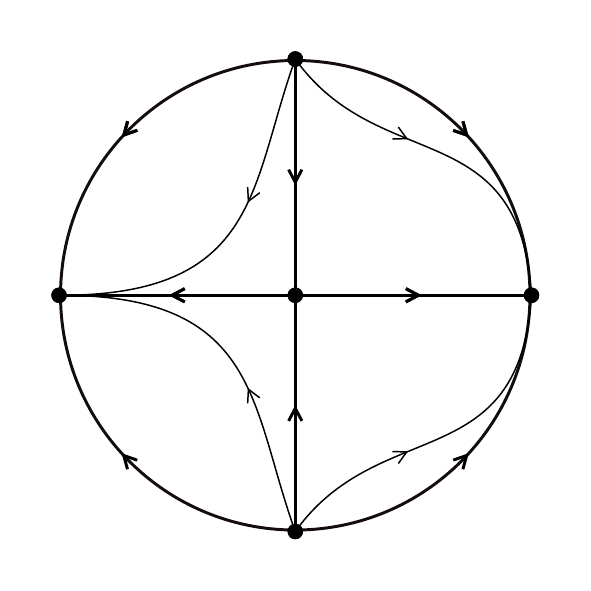}
		\caption*{\scriptsize (G92) [R=4, S=13]}
	\end{subfigure}
	\begin{subfigure}[h]{2.4cm}
		\centering
		\includegraphics[width=2.4cm]{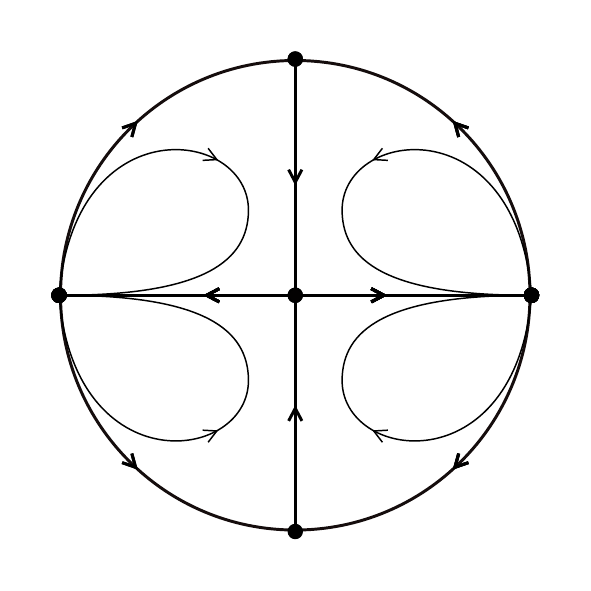}
		\caption*{\scriptsize (G93) [R=4, S=13]}
	\end{subfigure}
	\begin{subfigure}[h]{2.4cm}
		\centering
		\includegraphics[width=2.4cm]{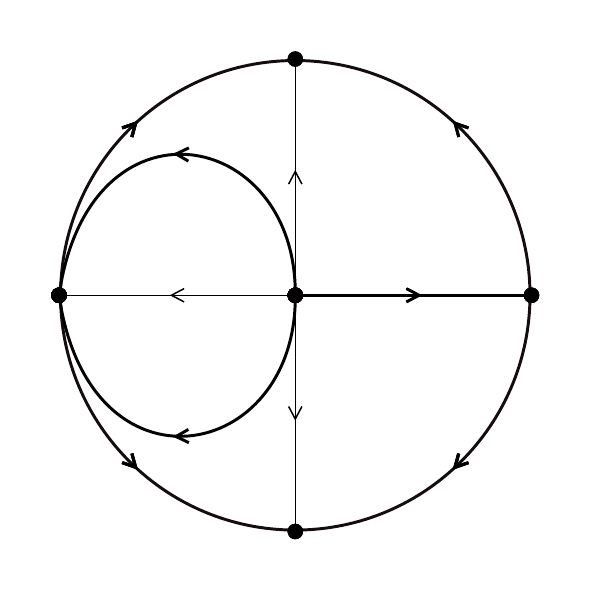}
		\caption*{\scriptsize (G94) [R=3, S=12]}
	\end{subfigure}
	\begin{subfigure}[h]{2.4cm}
		\centering
		\includegraphics[width=2.4cm]{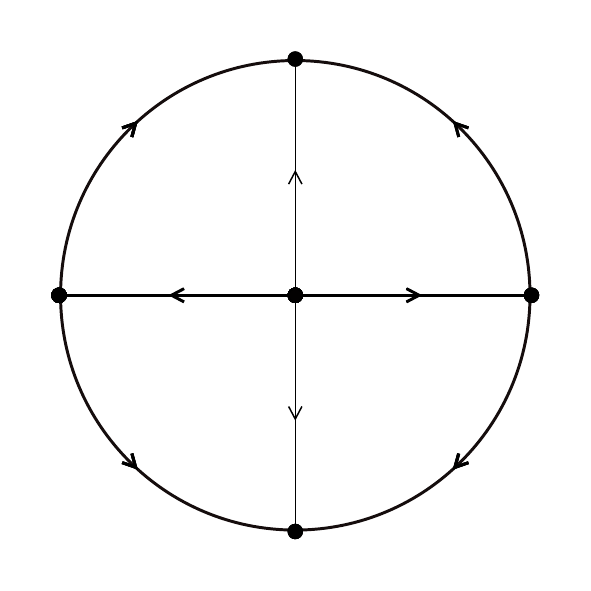}
		\caption*{\scriptsize (G95) [R=2, S=11]}
	\end{subfigure}
	\begin{subfigure}[h]{2.4cm}
		\centering
		\includegraphics[width=2.4cm]{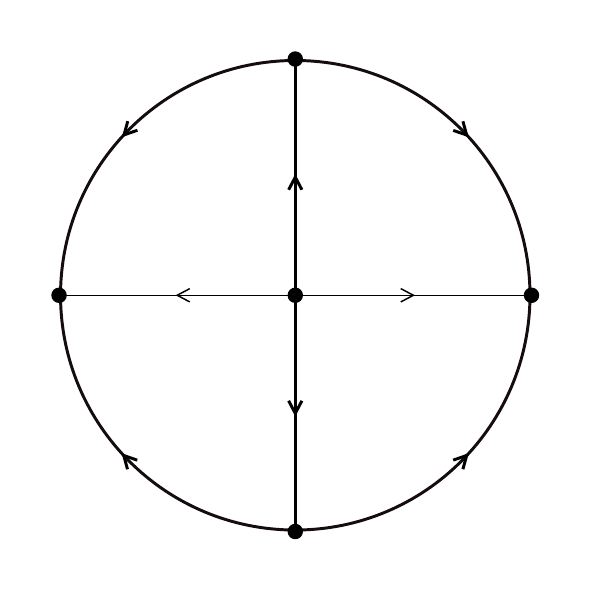}
		\caption*{\scriptsize (G96) [R=2, S=11]}
	\end{subfigure}
		\begin{subfigure}[h]{2.4cm}
		\centering
		\includegraphics[width=2.4cm]{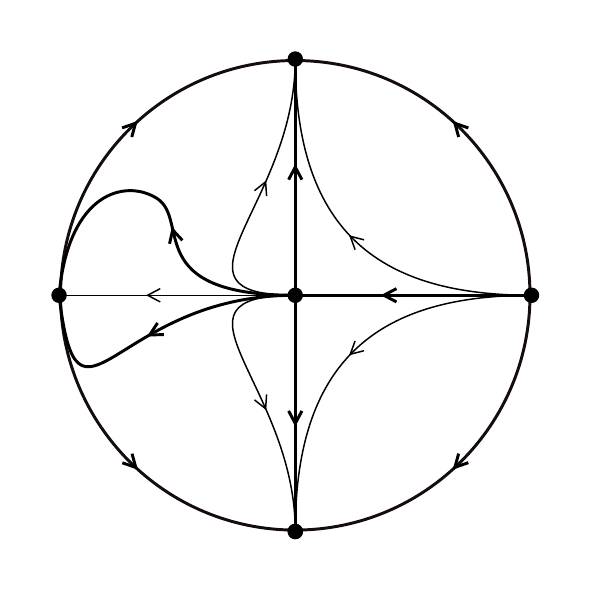}
		\caption*{\scriptsize (G97) [R=5, S=14]}
	\end{subfigure}
	\begin{subfigure}[h]{2.4cm}
		\centering
		\includegraphics[width=2.4cm]{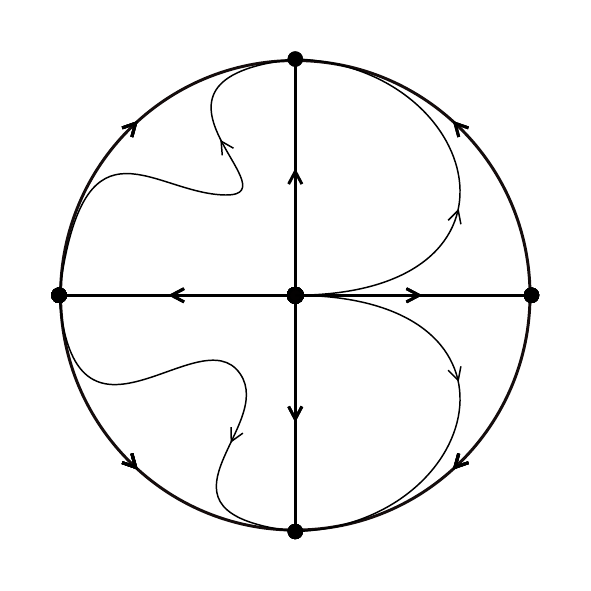}
		\caption*{\scriptsize (G98) [R=4, S=13]}
	\end{subfigure}	
	\begin{subfigure}[h]{2.4cm}
		\centering
		\includegraphics[width=2.4cm]{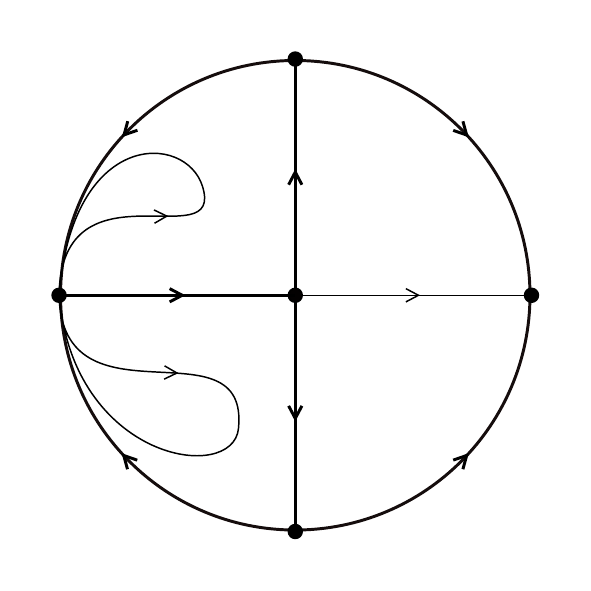}
		\caption*{\scriptsize (G99) [R=3, S=12]}
	\end{subfigure}
	\begin{subfigure}[h]{2.4cm}
		\centering
		\includegraphics[width=2.4cm]{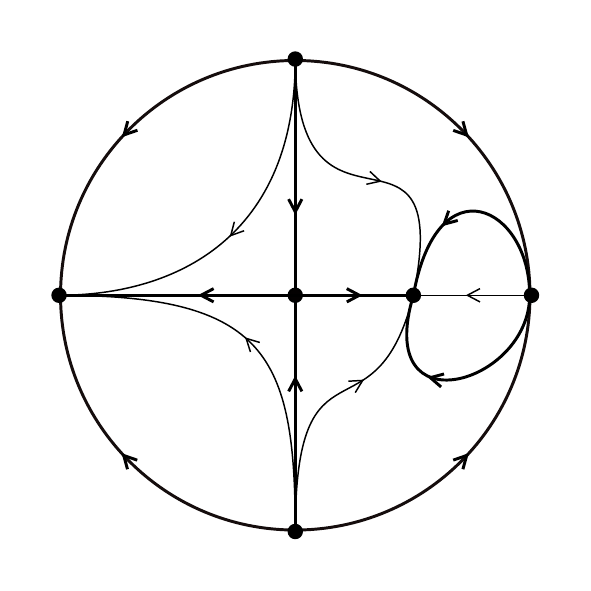}
		\caption*{\tiny (G100) [R=5, S=16]}
	\end{subfigure}
	\begin{subfigure}[h]{2.4cm}
		\centering
		\includegraphics[width=2.4cm]{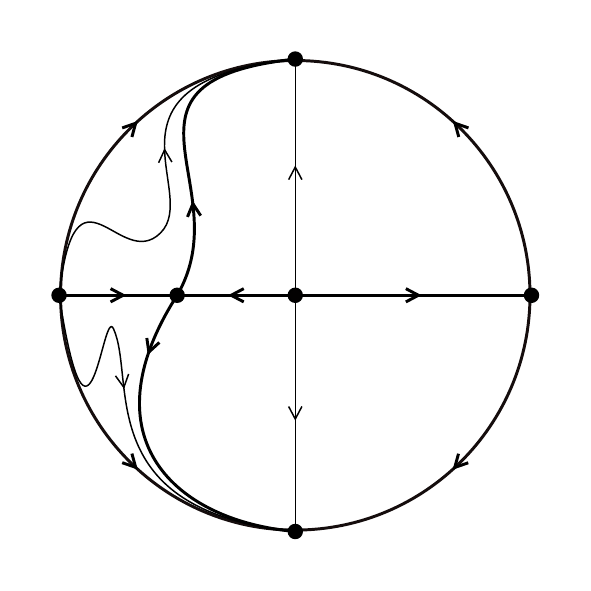}
		\caption*{\tiny (G101) [R=4, S=15]}
	\end{subfigure}
	\begin{subfigure}[h]{2.4cm}
		\centering
		\includegraphics[width=2.4cm]{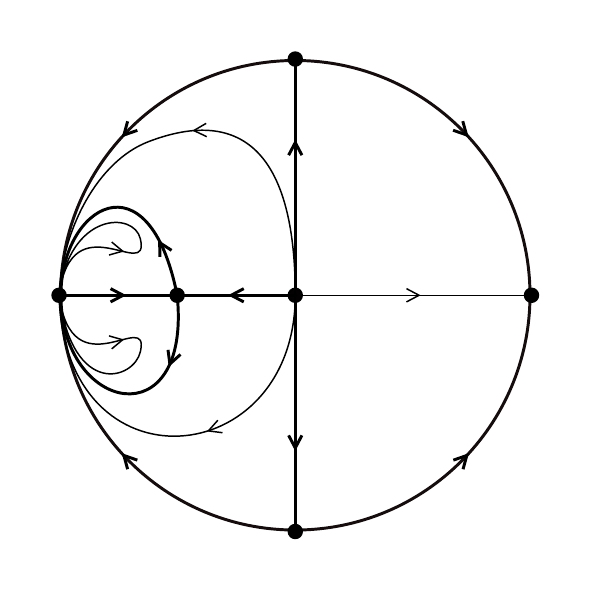}
		\caption*{\tiny (G102) [R=5, S=16]}
	\end{subfigure}
\caption{Global phase portraits of system \eqref{system} in the Poincar\'e disc.}
\label{fig:global_sis2.1}
\end{figure}

\begin{remark}
	Global phase portraits (G43), (G48), (G57), (G84) and (G92) can appear both under condition $c_1=0$ or under $c_1\neq0$ so, as we are interested in the topological classification we represent only the non-symmetric case respect to $z$-axis, but also the symmetric is possible.
	The same situation occurs with the global phase portraits
	(G9), (G13)--(G16), (G18), (G19), (G23)--(G27), (G31)--(G36), (G41), (G45)--(G49), (G55), (G56), (G76)--(G83), (G92)--(G102),
	%(G9), (G13), (G14), (G15), (G16), (G18), (G19), (G23), (G24), (G25), (G26), (G27), (G31), (G32), (G33), (G34), (G35), (G36), (G41), (G45), (G46), (G47), (G48), (G49), (G55), (G56), (G76), (G77), (G78), (G79), (G80), (G81), (G82), (G83), (G92), (G93), (G94), (G95), (G96), (G97), (G98), (G99), (G100), (G101) and (G102),
	which can appear under the condition $c_3=0$ or $c_3\neq0$ so they can present the symmetric or the non-symmetric form respect to $x$-axis, altought we only represent one of them because we are only interested on the topological classification.
\end{remark}

\subsection*{Acknowledgements}

The first and third authors are partially supported by the Ministerio de Economía, Industria y Competitividad, Agencia Estatal de Investigación (Spain), grant MTM2016-79661-P (European FEDER support included, UE) and the Consellería de Educación, Universidade e Formación Profesional (Xunta de Galicia), grant ED431C 2019/10 with FEDER funds. The first author is also supported by the Ministerio de Educacion, Cultura y Deporte de España, contract FPU17/02125.

The second author is partially supported by the Ministerio de Ciencia, Innovaci\'on y Universidades, Agencia Estatal de Investigaci\'on grants MTM2016-77278-P (FEDER), the Ag\`encia de Gesti\'o d'Ajuts Universitaris i de Recerca grant 2017SGR1617, and the H2020 European Research Council grant MSCA-RISE-2017-777911.


\begin{thebibliography}{0}

\bibitem{cosmologia} J. Alavez-Ram\'irez, G. Bl\'e, V. Castellanos, J. Llibre;
On the global flow of a 3-dimensional Lotka-Volterra system, \emph{Nonlinear Anal. Theor.}, \textbf{75} (2012), 4114-4125.

\bibitem{blowup} M. J. Álvarez, A. Ferragut, X. Jarque;
A survey on the blow up technique, \emph{Int. J. Bifurcat. Chaos.}, \textbf{21} (2011), 3103–3118.

\bibitem{blowupAndronov} A. A. Andronov, E. A. Leontovich, I.J. Gordon, A. G. Maier;
\emph{Qualitative Theory of 2nd Order Dynamic Systems}, J. Wiley \& Sons, 1973.


\bibitem{Ben-LLi} R. Benterki, J. Llibre, 
Phase portraits of quadratic polynomial differential systems having as solution some classical planar algebraic curves of degree 4, \emph{Electron. J. Differ. Eq}, 
\textbf{2019} (2019), No. 15, 1-25.


\bibitem{blackholes} P. Breitenlohner, G. Lavrelashvili, D. Maison;
Mass inflamation and chaotic behaviour inside hairy black holes, \emph{Nucl. Phys. B}, 
\textbf{524} (1998), 427-443.

\bibitem{hidrodinamica} F.H. Busse;
Transition to turbulence via the statistical limit cycle route, \emph{Synergetics}, 
\textbf{39}, Springer-Verlag, Berlin, 1978.

\bibitem{Goodwin2} M. Desai, P. Ormerod;
Richard Goodwin: A Short Appreciation,  \emph{Econ. J.},
\textbf{108} (1998) No. 450, 1431-1435.


\bibitem{blowupDumortier} F. Dumortier;
Singularities of vector fields on the plane,  \emph{J. Diff. Eq.}, 
\textbf{23} (1977), 53-106.



\bibitem{Libro} F. Dumortier, J. Llibre, J.C. Artés;
 \emph{Qualitative Theory of Planar Differential Systems}, UniversiText, Springer-Verlag, New York, 2006.


\bibitem{ecoGandolfo} G. Gandolfo;
\emph{Economic dynamics}, Fourth edition. Springer, Heidelberg, 2009. xxviii+749 pp.


\bibitem{Palomba} G. Gandolfo;
Giuseppe Palomba and the Lotka-Volterra equations,  \emph{Rend. Lince. - Sci. Fis.},
\textbf{19} (2008) No. 4, 347-357.


\bibitem{Goodwin} R. M. Goodwin;
\emph{A Growth Cycle}, C.H. Socialism, Capitalism and Economic Growth, Cambridge University Press, 1967.

\bibitem{Jim-Lli-Med} J. Jiménez, J. Llibre, J. C. Medrado; 
Crossing limit cycles for a class of piecewise linear differential centers separated by a conic, 
\emph{Electron. J. Differ. Eq}, 
\textbf{2020} (2020), No. 41, 1-36.

\bibitem{Kolmogorov} A. Kolmogorov;
Sulla teoria di Volterra della lotta per l'esistenza, \emph{Gi. Inst. Ital. Attuari}, 
\textbf{7} (1936), 74-80.

\bibitem{plasma} G. Laval, R. Pellat;
Plasma Physics, \emph{Proceedings of Summer School of Theoretical Physics}, (1975).



\bibitem{art_Darboux_1} J. Llibre,  X. Zhang;
Darboux theory of integrability in $\mathbb{C}^n$ taking into account the multiplicity, \emph{J.  Differ. Equations},
\textbf{246} (2009), 541-551.

\bibitem{Lli-Per-Pess} J. Llibre, W. F. Pereira, C. Pessoa; 
Phase portraits of Bernoulli quadratic polynomial differential systems, 
\emph{Electron. J. Differ. Eq.}, 
\textbf{2020} (2020), No. 48, 1-19.


\bibitem{art_Darboux_2} J. Llibre, X. Zhang;
Darboux theory of integrability for polynomial vector fields in $\mathbb{R}^n$ taking into account the multiplicity at infinity, \emph{B. Sci. Math.}, 
\textbf{133} (2009), 765-778.

\bibitem{art_Darboux_3} J. Llibre, X. Zhang;
Rational first integrals in the Darboux theory of integrability in $\mathbb{C}^n$, \emph{B. Sci. Math.}, 
\textbf{134} (2010), 189-195.

\bibitem{compesp} R. M. May;
\emph{Stability and Complexity in Model Ecosystems}, Princeton NJ, 1974.

\bibitem{Markus} L. Markus;
Global structure of ordinary differential equations in the plane, \emph{Trans. Amer. Math. Soc.},
\textbf{76} (1954), 127-148.

\bibitem{Neumann} D.A. Neumann;
Classification of continuous flows on 2-manifolds, \emph{Proc. Amer. Math. Soc.},
\textbf{48} (1975), 73-81.

\bibitem{Peixoto} M. M. Peixoto;
\emph{Proccedings of a simposium held at the university of Bahia}, Acad. Press, New York, 1973. 

\bibitem{LVdim2} D. Schlomiuk,  N. Vulpe;
Global topological classification of Lotka-Volterra quadratic differential systems, \emph{Electron. J.  Differ. Eq.},
\textbf{2012} (2012), No. 64, 1-69.

\bibitem{ecoarticulo} A. W. Wijeratne, F. Yi, J. Wei;
Bifurcation analysis in the diffusive Lotka-Volterra system: an application to market economy, \emph{Chaos Soliton. Fract.},
\textbf{40} (2009), No. 2, 902-911.

\end{thebibliography}
\end{document}